\numberwithin{equation}{section}
\newtheorem{theorem}{Theorem}[section]
\newtheorem{lemma}[theorem]{Lemma}
\newtheorem{claim}[theorem]{Claim}
\newtheorem{corollary}[theorem]{Corollary}
\newtheorem{proposition}[theorem]{Proposition}
\theoremstyle{definition}
\newtheorem{remark}[theorem]{Remark}
\newtheorem{remarks}[theorem]{Remarks}
\newtheorem{example}[theorem]{Example}
\newtheorem{definition}[theorem]{Definition}
\def\Z{\ensuremath{\mathbb{Z}}}
\def\R{\ensuremath{\mathbb{R}}}
\def\C{\ensuremath{\mathbb{C}}}
\def\H{\ensuremath{\mathbb{H}}}
\def\T{\ensuremath{\mathbb{T}}}
\def\Ab{\overline{A}}
\def\As{A^{\ast}}
\def\Abs{\Ab^{\ast}}
\def\m1{\multicolumn{1}}
\def\tr{\rule[-1mm]{0mm}{4.5mm}}
\def\A3{\tn{Alt}(3)}
\def\tA3{\tau \A3}
\def\B{\mathcal{B}}
\newcommand{\pa}[1]{\left(#1\right)}
\newcommand{\cpa}[1]{\left\{#1\right\}}
\newcommand{\tn}[1]{\textnormal{#1}}
\newcommand{\br}[1]{\left[#1\right]}
\newcommand{\fg}[1]{\left\langle #1\right\rangle}
\newcommand{\Int}[1]{\tn{Int} \, #1}
\newcommand{\p}[1]{\pi_1 (#1)}
\newcommand{\card}[1]{\left| #1 \right|}
\newcommand{\norm}[1]{\left\| #1 \right\|}
\font\cuf=cmtt8
\newcommand{\curl}[1]{{\cuf #1}}
\begin{document}
\title{Borromean rays and hyperplanes}

\author[J.~Calcut]{Jack S. Calcut}
\address{Department of Mathematics\\
         Oberlin College\\
         Oberlin, OH 44074}
\email{jcalcut@oberlin.edu}
\urladdr{\href{http://www.oberlin.edu/faculty/jcalcut/}{\curl{http://www.oberlin.edu/faculty/jcalcut/}}}

\author[J.~Metcalf-Burton]{Jules R. Metcalf-Burton}
\address{Department of Mathematics\\
         Oberlin College\\
         Oberlin, OH 44074}
\email{jmetcalf@oberlin.edu}

\author[T.~Richard]{Taylor J. Richard}
\address{Department of Mathematics\\
         Oberlin College\\
         Oberlin, OH 44074}
\email{trichard@oberlin.edu}

\author[L.~Solus]{Liam T. Solus}
\address{Department of Mathematics\\
         University of Kentucky,\hfill\break
         \indent Patterson Office Tower Room 722\\
         Lexington, KY 40504}
\email{liam.solus@uky.edu}
\urladdr{\href{http://www.ms.uky.edu/~solusl/}{\curl{http://www.ms.uky.edu/\textasciitilde solusl/}}}

\keywords{Borromean rays, Borromean hyperplanes, tangle, irreducible, chiral, wild arc, mildly wild frame.}
\subjclass[2000]{Primary 57M30, 57R52; Secondary 57M05}
\date{November 27, 2012}

\begin{abstract}
Three disjoint rays in $\R^3$ form \emph{Borromean rays} provided their union is knotted, but the union of any two components is unknotted.
We construct infinitely many Borromean rays, uncountably many of which are pairwise inequivalent.
We obtain uncountably many Borromean hyperplanes.
\end{abstract}

\maketitle

\section{Introduction}\label{introduction}

For proper, locally flat embeddings in $\R^n$, it is well-known that:
\begin{enumerate}\setcounter{enumi}{\value{equation}}
\item\label{rlt} A ray (= copy of $[0,\infty)$) knots if and only if $n=3$.
\item\label{hlt} A hyperplane (= copy of $\R^{n-1}$) knots if and only if $n=3$.
\setcounter{equation}{\value{enumi}}
\end{enumerate}
Both facts hold in the smooth, piecewise linear, and topological categories~\cite{cks}.
Fox and Artin discovered the first knotted ray~\cite{foxartin}.
The boundary of a closed regular neighborhood of any knotted ray is a knotted hyperplane.
For $n> 3$, fact \ref{hlt} is the Cantrell-Stallings hyperplane unknotting theorem, an enhancement of the famous Schoenflies theorem of
Mazur and Brown \cite{cks}, \hbox{\cite[p.~98]{rushing}}.
%For $n=2$, facts \ref{rlt} and \ref{hlt} and the Schoenflies theorem may be proved using $2$-dimensional embedded Morse theory~\cite[\S9]{cks}.\\
Embeddings in $\R^n$, $n\neq3$, of at most countably many rays or hyperplanes were recently classified by King, Siebenmann, and the first author~\cite{cks}.
%Any such multiray is equivalent to a radial multiray.
%Each radial multiray is classified by the number of its components when $n\geq3$, and by the cyclic ordering of its components when $n=2$~\cite[Remark~4.7]{cks}.
%To each multiple hyperplane $N\subset\R^n$ we associate a tree $T(N)$:
%the vertices are the components of $\R^n - N$, and
%two vertices are joined by an edge if and only if their closures in $\R^n$ share a component of $N$.
%For $n\neq 3$, the tree isomorphism type of $T(N)$ classifies $N$~\cite[Thm.~9.2]{cks}.\\
In $\R^3$, no classification is known or even conjectured.\\

A ray or multiray $r\subset\R^3$ is \emph{unknotted} if and only if an automorphism of $\R^3$ carries $r$ to a union of radial rays.
Unknotted multirays with the same number of components are ambient isotopic~\cite[Lemma~4.1]{cks}.\\

A rich collection of knotted rays may be obtained from wild arcs.
Let $a\subset S^3$ be an arc with one wild (= non-locally flat) point $p$.
Consider $r=a-\cpa{p}$ in $\R^3=S^3-\cpa{p}$.
If $p$ is an endpoint of $a$, then $r$ is a knotted ray.
If $p$ is an interior point of $a$, then $r$ is a knotted, two component multiray.
Hence, in $\R^3$:
\begin{enumerate}\setcounter{enumi}{\value{equation}}
\item There exist infinitely many knot types of a ray~\cite{alfordball}.
\item There exist uncountably many knot types of a ray~\cite{mcpherson}.
\item There exist uncountably many knot types of two component multirays with unknotted components~\cite{foxharrold}.
\setcounter{equation}{\value{enumi}}
\end{enumerate}

A three component multiray $r\subset\R^3$ will be called \emph{Borromean rays} provided $r$ is knotted, but any two components of $r$ form an unknotted multiray.
Debrunner and Fox constructed an example equivalent to Borromean rays~\cite{debrunnerfox}.
Earlier, Doyle attempted a construction~\cite{doyle}, but his argument contained a gap~\cite{debrunnerfox}.
We prove that there exist uncountably many knot types of Borromean rays.
The following is an overview.\\

Consider the four blocks in Figure~\ref{four_blocks}.
\begin{figure}[h!]
    \centerline{\includegraphics[scale=1.0]{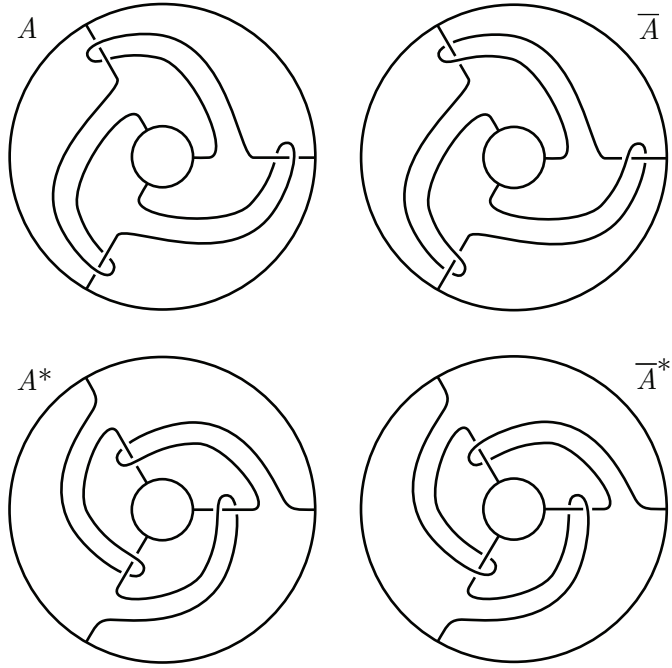}}
    \caption{Four blocks $A$, $\Ab$, $\As$, and $\Abs$. Each block is a three component tangle in a thickened $2$-sphere. The set of these four blocks is denoted by $\B$.}
\label{four_blocks}
\end{figure}
The block $A$ consists of a three component tangle $T$ in a thickened $2$-sphere $S^2\br{1,2}$.
Any two components of $T$ can be straightened by an ambient isotopy of $S^2\br{1,2}$ relative to boundary.
However, no diffeomorphism of $S^2\br{1,2}$ sends $T$ to a radial tangle (Corollary~\ref{Anottrivial}).
The blocks $\Ab$, $\As$, and $\Abs$ are reflections of $A$.
Let $\B=\cpa{A,\Ab,\As,\Abs}$ be the set of these four blocks.
Let $B_i$, $i\in\Z^{+}$, be a sequence of blocks in $\B$.
The \emph{infinite concatenation} $D^3 B_1 B_2 B_3 \cdots$ is obtained by gluing the inner boundary sphere of $B_1$ to the boundary of a $3$-disk,
and gluing the inner boundary sphere of $B_{i+1}$ to the outer boundary sphere of $B_i$ for each $i\in\Z^{+}$.
This yields the pair:
\begin{equation}\label{borr_tau}
	\pa{\R^3,\tau}=D^3 B_1 B_2 B_3 \cdots
\end{equation}
where $\tau\subset\R^3$ is a three component multiray.
Each such $\tau$ forms Borromean rays (Corollary~\ref{borr_blocks_yield_borr_rays}).
Let $\sigma$ be the Borromean rays determined by another such sequence $C_i$, $i\in\Z^{+}$.
We prove that if $f:\pa{\R^3,\tau}\to\pa{\R^3,\sigma}$ is a diffeomorphism of pairs,
then there is an isotopy of $f$ to a diffeomorphism $g:\pa{\R^3,\tau}\to\pa{\R^3,\sigma}$ and an integer $n$ such that:
\begin{equation}\label{block_respect}
	g\pa{B_i}=C_{i+n} \quad \tn{for all sufficiently large $i\in\Z^{+}$.}
\end{equation}
Hence, the existence of $f$ boils down to: (i) the tails of the sequences $B_i$ and $C_i$, and (ii) possible diffeomorphisms between individual blocks in $\B$.
The latter are studied in Section~\ref{s:diffeo_blocks}.
Our main result, Theorem~\ref{borr_rays_thm}, gives necessary and sufficient conditions for two such sequences to yield equivalent Borromean rays.
Care is taken to account for orientation.
As an application, we give necessary and sufficient conditions for our Borromean rays to be achiral (Corollary~\ref{chiral_cor}).
While most turn out to be chiral, we give a countably infinite family of pairwise inequivalent, achiral Borromean rays.\\

The notion of an \emph{irreducible block} plays a central role.
A block $B$ is \emph{irreducible} provided: if $B$ is diffeomorphic to a concatenation $B_1 B_2$, then $B_1$ or $B_2$ is diffeomorphic to a \emph{trivial block} (= block with a radial tangle).
Trivial blocks are irreducible (Proposition~\ref{sigma_en}).
We use this fact to prove that each $\tau$ in~\eqref{borr_tau} forms Borromean rays.
The block $A$ is also irreducible (Theorem~\ref{A_irred}), although the proof is more technical.
Thus, blocks in $\B$ are irreducible.
This fact is used to improve diffeomorphisms as in~\eqref{block_respect}.\\

We are unaware of a general method for detecting irreducibility.
For instance, let $B_1$ and $B_2$ be blocks containing $n$ component tangles $\tau_1$ and $\tau_2$ respectively.
Let $\tau$ be the tangle in the concatenation $B_1 B_2$.
Let $G_1$, $G_2$, and $G$ be the fundamental groups of $B_1 -\tau_1$, $B_2 -\tau_2$, and $B_1 B_2-\tau$ respectively.
Let $\Sigma$ be the $2$-sphere where $B_1$ and $B_2$ meet in $B_1 B_2$.
Then, $\Sigma'=\Sigma-\tau$ is an $n$-punctured sphere and $\pi_1\pa{\Sigma'}=F_{n-1}$ is free of rank $n-1$.
Using Dehn's Lemma and the Loop Theorem~\cite[p.~101]{rolfsen}, one may show that the inclusions $\Sigma'\hookrightarrow B_i-\tau_i$ induce injective homomorphisms on fundamental groups.
By van Kampen's theorem, $G=G_1 \ast_{F_{n-1}} G_2$ is the free product of $G_1$ and $G_2$ amalgamated over $F_{n-1}$ (see~\cite[\S4.2]{mks}).
By Grushko's theorem~\cite{stallings}, the rank of the free product $G_1 \ast G_2$ equals $rank \, G_1 + rank \, G_2$.
Thus, one might hope that $rank \, G \geq rank \, G_1 + rank \, G_2 - rank \, F_{n-1}$.
However, no such relation holds in general for free products with amalgamation~\cite[\S4]{weidmann2002}.
Still, rank behaves better when the amalgamating subgroup is malnormal in each factor~\cite{karrass_solitar}, \cite{weidmann2001}.
For knot groups, malnormality of the peripheral subgroups was studied recently by Weidmann~\cite{weidmann1998} and de la Harpe and Weber~\cite{dlharpe_weber}.
It is unclear to us whether $\pi_1\pa{\Sigma'}$ is malnormal in $G_1$ for an arbitrary block $B_1$.
It would be interesting to find block invariants sensitive to irreducibility.\\

We discovered the block $A$ as follows.
Consider a three component multiray $\tau\subset\R^3$ with the property:
\begin{enumerate}[label=(\dag)]
\item\label{two_comps_std} Any two components of $\tau$ form an unknotted multiray.
\end{enumerate}
Let $\tau_i$, $i\in\cpa{1,2,3}$, denote the components of $\tau$.
Property~\ref{two_comps_std} implies that for each pair $\tau_i$ and $\tau_j$ of components of $\tau$
there is a \emph{strip} $S_{i,j}\subset\R^3$ (= properly embedded copy of $[0,1]\times[0,\infty)$) whose \emph{stringers}
(= $\cpa{0}\times[0,\infty)$ and $\cpa{1}\times[0,\infty)$) equal $\tau_i$ and $\tau_j$.
The interior of $S_{i,j}$ probably intersects the third component of $\tau$ (if not, then $\tau$ is unknotted).
Using a small regular neighborhood of $\tau$, one may twist these strips about their stringers so that they patch together to form a general position immersion $f:S^1\times[0,\infty)\looparrowright \R^3$.
Let $p_i$, $i\in\cpa{1,2,3}$, be equally spaced points in $S^1$.
The immersion $f$ sends the radial ray $\cpa{p_i}\times[0,\infty)$ to $\tau_i$ for each $i\in\cpa{1,2,3}$ and is an embedding on each of the three closed sectors between such radial rays.
In an attempt to unknot $\tau$, one may try to eliminate multiple points of $f$.
Certain types of multiple points can be eliminated.
However, difficulties arise from essential circles of double points.
Figure~\ref{immersion} displays the relevant, compact part of a simple configuration where two essential circles of double points are identified under $f$.
\begin{figure}[h!]
    \centerline{\includegraphics[scale=1.0]{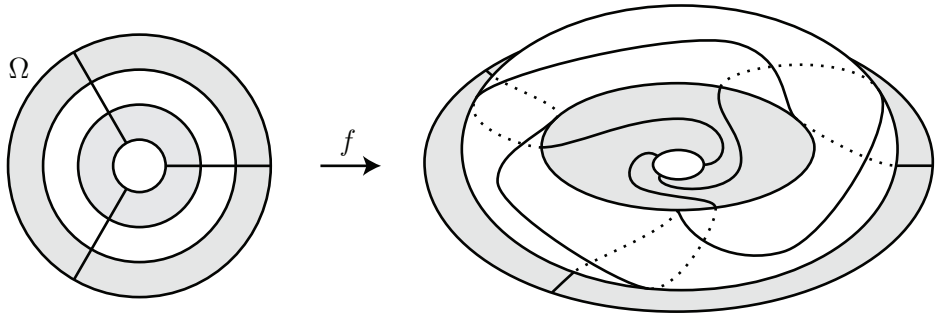}}
    \caption{Immersion $f$ from the compact, $2$-dimensional annulus $\Omega$ into the thickened sphere $S^2\br{1,2}$. The image of $f$ is a torus sitting atop an annulus and containing the tangle $T$.}
\label{immersion}
\end{figure}
Here, the domain of $f$ is the compact, $2$-dimensional annulus $\Omega$ containing three radial arcs.
The image of these three radial arcs is the tangle $T$ in the block $A$.
Having found $A$, a fundamental group calculation shows that $A$ is not a trivial block (see Section~\ref{equiv_rel_blocks}).
Our proof that $A$ is irreducible (see Section~\ref{AB_irred}) makes essential use of the immersion $f$ in Figure~\ref{immersion}.\\

After we discovered the block $A$, we found Debrunner and Fox's \emph{mildly wild $3$-frame} \cite{debrunnerfox}, \cite[pp.~95--97]{rushing}
and Doyle's attempted example~\cite{doyle}.
We pause to make some observations on these two examples.
\begin{enumerate}[label=(\arabic*),leftmargin=*]\setcounter{enumi}{0}
\item Debrunner and Fox's mildly wild $3$-frame $D\subset\R^3$ is compact, periodic, and contains one wild point $o$. In our notation, their building block is the concatenation $\Ab \ \Abs$.
Put $D$ in $S^3$. We define the \emph{Debrunner-Fox Borromean rays} to be $\delta=D-\cpa{o}$ in $\R^3=S^3-\cpa{o}$. In our notation:
\[
	\pa{\R^3,\delta}=D^3 A \As A \As A \As \cdots
\]
By Corollary~\ref{chiral_cor} below, $\delta$ is achiral.
\item Debrunner and Fox's proof that $D$ is wild hinges on showing a certain group is not finitely generated.
Their approach yields a mildly wild $n$-frame for each $n\geq 3$.
On the other hand, it is not clear how one can use it to distinguish between two wild $3$-frames.
In Section~\ref{irred_blocks}, we use irreducibility of trivial blocks to prove our multirays (including $\delta$) are knotted.
Then, we use irreducibility of blocks in $\B$ to distinguish between multirays.
\item Bing showed that Doyle's $3$-frame is standard~\cite{debrunnerfox}, though Bing's argument is not indicated.
Lemma~\ref{multiray_straightening} below is useful for recognizing unknotted multirays and applies to Doyle's $3$-frame.
\end{enumerate}

This paper is organized as follows.
Section~\ref{definitions} presents conventions and notation, introduces blocks (including several examples), and proves some basic properties concerning blocks.
Section~\ref{s:diffeo_blocks} studies diffeomorphisms between individual blocks in $\B$.
Section~\ref{irred_blocks} introduces irreducible blocks, proves trivial blocks are irreducible, deduces some corollaries, and constructs infinitely many irreducible blocks containing two component tangles.
Section~\ref{ball_arc_pairs} identifies some unknotted ball-arc pairs in blocks. 
Section~\ref{AB_irred} proves that blocks in $\B$ are irreducible.
Section~\ref{improve_spheres} simplifies certain spheres in concatenations of Borromean blocks and deduces two useful corollaries.
Section~\ref{brbh} classifies Borromean rays arising from sequences of blocks in $\B$ and then uses regular neighborhoods of multirays to obtain results on knotted multiple hyperplane embeddings.
In particular, we prove that there exist uncountably many pairwise inequivalent so-called \emph{Borromean hyperplanes} in $\R^3$.

\section{Building Blocks}\label{definitions}

We work in the smooth (= $C^{\infty}$) category.
Throughout, $\approx$ denotes diffeomorphism of manifolds or manifold pairs.
A map is \textbf{proper} provided the inverse image of each compact set is compact.
All isotopies will be smooth and proper.
A submanifold $X\subset M$ is \textbf{neat} provided $\partial X= X \cap \partial M$ and this intersection is transverse~\cite[pp.~30--31]{hirsch}, \cite[pp.~27, 31, 62]{kosinski}.\\

A \textbf{ray} is a proper embedding of $[0,\infty)$.
A \textbf{multiray} is a proper embedding of $Z\times [0,\infty)$ where $Z$ is a finite or countably infinite discrete space.
Indeed, each embedded submanifold of $\R^n$ contains at most countably many components since $\R^n$ is a separable metric space.
A ray in $\R^n$ is \textbf{radial} provided either it is straight and emanates from the origin, or it is contained in such a ray.
In particular, a radial ray can meet the origin only at its endpoint.
A collection of intervals embedded in $\R^n$ is \textbf{radial} provided each component lies in a radial ray.\\

The standard euclidean norm on $\R^n$ is $\left\|x\right\|:=\pa{\sum x_i^2}^{1/2}$.
On $\R^3$, the euclidean norm function will be denoted:
\begin{equation}\begin{split}\label{euclidean_norm}
\xymatrix@R=0pt{
	\mathbb{R}^3	\ar[r]^-{\eta}		&		\mathbb{R}\\
	x				\ar@{|-{>}}[r]	&		\left\|x\right\|}
\end{split}
\end{equation}
All lengths come from the standard euclidean metric.
The unit $n$-disk $D^n$ consists of all points $p\in\R^n$ such that $\left\|p\right\|\leq 1$.
The unit $(n-1)$-sphere is $S^{n-1}=\partial D^n$.
The sphere of radius $t>0$ in $\R^n$ about $0$ is denoted $S^{n-1}\br{t}$ and is called a \textbf{level sphere}.
In particular, $S^2=S^2\br{1}$.
Let $S^{n-1} \br{t_1,t_2}$, where $0<t_1<t_2$, denote the thickened sphere of points $p\in\R^n$ such that $t_1\leq\left\|p\right\|\leq t_2$.
In particular, $\partial S^{n-1} \br{t_1,t_2}$ equals the disjoint union of the spheres $S^{n-1}\br{t_1}$ and $S^{n-1}\br{t_2}$.
Let $S^{n-1}[t,\infty)$ denote the half-infinite annulus of points $p\in\R^n$ such that $\left\|p\right\|\geq t$.

\subsection{Blocks}\label{blocks}

Let $S^2 \br{t_1,t_2}$, where $0<t_1<t_2$, be a thickened sphere in $\R^3$.
A \textbf{tangle} $\tau$ is an embedding of the disjoint union of $n\geq1$ copies of $[0,1]$ as a neat submanifold of $S^2 \br{t_1,t_2}$.
If $\tau_k$ is a component of $\tau$, then the initial point $p_k$ of $\tau_k$ must lie in $S^2\br{t_1}$ and the terminal point must equal $(t_2/t_1)p_k\in S^2\br{t_2}$.
So, $\tau_k$ stretches between the two boundary $2$-spheres of $S^2\br{t_1,t_2}$, and its initial and terminal points lie on a radial ray.\\
 
A \textbf{block} is a pair $(S^2\br{t_1,t_2},\tau)$ where $\tau$ is a tangle.
Each block is oriented: $S^2\br{t_1,t_2}$ inherits its orientation from the standard one on $\R^3$,
and each component of $\tau$ is oriented to point out from the inner boundary $2$-sphere.
A \textbf{diffeomorphism} of blocks is any diffeomorphism of the corresponding pairs of spaces, not necessarily orientation or boundary preserving in any sense.\\

A \textbf{trivial block} is any pair $\varepsilon_n$ consisting of an $n$ component, radial tangle in a thickened sphere (see Figure~\ref{trivial_block}).\\

\begin{figure}[h!]
    \centerline{\includegraphics[scale=1.0]{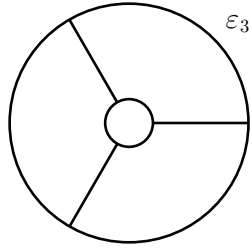}}
    \caption{Trivial block $\varepsilon_3$ containing a three component, radial tangle.}
\label{trivial_block}
\end{figure}

By our convention, in every displayed block, the positive $x$-axis points horizontally to the right, the positive $y$-axis points vertically up,
and the positive $z$-axis points out of the page towards the reader.\\

Given any block $B=(S^2\br{t_1,t_2},\tau)$, define two blocks:
\begin{enumerate}[itemsep=3pt]\setcounter{enumi}{\value{equation}}
\item $\overline{B}:=	(S^2\br{t_1,t_2},\overline{\tau})$ is the reflection of $B$ across the $xy$-plane.
\item $B^{\ast}:=	(S^2\br{t_1,t_2},\tau^{\ast})$ is the inversion of $B$ across $S^2\br{(t_1+t_2)/2}$.
\setcounter{equation}{\value{enumi}}
\end{enumerate}
If $B=(S^2\br{1,2},\tau)$, then inversion is $p\mapsto (3-\left\|p\right\|)\frac{p}{\left\|p\right\|}$.
Components of $B^{\ast}$ are still oriented out from the inner boundary $2$-sphere.
Evidently, the bar and star operations commute and are involutions:
\[
	\overline{B}^{\ast}=\overline{B^{\ast}}, \quad \overline{\overline{B}}=B, \quad \tn{and} \quad \pa{B^{\ast}}^{\ast}=B. 
\]

Figure~\ref{four_blocks} above introduced four blocks important for our purposes.
Let:
\[
	\B=\cpa{A,\Ab,\As,\Abs}
\]
be the set of these four blocks.
Note that $\B$ is closed under the bar and star operations.
By construction, blocks in $\B$ are pairwise diffeomorphic.
They are pairwise distinct, though, up to finer equivalence relations, as explained below.

\subsection{Equivalence Relations on Blocks}\label{equiv_rel_blocks}

The coarsest equivalence relation on blocks we consider is that of \emph{diffeomorphism} (defined above).
Finer diffeomorphism relations, involving orientation and/or boundary preservation, arise in Section~\ref{s:diffeo_blocks}.
On blocks with the same underlying thickened spheres, the finest relation we consider (short of equality) is that of \textbf{ambient isotopy relative to boundary},
meaning ambient isotopy of tangles fixing both boundary $2$-spheres pointwise at all times.

\begin{example}[\textbf{One Component Tangles}]\label{one_comp}
Any block $B=\pa{S^2\br{1,2},\tau}$, where $\tau$ has one component, is ambient isotopic relative to boundary to $\varepsilon_1$.
This fact is well-known (e.g., it's an exercise in Rolfsen \cite[p.~257]{rolfsen}).
We are not aware of a published proof, so we sketch one.
All isotopies are ambient and relative to boundary.
By a preliminary isotopy (left to the reader), we assume $B$ appears as in Figure~\ref{knot_block_e1}
where $K$ is a crossing diagram in general position.
\begin{figure}[h!]
    \centerline{\includegraphics[scale=1.0]{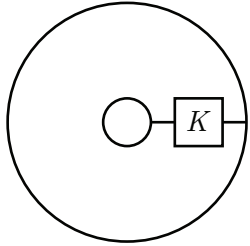}}
    \caption{Block $B$ with a one component tangle $\tau$.}
\label{knot_block_e1}
\end{figure}
It suffices to prove that crossings of $K$ may be switched by isotopy,
since then we may arrange that $K$ has monotonic $z$-coordinate and the result follows.
So, consider a crossing $C$ of $K$ with over arc $\alpha$ and under arc $\beta$.
Push $\alpha$ and $\beta$ sufficiently close together in the $z$-direction.
Let $p$ and $q$ be the midpoints of $\alpha$ and $\beta$ respectively, where $p$ lies directly above $q$.
Let $[0,t]$ denote the subarc of $\tau$ where $0\in S^2$ and $t\in\Int\tau$.
Let $J=[0,p]$ if $q\notin [0,p]$, and let $J=[0,q]$ if $q\in [0,p]$.
In other words, $J$ is the unique subarc of $\tau$ originating on $S^2$ and terminating at the first point, $p$ or $q$, encountered by $J$. 
Assume $J=[0,p]$ (otherwise, flip the picture over).
Isotop $q$ close to $S^2$ by following just underneath $J$ and stretching $\beta$.
Then, loop $\beta$ under $S^2$ and isotop $q$ back (again using $J$ as a guide) to lie above $p$.
The crossing $C$ has been switched, completing the proof.
\end{example}

\begin{remark}\label{knots_tied}
Example~\ref{one_comp} has the following possibly surprising corollary, which appears to be due to Wilder~\cite[p.~987]{foxartin}.
If the ray $r\subset\R^3$ is obtained by tying successive knots in a radial ray (see Figure~\ref{knots_ray}),
\begin{figure}[h!]
    \centerline{\includegraphics[scale=1.0]{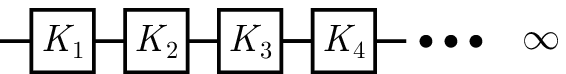}}
    \caption{Ray $r\subset\R^3$ obtained by tying successive knots in a radial ray.}
\label{knots_ray}
\end{figure}
then $r$ is ambient isotopic to a radial ray.
Proof: Let $K_i\subset S^2\br{i,i+1}$ for each $i\in\Z^+$.
Simultaneously apply the straightening process
from Example~\ref{one_comp} to each $S^2\br{i,i+1}$. $\square$
\end{remark}

\begin{example}[\textbf{Two Component Tangles}]\label{knot_blocks}
Let $k\subset S^3$ be a knot.
Let $D \subset S^3$ be a $3$-disk such that: (i) $a:=k\cap D$ is a neatly embedded arc in $D$, and (ii) $(D,a)$ is an unknotted ball-arc pair.
Let $D'\subset \Int{\varepsilon_2}$ be a small, round $3$-disk meeting one tangle component in an arc.
The \textbf{knot block} $B(k)$ is obtained from $\varepsilon_2$ by replacing $D'$ with $S^3-\Int{D}$ as in Figure~\ref{knot_block}.
\begin{figure}[h!]
    \centerline{\includegraphics[scale=1.0]{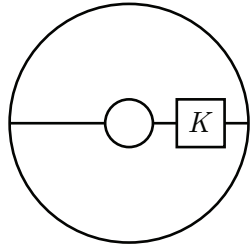}}
    \caption{Knot block $B\pa{k}$ where $K$ is a diagram yielding $k$.}
\label{knot_block}
\end{figure}
In general, $B(k)$ is well-defined up to diffeomorphism.
If $k$ itself is oriented, then one could define $B(k)$ more carefully.
Let $\tau$ be the tangle in a knot block $B(k)$.
Evidently, deleting the boundary from $B(k)-\tau$ yields $S^3 - k$.
In particular, $B(k)\approx B(k')$ implies $S^3-k \approx S^3-k'$.
So, knots with nonisomorphic groups (e.g., torus knots~\cite[p.~47]{bz}) yield nondiffeomorphic knot blocks.
Finally, let $B=\pa{S^2\br{1,2},\tau}$ be a block where $\tau$ has two components, $\tau_1$ and $\tau_2$.
Then, $B$ is ambient isotopic relative to boundary to some knot block.
To see this, straighten $\tau_2$ using the process in Example~\ref{one_comp}.
Then push $\tau_1$ away from $\tau_2$ by integrating a suitable vector field tangent to level spheres.
\end{example}

\begin{example}[\textbf{Dirac's Block}]\label{block_D}
Consider the block $D$ in Figure~\ref{one_twist}.
\begin{figure}[h!]
    \centerline{\includegraphics[scale=1.0]{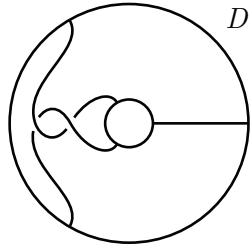}}
    \caption{Dirac's block $D$.}
\label{one_twist}
\end{figure}
The thickened sphere underlying $D$ is $S^2\br{1,2}$, and
$D$ is obtained from $\varepsilon_3$ by fixing the inner boundary $2$-sphere pointwise
and rigidly rotating the outer boundary sphere one revolution about the $x$-axis.
In particular, $\varepsilon_3 \approx D$, in fact by an orientation preserving diffeomorphism that is pointwise the identity on both boundary $2$-spheres.
On the other hand, $\varepsilon_3$ and $D$ are not ambient isotopic relative to boundary, as proved by Newman~\cite{newman} and Fadell~\cite{fadell} (see also~\cite{fadell_vanbuskirk} and~\cite[\S11.1--11.2]{murasugi_kurpita}).
If $D$ had been obtained from $\varepsilon_3$ by two complete twists, rather than just one,
then $D$ would have been ambient isotopic relative to boundary to $\varepsilon_3$ by Dirac's belt trick.
\end{example}

A diffeomorphism between thickened spheres is: (i) \textbf{radial} provided it sends radial arcs to radial arcs, and (ii)
\textbf{level} provided it sends level $2$-spheres to level $2$-spheres.
The next lemma says that there is essentially just one trivial block $\varepsilon_n$ for each $n\in\Z^+$.

\begin{lemma}[\textbf{Trivial Block Uniqueness}]\label{en_unique}
Let $\varepsilon_n=(S^2\br{t_1,t_2},r)$ and $\varepsilon'_n=(S^2\br{t'_1,t'_2},r')$ be trivial blocks.
Then, there is a radial, level, orientation preserving diffeomorphism $h:\varepsilon_n \to \varepsilon'_n$ sending $S^2\br{t_i}$ to $S^2\br{t'_i}$ for $i=1,2$.
If $\br{t_1,t_2}=\br{t'_1,t'_2}$, then there is an ambient isotopy $H_t$, $0\leq t\leq 1$, of $S^2\br{t_1,t_2}$ such that:
\begin{enumerate}\setcounter{enumi}{\value{equation}}
\item\label{eH0id} $H_0=\tn{Id}$ and $H_1=h$.
\item\label{H_tradial} $H_t$ is a radial, level diffeomorphism for all $0\leq t\leq 1$.
\setcounter{equation}{\value{enumi}}
\end{enumerate}
\end{lemma}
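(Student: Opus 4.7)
The plan is to exploit the product structure of radial, level diffeomorphisms. Any such $h\colon S^2\br{t_1,t_2}\to S^2\br{t'_1,t'_2}$ has the form $h(rp)=\rho(r)\phi(p)$ for $r\in[t_1,t_2]$ and $p\in S^2$, where $\rho\colon[t_1,t_2]\to[t'_1,t'_2]$ is a diffeomorphism and $\phi$ is a diffeomorphism of $S^2$. Indeed, the level condition forces $\norm{h(rp)}$ to depend on $r$ alone (call it $\rho(r)$), while the radial condition forces $h(rp)/\norm{h(rp)}$ to be independent of $r$ (a single radial arc has a single direction), giving $\phi$. Orientation preservation of $h$ then corresponds to $\rho$ being increasing together with $\phi\in\tn{Diff}^{+}(S^2)$.

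The tangle in $\varepsilon_n$ is the union of $n$ radial arcs, determined by $n$ distinct unit vectors $p_1,\ldots,p_n\in S^2$, and similarly $r'$ is determined by vectors $p'_1,\ldots,p'_n\in S^2$. To construct $h$, I take any orientation preserving diffeomorphism $\rho\colon[t_1,t_2]\to[t'_1,t'_2]$ with $\rho(t_i)=t'_i$, together with any $\phi\in\tn{Diff}^{+}(S^2)$ satisfying $\phi(p_i)=p'_i$. Existence of $\phi$ reduces to path-connectedness of the ordered configuration space $\tn{Conf}_n(S^2)$: by induction, move one point at a time along a path in the complement of the remaining $n-1$ points (still path connected), and then apply the isotopy extension theorem to realize this path by an ambient isotopy of $S^2$ through orientation preserving diffeomorphisms. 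Setting $h(rp)=\rho(r)\phi(p)$ produces the required diffeomorphism: it is radial, level, orientation preserving, carries $S^2\br{t_i}$ to $S^2\br{t'_i}$, and sends $r$ to $r'$.

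For part 2, with the underlying intervals equal, I isotope each factor separately. The space of diffeomorphisms of $[t_1,t_2]$ fixing the endpoints with everywhere positive derivative is convex, so the straight-line path $\rho_s:=(1-s)\tn{Id}+s\rho$ is a smooth isotopy from $\tn{Id}$ to $\rho$ through such diffeomorphisms. For the angular factor, invoke path-connectedness of $\tn{Diff}^{+}(S^2)$ (from Smale's theorem, or more elementarily the fact that every orientation preserving diffeomorphism of $S^2$ is isotopic to the identity) to obtain a smooth path $\phi_s$ from $\tn{Id}_{S^2}$ to $\phi$. Then $H_s(rp):=\rho_s(r)\phi_s(p)$ is the desired ambient isotopy: smooth in $s$, radial and level at every moment, with $H_0=\tn{Id}$ and $H_1=h$, verifying \ref{eH0id} and \ref{H_tradial}.

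The main obstacle, slight as it is, lies in arranging $\phi\in\tn{Diff}^{+}(S^2)$ with the prescribed action on the marked points. Everything else is essentially formal once the decomposition $h=\rho\cdot\phi$ is in hand; the single nontrivial ingredient is path-connectedness of $\tn{Conf}_n(S^2)$ together with the orientation-preserving version of isotopy extension, which is standard but benefits from the explicit inductive argument indicated.
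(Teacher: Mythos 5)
Your proposal is correct and follows essentially the same route as the paper: factor the map as (affine reparametrization of the radial coordinate) $\times$ (diffeomorphism of $S^2$), obtain the spherical factor by moving the marked points one at a time via ambient isotopies of $S^2$, and extend radially. The only divergence is in the isotopy statement, where you invoke connectedness of $\tn{Diff}^{+}(S^2)$ (Smale); this is correct but avoidable, since the $\phi$ you build from the configuration-space argument is already the time-$1$ map of an explicit ambient isotopy starting at the identity, which is how the paper obtains $H_t$ directly from the construction of $h$.
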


\begin{proof}
Let $\psi:\br{t_1,t_2}\to\br{t'_1,t'_2}$ be the unique affine, orientation preserving diffeomorphism.
Then, $x\mapsto \psi(\left\|x\right\|)\cdot x/\left\|x\right\|$ is a radial, level diffeomorphism $S^2\br{t_1,t_2}\to S^2\br{t'_1,t'_2}$.
So, it suffices to consider the case $\br{t_1,t_2}=\br{t'_1,t'_2}=[1,2]$.
Let $p_i\in S^2$, $1\leq i \leq n$, denote the initial points of the components $r_i$ of $r$.
Define $p'_i$ similarly for $r'$.
Let $\alpha_1$ be a smooth, simple path from $p_1$ to $p'_1$ in $S^2$.
Let $\nu\alpha_1$ be a smooth regular neighborhood of $\alpha_1$ in $S^2$.
There is an ambient isotopy of $S^2$, with support in $\nu\alpha_1$, carrying $p_1$ to $p'_1$.
For instance, begin with a suitable nonzero tangent vector field to $\alpha_1$, extend to a vector field $v$ on $S^2$ that vanishes outside of $\nu\alpha_1$, 
and then integrate $v$ (cf.~\cite[pp.~22--24]{milnor97}).
Extend this isotopy radially to get an ambient isotopy of $S^2\br{1,2}$ carrying $r_1$ to $r'_1$.
Any component $r_i$, $i\geq2$, that moved during this isotopy is still radial and is still denoted $r_i$.
Repeat this procedure, while choosing $\nu\alpha_i$ disjoint from $p'_1,\ldots,p'_{i-1}$. 
\end{proof}

To distinguish $A$ from $\varepsilon_3$ up to diffeomorphism, it suffices to distinguish the fundamental groups of their tangle complements up to isomorphism.
Presentations of such groups are obtained using Wirtinger's algorithm.
Consider the diagram of $A$ in Figure~\ref{wirt_A}.
\begin{figure}[h!]
    \centerline{\includegraphics[scale=1.0]{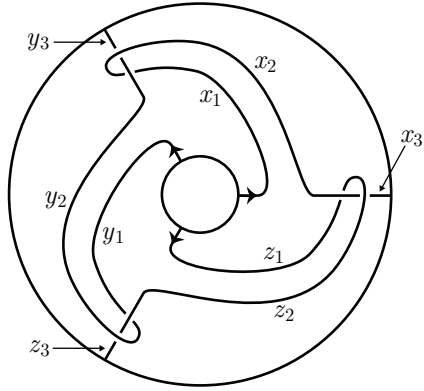}}
    \caption{Block $A$ with oriented and labeled arcs.}
\label{wirt_A}
\end{figure}
As usual, labels of arcs correspond to generators of $\pi_1(A-T)$.
The basepoint is above the page.
The based loop representing a generator $g$ first penetrates the plane of the page at a point just to the right of the oriented arc labeled $g$, and has linking number $+1$ with this oriented arc.
A presentation of $\p{A-T}$ is:
\[
	\fg{{\begin{array}{c} x_1,x_2,x_3,\\ y_1,y_2,y_3,\\ z_1,z_2,z_3 \end{array} } \Bigg|
	    {\begin{array}{c} y_2 x_2=x_1 y_2,\, x_2 y_3=y_2 x_2,\, x_2 z_2=z_1 x_2,\\
										    z_2 x_3=x_2 z_2,\, z_2 y_2=y_1 z_2,\, y_2 z_3=z_2 y_2,\\
                        x_1 y_1 z_1=1 \end{array} }}
\]
Each crossing of $T$ contributes a relation.
The last relation is evident topologically.
It is a \emph{vertex relation} for the fundamental group of the complement of the graph obtained by crushing the inner boundary $2$-sphere to a point \hbox{\cite[\S4]{whitehead}}, \hbox{\cite[p.~981]{foxartin}}, \hbox{\cite[p.~148]{stillwell}}.
The outer vertex relation, $x_3 y_3 z_3 =1$, is redundant.\\

For the trivial tangle, $\p{\varepsilon_3 -r}\cong F_2$ is free of rank $2$.
To distinguish $\pi_1(A-T)$ from $F_2$, we count their \emph{classes} of homomorphisms into small symmetric groups $\tn{Sym}(n)$ using the computer algebra system MAGMA (a finite problem).
Two homomorphisms $h_1,h_2:G\to \tn{Sym}(n)$ are considered \emph{equivalent} provided there exists $\pi\in\tn{Sym}(n)$ such that $h_2(g)=\pi^{-1} h_1(g)\pi$ for all $g\in G$.
Table~\ref{magma_data_1} collects this data.
\begin{table}[h!]\renewcommand{\arraystretch}{1.2}
\begin{center}
\begin{tabular}{c|c|c|}
\cline{2-3}
& $\p{\varepsilon_3 -r}$ & $\p{A-T}$ \\ \hline
\m1{|c|}{\tn{Sym}(1)} & $1$ & $1$  \\ \hline
\m1{|c|}{\tn{Sym}(2)} & $4$ & $4$ \\ \hline
\m1{|c|}{\tn{Sym}(3)} & $11$ & $11$ \\ \hline
\m1{|c|}{\tn{Sym}(4)} & $43$ & $47$  \\ \hline
\m1{|c|}{\tn{Sym}(5)} & $161$ & $193$  \\ \hline
\m1{|c|}{\tn{Sym}(6)} & $901$ & $1317$  \\ \hline
\end{tabular}
\end{center}
\vspace{3mm}
\caption{Numbers of classes of homomorphisms into $\tn{Sym}(n)$.}
\label{magma_data_1}
\end{table}
The two abelian symmetric groups are included for completeness, and the last two rows are included for comparision with Table~\ref{magma_data_2} ahead.
Any of the last three rows in Table~\ref{magma_data_1} implies the following.

\begin{corollary}\label{Anottrivial}
$A\not\approx \varepsilon_3$.
As blocks in $\B$ are pairwise diffeomorphic, no block in $\B$ is diffeomorphic to $\varepsilon_3$.
\end{corollary}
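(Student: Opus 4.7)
The plan is to show that $\pi_1(A - T) \not\cong \pi_1(\varepsilon_3 - r) \cong F_2$, which will immediately rule out any diffeomorphism of pairs $A \approx \varepsilon_3$ since such a diffeomorphism restricts to a homeomorphism of tangle complements and therefore induces an isomorphism on fundamental groups (basepoint choice being immaterial up to isomorphism class).

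To separate these two groups I would use the invariant tabulated in Table~\ref{magma_data_1}: for each finite group $H$, the number
\[
	N_H(G) := \card{\tn{Hom}(G,H)/\!\sim}
\]
of conjugacy classes of homomorphisms $G \to H$ depends only on the isomorphism type of $G$. Indeed, any isomorphism $G \cong G'$ induces a bijection $\tn{Hom}(G',H)\to\tn{Hom}(G,H)$ that descends to the quotient by conjugation in $H$. Thus it suffices to exhibit one $H$ for which $N_H(\p{A-T}) \neq N_H(\p{\varepsilon_3-r})$. The Wirtinger presentation of $\p{A-T}$ displayed above, together with the free presentation $\p{\varepsilon_3-r}=\fg{x,y}$, is finite and explicit, so the values $N_H$ for $H=\tn{Sym}(n)$ with $n\leq 6$ are a routine (if large) finite computation; Table~\ref{magma_data_1} records the output, and for each of $n=4,5,6$ the two counts disagree. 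Any one of these rows therefore certifies that the groups are not isomorphic, and hence that $A\not\approx\varepsilon_3$ as pairs.

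For the second assertion, I would invoke the observation already made immediately before the statement: every block in $\B=\cpa{A,\Ab,\As,\Abs}$ is obtained from $A$ by a rigid reflection or inversion of $\R^3$, so the four blocks are pairwise diffeomorphic as pairs. Consequently, if any member of $\B$ were diffeomorphic to $\varepsilon_3$, then $A$ itself would be, contradicting the first part. Hence no block in $\B$ is diffeomorphic to $\varepsilon_3$.

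The only non-routine ingredient is the MAGMA computation of $N_{\tn{Sym}(n)}$ for the two groups, and even this is conceptually straightforward; the main subtlety to mind is correctness of the Wirtinger presentation (including the single vertex relation $x_1y_1z_1=1$ at the inner boundary and the redundancy of the outer vertex relation), since any transcription error in the relators would invalidate the count. With the presentation in hand, the proof is essentially a one-line appeal to the invariance of $N_H$ under isomorphism.
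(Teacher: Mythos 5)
Your argument is exactly the paper's: distinguish $\p{A-T}$ from $F_2\cong\p{\varepsilon_3-r}$ via the conjugacy-class-of-homomorphisms count into $\tn{Sym}(n)$ recorded in Table~\ref{magma_data_1} (any of the rows $n=4,5,6$ suffices), then conclude for all of $\B$ since its four blocks are pairwise diffeomorphic. Correct and no meaningful deviation from the paper's proof.
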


\subsection{Borromean Blocks}\label{borr_blocks}

A \textbf{Borromean block} is a block $B=(S^2\br{t_1,t_2},\tau)$ such that: (i) $B\not\approx\varepsilon_3$,
and (ii) each block obtained from $B$ by forgetting one component of $\tau$ is ambient isotopic relative to boundary to $\varepsilon_2$.
By Figure~\ref{four_blocks}, each block in $\B$ clearly satisfies condition (ii).
Corollary~\ref{Anottrivial} implies the following.

\begin{corollary}\label{four_blocks_borromean}
The blocks $A$, $\Ab$, $\As$, and $\Abs$ are Borromean.
\end{corollary}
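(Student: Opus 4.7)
The plan is to verify the two defining conditions of a Borromean block for each $B \in \mathcal{B}$. Both conditions are essentially already in hand from the preceding material, so the proof is short.

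Condition (i), that $B \not\approx \varepsilon_3$, is precisely the content of Corollary~\ref{Anottrivial}: the MAGMA calculation tabulated in Table~\ref{magma_data_1} distinguishes $\pi_1(A - T)$ from the free group $\pi_1(\varepsilon_3 - r) \cong F_2$, so $A \not\approx \varepsilon_3$. Because the bar and star operations are diffeomorphisms of the ambient thickened sphere, every block in $\mathcal{B}$ is diffeomorphic to $A$, and hence none is diffeomorphic to $\varepsilon_3$.

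Condition (ii) asks that deleting any one of the three components of the tangle in $B$ yields a two-component block ambient isotopic relative to boundary to $\varepsilon_2$. Fix such a deletion and let $B'$ be the resulting two-component block. By Example~\ref{knot_blocks}, $B'$ is ambient isotopic relative to boundary to a knot block $B(k)$ for some knot $k \subset S^3$, and a direct inspection of Figure~\ref{four_blocks} shows that in each of the relevant cases (four blocks, three choices of strand to delete per block) the resulting $k$ is the unknot; this is precisely the Borromean feature built into the drawn tangles. The knot block of the unknot is $\varepsilon_2$ itself, so $B'$ is ambient isotopic relative to boundary to $\varepsilon_2$, as required.

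The only real obstacle is the visual check of Figure~\ref{four_blocks}, and this is immediate from the pictures: the four tangles are drawn precisely so that any two of the three strands may be pulled apart without crossing the third. Everything substantive is handled by Corollary~\ref{Anottrivial} and Example~\ref{knot_blocks}.
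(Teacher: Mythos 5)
Your proposal is correct and matches the paper's (implicit) proof: condition (i) is exactly Corollary~\ref{Anottrivial} together with the fact that the blocks in $\B$ are pairwise diffeomorphic, and condition (ii) is verified by inspection of Figure~\ref{four_blocks}, which is all the paper does. Your detour through Example~\ref{knot_blocks} (reducing the two-component block to a knot block and observing the knot is trivial) is a slightly more structured way of phrasing the same visual check, not a different argument.
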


It is useful to observe that any block whose tangle components always ``head out'' is diffeomorphically trivial.
Let $M$ be a smooth closed manifold.
A \textbf{tangle} $\tau$ in $M\times [0,1]$ is a neat embedding of a disjoint union of copies of $[0,1]$.
In particular, $\partial\tau\subset M\times\cpa{0,1}$ and $\tau$ has finitely many components.
In analogy with our definition of \emph{tangle in a thickened sphere}, we might also require each component of $\tau$
to have one boundary point in $M\times\cpa{0}$ and one in $M\times\cpa{1}$.
However, this property will automatically be satisfied in the following, our only use of tangles in general $M\times[0,1]$.

\begin{lemma}[\textbf{Tangle Straightening}]\label{tangle_straightening}
Let $M$ be a smooth closed manifold. Let:
\[
	p:M\times [0,1] \to [0,1]
\]
be projection.
Let $\tau\subset M\times[0,1]$ be a tangle such that $\left.p\right|\tau$ has no critical points.
Then, there is an ambient isotopy $H_t$, $0\leq t\leq 1$, of $M\times[0,1]$ such that:
\begin{enumerate}\setcounter{enumi}{\value{equation}}
\item\label{h0id} $H_0=\tn{Id}$.
\item\label{hrelinnerbound} $\left. H_t\right| M\times\cpa{0}=\tn{Id}$ for all $0\leq t\leq 1$.
\item\label{phtp} $p H_t = p$ for all $0\leq t \leq 1$.
\item\label{h1taustraight} $H_1(\tau)$ is a finite disjoint union of straight arcs $\cpa{x_i}\times[0,1]$, $x_i\in M$.
\setcounter{equation}{\value{enumi}}
\end{enumerate}
\end{lemma}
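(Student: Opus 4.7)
The plan is to write each tangle component $\tau_i$ as a graph over $[0,1]$ and then construct a smooth family of diffeomorphisms $\psi^t$ of $M$ that simultaneously unwinds these graphs; the ambient isotopy is then $H_s(x,t) = (\psi^{st}(x), t)$. Since $p|\tau$ has no critical points, each restriction $p|\tau_i$ is a local diffeomorphism from $\tau_i$ to $[0,1]$. Being proper (as $\tau_i$ is compact) and carrying its boundary into $\{0,1\}$ (by neatness), it is a diffeomorphism onto $[0,1]$. Thus $\tau_i = \{(\gamma_i(t), t) : t \in [0,1]\}$ for a unique smooth $\gamma_i : [0,1] \to M$, and disjointness of the components forces $\gamma_1(t), \ldots, \gamma_n(t)$ to be pairwise distinct in $M$ for each $t$.

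The key step is to produce a smooth time-dependent vector field $V_t$ on $M$ with $V_t(\gamma_i(t)) = \gamma_i'(t)$ for each $i$ and $t$. Near a chosen $t_0 \in [0,1]$, pick disjoint coordinate balls $U_i \subset M$ about the points $\gamma_i(t_0)$; for $t$ in a small neighborhood $I_{t_0}$ of $t_0$ the point $\gamma_i(t)$ lies in $U_i$, and in the chart one extends $\gamma_i'(t)$ to a vector field on $U_i$ and cuts it off by a bump function, summing over $i$. A partition of unity on the compact interval $[0,1]$ subordinate to such $I_{t_0}$ allows one to form a convex combination in $t$ yielding a globally smooth $V_t$ that still satisfies $V_t(\gamma_i(t)) = \gamma_i'(t)$, since the defining equation is preserved under convex combinations. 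Integrating $V_t$ from $\phi^0 = \tn{Id}$ gives a smooth family of diffeomorphisms $\phi^t : M \to M$, defined for all $t \in [0,1]$ by compactness of $M$. Uniqueness of solutions to $\dot x(t) = V_t(x(t))$ forces $\phi^t(\gamma_i(0)) = \gamma_i(t)$, so $\psi^t := (\phi^t)^{-1}$ is a smooth family of diffeomorphisms of $M$ with $\psi^0 = \tn{Id}$ and $\psi^t(\gamma_i(t)) = \gamma_i(0)$.

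With $\psi^t$ in hand, define $H_s(x, t) := (\psi^{st}(x), t)$. Smoothness in all variables follows from that of $(u, x) \mapsto \psi^u(x)$. Conditions \ref{h0id}, \ref{hrelinnerbound}, and \ref{phtp} are then immediate: $H_0 = \tn{Id}$ because $\psi^0 = \tn{Id}$; $H_s$ fixes $M \times \{0\}$ because the parameter $s \cdot 0$ equals $0$; and $pH_s = p$ by construction. Condition \ref{h1taustraight} holds because $H_1(\gamma_i(t), t) = (\psi^t(\gamma_i(t)), t) = (\gamma_i(0), t)$, so $H_1(\tau_i) = \{\gamma_i(0)\} \times [0,1]$, and properness is automatic as $M \times [0,1]$ is compact. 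The only genuinely nontrivial point is the construction of $V_t$, a standard isotopy-extension argument made routine by the disjointness of the $\gamma_i(t)$ and the compactness of $[0,1]$; once $V_t$ is secured, the remaining steps are purely formal.
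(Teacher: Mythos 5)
Your proof is correct and is essentially the paper's argument in a different guise: your time-dependent field $V_t$ on $M$ with $V_t(\gamma_i(t))=\gamma_i'(t)$ is exactly the same data as the paper's vector field $v$ on $M\times[0,1]$ that is tangent to $\tau$ and satisfies $v(p)\equiv 1$, and both proofs then straighten $\tau$ by flowing (the paper itself describes this as a modification of the isotopy extension theorem). The only cosmetic differences are your explicit graph parametrization of the components and the precise reparametrization used to turn the flow into the isotopy $H_s$.
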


\begin{proof}
Let $u$ be a nonzero tangent vector field on $\tau$.
Since $\left.p\right|\tau$ has no critical points, we may assume $u(p)>0$ on $\tau$.
Extend $u$ to a small tubular neighborhood $U$ of $\tau$ so that $u(p)>0$ on $U$.
Then, $w:=u/u(p)$ is tangent to $\tau$, and $w(p)=1$ on $U$.
Using $U$, we get a smooth function $g:M\times[0,1]\to [0,1]$ with support in $U$ and equal to $1$ on $\tau$.
Consider the vector field $v:=gw+(1-g)(0,1)$ on $M\times[0,1]$, where $(0,1)$ is the obvious constant vector field on $M\times[0,1]$.
Let $\phi((x,s),t)$ be the maximal flow generated by $v$ and let $\mathcal{D}$ be the domain of $\phi$.
Then, $p\phi((x,s),t)=s+t$ on $\mathcal{D}$ since $v(p)=1$ on $M\times[0,1]$.
As $M$ has no boundary and $M\times[0,1]$ is compact, patching together local flows yields:
\[
\mathcal{D}=M\times\cpa{(s,t) \ | \ 0\leq s\leq 1, \ -s\leq t\leq 1-s}
\]
Let $q:M\times[0,1]\to M$ be projection.
Define the ambient isotopy $H_t$, $0\leq t\leq 1$, of $M\times[0,1]$ by:
\[
	H_t(x,s):=( q \phi((x,s),-st),s)
\]
That is, at time $t$, $(x,s)$ flows via $\phi$ from $M\times\cpa{s}$ back into $M\times\cpa{s(1-t)}$ and then is translated into $M\times\cpa{s}$.
Properties~\ref{h0id}--\ref{phtp} evidently hold, and tangency of $v$ to $\tau$ guarantees~\ref{h1taustraight}.
\end{proof}

\begin{remark}
Our proof of Lemma~\ref{tangle_straightening} is a modification of the proof of the isotopy extension theorem (cf.~\cite[Lemma~9.15]{cks}).
\end{remark}

\begin{corollary}[\textbf{Tangle Radialization}]\label{trivial_block_cor}
Let $B=(S^2\br{t_1,t_2},\tau)$ be a block where $\tau$ has $n\geq1$ components.
Assume $\left.\eta\right|\tau$ has no critical points.
Then, there is an ambient isotopy $G_t$, $0\leq t \leq 1$, of $S^2\br{t_1,t_2}$ such that:
\begin{enumerate}\setcounter{enumi}{\value{equation}}
\item $G_0=\tn{Id}$.
\item $\left. G_t\right| S^2\br{t_1}=\tn{Id}$ for all $0\leq t\leq 1$.
\item $\eta G_t = \eta$ on $S^2\br{t_1,t_2}$ for all $0\leq t\leq 1$. In particular, each $G_t$ is a level diffeomorphism.
\item $G_1(\tau)$ is radial. In particular, $G_1(B)$ equals a trivial block $\varepsilon_n$.
\setcounter{equation}{\value{enumi}}
\end{enumerate}
\end{corollary}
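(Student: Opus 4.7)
My plan is to reduce the corollary to Lemma~\ref{tangle_straightening} via the natural polar-coordinates diffeomorphism between a thickened sphere and a product $S^2 \times [t_1, t_2]$. Concretely, I will work with the diffeomorphism
\[
	\Phi : S^2\times[t_1,t_2] \longrightarrow S^2\br{t_1,t_2}, \qquad (x,s)\longmapsto s\cdot x,
\]
which is smooth with smooth inverse $y\mapsto (y/\norm{y},\norm{y})$. The key observation is that $\eta\circ\Phi$ equals the projection $p:S^2\times[t_1,t_2]\to[t_1,t_2]$, so the norm function on the thickened sphere corresponds exactly to the second-factor projection on the product. Moreover, $\Phi$ sends the straight arcs $\cpa{x}\times[t_1,t_2]$ to radial arcs.

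First I would pull back the tangle, setting $\tau':=\Phi^{-1}(\tau)\subset S^2\times[t_1,t_2]$. Since $\eta|_\tau$ has no critical points by hypothesis and $\eta\circ\Phi=p$, it follows that $p|_{\tau'}$ has no critical points. (I should also note that the definition of \emph{tangle} on the thickened sphere requires that the two endpoints of each component lie on a common radial ray, which translates under $\Phi^{-1}$ to the product condition that each component of $\tau'$ has both endpoints in the same $\cpa{x}\times\cpa{t_1,t_2}$; this is consistent with, but stronger than, what Lemma~\ref{tangle_straightening} requires.) I then apply Lemma~\ref{tangle_straightening} with $M=S^2$ to obtain an ambient isotopy $H_t$ of $S^2\times[t_1,t_2]$ satisfying properties \ref{h0id}--\ref{h1taustraight} of that lemma.

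Next I would push the isotopy forward by setting
\[
	G_t := \Phi \circ H_t \circ \Phi^{-1},
\]
an ambient isotopy of $S^2\br{t_1,t_2}$, and verify the four required properties in turn. We have $G_0=\tn{Id}$ from $H_0=\tn{Id}$; $G_t$ restricts to the identity on $S^2\br{t_1}$ because $H_t$ restricts to the identity on $S^2\times\cpa{t_1}$ and $\Phi$ carries the latter to the former; the level-preserving identity $\eta\circ G_t=\eta$ is immediate from $p\circ H_t = p$ together with $\eta\circ\Phi=p$; and finally, since $H_1(\tau')$ is a finite disjoint union of vertical arcs $\cpa{x_i}\times[t_1,t_2]$, applying $\Phi$ gives $G_1(\tau)$ as a union of radial arcs, whence $G_1(B)$ is a trivial block $\varepsilon_n$.

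I expect no serious obstacle: the only place that deserves care is confirming the endpoint and smoothness conditions at the boundary spheres when transferring through $\Phi$, but both $\Phi$ and $\Phi^{-1}$ are smooth on all of $S^2\times[t_1,t_2]$ and $S^2\br{t_1,t_2}$ respectively (no punctures arise because the inner radius $t_1$ is strictly positive), so the conjugated isotopy $G_t$ is automatically smooth and ambient. The entire proof reduces to translating the product-version lemma through the polar diffeomorphism.
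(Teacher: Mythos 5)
Your proof is correct and is essentially the paper's own argument: the paper likewise conjugates the isotopy of Lemma~\ref{tangle_straightening} through the polar-coordinates diffeomorphism $x\mapsto\pa{x/\norm{x},l\pa{\norm{x}}}$, the only cosmetic difference being that it first rescales $\br{t_1,t_2}$ affinely to $\br{0,1}$ so the lemma applies verbatim, whereas you invoke the (obvious) version of the lemma over $\br{t_1,t_2}$ directly. All four properties transfer exactly as you describe.
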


\begin{proof}
Let $l:\br{t_1,t_2}\to\br{0,1}$ be the unique affine, orientation preserving diffeomorphism.
Let $h:S^2\br{t_1,t_2}\to S^2\times[0,1]$ be the diffeomorphism given by $h(x):=\pa{x/\left\|x\right\|,l\pa{\left\|x\right\|}}$.
Apply Lemma~\ref{tangle_straightening} to the tangle $h(\tau)$ in $S^2\times[0,1]$ and let $H$ be the resulting isotopy of $S^2\times[0,1]$.
The desired isotopy is $G:=h^{-1}\circ H \circ (h\times\tn{Id})$.
\end{proof}

\begin{remark}
Consider a tangle $\tau$ in a Borromean block. Critical points of $\left.\eta\right|\tau$ come in pairs since, by definition, tangle components 
originate on the inner boundary sphere and terminate on the outer boundary sphere.
So, Corollary~\ref{trivial_block_cor} implies $\left.\eta\right|\tau$ has at least two critical points.
This minimum number is achievable.
A simple ambient isotopy relative to boundary of $A$ yields the (necessarily Borromean) block $A'$ in Figure~\ref{A_2_cp}.
\begin{figure}[h!]
    \centerline{\includegraphics[scale=1.0]{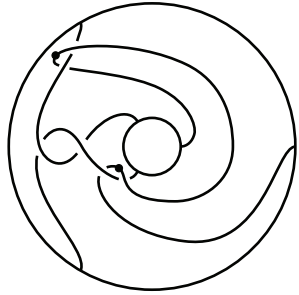}}
    \caption{Borromean block $A'=(S^2\br{1,2},T')$ ambient isotopic relative to boundary to $A$. The restriction $\left.\eta\right|T'$ has just the two indicated critical points.}
\label{A_2_cp}
\end{figure}
\end{remark}

The next corollary will be used when concatenating blocks.

\begin{corollary}[\textbf{Tangle Radialization Near Boundary}]\label{trnb}
Consider a block $B=(S^2\br{t_1,t_2},\tau)$ where $\tau$ has $n\geq1$ components.
There exists $\epsilon_0 >0$ such that, for each $0<\epsilon\leq\epsilon_0$, there is an ambient isotopy $K_t$, $0\leq t \leq 1$, of $S^2\br{t_1,t_2}$ such that:
\begin{enumerate}\setcounter{enumi}{\value{equation}}
\item $K_0=\tn{Id}$.
\item $K_t =\tn{Id}$ on $\partial S^2\br{t_1,t_2}$ and on $S^2\br{t_1+\epsilon,t_2-\epsilon}$ for all $0\leq t\leq 1$.
\item $\eta K_t=\eta$ on $S^2\br{t_1,t_2}$ for all $0\leq t \leq 1$. In particular, each $K_t$ is a level diffeomorphism.
\item $K_1(\tau)$ is radial in $S^2\br{t_1,t_1+\epsilon/2}$ and in $S^2\br{t_2-\epsilon/2,t_2}$.
\setcounter{equation}{\value{enumi}}
\end{enumerate}
\end{corollary}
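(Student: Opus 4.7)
The plan is to apply Corollary~\ref{trivial_block_cor} (or rather its underlying Lemma~\ref{tangle_straightening}) to each of the two thin boundary shells $S^2\br{t_1,t_1+\epsilon}$ and $S^2\br{t_2-\epsilon,t_2}$, and then damp the resulting level-preserving isotopies by a bump function in the radial coordinate so that they reduce to the identity on the middle region $S^2\br{t_1+\epsilon,t_2-\epsilon}$ and on both boundary spheres.

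The first step is to choose $\epsilon_0$. Because $\tau$ is neat in $S^2\br{t_1,t_2}$, its tangent vector at each boundary endpoint has nonzero radial component, so the critical set of $\rest{\eta}\tau$ is a closed subset of the compact $1$-manifold $\tau$ disjoint from $\partial\tau$. It is therefore bounded away from the boundary spheres, and we may choose $\epsilon_0>0$ so that $\rest{\eta}\tau$ has no critical point in $\tau\cap\pa{S^2\br{t_1,t_1+\epsilon_0}\cup S^2\br{t_2-\epsilon_0,t_2}}$.

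Fix $0<\epsilon\leq\epsilon_0$. For the inner shell, identify $S^2\br{t_1,t_1+\epsilon}$ with $S^2\times\br{0,1}$ by the obvious level-preserving diffeomorphism sending $S^2\br{t_1}$ to $S^2\times\cpa{0}$. The tangle piece $\tau\cap S^2\br{t_1,t_1+\epsilon}$ becomes a neat tangle in $S^2\times\br{0,1}$ on which projection to $\br{0,1}$ has no critical points, so Lemma~\ref{tangle_straightening} supplies a level-preserving isotopy $H_t(x,s)=\pa{\Phi^s_t(x),s}$ with $H_0=\tn{Id}$, $\rest{H_t}S^2\times\cpa{0}=\tn{Id}$, and $H_1$ sending this tangle to a disjoint union of vertical arcs. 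Pick a smooth cutoff $\lambda:\br{0,1}\to\br{0,1}$ with $\lambda\equiv 1$ on $\br{0,1/2}$ and $\lambda\equiv 0$ on $\br{3/4,1}$, and set $\wt H_t(x,s):=\pa{\Phi^s_{t\lambda(s)}(x),s}$. Then $\wt H_t$ is smooth, level-preserving, equals the identity on $S^2\times\cpa{0}$ and on $S^2\times\br{3/4,1}$, and agrees with $H_t$ on $S^2\times\br{0,1/2}$; hence $\wt H_1$ still radializes the tangle inside $S^2\times\br{0,1/2}$. Transporting back and extending by the identity across $S^2\br{t_1+\epsilon,t_2}$ produces the inner-shell contribution to $K_t$.

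A symmetric construction near the outer sphere (identifying $S^2\br{t_2-\epsilon,t_2}$ with $S^2\times\br{0,1}$ so that $S^2\br{t_2}$ corresponds to $S^2\times\cpa{0}$) yields an isotopy supported in $S^2\br{t_2-3\epsilon/4,t_2}$ that radializes $\tau$ inside $S^2\br{t_2-\epsilon/2,t_2}$. The two supports are disjoint, so the composition is the desired $K_t$, and each of (1)--(4) is verifiable one shell at a time. The one technical point is joint smoothness of $\wt H_t$ in $(t,x,s)$ and of its extension by the identity across $S^2\br{t_1+\epsilon}$; both are immediate from $\lambda$ being smooth and identically zero on a neighborhood of $1$, so I do not anticipate any substantive obstacle.
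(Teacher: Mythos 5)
Your proof is correct and uses essentially the same idea as the paper: both damp the straightening isotopy of Lemma~\ref{tangle_straightening} by a cutoff in the radial coordinate (the paper does this in one stroke via a function $b(s)$ that modifies the flow time $-st$ to $-b(s)t$ near both ends of $S^2\times[0,1]$, while you treat the two boundary shells separately and compose). The only cosmetic difference is organizational, and your reparametrization $\Phi^s_{t\lambda(s)}$ is exactly the paper's $b(s)=s\lambda(s)$ in disguise.
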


\begin{proof}
By definition of a block, $\tau$ is neatly embedded.
So, there is $\epsilon_0>0$ such that $\epsilon_0<(t_1+t_2)/2-t_1$ and $\left.\eta\right|\tau$ has no critical points in
$S^2\br{t_1,t_1+\epsilon_0}\cup S^2\br{t_2-\epsilon_0,t_2}$.
Let $0<\epsilon\leq\epsilon_0$.
Recall the diffeomorphism $h:S^2\br{t_1,t_2}\to S^2\times[0,1]$ from the proof of Corollary~\ref{trivial_block_cor}.
Consider the tangle $h(\tau)\subset S^2\times[0,1]$.
Define $\epsilon':=\epsilon/(t_2-t_1)$ and note that $0<\epsilon'<1/2$.
Construct a smooth function $b:[0,1]\to[-\epsilon',\epsilon']$ such that:
\[
	b(s) = 
	\begin{cases}
	s			&	\tn{if } 0\leq s \leq \epsilon'/2\\
	0			&	\tn{if } \epsilon'\leq s \leq 1-\epsilon'\\
	s-1		&	\tn{if } 1-\epsilon'/2\leq s \leq 1
	\end{cases}
\]
Also, to ensure well-definition of the isotopy to come, arrange that:
\[
	b(s)\in
	\begin{cases}
		[0,s]		&	\tn{if } \epsilon'/2\leq s \leq \epsilon' \\
		[s-1,0]	&	\tn{if } 1-\epsilon'\leq s \leq 1-\epsilon'/2
	\end{cases}
\]
Return now to the proof of Lemma~\ref{tangle_straightening} and redefine:
\[
	H_t(x,s):=( q \phi((x,s),-b(s)t),s)
\]
This $H$ is an ambient isotopy of $S^2\times[0,1]$.
Then, $K:=h^{-1}\circ H \circ (h\times\tn{Id})$ is the desired isotopy.
\end{proof}

The following lemma can be useful for recognizing unknotted multirays.

\begin{lemma}[\textbf{Multiray Straightening}]\label{multiray_straightening}
Let $r$ be a multiray in $\R^3$ with at most countably many components.
By a small perturbation of $r$, assume $\left.\eta\right|r$ is a Morse function.
Suppose $\left.\eta\right|r$ has no critical points (i.e., the rays always ``head out'').
Then, $r$ is ambient isotopic to a radial multiray.
\end{lemma}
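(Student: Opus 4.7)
The plan is to adapt the vector-field flow construction of Lemma~\ref{tangle_straightening} to the noncompact setting $\R^3-\cpa{0}\approx S^2\times(0,\infty)$. Write $p_i$ for the initial point of the component $r_i$; since $r$ is proper with at most countably many components, the set $\cpa{p_i}$ is discrete in $\R^3$ and $\norm{p_i}\to\infty$ when infinitely many components are present. Since $\eta|r$ has no critical points, $\eta$ is strictly increasing along each $r_i$, giving a smooth parametrization $r_i(\eta)$ for $\eta\ge\norm{p_i}$.

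First I would perform a preliminary ambient isotopy compactly supported in disjoint small balls around each $p_i$ so that, in each such ball, $r_i$ coincides with a radial segment of a ray $\ell_i$ through the origin, with the $\ell_i$ pairwise distinct (which we may arrange by small perturbations, since the $p_i$ are countable in $S^2$) and with no $p_i=0$; this can be done while keeping $\eta|r$ critical-point-free, because the tangent to $r_i$ at $p_i$ has strictly positive radial component. Following the construction in Lemma~\ref{tangle_straightening}, I would then build a smooth vector field $v:=gw+(1-g)\hat\eta$ on $\R^3-\cpa{0}$, where $\hat\eta(x):=x/\norm{x}$ is the radial outward unit field, $w$ is a tangent vector field on $r$ with $w(\eta)\equiv 1$ extended smoothly to a tubular neighborhood $N$ of $r$ in $\R^3-\cpa{0}$ so that $w(\eta)=1$ on $N$ and $w=\hat\eta$ near each $p_i$, and $g:\R^3\to[0,1]$ is a smooth cutoff with $g\equiv 1$ on $r$ and $\mathrm{supp}(g)\subset N$. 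Then $v(\eta)\equiv 1$ and $v$ is tangent to $r$, so the maximal flow $\phi_v$ satisfies $\eta(\phi_v(x,t))=\eta(x)+t$ on $t\in(-\eta(x),\infty)$ and, in particular, $\phi_v(r_i(\eta),-(\eta-\norm{p_i}))=p_i$.

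The desired ambient isotopy is then
\[
H_s(x) := \eta(x)\cdot\frac{\phi_v(x,-s\,c(x))}{\norm{\phi_v(x,-s\,c(x))}} \quad (x\neq 0), \qquad H_s(0):=0,
\]
for $s\in[0,1]$, where $c:\R^3\to[0,\infty)$ is a smooth function (built from bump functions in disjoint tubes about each $r_i$) with support in $N$, equal to $\eta(x)-\norm{p_i}$ on $r_i$, and satisfying $c(x)<\eta(x)$ everywhere. Direct verification shows $H_0=\mathrm{Id}$; $\eta\circ H_s\equiv\eta$, so each $H_s$ preserves level spheres and is proper; $H_s\equiv\mathrm{Id}$ off $N$ (where $c$ vanishes), so $H_s$ extends smoothly across $0$; and $H_1(r_i(\eta))=\eta\cdot p_i/\norm{p_i}$, so $H_1(r)$ is a radial multiray, its components disjoint because the $\ell_i$ are distinct.

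The main obstacle is verifying that each $H_s$ is a diffeomorphism of $\R^3$. Since $H_s$ preserves every level sphere $S^2\br{t}$, it suffices to show $H_s|S^2\br{t}$ is a diffeomorphism of $S^2\br{t}$; compactness and simple-connectivity of $S^2\br{t}$ reduce this to checking that the derivative of $H_s|S^2\br{t}$ is everywhere invertible (a local diffeomorphism of a compact simply connected surface onto itself is a covering, hence a diffeomorphism). This invertibility is a chain-rule computation in $S^2\times(0,\infty)$ coordinates, using that $\phi_v$ is a local diffeomorphism on its domain and that $c(x)<\eta(x)$ keeps the relevant flow times inside this domain. Smoothness of $\cpa{H_s}$ in $s$ then delivers a smooth proper ambient isotopy of $\R^3$ carrying $r$ to a radial multiray.
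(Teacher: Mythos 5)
Your overall strategy --- rebuilding the vector field $v$ of Lemma~\ref{tangle_straightening} on $\R^3-\cpa{0}\approx S^2\times(0,\infty)$ with $v(\eta)\equiv1$, flowing backward, and rescaling radially --- is exactly the adaptation the paper intends (its proof is the single sentence that Lemma~\ref{tangle_straightening} ``adapts readily''). But there is a genuine gap at the step you flag as the main obstacle. In Lemma~\ref{tangle_straightening} the backward flow time $-st$ is \emph{constant on each level} $M\times\cpa{s}$, so the restriction of $H_t$ to a level is a fixed-time flow map (a diffeomorphism onto a lower level) followed by the projection $q$, and invertibility is automatic; the paper preserves this structure even in Corollary~\ref{trnb} by letting the flow time depend only on $s$. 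Your flow time $s\,c(x)$ is \emph{not} constant on level spheres --- it cannot be, since $c$ must equal $\eta(x)-\norm{p_i}$ on $r_i$ and vanish off the tubes, and distinct components meet a given $S^2\br{t}$ with different values of $\norm{p_i}$. Hence $x\mapsto\phi_v(x,-s\,c(x))$ restricted to $S^2\br{t}$ is a \emph{variable-time} flow map, whose differential is $D\phi_{-sc}-s\,v\pa{\phi_v(x,-sc(x))}\otimes dc(x)$. The rank-one correction is not innocuous: the gradient of $c$ along a level sphere has magnitude of order $\pa{\eta(x)-\norm{p_i}}$ divided by the tube radius, which is unbounded along each ray, while the spherical component of $v$ at the flowed point need not vanish there (that ray, or another ray lying below it, may spiral). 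So this is not ``a chain-rule computation'': for thin tubes and long flow-back times the differential degenerates, $H_s$ fails to be an immersion (indeed to be injective) on level spheres, and no choice of cutoffs repairs this in general.

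The fix is to make the backward flow time depend only on $\norm{x}$, restoring the fixed-time structure. Since $r$ is closed in $\R^3$ and misses the origin, choose $\delta>0$ with $r$ and the tube $N$ contained in $\cpa{\eta>\delta}$, keep $v$ radial outside $N$, and set the flow time at $x$ equal to $-\pa{\eta(x)-\delta}t$ (in the parametrization with $v(\eta)\equiv1$), i.e.\ flow every point back to $S^2\br{\delta}$ before rescaling. On $\cpa{\eta\leq\delta}$ this composite is the identity, so $H_t$ extends over the origin; on each $S^2\br{t}$ with $t>\delta$ it is an honest fixed-time flow diffeomorphism onto $S^2\br{\delta}$ followed by radial projection, exactly as in Lemma~\ref{tangle_straightening}. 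A point $r_i(\eta)$ first flows down $r_i$ to $p_i$ and then continues along the single backward flowline of $v$ through $p_i$, meeting $S^2\br{\delta}$ at one point $\omega_i$ independent of $\eta$; hence $H_1(r_i)$ is the radial ray through $\omega_i$, and the $\omega_i$ are distinct because distinct flowlines are disjoint. With that modification the remainder of your argument (properness, smoothness in $s$, disjointness of the images) goes through.
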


\begin{proof}
The proof of Lemma~\ref{tangle_straightening} adapts readily (cf.~\cite[Lemma~9.15]{cks}).
\end{proof}

\begin{remark}
By Remarks~\ref{concat_remarks} item~\ref{tail_det_type} below,
Lemma~\ref{multiray_straightening} holds provided $\left.\eta\right|r_i$ has only finitely many critical points for each component $r_i$ of $r$.
\end{remark}

\subsection{Concatenations of Blocks}

We define finite and countably infinite \textbf{concatenations} of blocks.
In any concatenation of blocks, two natural properties are always assumed:
\begin{enumerate}\setcounter{enumi}{\value{equation}}
\item Block summands contain tangles with the same number, $n\geq1$, of components.
\item Initial points of tangles in different summands differ by a radial dilation.
\setcounter{equation}{\value{enumi}}
\end{enumerate}

First, consider a finite concatenation $B_1 B_2 \cdots B_k$ of blocks $B_i$.
By definition, this means the block $B:=(S^2\br{1,k+1},\tau)$ where $B_i$ has been stretched radially to coincide with $S^2\br{i,i+1}$.
The resulting tangle $\tau$ in $B$ is the union of the $k$ stretched tangles.
Evidently, this definition is deficient in that $\tau$ may have corners where blocks meet.
Our remedy is to refine the definition: each block $B_i$ is first adjusted, by an ambient isotopy
relative to boundary provided by Corollary~\ref{trnb}, so that its tangle is radial near the boundary spheres of $B_i$.
The concatenation $B_1 B_2 \cdots B_k$ is now a block, well-defined up to (obvious) ambient isotopy relative to boundary.\\

The proof of the next lemma is straightforward.

\begin{lemma}\label{e_nidentity}
If $B$ is a block, then $\varepsilon_n B \approx B \approx B\varepsilon_n$.
\end{lemma}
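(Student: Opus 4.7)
The plan is to construct an explicit radial, level diffeomorphism $\phi$ between the underlying thickened spheres of $\varepsilon_n B$ and $B$ that sends tangle to tangle. The only real issue is arranging smoothness where the two blocks meet, and this is handled by radializing the tangle of $B$ near its boundary spheres before concatenating, exactly as is already built into the definition of concatenation.

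Normalize so that $B = (S^2\br{1,2},\tau)$. By Corollary~\ref{trnb}, we may assume there is $\epsilon>0$ such that $\tau$ is radial on both $S^2\br{1,1+\epsilon/2}$ and $S^2\br{2-\epsilon/2,2}$. Then $\varepsilon_n B = (S^2\br{1,3},\tau')$ where, after the affine rescaling $x\mapsto x+x/\norm{x}$ used by concatenation, $\tau'$ is radial on $S^2\br{1,2+\epsilon/2}$ (combining the $\varepsilon_n$-part with the radialized collar of $B$) and also on $S^2\br{3-\epsilon/2,3}$, while on $S^2\br{2+\epsilon/2,3-\epsilon/2}$ it agrees with the affine translate of $\tau|_{S^2\br{1+\epsilon/2,2-\epsilon/2}}$. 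The initial points of $\tau'$ on $S^2\br{1}$ coincide with the initial points of $\tau$ on $S^2\br{1}$, by the radial-dilation convention~(1.11) on concatenations.

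Define a radial, level map $\phi:S^2\br{1,3}\to S^2\br{1,2}$ by $\phi(x)=\psi(\norm{x})\cdot x/\norm{x}$, where $\psi:[1,3]\to[1,2]$ is chosen to be a smooth, strictly increasing diffeomorphism with
\[
\psi(r)=r-1\quad\text{for }r\in[2+\epsilon/2,3],
\]
and on $[1,2+\epsilon/2]$ is any smooth, strictly increasing diffeomorphism onto $[1,1+\epsilon/2]$ whose $\infty$-jet at $r=2+\epsilon/2$ matches that of the affine map $r\mapsto r-1$ (so that $\psi$ is $C^\infty$ globally). Such $\psi$ exists by a standard bump-function construction. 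Then $\phi$ is a diffeomorphism of pairs $(S^2\br{1,3},\tau')\to(S^2\br{1,2},\tau)$: on the inner radial part it sends the radial segment $\{r\hat p_i : 1\le r\le 2+\epsilon/2\}$ of $\tau'$ to the radial segment $\{s\hat p_i : 1\le s\le 1+\epsilon/2\}$ of $\tau$ by radiality of $\phi$, and on $S^2\br{2+\epsilon/2,3}$ it applies the inverse of the affine rescaling that built $\tau'$ from $\tau$, hence takes that portion of $\tau'$ to the corresponding portion of $\tau$.

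The case $B\varepsilon_n \approx B$ is symmetric: arrange the outer radial collar of $B$ to absorb the $\varepsilon_n$-part, and use $\psi$ equal to the identity on $[1,2-\epsilon/2]$ and a smooth strictly increasing diffeomorphism from $[2-\epsilon/2,3]$ onto $[2-\epsilon/2,2]$ with the $\infty$-jet of the identity at the left endpoint. The main (minor) obstacle in both cases is the smooth matching of $\psi$ at the transition radius, which is why the radialization-near-boundary step of Corollary~\ref{trnb} is built into the definition of concatenation in the first place; once the tangle is radial on a collar, any such $\psi$ automatically produces a smooth diffeomorphism of pairs.
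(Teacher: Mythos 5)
Your proof is correct; the paper states this lemma without proof (``the proof \ldots is straightforward''), and your argument---absorbing the trivial summand into the radialized collar of $B$ via a radial, level reparametrization $\phi(x)=\psi(\norm{x})\,x/\norm{x}$---is precisely the natural realization of that omitted argument, with the smoothness issue at the junction correctly handled by Corollary~\ref{trnb} and the jet-matching condition on $\psi$. The appeal to the radial-dilation convention on initial points, which is what makes the $\varepsilon_n$-part and the radialized inner collar of $B$ line up into a single radial piece, is the one point that genuinely needs to be said, and you said it.
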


Each infinite concatenation $B_1 B_2 B_3\cdots$ is defined analogously.
The result is $(S^2 [1,\infty),\tau)$ where $\tau$ is now a neatly embedded $n$ component multiray.
Further, $D^3 B_1 B_2 B_3\cdots$ denotes the pair $(\R^3,\tau)$ where $\tau$ is the obvious $n$ component multiray in $\R^3$.
A \textbf{diffeomorphism} between infinite concatenations is a diffeomorphism between the corresponding pairs of spaces.
An \textbf{ambient isotopy} between infinite concatenations is an ambient isotopy of the underlying total space carrying one multiray to the other.
An ambient isotopy of $D^3 B_1 B_2 B_3\cdots$ is thus an ambient isotopy of $\R^3$, and it need not fix $D^3$.\\

\begin{remarks}\label{concat_remarks}
\noindent\begin{enumerate}[label=(\arabic*),leftmargin=*]\setcounter{enumi}{0}
\item\label{tail_det_type} Both the diffeomorphism type and the ambient isotopy type of any infinite concatenation of blocks $D^3 B_1 B_2 B_3 \cdots$ are determined by any tail
$B_m B_{m+1} B_{m+2} \cdots$ of $B_1 B_2 B_3 \cdots$.
This is a consequence of the fact that the multiray $\tau$ determined by $D^3 B_1 B_2 B_3 \cdots$ may be
shrunk outwards.
Using a regular neighborhood $\nu \tau$ of $\tau$ in $\R^3$, one may construct an ambient isotopy of $\R^3$,
with support in $ \Int \nu \tau$, that sends each component $\tau_k$ of $\tau$ into itself at all times
and slides the initial point $\partial \tau_k$ up along $\tau_k$ any given finite amount.
\item Let $f_i : B_i\to C_i$ be diffeomorphisms of blocks where $1\leq i\leq k$ (a similar remark applies to the case $i\in\Z^+$).
If each $f_i$ is pointwise the identity on boundary $2$-spheres, then evidently we get a diffeomorphism of the concatenations 
$B_1 B_2 \cdots B_k \to C_1 C_2 \cdots C_k$ by pasting together diffeomorphisms (and smoothing the resulting homeomorphism near boundary $2$-spheres~\cite[p.~182]{hirsch}).
\item If $B_1\approx C_1$ and $B_2 \approx C_2$, then, in general, one may not conclude that $B_1 B_2$ and $C_1 C_2$ are diffeomorphic.
For instance, let $f_1=\tn{Id}:A\to A$, and let $f_2:A\to\Abs$ be the diffeomorphism from Table~\ref{obvious_homeos} below;
both diffeomorphisms preserve orientation, $f_1$ is pointwise the identity everywhere, and $f_2$ preserves boundary spheres setwise.
However, by Table~\ref{magma_data_2} below, $AA\not\approx A\Abs$.
\end{enumerate}
\end{remarks}

We close this section with a special instance where one may conclude that $B_1 B_2\approx C_1 C_2$.

\begin{lemma}\label{pw_id_b}
Let $f:B\to\varepsilon_n$ be a diffeomorphism.
Then, there is a trivial block $\varepsilon'_n$ and diffeomorphisms $g,k:B\to\varepsilon'_n$ such that $g$ is pointwise the identity on outer boundary $2$-spheres
and $k$ is pointwise the identity on inner boundary $2$-spheres.
\end{lemma}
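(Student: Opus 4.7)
The plan is to produce $g$ and $k$ by post-composing $f$ with two radial, level self-diffeomorphisms of the ambient thickened sphere, each chosen so as to cancel $f$ on one boundary sphere and, crucially, both mapping the given trivial block to a common target $\varepsilon'_n$.

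First I would make a preliminary reduction. By Lemma~\ref{en_unique}, after composing $f$ with a radial, level, orientation-preserving diffeomorphism of trivial blocks, I may assume that $\varepsilon_n=\pa{S^2\br{t_1,t_2},r}$ has the same underlying thickened sphere as $B$. Since $\partial B$ and $\partial\varepsilon_n$ each consist of two components, $f$ either preserves or swaps the inner and outer boundary $2$-spheres; in the latter case I further post-compose with the inversion of $\varepsilon_n$, which remains a trivial block on $S^2\br{t_1,t_2}$. Thus I may assume $f\pa{S^2\br{t_i}}=S^2\br{t_i}$ for $i=1,2$. Write $\phi_i:=\rest{f}S^2\br{t_i}$ and let $\wt{\phi}_i:S^2\to S^2$ be its canonical rescaling, $\wt{\phi}_i(p):=\phi_i(t_i p)/t_i$. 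Next, define two radial, level self-diffeomorphisms of $S^2\br{t_1,t_2}$ by
\[
h(x):=\norm{x}\cdot\wt{\phi}_2^{-1}\pa{x/\norm{x}}, \qquad h'(x):=\norm{x}\cdot\wt{\phi}_1^{-1}\pa{x/\norm{x}},
\]
for which $\rest{h}S^2\br{t_2}=\phi_2^{-1}$ and $\rest{h'}S^2\br{t_1}=\phi_1^{-1}$.

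The key observation is that, although $h$ and $h'$ differ in general, they send $\varepsilon_n$ to the \emph{same} trivial block. Writing $q_1,\ldots,q_n$ for the initial points of $r$ and letting $\sigma$ denote the permutation of tangle components induced by $f$, a direct computation--using that $\phi_j$ sends boundary of $\tau$ to boundary of $r$ and that the terminal point of each tangle component equals $(t_2/t_1)$ times its initial point--shows that $h(q_i)=h'(q_i)$ equals the initial point of the $\sigma^{-1}(i)$-th component of $\tau$. Since $h$ and $h'$ are radial, this agreement on initial points forces $h(\varepsilon_n)=h'(\varepsilon_n)=:\varepsilon'_n$, namely the trivial block whose radial tangle passes through the initial points of $\tau$.

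Finally, I set $g:=h\circ f$ and $k:=h'\circ f$. Both are diffeomorphisms $B\to\varepsilon'_n$, and by construction $\rest{g}S^2\br{t_2}=\phi_2^{-1}\circ\phi_2=\tn{Id}$ while $\rest{k}S^2\br{t_1}=\phi_1^{-1}\circ\phi_1=\tn{Id}$, as required. The main obstacle is precisely the coincidence $h(\varepsilon_n)=h'(\varepsilon_n)$; it reflects the radial compatibility between $\phi_1$ and $\phi_2$ forced by the fact that $f$ sends the neatly embedded tangle $\tau$, whose endpoints are radially paired, to the radial tangle $r$.
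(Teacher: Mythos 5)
Your proposal is correct and follows essentially the same route as the paper: after reducing to a common thickened sphere and (via inversion) to the boundary-component-preserving case, you radially extend the restriction of $f^{-1}$ on a boundary sphere to a radial, level self-diffeomorphism $h$, set $\varepsilon'_n:=h(\varepsilon_n)$, and take $g=h\circ f$ (and symmetrically $k$). Your explicit verification that $h(\varepsilon_n)=h'(\varepsilon_n)$ — forced by the radial pairing of tangle endpoints — is a worthwhile detail that the paper leaves implicit under ``the other case is similar.''
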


\begin{proof}
Without loss of generality, $B$ and $\varepsilon_n$ have underlying thickened sphere $S^2\br{1,2}$.
Composing $f$ with inversion across $S^2\br{\sfrac{3}{2}}$ if necessary, we assume $f$ preserves boundary $2$-spheres setwise.
Define $h:S^2\br{1,2}\to S^2\br{1,2}$ by:
\[
h(p)=\frac{\norm{p}}{2}f^{-1}\pa{2\frac{p}{\norm{p}}}
\]
That is, $h=f^{-1}$ on $S^2\br{2}$, and $h$ is extended radially to $S^2\br{1,2}$.
So, $\varepsilon'_n :=h(\varepsilon_n)$ is a trivial block and $g:=hf$ is the desired diffeomorphism.
The other case is similar.
\end{proof}

\begin{lemma}\label{diff_impl_diff}
Let $B_1\approx C_1$ and $B_2 \approx C_2$. Suppose one of the blocks $B_1$, $B_2$, $C_1$, or $C_2$ is diffeomorphic to $\varepsilon_n$.
Then, $B_1 B_2\approx C_1 C_2$.
\end{lemma}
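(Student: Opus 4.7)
The plan is to reduce everything to Lemma \ref{e_nidentity} by using Lemma \ref{pw_id_b} to replace the trivial factor with a diffeomorphism that pastes cleanly across the shared boundary sphere. Since $B_i\approx C_i$, the hypothesis is equivalent to assuming that \emph{either} $B_1\approx\varepsilon_n$ \emph{or} $B_2\approx\varepsilon_n$ (with the corresponding $C_i$ then also diffeomorphic to $\varepsilon_n$), so I will split into two symmetric cases.

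Consider first the case $B_1\approx\varepsilon_n$, so also $C_1\approx\varepsilon_n$. By Lemma \ref{pw_id_b}, I can choose trivial blocks $\varepsilon'_n$, $\varepsilon''_n$ and diffeomorphisms $g:B_1\to\varepsilon'_n$ and $g':C_1\to\varepsilon''_n$ that are pointwise the identity on the outer boundary $2$-spheres. Because $g$ fixes the outer sphere of $B_1$ pointwise (and in particular fixes the outer endpoints of the tangle), pasting $g$ with $\tn{Id}_{B_2}$ across the shared sphere yields a homeomorphism of pairs $B_1 B_2\to\varepsilon'_n B_2$; smoothing across the junction as in Remarks \ref{concat_remarks}(2) gives a diffeomorphism. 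By Lemma \ref{e_nidentity}, $\varepsilon'_n B_2\approx B_2$, and symmetrically $C_1 C_2\approx\varepsilon''_n C_2\approx C_2$. Chaining with $B_2\approx C_2$ delivers $B_1 B_2\approx C_1 C_2$.

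The case $B_2\approx\varepsilon_n$ (hence $C_2\approx\varepsilon_n$) is handled in parallel, but now I invoke the second conclusion of Lemma \ref{pw_id_b} to obtain diffeomorphisms $k:B_2\to\varepsilon'_n$ and $k':C_2\to\varepsilon''_n$ that are pointwise the identity on the \emph{inner} boundary $2$-spheres. Pasting $\tn{Id}_{B_1}$ with $k$ across the shared sphere (and smoothing) gives $B_1 B_2\approx B_1\varepsilon'_n\approx B_1$ by Lemma \ref{e_nidentity}, and similarly $C_1 C_2\approx C_1$; combine with $B_1\approx C_1$.

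The only step requiring any genuine care is verifying that the pasted map is actually a diffeomorphism of the concatenated pair, which amounts to two checks: (a) the tangle endpoints on the shared sphere land correctly, which is automatic because the trivial-side factor has been arranged (via Lemma \ref{pw_id_b}) to be the identity on the relevant boundary sphere; and (b) smoothness across that sphere, which is the standard smoothing of Remarks \ref{concat_remarks}(2). Beyond these bookkeeping points there is no obstacle.
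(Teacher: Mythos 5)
Your proof is correct and follows essentially the same route as the paper: reduce to the case where the trivial factor is $B_1$ (or $B_2$), apply Lemma~\ref{pw_id_b} to make that factor's diffeomorphism the identity on the shared boundary sphere, paste with the identity, absorb the trivial block via Lemma~\ref{e_nidentity}, and finish with the remaining hypothesis $B_2\approx C_2$ (resp.\ $B_1\approx C_1$). The only difference is that you write out the second case explicitly where the paper dismisses it as similar.
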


\begin{proof}
Without loss of generality, the four given blocks have underlying thickened spheres $S^2\br{1,2}$.
Assume there is a diffeomorphism $f:B_1 \to \varepsilon_n$ (the other cases are similar).
By Lemma~\ref{pw_id_b}, there is a diffeomorphism $g: B_1 \to \varepsilon'_n$ that is pointwise the identity on $S^2\br{2}$.
We have diffeomorphisms:
\[
 B_1 B_2 \to \varepsilon'_n B_2 \to B_2
\]
where the first is given by pasting together $g$ and $\tn{Id}$, and the second is given by Lemma~\ref{e_nidentity}.
As $C_1\approx B_1\approx \varepsilon_n$, we similarly obtain diffeomorphisms:
\[
 C_1 C_2 \to \varepsilon''_n C_2 \to C_2
\]
As $B_2\approx C_2$, the result follows.
\end{proof}

\section{Diffeomorphisms Between Blocks in \texorpdfstring{$\B$}{B}}\label{s:diffeo_blocks}

Let $\tn{Hom}(\B)$ denote the set of all diffeomorphisms $h:B_1\to B_2$ such that $B_1$ and $B_2$ lie in $\B$.
The category $\mathcal{C}$ with objects $\B$ and with morphisms $\tn{Hom}(\B)$ is a \emph{groupoid} (i.e., every morphism is an isomorphism).
The composition $h_2\circ h_1$ of elements $h_1,h_2\in\tn{Hom}(\B)$ exists if and only if the codomain block of $h_1$ equals the domain block of $h_2$.
Given a block in $\B$, we number its tangle components by $1$, $2$, $3$, beginning with the component having boundary points on the $x$-axis and proceeding counterclockwise (CCW).\\

To each $h:B_1\to B_2$ in $\tn{Hom}(\B)$, we associate:
\begin{enumerate}[label=(\arabic*)]\setcounter{enumi}{0}
\item The domain block $B_1$.
\item The codomain block $B_2$.
\item The \emph{orientation character}: $+1$ if $h$ preserves orientation of $S^2\br{1,2}$, and $-1$ otherwise.
\item The \emph{boundary character}: $+1$ if $h$ preserves the boundary $2$-spheres componentwise, and $-1$ if $h$ swaps them.
\item The \emph{tangle permutation}: $\sigma=\sigma(h)\in\tn{Sym}(3)$, defined using the numbering of tangle components.
\end{enumerate}
The \textbf{type} of $h$ is the $5$-tuple of these parameters.
The set of possible types is:
\[
	U:=\B\times\B\times \cpa{\pm1} \times \cpa{\pm1} \times \tn{Sym}(3)
\]
The set $U$  has cardinality $384$ and has natural operations:
\begin{align*}
 (B_1,B_2,\varepsilon,\delta,\sigma)^{-1}:= &(B_2,B_1,\varepsilon,\delta,\sigma^{-1})\\
 (B_2,B_3,\varepsilon_2,\delta_2,\sigma_2)\circ (B_1,B_2,\varepsilon_1,\delta_1,\sigma_1):= &(B_1,B_3,\varepsilon_1 \varepsilon_2,\delta_1 \delta_2,\sigma_2 \sigma_1)
\end{align*}
The former is a unary operation defined on all types, and the latter is a partial composition.
We compose permutations right to left, as with functions; MAGMA does the reverse.
The category $\mathcal{D}$ with objects $\B$ and morphisms $U$ is also a groupoid; if $B\in\B$, then the \textbf{identity morphism} is $1_B:=(B,B,+1,+1,\tn{Id})$.
We have a functor of groupoids $F:\mathcal{C} \to\mathcal{D}$.
It is the identity on objects and is defined on morphisms by:
\[
\xymatrix@R=0pt{
	\tn{Hom}(\B)	\ar[r]^-{F}			&	U	\\
	h   \ar@{|-{>}}[r]  & \tn{type}(h)}
\]

We wish to determine the types realized by diffeomorphisms, i.e., the image of $F$ on morphisms.
Table~\ref{hom_block} presents exactly these types, where $\tn{A}_3 := \fg{(1,2,3)}$ is the alternating group of degree $3$
and $C:=(1,2)\tn{A}_3$ is the coset of transpositions.
The remainder of this section is devoted to proving Table~\ref{hom_block}.\\

\begin{table}[h!]
\begin{center}
\begin{tabular}{cc|cc|cc|cc|cc|}
\cline{3-10}
& & \multicolumn{2}{|c|}{\rule[0mm]{0mm}{4.5mm}$A$} &  \multicolumn{2}{|c|}{$\Ab$} & \multicolumn{2}{|c|}{$\As$} & \multicolumn{2}{|c|}{$\Abs$}\\
\cline{3-10}
& & \m1 {|c|}{$+1$} & \m1{|c|}{\tr$-1$} & \m1{|c|}{$+1$} & \m1{|c|}{$-1$} & \m1{|c|}{$+1$} & \m1{|c|}{$-1$} & \m1{|c|}{$+1$} & \m1{|c|}{$-1$} \\ \hline
\m1{|c|}{\multirow{2}{*}{$A$}} & \m1{|c|}{\tr$+1$} & $\tn{A}_3$ & & & $\tn{A}_3$ & & $C$ & $C$ & \\
\cline{2-2}
\m1{|c|}{} & \m1{|c|}{\tr$-1$} & $C$ & & & $C$ & & $\tn{A}_3$ & $\tn{A}_3$ & \\ \hline
\m1{|c|}{\multirow{2}{*}{$\Ab$}} & \m1{|c|}{\tr$+1$} & & $\tn{A}_3$ & $\tn{A}_3$ & & $C$ & & & $C$ \\
\cline{2-2}
\m1{|c|}{} & \m1{|c|}{\tr$-1$} & & $C$ & $C$ & & $\tn{A}_3$ & & & $\tn{A}_3$ \\ \hline
\m1{|c|}{\multirow{2}{*}{$\As$}} & \m1{|c|}{\tr$+1$} & & $C$ & $C$ & & $\tn{A}_3$ & & & $\tn{A}_3$ \\
\cline{2-2}
\m1{|c|}{} & \m1{|c|}{\tr$-1$} & & $\tn{A}_3$ & $\tn{A}_3$ & & $C$ & & & $C$\\ \hline
\m1{|c|}{\multirow{2}{*}{$\Abs$}} & \m1{|c|}{\tr$+1$} & $C$ & & & $C$ & & $\tn{A}_3$ & $\tn{A}_3$ & \\
\cline{2-2}
\m1{|c|}{} & \m1{|c|}{\tr$-1$} & $\tn{A}_3$ & & & $\tn{A}_3$ & & $C$ & $C$ & \\ \hline
\end{tabular}
\end{center}
\vspace{3mm}
\caption{Types of diffeomorphisms of blocks in $\B$.
Columns indicate domain and orientation character.
Rows indicate codomain and boundary character.
A given subset of $\tn{Sym}(3)$ specifies tangle permutations realized by such a diffeomorphism.
An empty entry means no such diffeomorphism exists.}
\label{hom_block}
\end{table}

Table~\ref{obvious_homeos} lists some obvious diffeomorphisms of blocks and their types.
To see the last two diffeomorphisms, perform the rotation on the given domain block, then perform a simple ambient isotopy relative to boundary.\\

\begin{table}[h!]
\begin{center}
\begin{tabular}{|l|c|c|}
\hline
\tr Diffeomorphism	& Domain/Codomain	&	type \\ \hline
\tr Identity	&	$B\to B$, $B\in\B$ & $(B,B,+1,+1,\tn{Id})$ \\ \hline
\tr CCW Rotation by	& \multirow{2}{*}{$B\to B$, $B\in\B$} & \multirow{2}{*}{$(B,B,+1,+1,(1,2,3))$} \\
$2\pi/3$ about $z$-axis	&	&\\ \hline
\rule[0mm]{0mm}{4.5mm}Reflection across & $A\to \Ab$     & $(A,\Ab,-1,+1,\tn{Id})$ \\
$xy$-plane            & $\As\to \Abs$  & $(\As,\Abs,-1,+1,\tn{Id})$ \\ \hline
\tr Inversion across & $A\to \As$     & $(A,\As,-1,-1,\tn{Id})$ \\
intermediate $S^2$    & $\Ab \to \Abs$ & $(\Ab,\Abs,-1,-1,\tn{Id})$ \\ \hline
\rule[0mm]{0mm}{4.5mm}Rotation by $\pi$ & $A\to \Abs$    & $(A,\Abs,+1,+1,(2,3))$ \\
about $x$-axis        & $\Ab\to \As$   & $(\Ab,\As,+1,+1,(2,3))$ \\ \hline
\end{tabular}
\end{center}
\vspace{3mm}
\caption{Diffeomorphisms of blocks.}
\label{obvious_homeos}
\end{table}

The image of $F$ is a subgroupoid of $\mathcal{D}$, with objects $\B$ and morphisms $F(\tn{Hom}(\B))$.
In particular, $F(\tn{Hom}(\B))$ is closed under taking inverses and forming well-defined compositions.
So: (i) take the types in Table~\ref{obvious_homeos}, (ii) include their inverses, and (iii) further include all well-defined binary compositions.
This yields the $96$ realizable types in Table~\ref{hom_block}.
It remains to exclude the other types.\\

Exclusion of types will utilize fundamental groups of certain tangle complements.
If $B$ is a block, or a concatenation of blocks, and $\tau$ is the tangle therein, then $B-\tau$ denotes the complement of $\tau$ in the thickened sphere.
As explained in Section~\ref{equiv_rel_blocks}, we count classes of homomorphisms into small symmetric groups $\tn{Sym}(n)$ using MAGMA.
Table~\ref{magma_data_2} collects this data.
\begin{table}[h!]\renewcommand{\arraystretch}{1.2}
\begin{center}
\begin{tabular}{c|c|c|c|c|}
\cline{2-5}
&  $\rule[0mm]{0mm}{4.5mm}\p{AA-\tau}$ & $\p{A\Ab-\tau}$ & $\p{A\As-\tau}$ & $\p{A\Abs-\tau}$\\ \hline
\m1{|c|}{\tn{Sym}(1)} &  $1$ &  $1$ &  $1$ &  $1$ \\ \hline
\m1{|c|}{\tn{Sym}(2)} &  $4$ &  $4$ &  $4$ &  $4$ \\ \hline
\m1{|c|}{\tn{Sym}(3)} &  $11$ &  $11$ &  $11$ &  $11$ \\ \hline
\m1{|c|}{\tn{Sym}(4)} &  $63$ &  $63$ &  $63$ &  $63$ \\ \hline
\m1{|c|}{\tn{Sym}(5)} &  $342$ &  $342$ &  $354$ &  $330$ \\ \hline
\m1{|c|}{\tn{Sym}(6)} &  $3111$ &  $3255$ &  $3525$ &  $3105$ \\ \hline
\end{tabular}
\end{center}
\vspace{3mm}
\caption{Numbers of classes of homomorphisms into $\tn{Sym}(n)$.}
\label{magma_data_2}
\end{table}

\begin{lemma}\label{non_exist_types}
Let $B_1 \neq B_2$ be blocks in $\B$. Then, the type $(B_1,B_2,+1,+1,\tn{Id})$ is not in the image of $F$.
\end{lemma}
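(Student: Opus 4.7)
The plan is to assume for contradiction that a diffeomorphism $h:B_1\to B_2$ of type $(B_1,B_2,+1,+1,\tn{Id})$ exists with $B_1\neq B_2$ in $\B$. My strategy is then to first improve $h$ through an isotopy so that it restricts to the identity on both boundary spheres, concatenate with the identity on an auxiliary copy of $A$ to obtain a diffeomorphism $AB_1 \approx AB_2$, and finally invoke the MAGMA data in Table~\ref{magma_data_2} for a contradiction.

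For the improvement step, I would first observe that $\left. h \right|_{\partial}$ is an orientation-preserving diffeomorphism of each boundary sphere that fixes the three tangle endpoints pointwise. The fixed-point property follows from boundary character $+1$ (preserving each boundary sphere componentwise), identity tangle permutation, and the convention that the initial points of tangles in different blocks differ only by a radial dilation, so that the tangle endpoints for every block in $\B$ lie at the same three equispaced points of each boundary sphere. Since the orientation-preserving mapping class group of $S^2$ relative to three fixed points is trivial, I obtain smooth isotopies of $\left.h\right|_{S^2\br{t_1}}$ and $\left.h\right|_{S^2\br{t_2}}$ to the identity, rel those three points.

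Next, using Corollary~\ref{trnb}, I would first arrange, by an ambient isotopy of $B_2$ relative to its boundary, that the tangle of $B_2$ is radial in thin collars $S^2\br{t_1,t_1+\epsilon/2}$ and $S^2\br{t_2-\epsilon/2,t_2}$. I would then extend each boundary isotopy radially into the corresponding collar, interpolating to the identity on the complementary shell by a bump function. Because each extended isotopy sends each level sphere to itself and fixes the three endpoints on $S^2\br{t_i}$ throughout, it carries radial arcs to radial arcs with the same endpoints, preserving the tangle of $B_2$ setwise at all times. Composing $h$ with the resulting ambient isotopy of $B_2$ produces a diffeomorphism $h':B_1\to B_2$ of the same type as $h$ but with $\left. h' \right|_{\partial}=\tn{Id}$. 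Pasting $h'$ with the identity on an auxiliary copy of $A$ along the inner boundary sphere, and smoothing near that sphere as in Remarks~\ref{concat_remarks}, yields a diffeomorphism of blocks $AB_1\approx AB_2$, and hence an isomorphism $\p{AB_1-\tau}\cong\p{AB_2-\tau}$. Table~\ref{magma_data_2} already distinguishes $\p{AA-\tau}$, $\p{A\Ab-\tau}$, $\p{A\As-\tau}$, $\p{A\Abs-\tau}$ pairwise through their $\tn{Sym}(6)$ class counts ($3111$, $3255$, $3525$, $3105$), yielding the contradiction.

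The step I expect to require the most care is the radial extension of the boundary isotopy into $B_2$ without disturbing the tangle; Corollary~\ref{trnb} is the essential ingredient, since it guarantees a collar in which the tangle is literally radial and the extended isotopy can be built to preserve it. The MCG triviality invoked in the second paragraph is classical and painless by comparison, and the final contradiction is a direct table lookup once the concatenated diffeomorphism has been produced.
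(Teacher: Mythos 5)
Your proposal is correct and follows essentially the same route as the paper: normalize $h$ on the (inner) boundary sphere using the triviality of the pure mapping class group of the thrice-marked $S^2$ (the paper's isomorphism $\tn{Mod}(S_{0,3})\cong\tn{Sym}(3)$), paste with $\tn{Id}:A\to A$ to get $AB_1\approx AB_2$, and contradict the last row of Table~\ref{magma_data_2}. The only difference is that you spell out the collar/radialization details of the boundary normalization (which the paper leaves implicit) and normalize on both boundary spheres where only the inner one is needed.
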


\begin{proof}
Suppose otherwise.
Then, there is a diffeomorphism $h:B_1\to B_2$ of type $(B_1,B_2,+1,+1,\tn{Id})$.\\

Let $S_{0,3}$ denote the $2$-sphere $S^2$ with three marked points $p_1,p_2,p_3$.
The mapping class group $\tn{Mod}(S_{0,3})$ is the group of orientation preserving diffeomorphisms of $S^2$ that send $\cpa{p_1,p_2,p_3}\to\cpa{p_1,p_2,p_3}$,
modulo isotopies of $S^2$ fixing $p_1$, $p_2$, and $p_3$ at all times~\cite[pp.~44--45]{farbmargalit}.
Recall that the natural map
\begin{equation}\label{3ps}
\xymatrix{
	\tn{Mod}(S_{0,3}) \ar[r]^{\cong} & \tn{Sym}(3)}
\end{equation}
which sends an element of $\tn{Mod}(S_{0,3})$ to its action on the three marked points, is an isomorphism~\cite[\S2.2.2]{farbmargalit}.\\

By hypothesis, $h$ preserves boundary $2$-spheres componentwise and preserves orientation of $S^2\br{1,2}$.
So, $\left.h\right| S^2$ is an orientation preserving diffeomorphism.
As the tangle permutation of $h$ is the identity,~\eqref{3ps} permits us to assume $h$ is the identity on inner boundary $2$-spheres.
Pasting together the diffeomorphisms $\tn{Id}:A\to A$ and $h:B_1\to B_2$ yields a diffeomorphism $AB_1\to AB_2$.
This contradicts the last row of Table~\ref{magma_data_2} since $B_1 \neq B_2$.
\end{proof}

Consider two types $\alpha,\beta\in U$ such that $\beta\circ\alpha$ is defined.
As the image of $F$ is a subgroupoid of $\mathcal{D}$,
if any two of $\alpha$, $\beta$, or $\beta\circ\alpha$ lie in the image of $F$, then the third does as well.
Therefore, if $\alpha$ is any type forbidden by Lemma~\ref{non_exist_types}, $\beta$ is any of the already realized $96$ types in the image of $F$, and $\beta\circ\alpha$ is defined in $\mathcal{D}$, then $\beta\circ\alpha$ is not in the image of $F$.
A tedious, but completely straightforward calculation (facilitated by MAGMA), shows that this yields $288$ types not in the image of $F$.
This completes our proof of Table~\ref{hom_block}, and yields the following.

\begin{corollary}\label{blocks_inequiv_isotopy}
The blocks $A$, $\Ab$, $\As$, and $\Abs$ are pairwise distinct up to ambient isotopy relative to boundary.
\end{corollary}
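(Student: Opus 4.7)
The plan is to reduce the corollary immediately to Lemma~\ref{non_exist_types}. Suppose, for contradiction, that two distinct blocks $B_1, B_2 \in \B$ are ambient isotopic relative to boundary. By definition this gives an ambient isotopy $H_t$, $0 \le t \le 1$, of the common underlying thickened sphere $S^2[1,2]$ with $H_0 = \tn{Id}$, $H_1$ carrying the tangle of $B_1$ onto the tangle of $B_2$, and $H_t$ pointwise the identity on $\partial S^2[1,2]$ for every $t$. I would read off the type of the diffeomorphism $h := H_1 \colon B_1 \to B_2$ one coordinate at a time.

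First, since $H_t$ is an isotopy starting at the identity of $S^2[1,2]$, the map $h$ is isotopic to the identity through diffeomorphisms of the thickened sphere, so its orientation character is $+1$. Next, because $H_t$ fixes $\partial S^2[1,2]$ pointwise for all $t$, the diffeomorphism $h$ sends each boundary $2$-sphere identically onto itself; in particular the boundary character is $+1$. Finally, recall that tangle components are numbered by the counterclockwise position of their initial points on the inner boundary $2$-sphere; this numbering is intrinsic to the underlying thickened sphere and is shared by $B_1$ and $B_2$. Since $h$ fixes the inner boundary sphere pointwise, it fixes each initial point of the tangle of $B_1$, hence carries the $i$-th component of the tangle of $B_1$ to the $i$-th component of the tangle of $B_2$. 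Thus the tangle permutation $\sigma(h)$ is the identity.

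Consequently $h$ realizes the type $(B_1, B_2, +1, +1, \tn{Id})$. Since $B_1 \neq B_2$, Lemma~\ref{non_exist_types} (equivalently, the corresponding empty entry in Table~\ref{hom_block}) forbids this type, a contradiction. Therefore no two distinct blocks in $\B$ are ambient isotopic relative to boundary.

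I expect no real obstacle here, as the corollary is essentially a repackaging of Lemma~\ref{non_exist_types}: the only point requiring any care is verifying that ambient isotopy relative to boundary forces all three non-block coordinates of the type to be trivial, and each of those verifications is immediate from the definitions. The substantive content, namely the exclusion of the type $(B_1,B_2,+1,+1,\tn{Id})$ via the fundamental group counts of Table~\ref{magma_data_2}, has already been done in the proof of Lemma~\ref{non_exist_types}.
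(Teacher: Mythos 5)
Your proof is correct and matches the paper's reasoning: the paper derives this corollary directly from Table~\ref{hom_block}, whose relevant empty entries are exactly the exclusions of Lemma~\ref{non_exist_types}, and your verification that an ambient isotopy relative to boundary produces a diffeomorphism of type $(B_1,B_2,+1,+1,\tn{Id})$ is the intended (routine) reduction.
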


\section{Irreducible Blocks}\label{irred_blocks}

This section introduces the notion of an \emph{irreducible block}.
Such blocks play a central role in our construction of Borromean rays.
First, recall the following standard definitions and accompanying lemma.\\

A $2$-sphere $\Sigma$ in a thickened sphere is \textbf{essential} provided it does not bound a $3$-disk in the thickened sphere.
Otherwise, $\Sigma$ is \textbf{inessential}.
Let $a$ be an arc transverse to a $2$-manifold $\Sigma$ in a $3$-manifold.
Then, $\card{a\cap\Sigma}$ denotes the number of points in $a\cap\Sigma$ (ignoring any orientations).
The \textbf{mod $2$ intersection number} of $a$ and $\Sigma$, denoted $\#_2(a,\Sigma)$,
is $\card{a\cap\Sigma}\mod 2$.

\begin{lemma}\label{ess_sphere}
Let $a$ be a neatly embedded arc in $S^2\br{t_1,t_2}$.
Assume $a$ has one boundary point in $S^2\br{t_1}$ and the other in $S^2\br{t_2}$.
Let $\Sigma$ be a $2$-sphere embedded in the interior of $S^2\br{t_1,t_2}$ and transverse to $a$.
The following are equivalent: (i) $\Sigma$ is essential in $S^2\br{t_1,t_2}$,
(ii) $\#_2(a,\Sigma)\ne0$,
and (iii) there is a neighborhood $U$ of $\partial S^2\br{t_1,t_2}$ in $S^2\br{t_1,t_2}$ and an ambient isotopy $H_t$,
$0\leq t \leq 1$, of $S^2\br{t_1,t_2}$ such that:
\begin{enumerate}\setcounter{enumi}{\value{equation}}
\item $H_0=\tn{Id}$.
\item $\left.H_t\right|U=\tn{Id}$ for all $0\leq t\leq 1$.
\item $H_1(\Sigma)$ is a level $2$-sphere in $\Int{S^2\br{t_1,t_2}}$.
\setcounter{equation}{\value{enumi}}
\end{enumerate}
\end{lemma}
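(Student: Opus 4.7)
I would prove three directions directly --- (iii) $\Rightarrow$ (i), (iii) $\Rightarrow$ (ii), and the contrapositive of (ii) $\Rightarrow$ (i) --- and then prove (i) $\Rightarrow$ (iii) as the main content; the remaining implications follow by logic.

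For the easy directions: a level $2$-sphere is essential because its complement in $S^2\br{t_1,t_2}$ has two components, each homotopy equivalent to $S^2$ and therefore not a $3$-disk, which gives (iii) $\Rightarrow$ (i). An arc from $S^2\br{t_1}$ to $S^2\br{t_2}$ transverse to any intermediate level sphere meets it in an odd number of points, and $\#_2(a,\Sigma)$ is invariant under the kind of ambient isotopy in (iii), giving (iii) $\Rightarrow$ (ii). Finally, if $\Sigma = \partial D$ for a $3$-disk $D \subset S^2\br{t_1,t_2}$, then $D$ is neatly embedded with $\partial D \subset \Int{S^2\br{t_1,t_2}}$ and hence $D \subset \Int{S^2\br{t_1,t_2}}$; the endpoints of $a$ then lie outside $D$, and the transverse intersections with $\partial D$ pair up as entries and exits, forcing $\#_2(a,\Sigma)=0$.

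The substantive implication is (i) $\Rightarrow$ (iii). Viewing $\Sigma \subset \R^3 \subset S^3$, the smooth Schoenflies theorem in $S^3$ gives $3$-disks $D_-, D_+ \subset S^3$ with $D_- \cap D_+ = \Sigma$ and $D_- \cup D_+ = S^3$, and each boundary sphere $S^2\br{t_i}$ of $S^2\br{t_1,t_2}$ lies entirely in the interior of one of them. The key topological step is to argue that $S^2\br{t_1}$ and $S^2\br{t_2}$ must lie on opposite sides. Otherwise, the $3$-disk on the side containing neither boundary sphere, being connected and disjoint from $S^2\br{t_1} \cup S^2\br{t_2}$, would sit in one of the three components of $S^3 \setminus (S^2\br{t_1} \cup S^2\br{t_2})$; since $\Sigma \subset \Int{S^2\br{t_1,t_2}}$, that $3$-disk must be contained in $S^2\br{t_1,t_2}$, contradicting essentiality of $\Sigma$. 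Once separation is established, each side of $\Sigma$ in $S^2\br{t_1,t_2}$ is a $3$-disk with an open $3$-disk removed (namely $D_\pm$ with an open ball around the enclosed boundary sphere deleted), and hence is diffeomorphic to $S^2 \times [0,1]$ by the smooth annulus theorem in $S^3$, a classical consequence of Schoenflies.

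With these two product structures in hand, I would assemble a diffeomorphism $\varphi$ of $S^2\br{t_1,t_2}$ sending $\Sigma$ to $S^2\br{(t_1+t_2)/2}$ and equal to the identity on a collar neighborhood $U$ of $\partial S^2\br{t_1,t_2}$; compatibility along $\Sigma$ between the two product structures is arranged by composing one of them with a twist along the $[0,1]$-factor, legitimized by connectivity of $\tn{Diff}^+(S^2)$. The isotopy $H_t$ then arises from interpolating both product structures against the standard level structure on $S^2\br{t_1,t_2}$, which preserves the condition $H_t|U = \tn{Id}$. I expect the main obstacle to lie in this last step --- producing the isotopy so that it is strictly the identity on the collar $U$, rather than only fixing the boundary pointwise --- which requires careful use of collars and connectedness of the relevant diffeomorphism group rel boundary. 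The topological core, that $\Sigma$ separates the boundary spheres and that each side is an $S^2\times[0,1]$, is Schoenflies.
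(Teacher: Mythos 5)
Your argument is correct and is essentially the paper's: Schoenflies identifies the two sides of $\Sigma$, essentiality is equivalent to $\Sigma$ separating $S^2\br{t_1}$ from $S^2\br{t_2}$ (whence $\#_2(a,\Sigma)=1$), and a product structure on a complementary region produces the isotopy. (The paper caps off only the inner hole, working in the disk $D^3\cup S^2\br{1,2}$ after normalizing $\br{t_1,t_2}=[1,2]$, so that $\Sigma$ bounds a \emph{unique} $3$-disk $\Delta$ there and the dichotomy is simply whether $D^3\subset\Int{\Delta}$; this is your two-disk decomposition of $S^3$ in slightly different clothing.) The one caution concerns your final step: isotoping a global diffeomorphism $\varphi$ to the identity rel a collar would require controlling the component of $\varphi$ in $\tn{Diff}(S^2\times[0,1]\ \tn{rel}\ \partial)$, and that group is \emph{not} connected --- by the Newman--Fadell result quoted in Example~\ref{block_D}, one full Dirac twist is not isotopic to the identity rel boundary --- so ``connectedness of the relevant diffeomorphism group rel boundary'' is not available as stated. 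This is harmless for the lemma (the twist preserves level spheres, so $\varphi$ can be corrected without moving $\varphi(\Sigma)$; and condition (iii) only requires $H_1(\Sigma)$ to be level, not $H_1=\varphi$), but the paper's construction sidesteps the issue entirely: it takes a product structure only on the region $R$ between $\Sigma$ and a nearby exterior level sphere (via uniqueness of disk embeddings), pushes the vertical vector field of $S^2\times[0,1]$ forward to $R$, cuts it off outside a small neighborhood of $R$, and lets the resulting flow be $H_t$. That flow is automatically the identity near $\partial S^2\br{t_1,t_2}$ and carries $\Sigma$ to a level sphere with no mapping-class input, so I would replace your interpolation step with that vector-field argument.
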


\begin{proof}
Assume, without loss of generality, that $\br{t_1,t_2}=[1,2]$.
Let $D$ denote \hbox{$D^3\cup S^2\br{1,2}$}, the $3$-disk of radius $2$.
By the $3$-dimensional Schoenflies theorem \cite[Ch.~III]{cerf}, \cite[\S17]{moise}, \cite[Thm.~1.1]{hatcher3d}, $\Sigma$ bounds a unique $3$-disk $\Delta\subset\tn{Int}D$.
Let $Y:=D-\tn{Int}\Delta$. So, $D=\Delta\cup Y$ and $\Delta\cap Y=\Sigma$.\newline
Case 1. $D^3\subset \tn{Int}Y$.
Then, $\Delta$ lies in the interior of $S^2[1,2]$.
So, $\Sigma$ is inessential and clearly $\#_2(a,\Sigma)=0$.\newline
Case 2. $D^3\subset \tn{Int}\Delta$.
Then, $\Sigma$ cannot bound a $3$-disk in $S^2[1,2]$ and, hence, is essential in $S^2[1,2]$.
The arc $a$ has one boundary point in $\Int{\Delta}$ and one outside $\Delta$, so $\#_2(a,\Sigma)=1$.
Let $\Sigma'$ be a level $2$-sphere between $\Sigma$ and $S^2\br{2}$.
Let $D'$ be the $3$-disk in $\Int{D}$ with boundary $\Sigma'$.
Let $R:=D'-\Int{\Delta}$ be the compact region in $\Int{D}$ with boundary $\Sigma\sqcup\Sigma'$.
By uniqueness of disk embeddings~\cite[p.~185]{hirsch}, there is an ambient isotopy of $D'$ carrying $\Delta$ to a round $3$-disk.
Hence, there is a diffeomorphism $g:R\to S^2\times[0,1]$ sending $\Sigma$ to $S^2\times\cpa{0}$.
Construct a vector field $v$ on $S^2[1,2]$ as follows.
On $R$, $v$ is the pushforward by $g^{-1}$ of the constant vector field $(0,1)$ on $S^2\times[0,1]$.
Extend $v$ to the rest of $S^2\br{1,2}$, making it $0$ outside a small neighborhood of $R$.
The isotopy generated by $v$ is the desired $H_t$.
\end{proof}

Next, we give two definitions of \textbf{irreducible block} and then we prove they are equivalent.
Let $B=(S^2\br{t_1,t_2},\tau)$ be a block where $\tau$ has $n\geq1$ components.

\begin{definition}[\textbf{Irreducible Block, First Definition}]\label{irred_def_1}
The block $B$ is \textbf{irreducible} provided: if $B$ is diffeomorphic to a concatenation of blocks $B_1 B_2$,
then $B_1$ or $B_2$ (or both) is diffeomorphic to a trivial block $\varepsilon_n$.
\end{definition}

\begin{definition}[\textbf{Irreducible Block, Second Definition}]\label{irred_def_2}
Let $\Sigma$ be a $2$-sphere embedded in the interior of $S^2\br{t_1,t_2}$ and transverse to $\tau$.
Assume $\Sigma$ meets each component of $\tau$ at exactly one point.
The block $B$ is \textbf{irreducible} provided:
there is a neighborhood $U$ of
$\partial S^2\br{t_1,t_2}$ in $S^2\br{t_1,t_2}$ and an ambient isotopy $H_t$, $0\leq t \leq 1$, of $S^2\br{t_1,t_2}$ such that:
\begin{enumerate}\setcounter{enumi}{\value{equation}}
\item\label{H0id} $H_0=\tn{Id}$.
\item\label{HrelU} $\left.H_t\right|U=\tn{Id}$ for all $0\leq t\leq 1$.
\item\label{r_setwise} $H_t$ fixes $\tau$ setwise for all $0\leq t \leq 1$.
\item $H_1(\Sigma)$ is a level $2$-sphere in $S^2\br{t_1,t_2}$.
\item $\left.\eta\right|\tau$ has no critical points between $H_1(\Sigma)$ and $S^2\br{t_1}$, or between $H_1(\Sigma)$ and $S^2\br{t_2}$.
\setcounter{equation}{\value{enumi}}
\end{enumerate}
\end{definition}

\begin{proposition}\label{defs_equiv}
The two definitions of irreducible block are equivalent.
\end{proposition}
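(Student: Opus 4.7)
The plan is to prove the two implications separately, exploiting Corollary~\ref{trivial_block_cor} in the easy direction and combining several of the block lemmas in the harder direction.

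First I would prove Definition~\ref{irred_def_2} implies Definition~\ref{irred_def_1}. Given a diffeomorphism $f:B\to B_1 B_2$, set $\Sigma:=f^{-1}$ of the level interface sphere of the concatenation; then $\Sigma\subset B$ is transverse to $\tau$ and meets each tangle component exactly once. Apply the second definition to $\Sigma$, obtaining an ambient isotopy $H_t$ identity near $\partial B$, preserving $\tau$ setwise, with $H_1(\Sigma)$ a level sphere and (without loss of generality) $\eta|\tau$ critical-point free between $H_1(\Sigma)$ and $S^2\br{t_1}$. The region between $S^2\br{t_1}$ and $H_1(\Sigma)$, with its inherited tangle, is a block to which Corollary~\ref{trivial_block_cor} applies, hence is diffeomorphic to $\varepsilon_n$. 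Composing with $H_1\circ f^{-1}$ transports this triviality to $B_1$, yielding $B_1\approx\varepsilon_n$.

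For the reverse direction, given $\Sigma$ satisfying the hypotheses of Definition~\ref{irred_def_2}, first I would show $\Sigma$ induces an abstract concatenation. Since $\Sigma$ meets each tangle component once, $\#_2(\tau_i,\Sigma)=1$, and Lemma~\ref{ess_sphere} implies $\Sigma$ is essential; the Case~2 analysis in that lemma's proof exhibits the $3$-disk $\Delta\subset D^3\cup B$ bounded by $\Sigma$ as containing $D^3$ in its interior. Uniqueness of disk embeddings then identifies each of the two regions of $B$ cut by $\Sigma$ with a thickened sphere, and after a small radial adjustment to align endpoints, the inherited tangles yield abstract blocks $B_1,B_2$ with $B\approx B_1 B_2$. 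Definition~\ref{irred_def_1} gives, WLOG, $B_1\approx\varepsilon_n$. I would then combine Lemma~\ref{pw_id_b} (to arrange the diffeomorphism $B_1\to\varepsilon_n$ to be identity on $S^2\br{t_1}$), Lemma~\ref{e_nidentity}, Lemma~\ref{diff_impl_diff}, and Corollary~\ref{trnb} to produce a diffeomorphism $\Phi:B\to B$ that is pointwise the identity on $\partial B$, carries $\Sigma$ to a level sphere $S^2\br{t_0}$ (with $t_0$ close enough to $t_1$ that $(t_1,t_0)$ contains no critical values of $\eta|\tau$), and sends $\tau$ to a tangle $\Phi(\tau)$ that is radial on $S^2\br{t_1,t_0}$. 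Alexander's theorem applied to the $3$-disk $D^3\cup B$ then provides an isotopy of $\Phi$ to the identity rel $\partial B$, which I would upgrade to the required $H_t$ preserving $\tau$ setwise via the isotopy extension theorem applied to the path of tangles.

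The main obstacle is this last step: converting the block diffeomorphism $\Phi$ (which sends $\tau$ to $\Phi(\tau)$, not back to $\tau$) into an ambient isotopy $H_t$ preserving $\tau$ setwise and ending at a level sphere. Applying Alexander's theorem naively produces an isotopy from $\Phi$ to the identity rel $\partial B$ that may move $\tau$ erratically. Example~\ref{block_D} (Dirac's block) shows pointwise preservation of $\tau$ would be too strong: although Dirac's block is diffeomorphic to $\varepsilon_3$ via a boundary-fixing map, it is not ambient isotopic rel boundary to $\varepsilon_3$. The setwise clause in Definition~\ref{irred_def_2} is designed exactly to permit tangle components to slide along themselves; reconciling the Alexander isotopy with a compensating tangle-sliding motion---compatibly with being identity near $\partial B$ and ending at a level sphere---is the technical heart of the argument.
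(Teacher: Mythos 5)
Your first implication (second definition $\Rightarrow$ first) is correct and is essentially the paper's argument: pull back the interface sphere, apply the hypothesized isotopy, and invoke Corollary~\ref{trivial_block_cor} on the critical-point-free region.

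The reverse implication, however, contains a genuine gap --- one you have located precisely but not closed. Everything up to ``Definition~\ref{irred_def_1} gives, WLOG, $B_1\approx\varepsilon_n$'' is fine and parallels the paper. But the route you then propose (build a boundary-fixing $\Phi:B\to B$ carrying $\Sigma$ to a level sphere, isotope $\Phi$ to the identity by Alexander/Cerf, then ``upgrade'' via isotopy extension) does not produce the object Definition~\ref{irred_def_2} demands. The Cerf isotopy from $\Phi$ to the identity is rel $\partial$ but has no reason to preserve $\tau$ setwise at any time, and isotopy extension applied to the path of tangles $\Phi_t(\tau)$ yields an ambient isotopy carrying $\tau$ to $\Phi(\tau)$ --- not one fixing $\tau$ setwise while moving $\Sigma$. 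As you say yourself, reconciling these two requirements is ``the technical heart of the argument''; a proof must actually supply that reconciliation, and your proposal stops exactly there.

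The paper closes the gap without Alexander's theorem, by a direct vector-field construction. Since $B_1\approx\varepsilon_n$, there is a diffeomorphism of pairs $g:(R,\tau\cap R)\to\varepsilon_n$, where $R$ is the region between $S^2\br{t_1}$ and $\Sigma$, with $g(S^2\br{t_1})=S^2$. Choose a genuinely level sphere $\Sigma'':=S^2\br{t_1+\alpha}$ inside $R$ with $\alpha$ small enough that $\left.\eta\right|\tau$ has no critical points below it and that $g(\Sigma'')$ is transverse to every radial arc; after sliding along radial arcs one may assume $g(\Sigma'')=S^2\br{3/2}$. Now pull back the inward radial field $-p/(2\norm{p})$ on $S^2\br{3/2,2}$ by $g^{-1}$: because $g$ carries $\tau\cap R$ to \emph{radial} arcs, this pulled-back field is automatically tangent to $\tau$, so its flow fixes $\tau$ setwise, is supported away from $\partial S^2\br{t_1,t_2}$, and carries $\Sigma=g^{-1}(S^2\br{2})$ onto the level sphere $\Sigma''$. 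That tangency --- obtained for free from the trivializing diffeomorphism $g$ --- is precisely the ``compensating tangle-sliding motion'' your outline calls for; note that it only requires $\tau$ to be preserved setwise, so the Dirac-block obstruction you worry about never arises.
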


\begin{proof}
Assume $B$ is irreducible according to the second definition.
Suppose $h:B\to B_1 B_2$ is a diffeomorphism.
Assume $h$ sends the inner boundary $2$-sphere to the inner boundary $2$-sphere, the other case being similar.
Let $\Sigma'$ be the level $2$-sphere in $B_1 B_2$ along which the concatenation takes place.
Let $\Sigma:=h^{-1}(\Sigma')$.
Then, $\Sigma$ satisfies the hypotheses of the second definition.
Let $H_t$ be the isotopy provided by the second definition.
So, $H_1(\Sigma)$ is a level $2$-sphere in $S^2\br{t_1,t_2}$ and, say,
$\left.\eta\right|\tau$ has no critical points in the compact region $R$ with boundary $H_1(\Sigma)\sqcup S^2\br{t_1}$.
By Corollary~\ref{trivial_block_cor}, the block $(R,\tau\cap R)$ is diffeomorphic to a trivial block $\varepsilon_n$.
Hence, $B_1\approx\varepsilon_n$, as desired.\\

Next, assume $B$ is irreducible according to the first definition.
Let $\Sigma$ satisfy the hypotheses of the second definition.
By Lemma~\ref{ess_sphere}, $\Sigma$ is essential in $S^2\br{t_1,t_2}$, and there is an isotopy of $S^2\br{t_1,t_2}$
(probably disturbing $\tau$) which carries $\Sigma$ to a level $2$-sphere, $\Sigma'$.
Let $\tau'$ be the image of $\tau$ under this isotopy.
This $\Sigma'$ divides $(S^2\br{t_1,t_2},\tau')$ into two obvious blocks $B_1$ and $B_2$.
Evidently, $B$ is diffeomorphic to the concatenation $B_1 B_2$.
The first definition of irreducible block implies that, say, $B_1$ is diffeomorphic to $\varepsilon_n$.
By Lemma~\ref{en_unique} (Trivial Block Uniqueness), we can and do assume $\varepsilon_n$ has underlying thickened sphere $S^2[1,2]$.
Let $R$ denote the compact region in $S^2\br{t_1,t_2}$ between $\Sigma$ and $S^2\br{t_1}$.
Hence, there is a diffeomorphism of pairs $g:(R,\tau\cap R)\to\varepsilon_n$, and $g$ sends $S^2\br{t_1}$ to $S^2$.
Let $\Sigma'':=S^2\br{t_1+\alpha}$ for some small $\alpha>0$ to be specified.
First, choose $\alpha$ small enough so $\Sigma''$ lies between $\Sigma$ and $S^2\br{t_1}$.
Next, reduce $\alpha$ if necessary so that:
\begin{enumerate}\setcounter{enumi}{\value{equation}}
\item\label{nocp} $\left.\eta\right|\tau$ has no critical points on or between $\Sigma''$ and $S^2\br{t_1}$.
\setcounter{equation}{\value{enumi}}
\end{enumerate}
This reduction is possible since $\tau$ is neatly embedded.
Note that futher reducing $\alpha$ maintains condition~\ref{nocp}.
Finally, reduce $\alpha$ if necessary so that:
\begin{enumerate}\setcounter{enumi}{\value{equation}}
\item\label{nottangent} $g(\Sigma'')$ is nowhere tangent to any radial arc in $S^2\br{1,2}$.
\setcounter{equation}{\value{enumi}}
\end{enumerate}
This last reduction is possible since: (i) $g$ is a diffeomorphism, (ii) $g(S^2\br{t_1})=S^2$, and (iii) $S^2\br{t_1}$ is compact.
As $g(\Sigma'')$ is essential in $S^2\br{1,2}$, condition~\ref{nottangent} implies that:
\begin{enumerate}\setcounter{enumi}{\value{equation}}
\item\label{onepointtransverse} Each radial arc in $S^2\br{1,2}$ of length $1$ intersects $g(\Sigma'')$ in exactly one point and transversely.
\setcounter{equation}{\value{enumi}}
\end{enumerate}
Condition~\ref{onepointtransverse} permits construction of an ambient isotopy of $S^2\br{1,2}$ that carries $g(\Sigma'')$ to $S^2\br{3/2}$ and merely slides points along radial arcs.
So, by an abuse of notation, we further assume the diffeomorphism of pairs $g$ itself sends $\Sigma''$ to $S^2\br{3/2}$.
Let $R'$ be the compact region in $S^2\br{t_1,t_2}$ between $\Sigma$ and $\Sigma''$.
Construct a vector field $v$ on $S^2\br{t_1,t_2}$ as follows.
On $R'$, $v$ is the pushforward by $g^{-1}$ of the vector field $-p/(2\left\|p\right\|)$ on $S^2\br{3/2,2}$.
Note that $v$ is tangent to $\tau$ on $R'$.
Extend $v$ to the rest of $S^2\br{t_1,t_2}$, making it $0$ outside a small neighborhood of $R'$ and ensuring tangency to $\tau$.
The isotopy generated by $v$ is the desired $H_t$.
\end{proof}

In general, it appears to be a difficult problem to decide whether a given block is irreducible.
We prove next that trivial blocks are irreducible and then observe some corollaries.

\begin{proposition}\label{sigma_en}
Each trivial block $\varepsilon_n=(S^2\br{t_1,t_2},\tau)$, $n\geq1$, is irreducible.
\end{proposition}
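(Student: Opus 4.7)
My plan is to verify the second definition of irreducible block (Definition~\ref{irred_def_2}). Let $\Sigma\subset\tn{Int}\,S^2\br{t_1,t_2}$ be a $2$-sphere transverse to the radial tangle $\tau$ of $\varepsilon_n$ with $\card{\Sigma\cap\tau_i}=1$ for each component $\tau_i$. Because $\tau$ is radial, $\left.\eta\right|\tau$ has no critical points at all, so condition~(5) of Definition~\ref{irred_def_2} is automatic once $\Sigma$ is carried to a level sphere. Identifying $\varepsilon_n\approx\pa{S^2\times\br{t_1,t_2},\cpa{x_1,\ldots,x_n}\times\br{t_1,t_2}}$ by the radial diffeomorphism, write $p_i=(x_i,r_i)$ for the intersection points. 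It suffices to construct an ambient isotopy supported away from $\partial S^2\br{t_1,t_2}$, preserving $\tau$ setwise, and carrying $\Sigma$ to a level sphere.

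Using product tubes $N_i=D_i\times\br{t_1,t_2}$ of $\tau_i$, transversality of $\Sigma$ with $\tau_i$ at $p_i$ supplies a local ambient isotopy supported in $N_i$, fixing $\tau_i$ setwise (and the complement of a smaller neighborhood of $p_i$), that straightens a small disk neighborhood of $p_i$ in $\Sigma$ to a horizontal disk $D_i'\times\cpa{r_i}$. Subsequent vertical slides of the form $(x,r)\mapsto(x,\psi_i^s(r))$ in each $N_i$, with $\psi_i^s$ a diffeomorphism of $\br{t_1,t_2}$ fixing a neighborhood of the endpoints, preserve each $\tau_i$ setwise and bring all the $r_i$ to a common value $r_0\in(t_1,t_2)$. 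After these two stages $\Sigma$ coincides with the level sphere $\Sigma_0:=S^2\times\cpa{r_0}$ on a thin product slab $\bigsqcup_i D_i'\times J$, where $J$ is a subinterval about $r_0$.

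It remains to extend the isotopy across the complement of that slab. There $\Sigma$ and $\Sigma_0$ are properly embedded $n$-holed $2$-spheres in the $3$-manifold $W:=(S^2-\bigsqcup_i\tn{Int}\,D_i')\times\br{t_1,t_2}$, sharing common boundary $\bigsqcup_i\partial D_i'\times\cpa{r_0}$. The manifold $W$ is a product of a planar surface with an interval, hence is irreducible, and every embedded $2$-sphere in $W$ bounds a $3$-disk. Innermost-disk surgeries along the circles of $\Sigma\cap\Sigma_0$ in $W$ reduce those intersections to the common boundary; once $\Sigma$ meets $\Sigma_0$ only there, a parallel-surfaces argument in the product $W$ (which collapses to the Alexander-Schoenflies theorem when $n=1$) furnishes an ambient isotopy of $W$, relative to its boundary, taking $\Sigma$ onto $\Sigma_0$. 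Extending by the identity across the slab produces the desired isotopy. The principal obstacle is this last stage: carrying out the surgeries in a way compatible with the tube and boundary constraints so that the resulting isotopy fixes $\tau$ setwise throughout. For $n=1$ the argument collapses to ordinary Schoenflies, whereas for $n\geq 2$ it uses the standard theory of incompressible surfaces in the product $F\times\br{t_1,t_2}$.
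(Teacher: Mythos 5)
Your strategy---verifying Definition~\ref{irred_def_2} directly by isotoping $\Sigma$ to a level sphere while fixing $\tau$ setwise---differs from the paper's, which verifies Definition~\ref{irred_def_1}: the paper intersects $\Sigma$ with the flat annulus $\Pi$ containing the radial tangle, reduces $\Sigma\cap\Pi$ to a single circle through all the points $p_i$ by innermost-disk arguments, and then \emph{constructs a diffeomorphism} from the region between $\Sigma$ and $S^2\br{t_1}$ to a trivial block by a bootstrapping use of the $2$- and $3$-dimensional Schoenflies theorems and extension of boundary diffeomorphisms; no isotopy of $\Sigma$ to a level sphere is required there. Your first two stages (local straightening of $\Sigma$ near the $p_i$ inside product tubes, then vertical slides to a common level $r_0$) are fine, and you are right that working relative to $\partial W$ makes the ``fix $\tau$ setwise'' constraint automatic.

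The gap is exactly where you flag it, and it is not a peripheral technicality: the entire content of the proposition is concentrated in the last stage, which you assert rather than prove. Two concrete problems. First, the innermost-disk reduction of $\Sigma\cap\Sigma_0$ does not go through as stated: an intersection circle that is \emph{essential} in the planar surface $\Sigma\cap W$ bounds no disk of $\Sigma$ lying in $W$, so it cannot be removed by an innermost disk in $W$; handling such circles requires showing $\Sigma\cap W$ is incompressible in $W$, which does not follow from irreducibility of $W$. (It is provable: a compressing disk $D\subset W$ with $\partial D$ essential in $\Sigma\cap W$, capped off by either disk that $\partial D$ bounds in the sphere $\Sigma$, yields an embedded $2$-sphere disjoint from some components of $\tau$ and meeting others once transversally, so Lemma~\ref{ess_sphere} declares it both essential and inessential---but this argument appears nowhere in your write-up.) Second, even once $\Sigma\cap W$ and $\Sigma_0\cap W$ meet only along their common boundary, the ``parallel-surfaces argument'' is not automatic for $n\geq2$: the two $n$-holed spheres glue along their $n$ boundary circles into a closed surface of genus $n-1$, so the region they cobound is not recognized by any Schoenflies theorem; you must show that region is a product $(\tn{planar surface})\times\br{0,1}$, which again needs incompressibility of both surfaces together with a Waldhausen-type product-recognition argument, or else a hands-on construction of the sort the paper carries out. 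As written, the proposal completes only the easy preliminary normalizations and defers the theorem itself to an unproven claim.
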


\begin{proof}
Let $\Pi$ denote the annulus where $S^2\br{1,2}$ meets the $xy$-plane.
Lemma~\ref{en_unique} reduces us to the case where $\varepsilon_n$ has underlying thickened sphere $S^2\br{1,2}$
and $\tau$ consists of $n$ equally spaced radial arcs in $\Pi$.
We prove $\varepsilon_n$ is irreducible according to the first definition.
By the first paragraph of the proof of Proposition~\ref{defs_equiv}, it suffices to consider a $2$-sphere, $\Sigma$,
embedded in the interior of $S^2\br{1,2}$, transverse to $\tau$, and intersecting each component $\tau_i$ of $\tau$
in one point $p_i$.
Let $X\subset S^2\br{1,2}$ be the compact set between $\Sigma$ and $S^2$.
It suffices to produce a diffeomorphism $g:\pa{X,X\cap\tau}\to \varepsilon_n$.\\

We will improve $\Sigma$ (and, hence, $X$) by ambient isotopies of $S^2\br{1,2}$.
Improved spaces will be denoted by their original names,
except $\Pi$ always denotes $S^2\br{1,2} \cap \tn{($xy$-plane)}$.
So, assume $\Sigma$ intersects $\Pi$ transversely.
Thus, $\Sigma\cap\Pi$ is a closed $1$-manifold and one component, $K$, of $\Sigma\cap\Pi$ must contain all of the points $p_i$.\\

If $\Sigma \cap \Pi\neq K$, then consider a component, $C$, of $\Sigma\cap \Pi$ that is innermost in its component of $\Sigma - K$.
Let $D_1$ be the $2$-disk in $\Sigma - K$ with boundary $C$.
Let $D_2$ be the $2$-disk in $\Pi$ with boundary $C$.
Then, $D_1\cup D_2$ is an embedded $2$-sphere in the interior of $S^2\br{1,2}$ disjoint from $\tau$.
By Lemma~\ref{ess_sphere}, $D_1\cup D_2$ is inessential in $S^2\br{1,2}$.
Let $D$ be the $3$-disk in $S^2\br{1,2}$ with boundary $D_1\cup D_2$.
This $D$ permits construction of an isotopy of $S^2\br{1,2}$, with support near $D$, that carries $D_1$ past $D_2$ to a parallel copy of $D_2$.
Thus, $C$ (at least) has been eliminated from $\Sigma \cap \Pi$.
Repeating this operation finitely many times, we get $\Sigma \cap \Pi = K$.\\

Now, we give a bootstrapping definition of the required diffeomorphism $g$.
First, $g$ sends $S^2\to S^2$ by the identity.
Second, $g$ sends $X\cap\tau_i$ to $\tau_i$ by an affine diffeomorphism for each $i$.
Third, $g$ sends $K$ to $S^2\br{2}\cap\Pi$.
Fourth, $g$ sends a smooth, regular neighborhood of $\pa{S^2\cap\Pi}\cup \pa{X\cap\tau}\cup K$ in $X\cap\Pi$ to
a smooth, regular neighborhood of $\pa{S^2\cap\Pi} \cup\tau \cup \pa{S^2\br{2}\cap\Pi}$ in $\Pi$.
This step may be accomplished, quite concretely, by judiciously choosing (closed) collars~\cite[\S4.6]{hirsch} and ambiently rounding corners.
Fifth, $g$ sends $X\cap\Pi$ to $\Pi$.
By the smooth $2$-dimensional Schoenflies theorem \cite[Remark~9.19]{cks}, this step evidently requires extension of $g$ over $n$ smooth $2$-disks $\Delta_i \subset X\cap\Pi$, $i=1,\ldots,n$.
Let $R_i\subset\Pi$ denote the $2$-disk with boundary $g\pa{\partial\Delta_i}$.
As every diffeomorphism of $S^1$ extends to one of $D^2$, each diffeomorphism $\left.g\right| \partial\Delta_i$ extends to a diffeomorphism $\psi_i:\Delta_i\to R_i$.
Extending $g$ over $\Delta_i$ by $\psi_i$ yields a well-defined homeomorphism (smooth except possibly at $\partial \Delta_i$).
By an isotopy of $\psi_i$, relative to $\partial \Delta_i$ and with support in a collar of $\partial \Delta_i$~\cite[p.~182]{hirsch},
this extension is a diffeomorphism.
Sixth, $g$ sends $\Sigma\to S^2\br{2}$.
Seventh, $g$ sends a smooth, regular neighborhood of $S^2\cup \pa{X\cap\Pi}\cup \Sigma$ in $X$ to
a smooth, regular neighborhood of $S^2 \cup\Pi \cup S^2\br{2}$ in $S^2\br{1,2}$.
This is done as in step four (product a nice corner rounding with $S^1$).
Finally, $g$ sends $X$ to $S^2\br{1,2}$.
By the smooth $3$-dimensional Schoenflies theorem, this step requires extension of $g$ over two smooth $3$-disks.
This is done as in step five, except using the fact that every diffeomorphism of $S^2$ extends to one of $D^3$ \cite{munkres60}, \cite{smale}, \cite[pp.~202--206]{thurston}.
By construction, the diffeomorphism $g:X\to S^2\br{1,2}$ sends $X\cap\tau$ to $\tau$.
\end{proof}

\begin{corollary}\label{cor1}
Let $B_i$, $1\leq i\leq k$, be any blocks.
If $B_1 B_2 \cdots B_k \approx \varepsilon_n$, then $B_i\approx\varepsilon_n$ for each $1\leq i\leq k$.
\end{corollary}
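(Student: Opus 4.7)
The plan is to proceed by induction on $k$, using irreducibility of $\varepsilon_n$ (Proposition~\ref{sigma_en}) together with the ``cancellation'' provided by Lemmas~\ref{e_nidentity} and~\ref{diff_impl_diff}. The base case $k=1$ is immediate. For the inductive step, regard the concatenation $B_1 B_2 \cdots B_k$ as a two-block concatenation $B_1 \cdot (B_2 B_3 \cdots B_k)$; this is legitimate since we may combine the last $k-1$ summands into a single block (applying Corollary~\ref{trnb} to radialize tangles near the shared interface if necessary).

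Since $B_1 \cdot (B_2 \cdots B_k) \approx \varepsilon_n$ and $\varepsilon_n$ is irreducible (first definition), at least one of $B_1$ or $B_2 \cdots B_k$ is diffeomorphic to $\varepsilon_n$. The key observation is that once one factor is known to be trivial, the other is forced to be trivial as well: if, say, $B_1 \approx \varepsilon_n$, then Lemma~\ref{diff_impl_diff} gives $B_2 \cdots B_k \approx \varepsilon_n \cdot (B_2 \cdots B_k) \approx B_1 (B_2 \cdots B_k) \approx \varepsilon_n$, where the middle isomorphism comes from Lemma~\ref{e_nidentity} and Lemma~\ref{diff_impl_diff}. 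Symmetrically, if $B_2 \cdots B_k \approx \varepsilon_n$, then $B_1 \approx B_1 \cdot \varepsilon_n \approx B_1 (B_2 \cdots B_k) \approx \varepsilon_n$. Either way, both $B_1 \approx \varepsilon_n$ and $B_2 \cdots B_k \approx \varepsilon_n$.

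Applying the inductive hypothesis to $B_2 \cdots B_k \approx \varepsilon_n$ yields $B_i \approx \varepsilon_n$ for each $2 \leq i \leq k$, completing the induction.

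The main subtle point is verifying that the cancellation step is valid: the first definition of irreducibility alone only supplies triviality of \emph{one} factor, so we must leverage Lemma~\ref{diff_impl_diff} (which requires at least one factor in each pair of concatenations to be trivial) to promote this to triviality of both. Secondarily, one should check that treating the tail $B_2 \cdots B_k$ as a single block is harmless with respect to the definition of concatenation; this is transparent from the construction in Section~\ref{definitions}, since the smoothed tangle in $B_2 \cdots B_k$ is itself the tangle of a well-defined block up to ambient isotopy relative to boundary.
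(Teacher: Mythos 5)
Your proof is correct and follows essentially the same route as the paper: the paper reduces to $k=2$ by induction and then gives precisely your argument as its (second, ``instructive'') justification of that case, namely applying irreducibility of $\varepsilon_n$ (Proposition~\ref{sigma_en}, first definition) to the splitting $B_1\cdot(B_2\cdots B_k)$ and then upgrading triviality of one factor to triviality of both via Lemmas~\ref{e_nidentity} and~\ref{diff_impl_diff}. The only cosmetic difference is that the paper's primary argument for $k=2$ instead reuses the proof of Proposition~\ref{sigma_en} directly, observing that the region $X$ there may be taken on either side of the separating sphere $\Sigma$.
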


\begin{proof}
By induction, it suffices to consider the case $k=2$.
The case $k=2$ follows from the proof of Proposition~\ref{sigma_en}, since $X$ can be the compact region between $\Sigma$ and $S^2$, or between $\Sigma$ and $S^2\br{2}$.
Alternatively, it is instructive to see that the case $k=2$ follows from the statement of Proposition~\ref{sigma_en} as follows.
Proposition~\ref{sigma_en} implies that $B_1\approx\varepsilon_n$ or $B_2\approx\varepsilon_n$.
Assume $B_1\approx\varepsilon_n$ (the other case is similar).
We have diffeomorphisms:
\[
	\varepsilon_n \approx B_1 B_2 \approx \varepsilon_n B_2 \approx B_2
\]
where the first exists by hypothesis, the second follows from Lemma~\ref{diff_impl_diff} since $B_1\approx\varepsilon_n$ and $B_2\approx B_2$, and
the last is given by Lemma~\ref{e_nidentity}.
\end{proof}

\begin{corollary}\label{cor2}
Let $B_i$, $i\in\Z^+$, be any blocks.
If $B_1 B_2 B_3 \cdots \approx \varepsilon_n \varepsilon_n \varepsilon_n \cdots$, then $B_i\approx \varepsilon_n$ for each $i\geq 1$.
\end{corollary}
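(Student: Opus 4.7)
The plan is to reduce Corollary~\ref{cor2} to the finite case handled by Corollary~\ref{cor1}, by cutting off arbitrarily long initial segments of the infinite concatenation. Fix a diffeomorphism of pairs $f\colon B_1 B_2 B_3 \cdots \to \varepsilon_n \varepsilon_n \varepsilon_n \cdots$. Both sides have underlying space $S^2[1,\infty)$ with boundary $S^2$, so $f$ is a diffeomorphism of $S^2[1,\infty)$ sending one multiray to the other, with $f(S^2)=S^2$. For each $i\in\Z^+$, let $\Sigma_i := S^2\br{i+1}$ be the level $2$-sphere separating the initial segment $B_1\cdots B_i$ from the tail $B_{i+1}B_{i+2}\cdots$.

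The first step is to chop off a suitable finite initial piece of the trivial concatenation. Set $\Sigma_i' := f(\Sigma_i)$; this is a smoothly embedded $2$-sphere in $\varepsilon_n\varepsilon_n\cdots$ meeting each component of the trivial multiray $\tau'$ transversely in exactly one point. Since $\Sigma_i'$ is compact, it lies in $S^2\br{1,k+1}$ for all sufficiently large $k$. The finite concatenation $\varepsilon_n^{(1)}\cdots\varepsilon_n^{(k)}$ has radial tangle throughout---the tangle-radialization-near-boundary step from Corollary~\ref{trnb} has no effect on an already-radial tangle---so it coincides with a trivial block whose underlying thickened sphere is $S^2\br{1,k+1}$. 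Let $X$ denote the compact region in $\varepsilon_n\varepsilon_n\cdots$ bounded by $S^2$ and $\Sigma_i'$; then $X$ sits inside this larger trivial block.

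Next, I would apply the argument of Proposition~\ref{sigma_en} to the trivial block $\varepsilon_n^{(1)}\cdots\varepsilon_n^{(k)}$ together with the sphere $\Sigma_i'$. Its proof constructs, from precisely the hypotheses just verified, a diffeomorphism of pairs $(X, X\cap \tau') \approx \varepsilon_n$. Composing with the restriction of $f$ to $B_1\cdots B_i$ then yields $B_1 B_2 \cdots B_i \approx \varepsilon_n$. Corollary~\ref{cor1} gives $B_j \approx \varepsilon_n$ for every $1\leq j\leq i$.

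Since $i$ was arbitrary, $B_j\approx\varepsilon_n$ for every $j\in\Z^+$, completing the argument. The only subtle point is recognizing that a finite concatenation of trivial blocks is itself a trivial block, which lets Proposition~\ref{sigma_en} apply verbatim to the (a priori possibly very wildly embedded) sphere $\Sigma_i'$; once this observation is in place the reduction to Corollary~\ref{cor1} is routine.
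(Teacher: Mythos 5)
Your argument is correct, and it is organized differently from the paper's. The paper fixes $f$ and then performs an \emph{infinite} sequence of improvements: it isotops $f$ so that $f(S^2[2])=S^2[2]$, then (relative to $S^2[1,2]$) so that $f(S^2[3])=S^2[3]$, and so on, and must then check that the composition of these infinitely many isotopies is a well-defined, smooth, proper isotopy; after that, $f$ carries each $B_i$ onto a trivial block directly. You instead truncate: for each fixed $i$ you place the compact sphere $f(S^2[i+1])$ inside a large finite trivial block $S^2[1,k+1]$, invoke the construction in the proof of Proposition~\ref{sigma_en} to see that the region between $S^2$ and $f(S^2[i+1])$ is a trivial block, conclude $B_1\cdots B_i\approx\varepsilon_n$, and finish with Corollary~\ref{cor1}. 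Both routes run on the same engine (the sphere-straightening in the proof of Proposition~\ref{sigma_en}, applied to a sphere meeting each radial tangle component once), but yours trades the paper's infinite composition of isotopies for a quantifier over finite initial segments, which is a genuine simplification of the bookkeeping. Two small points you handled correctly and that are worth keeping explicit: you appeal to the \emph{proof} of Proposition~\ref{sigma_en} rather than its statement, since $f(S^2[i+1])$ need not be a level sphere and so does not literally exhibit $S^2[1,k+1]$ as a concatenation (the paper itself makes the same move in the proof of Corollary~\ref{cor1}); and your observation that a finite concatenation of trivial blocks is again a trivial block is needed and is immediate from the radial-near-boundary convention. Note only that your method, as stated, proves each $B_i$ is diffeomorphic to $\varepsilon_n$ but does not reprove the stronger conclusion implicit in the paper's argument (that $f$ itself can be isotoped to respect all blocks), which is not needed here but is the form in which the idea gets reused in Corollaries~\ref{cor3} and~\ref{workhorse}.
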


\begin{proof}
Let $f:B_1 B_2 B_3 \cdots \to \varepsilon_n \varepsilon_n \varepsilon_n \cdots$ be a diffeomorphism (of pairs).
Let $\tau$ and $r$ be the $n$-component multirays determined by these concatenations respectively.
Clearly, $r$ is radial.
Isotopies of $f$ will send $\tau$ to $r$ at all times.
Note that $f\pa{S^2}=S^2$ since $f$ must restrict to a diffeomorphism on the boundaries of the total spaces.
Let $\Sigma:=f\pa{S^2\br{2}}$.
By a radial isotopy, relative to $S^2$, we may assume $\Sigma\subset \Int{S^2\br{1,2}}$.
This $\Sigma$ satisfies the hypotheses in the second definition of irreducible block.
Propositions~\ref{defs_equiv} and~\ref{sigma_en} permit us to isotop $f$, relative to $S^2$, so that $\Sigma=S^2\br{2}$.
Having isotoped $f$ so that $f\pa{S^2\br{i}}=S^2\br{i}$ for $1\leq i\leq k$ and some $k\geq 2$,
the same argument permits us to further isotop $f$, relative to $S^2\br{1,k}$, so that $f\pa{S^2\br{k+1}}=S^2\br{k+1}$.
Evidently, the composition of all of these (infinitely many) isotopies is a well-defined, smooth, proper isotopy.
So, we can and do assume $f\pa{S^2\br{k}}=S^2\br{k}$ for all $k\in\Z^+$.
The result is now immediate.
\end{proof}

\begin{corollary}\label{cor3}
Let $B_i$, $i\in\Z^+$, be any blocks.
If $D^3 B_1 B_2 B_3 \cdots \approx D^3 \varepsilon_n \varepsilon_n \varepsilon_n \cdots$, then $B_i\approx \varepsilon_n$ for all sufficiently large $i\in\Z^{+}$.
\end{corollary}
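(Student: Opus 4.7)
The plan is to adapt the argument of Corollary~\ref{cor2}. In that corollary the total space was $S^2\br{1,\infty)$, whose boundary $S^2$ forced $f\pa{S^2}=S^2$; here the total space is $\R^3$ and no such pinning is available, but properness of $f$ absorbs the difference into a single finite shift of the indexing.

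Let $f\colon D^3 B_1 B_2 \cdots \to D^3 \varepsilon_n \varepsilon_n \cdots$ be a diffeomorphism of pairs, carrying the multiray $\tau$ onto the radial multiray $r$. By properness I would first choose an integer $m\geq 0$ such that $f$ sends $S^2\br{m+1,\infty)$ into $S^2\br{1,\infty)$. For each $i\geq 1$, the image $\Sigma_i:=f\pa{S^2\br{m+i}}$ is then a smooth $2$-sphere in $S^2\br{1,\infty)$ meeting each component of $r$ transversely in exactly one point, since $S^2\br{m+i}$ has that property for $\tau$.

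Next I would straighten the $\Sigma_i$ inductively. For the base case, choose $K_1 > 1$ with $\Sigma_1\subset\Int{S^2\br{1,K_1}}$. The pair $\pa{S^2\br{1,K_1},\, r\cap S^2\br{1,K_1}}$ is a trivial block $\varepsilon_n$, so Propositions~\ref{sigma_en} and~\ref{defs_equiv} supply an ambient isotopy of $S^2\br{1,K_1}$, supported away from its boundary and preserving $r$ setwise, which carries $\Sigma_1$ to a level sphere $S^2\br{c_1}$. Extend by the identity to $\R^3$ and compose with $f$. Now suppose inductively that $f\pa{S^2\br{m+i}}=S^2\br{c_i}$ for $i=1,\ldots,j$, with $1<c_1<\cdots<c_j$. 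The new sphere $\Sigma_{j+1}$ lies strictly outside $S^2\br{c_j}$ and inside $S^2\br{c_j,K_{j+1}}$ for some $K_{j+1}>c_j$. Applying the same two propositions to the trivial block $\pa{S^2\br{c_j,K_{j+1}},\, r\cap S^2\br{c_j,K_{j+1}}}$ yields an isotopy supported in its interior (so fixing all of $S^2\br{1,c_j}$) which moves $\Sigma_{j+1}$ to a level sphere $S^2\br{c_{j+1}}$, completing the step.

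Exactly as in Corollary~\ref{cor2}, the countable composition of these isotopies is a well-defined smooth proper isotopy because the $j$-th isotopy has compact support receding to infinity. After composing, $f$ restricts for each $i\geq 1$ to a diffeomorphism of pairs from $B_{m+i}$ onto $\pa{S^2\br{c_i,c_{i+1}},\, r\cap S^2\br{c_i,c_{i+1}}}$, which is itself a trivial block $\varepsilon_n$ by the definitions in Section~\ref{blocks}. Hence $B_{m+i}\approx\varepsilon_n$ for every $i\geq 1$, proving the corollary with threshold $m+1$. The main technical point is ensuring the inductive step does not disturb the spheres $S^2\br{c_1},\ldots,S^2\br{c_j}$ already in place, and this is automatic because the second definition of irreducible block delivers an isotopy that is the identity on a neighborhood of the boundary of the ambient thickened sphere.
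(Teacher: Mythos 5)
Your proof is correct and follows essentially the same route as the paper: the paper straightens a single image sphere $f\pa{S^2\br{k}}$ to a level sphere using Propositions~\ref{defs_equiv} and~\ref{sigma_en}, then restricts to $S^2[k,\infty)$ and cites Corollary~\ref{cor2}, whose inductive straightening argument you have simply inlined.
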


\begin{proof}
Let $f:D^3 B_1 B_2 B_3 \cdots \to D^3 \varepsilon_n \varepsilon_n \varepsilon_n \cdots$ be a diffeomorphism (of pairs).
Let $\tau$ and $r$ be the $n$-component multirays determined by these concatenations respectively.
Isotopies of $f$ will send $\tau$ to $r$ at all times.
By compactness, there exists $k\geq 2$ such that $f\pa{S^2\br{k}}$ is disjoint from $D^3$.
As in the previous proof, we may isotop $f$, relative to $D^3$, so that $f\pa{S^2\br{k}}=S^2\br{k}$.
Restricting $f$ to $S^2[k,\infty)$ implies $B_k B_{k+1} B_{k+2} \cdots \approx \varepsilon_n \varepsilon_n \varepsilon_n \cdots$.
Now, apply the previous corollary.
\end{proof}

Borromean blocks were defined in Section~\ref{borr_blocks}.
A multiray $\tau\subset\R^3$ forms \textbf{Borromean rays} provided: (i) no diffeomorphism of $\R^3$ carries $\tau$ to a radial multiray,
and (ii) each multiray obtained from $\tau$ by forgetting one component is ambiently isotopic to a two component, radial multiray.

\begin{corollary}\label{borr_blocks_yield_borr_rays}
Let $B_i$, $i\in\Z^+$, be Borromean blocks and let:
\[
	\pa{\R^3,\tau}=D^3 B_1 B_2 B_3 \cdots
\]
Then, $\tau$ forms Borromean rays.
In particular, the conclusion holds if each $B_i \in \B$.
\end{corollary}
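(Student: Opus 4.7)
My plan is to establish the two defining properties of Borromean rays separately, with (i) knottedness reducing to Corollary \ref{cor3} and irreducibility of trivial blocks, and (ii) the unknottedness of pairs following from the Borromean block condition plus a patching argument.

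For property (i), I argue by contradiction. Suppose some diffeomorphism of $\R^3$ carries $\tau$ to a radial three-component multiray $\rho$. Using concentric level spheres $S^2[k]$ for $k\in\Z^+$, the pair $(\R^3,\rho)$ decomposes as $D^3\varepsilon_3\varepsilon_3\varepsilon_3\cdots$ (any radial trivial block is a $\varepsilon_3$ by Lemma~\ref{en_unique}). Composing with the assumed diffeomorphism gives
\[
D^3 B_1 B_2 B_3 \cdots \approx D^3 \varepsilon_3 \varepsilon_3 \varepsilon_3 \cdots .
\]
Corollary~\ref{cor3} then forces $B_i\approx\varepsilon_3$ for all sufficiently large $i$, contradicting clause (i) of the definition of Borromean block.

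For property (ii), fix a component $\tau_j$ of $\tau$ and let $\tau':=\tau-\tau_j$. In each block $B_i$, let $B_i'$ denote the block obtained by deleting the corresponding tangle component. By the Borromean block hypothesis, there is an ambient isotopy $H^{(i)}_t$ of the underlying thickened sphere $S^2[i,i+1]$, fixing $\partial S^2[i,i+1]$ pointwise at all times, carrying the tangle of $B_i'$ to the radial tangle of a trivial block $\varepsilon_2$. Because each $H^{(i)}_t$ is the identity on both boundary spheres, the concatenation $H_t$ of the $H^{(i)}_t$ (extended by the identity on $D^3$) defines an ambient isotopy of $\R^3$; properness holds because each $H^{(i)}_t$ is supported in $S^2[i,i+1]$, and smoothness at the junction spheres follows from the rel-boundary condition. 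At time $1$, $H_1(\tau')$ is the two-component multiray determined by $D^3\varepsilon_2\varepsilon_2\varepsilon_2\cdots$, which is a radial multiray. Thus $\tau'$ is ambient isotopic to a radial multiray, as required.

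The main obstacle is the plumbing argument in (ii): one must verify that the per-block rel-boundary isotopies genuinely patch to a smooth, proper isotopy of $\R^3$. This is handled by the rel-boundary hypothesis in the definition of Borromean block, which ensures compatible smoothing across each junction sphere $S^2[i]$, and by the local-finiteness of the supports (one per shell $S^2[i,i+1]$), which gives properness. The final sentence of the corollary is immediate from Corollary~\ref{four_blocks_borromean}.
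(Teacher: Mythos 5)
Your proposal is correct and follows essentially the same route as the paper: the knottedness of $\tau$ is deduced from Corollary~\ref{cor3} exactly as in the paper's proof, and the unknottedness of each two-component submultiray is obtained by simultaneously performing the rel-boundary straightening isotopies in each block summand. Your extra remarks on why the shell-by-shell isotopies patch to a smooth, proper ambient isotopy of $\R^3$ merely make explicit what the paper leaves implicit.
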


\begin{proof}[Proof of Corollary~\ref{borr_blocks_yield_borr_rays}]
By definition of Borromean block, $B_i\not\approx\varepsilon_3$.
Corollary~\ref{cor3} implies that $D^3 B_1 B_2 B_3 \cdots \not\approx D^3 \varepsilon_3 \varepsilon_3 \varepsilon_3\cdots$.
Hence, no diffeomorphism of $\R^3$ carries $\tau$ to a radial multiray.
Next, let $\sigma$ be obtained from $\tau$ by forgetting any one component.
Let $C_i$ be obtained from $B_i$ by forgetting the corresponding tangle component.
By definition of Borromean block, each $C_i$ is ambient isotopic (relative to boundary) to $\varepsilon_2$.
Performing these isotopies, for $i\in\Z^{+}$, simultaneously yields an ambient isotopy of $\R^3$ carrying $\sigma$ to a radial multiray.
Thus, $\tau$ forms Borromean rays.
Lastly, blocks in $\B$ are Borromean by Corollary~\ref{four_blocks_borromean}.
\end{proof}

\begin{remarks}
\noindent\begin{enumerate}[label=(\arabic*),leftmargin=*]\setcounter{enumi}{0}
\item Corollary~\ref{cor3} reduces the \emph{infinite} problem of constructing knotted multirays to the \emph{finite} problem of constructing nontrivial blocks. For instance, the infinitely generated group theory in~\cite{debrunnerfox} may be replaced by finitely generated group theory (as used in Section~\ref{equiv_rel_blocks} above).
\item The converse of Corollary~\ref{cor3} holds by Remarks~\ref{concat_remarks} item~\ref{tail_det_type}.
\end{enumerate}
\end{remarks}

We close this section by constructing infinitely many irreducible blocks containing two component tangles.
Recall the notion of a knot block from Example~\ref{knot_blocks}.

\begin{lemma}\label{pk_ib}
If $k\subset S^3$ is a prime knot, then $B(k)$ is an irreducible block.
\end{lemma}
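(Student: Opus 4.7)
The plan is to verify Definition~\ref{irred_def_1} directly. Assume a diffeomorphism $\phi : B_1 B_2 \to B(k)$, and let $\Sigma \subset \Int{B(k)}$ be the image under $\phi$ of the level interface sphere of the concatenation; then $\Sigma$ is transverse to the tangle $\tau$ of $B(k)$ and meets each of its two components exactly once. Write $\hat{B}_1$ and $\hat{B}_2$ for the two pieces of $B(k)$ cut by $\Sigma$, with $\hat{B}_1$ adjacent to the inner boundary sphere. Cap off both boundary spheres of $B(k)$ by gluing in $3$-disks $C_0, C_\infty$, each containing a trivial arc joining the two endpoints of $\tau$ on that sphere. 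By the construction of $B(k)$ from $\varepsilon_2$ (Example~\ref{knot_blocks}), the resulting pair is $(S^3, k)$, and $\Sigma$ now meets $k$ transversely at exactly two points, giving a decomposition of $(S^3, k)$ as a sum of two ball-arc pairs along $\Sigma$. Since $k$ is prime, Schubert's unique-factorization theorem for knots forces this decomposition to be trivial: one side of $\Sigma$ bounds a $3$-ball $B^\ast$ with $(B^\ast, k \cap B^\ast)$ an unknotted ball-arc pair. Without loss of generality, $B^\ast = \hat{B}_1 \cup C_0$.

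Let $\alpha := k \cap B^\ast$, and let $D \subset B^\ast$ be a spanning disk for $\alpha$, so $\partial D = \alpha \cup \gamma$ with $\gamma \subset \Sigma$ and $D \cap \partial B^\ast = \gamma$. Put $D$ transverse to the inner boundary sphere $S_0 := \partial C_0$ of $B(k)$. The $1$-manifold $D \cap S_0$ consists of circles in $\Int{D}$ together with arcs whose endpoints lie in $\alpha \cap S_0 = \cpa{p_1,p_2}$; at least one such arc is present, since $\partial D$ crosses $S_0$ transversely at $p_1$ and $p_2$. An innermost circle $c$ of $D \cap S_0$ in $D$ cannot separate $p_1$ from $p_2$ on $S_0$ --- any arc of $D \cap S_0$ from $p_1$ to $p_2$ would otherwise be forced to cross $c$ --- so one of the two disks into which $c$ divides $S_0$ avoids $\cpa{p_1,p_2}$. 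A standard innermost-circle surgery using this disk, pushed slightly off $S_0$, eliminates $c$ from $D \cap S_0$ without disturbing $\alpha$. Iterating, $D \cap S_0$ reduces to a family of parallel arcs running from $p_1$ to $p_2$ in $D$. The outermost such arc $a$ on the side of $D$ toward $\hat{B}_1$ then cuts off a sub-disk $D_+ \subset \hat{B}_1$ whose boundary is a cyclic concatenation of four arcs: $a \subset S_0$, a sub-arc of $\tau_1$ in $\hat{B}_1$, $\gamma \subset \Sigma$, and a sub-arc of $\tau_2$ in $\hat{B}_1$ --- exactly the boundary pattern of a pie-slice disk in $\varepsilon_2$.

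The remaining step is to upgrade the existence of $D_+$ to a diffeomorphism of pairs $(\hat{B}_1, \tau \cap \hat{B}_1) \approx \varepsilon_2$; pulling back via $\phi$ then yields $B_1 \approx \varepsilon_2$ or $B_2 \approx \varepsilon_2$, as required. I expect this last step to be the main obstacle. It should be carried out by a bootstrapping construction modeled on the final paragraph of the proof of Proposition~\ref{sigma_en}: specify the diffeomorphism successively on $\partial \hat{B}_1$ (matching tangle endpoints), on the tangle arcs of $\hat{B}_1$, on $D_+$, on a smooth regular neighborhood of $\partial \hat{B}_1 \cup (\tau \cap \hat{B}_1) \cup D_+$ in $\hat{B}_1$, and finally on the remaining $3$-ball in $\hat{B}_1$ via the smooth Schoenflies theorem together with standard corner-rounding and collar-isotopy arguments.
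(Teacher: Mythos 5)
Your argument is correct in outline and takes a genuinely different route from the paper's. The paper first normalizes the \emph{sphere}: it isotops $\Sigma$ onto the level sphere through $p_2$ by an ambient isotopy relative to the boundary and to the radial strand $\tau_2$ (reusing the innermost-circle machinery from the proof of Proposition~\ref{sigma_en}), after which the diagram $K$ is visibly split as a connected sum $K_1\# K_2$ and primality immediately forces one factor, hence one sub-block, to be trivial. You instead leave $\Sigma$ where it is, cap off to $(S^3,k)$, apply primality directly to $\Sigma$ to obtain an unknotted ball-arc pair on one side, and then convert that back into triviality of the corresponding half of $B(k)$ by normalizing a \emph{spanning disk} of the unknotted arc. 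Your version makes the role of primality more transparent (it is invoked in its textbook form) and avoids the sphere-normalization, at the cost of the disk-normalization plus the final bootstrapping; the costs are comparable, and your deferred last step does go through by the regular-neighborhood/Schoenflies scheme you indicate, since $\widehat{B}_1$ cut along the pie-slice disk $D_+$ is a simply connected compact $3$-manifold in $\R^3$ with sphere boundary, hence a ball.

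Two small repairs to your middle paragraph. First, $D\cap S_0$ is a compact $1$-manifold properly embedded in $D$ with boundary $\partial D\cap S_0=\cpa{p_1,p_2}$, so once the circles are gone it consists of \emph{exactly one} arc from $p_1$ to $p_2$, not a family of parallel arcs; this only simplifies your picture ($a$ is that arc and $D_+$ is the component of $D-a$ containing $\gamma$). Second, your circle-removal move pairs a circle $c$ innermost in $D$ with a disk $E\subset S_0$ merely avoiding $\cpa{p_1,p_2}$; but $\Int{E}$ may still meet $D$ (in other circles of $D\cap S_0$, or in the arc component), so a push-off of $E$ need not be disjoint from $D-D_c$ and the surgered surface need not be embedded. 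Perform the surgery instead at a circle that is innermost in $S_0$, i.e.\ bounding a disk $E$ with $\Int{E}\cap D=\emptyset$; such a circle exists whenever circles are present, its disk automatically misses $p_1$ and $p_2$ (these are endpoints of the arc component, which $\Int{E}$ avoids), and replacing the sub-disk of $D$ bounded by $c$ with a push-off of $E$ strictly reduces $\card{D\cap S_0}$, so the process terminates as you intend.
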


\begin{proof}
Let $B(k)=\pa{S^2\br{1,2},\tau}$ where $\tau_2$ is radial and $\tau_1$ contains the diagram $K$ for $k$ as in Figure~\ref{knot_block}.
Let $\Sigma$ be a $2$-sphere embedded in the interior of $S^2\br{1,2}$, transverse to $\tau$, and intersecting each component $\tau_i$ of $\tau$
in one point $p_i$.
Perturb $\Sigma$ so it coincides with the level sphere through $p_2$ near $\tau_2$.
All isotopies will be ambient and relative to a neighborhood of both $\partial S^2[1,2]$ and $\tau_2$.
Subsets that move will be called by their original names.
As in the proof of Proposition~\ref{sigma_en}, we isotop $\Sigma$ to the level sphere containing $p_2$.
Push $\tau_1$ away from $\tau_2$ by integrating a vector field tangent to level spheres.
The result is shown in Figure~\ref{knot_block_comp}.
\begin{figure}[h!]
    \centerline{\includegraphics[scale=1.0]{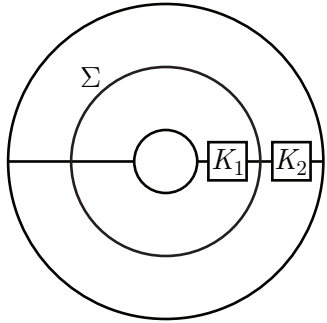}}
    \caption{Block $B(k)$ after ambient isotopy carrying $\Sigma$ to a level sphere.}
\label{knot_block_comp}
\end{figure}
As $k$ is prime, one of the diagrams $K_1$ or $K_2$ must be trivial.
\end{proof}

\begin{corollary}\label{inf_ib}
There exists a countably infinite collection of irreducible knot blocks, pairwise distinct up to diffeomorphism.
\end{corollary}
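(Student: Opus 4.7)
The plan is to reduce the corollary to a standard existence statement about prime knots with pairwise nonisomorphic knot groups. By Lemma~\ref{pk_ib}, every prime knot $k\subset S^3$ yields an irreducible knot block $B(k)$. The discussion in Example~\ref{knot_blocks} shows that $B(k)\approx B(k')$ implies $S^3-k\approx S^3-k'$, and in particular $\p{S^3-k}\cong\p{S^3-k'}$. Hence it suffices to exhibit a countably infinite family of prime knots whose groups are pairwise nonisomorphic.

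For this I would take the $(2,2n+1)$-torus knots $k_n:=T(2,2n+1)$ for $n\in\Z^+$. Every torus knot is prime, so Lemma~\ref{pk_ib} gives at once that each $B(k_n)$ is irreducible. The classical torus-knot group presentation
\[
\p{S^3-k_n}=\fg{a,b\mid a^2=b^{2n+1}}
\]
has infinite cyclic center generated by $a^2=b^{2n+1}$, with central quotient the free product $\Z/2\ast\Z/(2n+1)$. The abelianization of this quotient is $\Z/2\oplus\Z/(2n+1)\cong\Z/(2(2n+1))$, a cyclic group whose order depends on $n$. Since passing to the center and then abelianizing the quotient is a group-theoretic invariant, the groups $\p{S^3-k_n}$ are pairwise nonisomorphic.

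Assembling these ingredients, $\{B(k_n):n\in\Z^+\}$ is the required countably infinite collection of pairwise nondiffeomorphic irreducible knot blocks. There is no substantive obstacle: the corollary is essentially a packaging of Lemma~\ref{pk_ib}, the knot-group remark from Example~\ref{knot_blocks}, and the classical structure of torus-knot groups. The only mild care needed is to choose an infinite family of prime knots for which nonisomorphism of groups is transparent, and the $(2,2n+1)$-family is the most economical such choice.
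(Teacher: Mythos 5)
Your proposal is correct and follows essentially the same route as the paper: take a countably infinite family of torus knots, invoke primality plus Lemma~\ref{pk_ib} for irreducibility, and use the observation from Example~\ref{knot_blocks} that diffeomorphic knot blocks force homeomorphic (hence group-isomorphic) knot complements. The only difference is cosmetic --- the paper cites Burde--Zieschang for the pairwise nonisomorphism of torus knot groups over the full coprime family $\mathfrak{t}(a,b)$, whereas you restrict to the $(2,2n+1)$ subfamily and verify the nonisomorphism directly via the center and the abelianized central quotient, which is a perfectly sound (and self-contained) substitute.
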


\begin{proof}
Let $\mathcal{T}$ denote the set of torus knots $\mathfrak{t}(a,b)$ where $a>b\geq 2$ and $\gcd(a,b)=1$.
As torus knots are prime~\cite[p.~95]{bz}, Lemma~\ref{pk_ib} implies that each $B(k)$, $k\in\mathcal{T}$, is irreducible.
The fundamental groups of these torus knots are pairwise nonisomorphic~\cite[p.~47]{bz}.
By Example~\ref{knot_blocks}, these knot blocks are pairwise distinct up to diffeomorphism.
\end{proof}

\begin{remark}
Under concatenation, knot blocks commute, unlike distinct blocks in $\B$ (recall Table~\ref{magma_data_2}).
Concatenating infinitely many knot blocks yields a multiray in $\R^3$ known as \emph{Wilder rays}.
They were classified by Fox and Harrold~\cite{foxharrold}.
\end{remark}

\section{Unknotted Ball-Arc Pairs}\label{ball_arc_pairs}

This section identifies some unknotted ball-arc pairs in blocks.
These tools will be used in the next two sections.
Recall that a \textbf{ball-arc pair} is a pair $\pa{\Delta,a}$ such that $a\approx D^1$ is neatly embedded in $\Delta\approx D^3$.
Such a pair is \textbf{unknotted} provided it is diffeomorphic to the standard pair $\pa{D^3, \cpa{(0,0)}\times D^1}$, and otherwise it is \textbf{knotted}.
If $k\subset S^3$ is a smooth knot (not the unknot) and $\pa{D,b}\subset\pa{S^3,k}$ is an unknotted ball-arc pair such that $D\cap k=b$, then
$\pa{S^3 -\Int{D}, k-\Int{b}}$ is a knotted ball-arc pair. Every knotted ball-arc pair arises this way up to diffeomorphism.\\

\begin{lemma}\label{ball-arc}
Let $\varepsilon_2=\pa{S^2\br{1,2},\tau}$.
Let $\Sigma$ be a $2$-sphere embedded in the interior of $S^2\br{1,2}$ and transverse to $\tau$.
Assume $\Sigma$ meets $\tau$ at exactly two points $p$ and $q$, both of which lie on one component of $\tau$, say $\tau_1$.
Then, $\Sigma$ bounds a $3$-disk, $\Delta$, in $S^2\br{1,2}$ and $\pa{\Delta,\Delta\cap\tau}$ is an unknotted ball-arc pair.
\end{lemma}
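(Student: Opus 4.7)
The plan is first to identify the ball-arc pair $(\Delta, \Delta\cap\tau)$ using Lemma~\ref{ess_sphere}, and then to exhibit a spanning disk for the arc inside $\Delta$ via an innermost-disk argument applied to an unknotted completion of $\tau_1$ in $S^3$.

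For the first part, since $\Sigma$ is disjoint from the radial arc $\tau_2$, one has $\#_2(\tau_2,\Sigma)=0$; by Lemma~\ref{ess_sphere}, $\Sigma$ is inessential in $S^2\br{1,2}$ and bounds a unique $3$-disk $\Delta\subset S^2\br{1,2}$, necessarily disjoint from $\partial S^2\br{1,2}$. Because the endpoints of $\tau_1$ lie on $\partial S^2\br{1,2}$, the two transverse intersection points $p$ and $q$ of $\Sigma$ with $\tau_1$ must cobound a single subarc $a=\Delta\cap\tau_1=\Delta\cap\tau$ of $\tau_1$, so $(\Delta,a)$ is indeed a ball-arc pair.

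To show it is unknotted, I would view $\varepsilon_2\subset\R^3\subset S^3$ with $\tau_1$ lying on a radial line through the origin; extending this line and adjoining $\infty$ yields an unknotted circle $K\subset S^3$ meeting $S^2\br{1,2}$ in exactly $\tau_1$, so that $\Sigma\cap K=\cpa{p,q}$. Since $K$ is unknotted, it bounds a smooth embedded disk $D\subset S^3$; put $D$ in general position with $\Sigma$. Then $D\cap\Sigma$ is a compact $1$-manifold with boundary $\cpa{p,q}$, so it consists of one arc $\alpha$ from $p$ to $q$ together with finitely many simple closed curves.

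The main step, and the main obstacle, is to eliminate those closed curves by innermost-disk surgery: if $C$ is innermost in $D$, it bounds a subdisk $D_C\subset D$ meeting $\Sigma$ only along $C$ and also a subdisk $E_C\subset\Sigma$; replacing $D_C$ by a small push-off of $E_C$ to the side of $\Sigma$ on which $D_C$ lies yields a new embedded disk bounded by $K$ with strictly fewer circles of intersection with $\Sigma$. One must verify that the push-off remains embedded and disjoint from $D\setminus D_C$, which is handled in the standard way by working inside a bicollar of $\Sigma$. After finitely many exchanges $D\cap\Sigma=\alpha$, and $\alpha$ separates $D$ into two subdisks; the one contained in $\Delta$ has boundary $a\cup\alpha$ and exhibits $a$ as boundary-parallel in $\Delta$, which is precisely the definition of $(\Delta,a)$ being unknotted.
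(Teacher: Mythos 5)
Your first paragraph is fine and matches the paper's starting point (inessentiality of $\Sigma$ via Lemma~\ref{ess_sphere} applied to $\tau_2$, so that $\Delta\cap\tau$ is a single subarc $a$ of $\tau_1$). Your second half takes a genuinely different route from the paper -- the paper flattens the picture into the annulus $\Pi=S^2\br{1,2}\cap(xy\tn{-plane})$ containing $\tau$, cleans up $\Sigma\cap\Pi$ by innermost circles, and then builds the diffeomorphism to the standard pair by hand -- whereas you close $\tau_1$ up to an unknot $K$, span $K$ by a disk $D$, and do the innermost-circle argument in $D$ to produce a boundary-parallelism for $a$ in $\Delta$. That strategy is sound, but as written it has a genuine gap at the load-bearing step.

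The gap is the claim that your completion meets $S^2\br{1,2}$ in exactly $\tau_1$, hence that $K\cap\Sigma=\cpa{p,q}$. The line through the origin containing the radial arc $\tau_1$ meets $S^2\br{1,2}$ in \emph{two} radial arcs, $\tau_1$ and its antipode $-\tau_1$, and nothing in the hypotheses prevents $\Sigma$ from intersecting $-\tau_1$ (you only know $\#_2(-\tau_1,\Sigma)=0$, so it meets $\Sigma$ an even, possibly positive, number of times). If $K\cap\Sigma\supsetneq\cpa{p,q}$, then $D\cap\Sigma$ need not contain an arc joining $p$ to $q$ -- $p$ may instead be paired with a point of $(-\tau_1)\cap\Sigma$ -- and the final subdisk argument collapses. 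This is not a cosmetic issue: the paper's Remark~5.2(1) shows the conclusion is \emph{false} for $\varepsilon_1$, and the failure mode there is exactly that the only available closure of the single tangle component re-enters the thickened sphere where $\Sigma$ can catch it. So the hypothesis $\Sigma\cap\tau_2=\emptyset$ must be used a second time, beyond inessentiality, and your proof never does. The fix is easy once seen: close up through $\tau_2$ instead, joining the inner endpoints of $\tau_1$ and $\tau_2$ by an arc in $\cpa{\norm{x}\leq 1}$ and the outer endpoints by an arc in $\cpa{\norm{x}\geq 2}\cup\cpa{\infty}$, so that $K$ is a visibly unknotted circle meeting $S^2\br{1,2}$ in exactly $\tau_1\cup\tau_2$ and hence meeting $\Sigma$ in exactly $\cpa{p,q}$; the rest of your argument then goes through. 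Two smaller points: in the innermost-circle surgery you should take $C$ innermost on $\Sigma$ (or argue via the ball bounded by $D_C\cup E_C$) to guarantee the exchanged disk is embedded; and boundary-parallelism of $a$ in $\Delta$ is not the paper's \emph{definition} of unknotted ball-arc pair (which is diffeomorphism with the standard pair), so a final regular-neighborhood-plus-Schoenflies step, of the ``bootstrapping'' kind used in the proof of Proposition~\ref{sigma_en}, is still needed.
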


\begin{proof}
By Lemma~\ref{ess_sphere}, $\Sigma$ is inessential in $S^2\br{1,2}$.
Let $\Pi$ denote the annulus where $S^2\br{1,2}$ meets the $xy$-plane.
Without loss of generality, $\tau\subset\Pi$ and $\Sigma$ is transverse to $\Pi$.
Thus, $\Sigma\cap\Pi$ is a closed $1$-manifold and one component, $K$, of $\Sigma\cap\Pi$ must contain $p$ and $q$;
this is where the hypothesis $\Sigma\cap\tau_2=\emptyset$ is used.
As in the proof of Proposition~\ref{sigma_en} (paragraph three), we may arrange that $\Sigma\cap\Pi=K$.
Now, it is straightforward to construct the required diffeomorphism (cf. paragraph four of the proof of Proposition~\ref{sigma_en}).
\end{proof}

\begin{remarks}
\noindent\begin{enumerate}[label=(\arabic*),leftmargin=*]\setcounter{enumi}{0}
\item Lemma~\ref{ball-arc} becomes false without the hypothesis $\Sigma\cap\tau_2=\emptyset$ (i.e., with $\varepsilon_2$ replaced by $\varepsilon_1$).
To see this, consider the block $B=\pa{S^2\br{1,2},\tau'}$ in Figure~\ref{sphere_two_points} (left).
\begin{figure}[h!]
    \centerline{\includegraphics[scale=1.0]{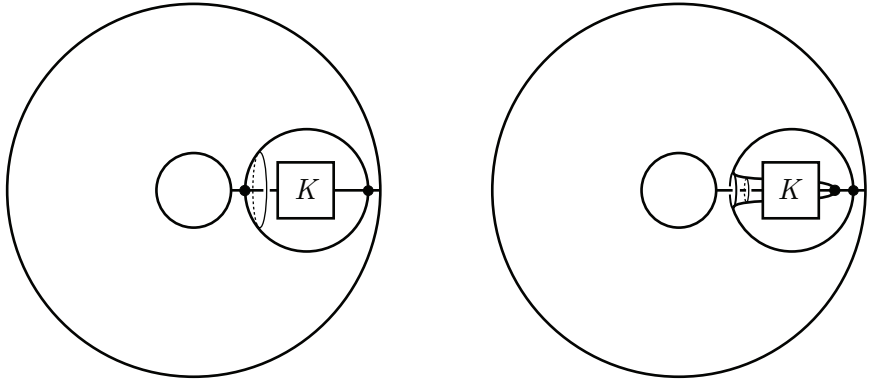}}
    \caption{Sphere $\Sigma'$ meeting the tangle $\tau'$ at two points in the block $B\approx\varepsilon_1$.
   						At right is the result of an ambient isotopy that fixes $\tau'$ setwise.}
\label{sphere_two_points}
\end{figure}
The indicated sphere $\Sigma'$ meets $\tau'$ in two points and bounds the $3$-ball $\Delta'$.
Let $K$ be any crossing diagram such that $\pa{\Delta',\Delta'\cap\tau'}$ is a knotted ball-arc pair.
Straighten $\tau'$ using the argument in Example~\ref{one_comp}.
Let $\tau$, $\Sigma$, and $\Delta$ denote the respective images of $\tau'$, $\Sigma'$, and $\Delta'$ under this ambient isotopy.
Then, $\Sigma$ is a $2$-sphere in $\varepsilon_1$ meeting $\tau$ in exactly two points and transversely.
However, $\pa{\Delta,\Delta\cap\tau}\approx \pa{\Delta',\Delta'\cap\tau'}$ is a knotted ball-arc pair.
\item Lemma~\ref{ball-arc} and the previous remark may be recast in $\R^3$ as follows.
Consider a $2$-sphere $\Sigma\subset\R^3$.
Let $\Delta\subset\R^3$ be the $3$-disk with $\partial\Delta=\Sigma$.
Suppose $a$ is a straight arc in $\R^3$ that is neatly embedded in $\Delta$.
Let $l\subset\R^3$ be the straight line containing $a$.
If $\Sigma$ is disjoint from $l-a$, then $\pa{\Delta,a}$ is an unknotted ball-arc pair.
If $\Sigma$ meets $l-a$, then $\pa{\Delta,a}$ may be a knotted ball-arc pair.
In fact, every knotted ball-arc pair $\pa{\Delta',a'}$ in $\R^3$ is ambient isotopic to some such $\pa{\Delta,a}$.
Proof: (i) straighten $a'$ near an endpoint $q'$, (ii) let $q''\neq q'$ be a point in the straightened end of $a'$, and (iii) ambiently isotop the other endpoint of $a'$ along $a'$ until it concides with $q''$ (cf. Figure~\ref{sphere_two_points} (right)). $\square$
\end{enumerate}
\end{remarks}

\begin{lemma}\label{disk_ess_sphere}
Let $a$ be a neatly embedded arc in $S^2\br{t_1,t_2}$.
Assume $a$ has one boundary point in $S^2\br{t_1}$ and the other in $S^2\br{t_2}$.
Let $D_1$ be a $2$-disk embedded in the interior of $S^2\br{t_1,t_2}$.
Assume that $C:=\partial D_1$ lies in some $S^2\br{t}$, $C$ is disjoint from $a$, $\Int{D_1}$ is disjoint from $S^2\br{t}$, and $D_1$ is transverse to $S^2\br{t}$.
Let $D_2$ and $D'_2$ be the two $2$-disks in $S^2\br{t}$ bounded by $C$.
Then:
\begin{enumerate}\setcounter{enumi}{\value{equation}}
\item\label{indneq} The intersection numbers $\#_2(a,D_2)$ and $\#_2(a,D'_2)$ are unequal.
\item The intersection numbers $\#_2(a,D_1)$ and $\#_2(a,D_2)$ are equal (after possibly interchanging the names of $D_2$ and $D'_2$).
\item The sphere $D_1\cup D_2$ is inessential in $S^2\br{t_1,t_2}$.
\item The sphere $D_1\cup D'_2$ is essential in $S^2\br{t_1,t_2}$.
\setcounter{equation}{\value{enumi}}
\end{enumerate}
\end{lemma}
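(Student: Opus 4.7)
My plan is to deduce all four conclusions from Lemma~\ref{ess_sphere} together with the additivity of the mod~$2$ intersection number. The only preliminary step is transversality: a small perturbation of $a$, supported away from $\partial S^2\br{t_1,t_2}$ and from a neighborhood of $a\cap D_1$, makes $a$ transverse to $S^2\br{t}$, and hence transverse to $D_2$ and $D_2'$ as well (since $C=\partial D_2=\partial D_2'$ is disjoint from $a$). This perturbation leaves every mod~$2$ intersection number that will appear unchanged. A level sphere $S^2\br{t}$ with $t_1<t<t_2$ separates $S^2\br{t_1,t_2}$ into two thickened spheres, neither of which is a $3$-disk, so $S^2\br{t}$ is essential; by Lemma~\ref{ess_sphere}, $\#_2(a,S^2\br{t})=1$.

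The first conclusion is then immediate. Because $D_2$ and $D_2'$ have disjoint interiors, union $S^2\br{t}$, and common boundary $C$ disjoint from $a$, the set $a\cap S^2\br{t}$ is the disjoint union of $a\cap D_2$ and $a\cap D_2'$, so
\[
\#_2(a,D_2)+\#_2(a,D_2')=\#_2(a,S^2\br{t})=1\pmod 2,
\]
which forces the two intersection numbers to be unequal.

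For the remaining three parts, I would apply the same additivity to the two spheres $\Sigma:=D_1\cup D_2$ and $\Sigma':=D_1\cup D_2'$: each is a union of two surfaces meeting only along $C$, and $C$ misses $a$, hence
\[
\#_2(a,\Sigma)+\#_2(a,\Sigma')=\#_2(a,D_2)+\#_2(a,D_2')=1\pmod 2.
\]
Thus exactly one of $\Sigma,\Sigma'$ has odd intersection number with $a$; by Lemma~\ref{ess_sphere} that one is essential and the other is inessential. After interchanging the names of $D_2$ and $D_2'$ if necessary so that $\Sigma=D_1\cup D_2$ is the inessential sphere and $\Sigma'=D_1\cup D_2'$ is the essential one, conclusions (3) and (4) are established. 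Finally, with this labeling $\#_2(a,\Sigma)=0$, and additivity once more (using $D_1\cap D_2=C$ disjoint from $a$) gives $\#_2(a,D_1)=\#_2(a,D_2)\pmod 2$, which is conclusion (2). I do not expect a genuine obstacle in this argument; it is essentially bookkeeping around Lemma~\ref{ess_sphere} and the additivity of $\#_2$ over a union of $2$-manifolds whose common boundary curve misses the arc, with the mild perturbation above handling general position.
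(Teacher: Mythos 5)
Your proposal is correct and is exactly the argument the paper intends: the paper's proof is the single line ``Immediate by Lemma~\ref{ess_sphere},'' and your write-up simply supplies the bookkeeping (additivity of $\#_2$ over unions along $C$, which misses $a$, plus $\#_2(a,S^2\br{t})=1$) that makes ``immediate'' honest. The only detail worth a passing remark is that $D_1\cup D_2$ is a sphere with a corner along $C$, which must be smoothed (or Lemma~\ref{ess_sphere} read in the piecewise smooth category) before that lemma is applied; this is harmless and the paper elides it too.
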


\begin{proof}
Immediate by Lemma~\ref{ess_sphere}.
\end{proof}

Let $\R^n_+ :=\cpa{x\in\R^n\mid x_n\geq 0}$ denote closed upper half space.
The closed upper half disk is $D^n_+ := D^n \cap \R^n_+$.

\begin{lemma}\label{ball-arc_diffeo}
Let $\varepsilon_2=\pa{S^2\br{1,2},\tau}$.
Let $\tau_1$ be a component of $\tau$ and let $q=\tau_1\cap S^2\br{2}$.
Suppose $D_1$ is a $2$-disk neatly embedded in $S^2\br{1,2}$ such that: (i) $D_1$ is transverse to $\tau$, (ii) $D_1$ meets $\tau$ at one point $p\in\Int{\tau_1}$,
and (iii) $C=\partial D_1$ lies in $S^2\br{2}$.
Let $D_2$ be the $2$-disk in $S^2\br{2}$ with boundary $C$ and containing $q$.
Then, $D_1 \cup D_2$ bounds a piecewise smooth $3$-disk $D\subset S^2\br{1,2}$ and $D\cap\tau_2=\emptyset$.
Further, there is a diffeomorphism of pairs $g:\pa{D,D\cap\tau}\to \pa{D^3_+,\cpa{\pa{0,0}}\times D^1_+}$ that sends $D_1$ to the upper hemisphere and $D_2$ to $D^2\times\cpa{0}$.
\end{lemma}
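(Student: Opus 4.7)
The plan is to first establish the topological structure of $D$ via Schoenflies arguments, then to locate $D$ relative to $S^2\br{1}$ and $\tau_2$ by tracking the components of $\tau$, and finally to build the explicit diffeomorphism via Lemma~\ref{ball-arc}.

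View $S^2\br{1,2}$ as sitting inside the closed $3$-disk $D^3\cup S^2\br{1,2}$ of radius $2$, so that $D_1$ properly embeds in this larger disk with $\partial D_1 = C \subset S^2\br{2}$. After rounding the corner of $D_1\cup D_2$ along $C$ and applying the smooth $3$-dimensional Schoenflies theorem, $D_1$ separates the closed $3$-disk of radius $2$ into two smooth $3$-disks. The corresponding piecewise smooth regions are $D$, bounded by $D_1\cup D_2$, and $D'$, bounded by $D_1\cup D'_2$, where $D'_2 := S^2\br{2}-\Int D_2$. To see that $D \subset S^2\br{1,2}$, I would track $\tau_1$: near $q \in \Int D_2 \subset \partial D$ the arc $\tau_1$ lies in $\Int D$, and at its unique transverse crossing $p$ with $D_1$ it switches components, so the $S^2\br{1}$ endpoint of $\tau_1$ lies in $D'$. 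Since $S^2\br{1}$ is connected and disjoint from $D_1\cup D_2$, this forces $S^2\br{1}\subset D'$, hence $D^3\subset D'$ and $D\subset S^2\br{1,2}$. The same reasoning applied to $\tau_2$, which is disjoint from $D_1$ by hypothesis and starts on $S^2\br{1}\subset D'$, shows $\tau_2 \subset D'$, so $D\cap\tau_2 = \emptyset$.

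For the diffeomorphism $g$, I would first show $(D, D\cap \tau_1)$ is an unknotted ball-arc pair (with piecewise smooth boundary) by perturbing $D_1\cup D_2$ into $\Int S^2\br{1,2}$: push $D_2$ radially inward by a small $\delta>0$ to a parallel disk $D_2^-$ in $S^2\br{2-\delta}$ and modify $D_1$ correspondingly near $C$, producing a smooth $2$-sphere $\Sigma' := D_1^-\cup D_2^-$ in $\Int S^2\br{1,2}$ that meets $\tau$ at the two points $p$ and $\tau_1\cap S^2\br{2-\delta}$, both on $\tau_1$. By Lemma~\ref{ball-arc}, $\Sigma'$ bounds an unknotted ball-arc pair in $S^2\br{1,2}$, and $(D, D\cap \tau_1)$ differs from this only by a trivial collar. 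To produce $g$ itself, I would run a bootstrapping construction analogous to the one in the proof of Proposition~\ref{sigma_en}: fix an affine identification of the arc $D\cap \tau_1$ with $\cpa{\pa{0,0}}\times D^1_+$ and of $C$ with $S^1\times\cpa{0}$; extend to diffeomorphisms from $D_1$ onto the upper hemisphere and from $D_2$ onto $D^2\times\cpa{0}$ using the smooth $2$-dimensional Schoenflies theorem together with the fact that every diffeomorphism of $S^1$ extends to one of $D^2$; finally, extend to all of $D$ using the smooth $3$-dimensional Schoenflies theorem and the fact that every diffeomorphism of $S^2$ extends to one of $D^3$.

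The main obstacle is this final extension step, where the decomposition $\partial D = D_1\cup D_2$ must be matched with the hemispherical decomposition of $\partial D^3_+$ simultaneously with sending $D\cap \tau_1$ onto the axis $\cpa{\pa{0,0}}\times D^1_+$. This is handled by combining the unknottedness supplied by Lemma~\ref{ball-arc} with corner-rounding along $C$ (to invoke the smooth Schoenflies theorems) and the bootstrapping extension scheme of Proposition~\ref{sigma_en}.
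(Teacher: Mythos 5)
Your proof is correct and follows essentially the same route as the paper: the paper cites Lemma~\ref{disk_ess_sphere} (itself a Schoenflies/intersection-number argument) to locate $D$ in $S^2\br{1,2}$ and to see $D\cap\tau_2=\emptyset$, where you rederive this by tracking $\tau_1$ and $\tau_2$ across $D_1$, and both arguments then build $g$ by the same bootstrapping extension over $\partial D$, a regular neighborhood, and the interior, with Lemma~\ref{ball-arc} supplying the unknottedness needed for the final extension.
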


\begin{proof}
By Lemma~\ref{disk_ess_sphere} with $a=\tau_1$, $D_1 \cup D_2$ is inessential in $S^2\br{1,2}$.
By hypothesis, $D_1\cap\tau_2=\emptyset$.
So, $D_2\cap\tau_2=\emptyset$, and $D\cap\tau_2=\emptyset$ as well.
The required diffeomorphism $g$ is constructed in bootstrapping fashion (cf. paragraph four of the proof of Proposition~\ref{sigma_en}):
define $g$ on $\partial D$, extend to a smooth, regular neigborhood of $\partial D$ in $D$, and extend to the rest of $D$ utilizing Lemma~\ref{ball-arc}.
\end{proof}

\section{Irreducibility of Blocks in \texorpdfstring{$\B$}{B}}\label{AB_irred}

\begin{theorem}\label{A_irred}
Each block in $\B$ is irreducible.
\end{theorem}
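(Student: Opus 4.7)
The plan is to reduce to the single block $A$ and then use the immersion $f:\Omega\to S^2\br{1,2}$ of Figure~\ref{immersion} to analyze an arbitrary essential sphere that meets $T$ in three points. First, observe that the diffeomorphisms of Table~\ref{obvious_homeos} (reflection across the $xy$-plane giving $A\approx\overline{A}$, inversion giving $A\approx A^{\ast}$, and their composite giving $A\approx\overline{A}^{\ast}$) identify the four blocks of $\B$ up to diffeomorphism of pairs. Since irreducibility (in the sense of Definition~\ref{irred_def_1}) is a diffeomorphism invariant of the pair---being phrased purely in terms of diffeomorphism to concatenations $B_1B_2$ and to trivial blocks---it suffices to prove that $A$ is irreducible. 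I will work with the equivalent second definition (Definition~\ref{irred_def_2}, justified by Proposition~\ref{defs_equiv}).

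Let $\Sigma\subset\tn{Int}\pa{S^2\br{1,2}}$ be a $2$-sphere, transverse to $T$, meeting each of the three components of $T$ at exactly one point. Let $X=f(\Omega)$ denote the image $2$-complex (a torus attached to an annulus) from Figure~\ref{immersion}; note $T\subset X$. After a small perturbation put $\Sigma$ in general position with respect to $X$, so that $\Sigma\cap X$ is a disjoint union of simple closed curves in the smooth part of $X$ together with a finite number of transverse crossings at the double locus of $f$. The next step is to simplify $\Sigma\cap X$ by standard innermost-disk arguments: any curve of $\Sigma\cap X$ that bounds a disk in $\Sigma$ disjoint from $T$ and also bounds a disk in (a sheet of) $X$ disjoint from $T$ can be removed by an isotopy supported in a neighborhood of a $3$-ball, invoking Lemma~\ref{ess_sphere} and Lemma~\ref{disk_ess_sphere} at each stage to verify that the isotopy can be arranged to fix $T$ setwise and not to create new intersections with $T$.

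Once $\Sigma\cap X$ is reduced to a minimal configuration, the hypothesis $\card{\Sigma\cap T}=3$ (one point per component) together with Lemma~\ref{ball-arc} and Lemma~\ref{ball-arc_diffeo} force the remaining intersection curves to be parallel in $X$ and to bound, on one side of $\Sigma$, a region diffeomorphic to a standard piece of $X\times(\text{interval})$ containing an initial radial sub-tangle. Concretely, pulling this configuration back through $f$ to the domain annulus $\Omega$ (where the tangle is honestly radial) reveals that the three punctures of $\Sigma$ are arranged on a single separating curve, and the region $R$ between $\Sigma$ and one boundary sphere of $S^2\br{1,2}$ is diffeomorphic rel~$T\cap R$ to a trivial block. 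A final application of Corollary~\ref{trivial_block_cor} (Tangle Radialization) straightens $\tau\cap R$, so $\eta|_T$ has no critical points in $R$, and the isotopy furnished by the second definition of irreducibility can be assembled as required.

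The main obstacle will be the second step. The complex $X$ is not a manifold: it has essential circles of double points (this is exactly why $A$ fails to be trivial in the first place), so ordinary two-sided innermost-disk arguments must be replaced by a careful case analysis at each double circle, together with bookkeeping to ensure that simplifications on one sheet of $X$ do not undo simplifications on the other. The payoff is that $\Omega$ is a \emph{bona fide} trivial pair, so once $\Sigma\cap X$ has been driven to the minimal configuration and pulled back to $\Omega$, the remainder of the argument is essentially the same as in the proof of Proposition~\ref{sigma_en}.
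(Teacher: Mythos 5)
Your framework coincides with the paper's: reduce to $A$, exploit the immersion $f$, pull $\Sigma$ back to the annulus $\Omega$ where the tangle is radial, simplify by isotopies, and finish as in Proposition~\ref{sigma_en}. You also correctly locate the difficulty at the double locus of $f$. But the proposal stops exactly where the real work begins, and the two assertions you make about how that difficulty resolves are not correct as stated. First, it is not true that every simplifying isotopy ``can be arranged to fix $T$ setwise.'' The reduction operations of innermost-disk type handle disks in $\Omega$ meeting at most one special point; an innermost inessential curve of $\sigma=f^{-1}(\Sigma)$ whose disk contains \emph{both} special points of a sector (Lemma~\ref{2special}) cannot be removed by any $T$-preserving isotopy. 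The paper instead performs an isotopy that genuinely moves $T_1$, then uses the Borromean property of $A$ to straighten two components and wind the third around one of them, concluding that such a configuration would force $A\approx\varepsilon_3$ and contradicting Corollary~\ref{Anottrivial}. This global argument, which leans on $A$ being a nontrivial Borromean block rather than on local disk swaps, is indispensable and absent from your plan.

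Second, once all inessential components of $\sigma$ are gone, one must still show the surviving essential curve $K$ misses the subannulus $\T$, i.e., that $\Sigma$ avoids the torus $\T'$. Your claim that the minimal configuration consists of curves ``parallel in $X$'' bounding a standard product region is unsubstantiated, and the actual mechanism is of a different nature: any two disjoint essential curves of $\Sigma\cap\T'$ are parallel of some type $(m,n)$ on $\T'$ and, lying on the sphere $\Sigma$, must have linking number $mn=0$ in $\R^3$ (Lemma~\ref{essential}); on the other hand, the combinatorics of the ``extended segments'' of $K\cap\T$ force any such curve to wind $m\geq 1$ times meridionally and strictly between $m/6$ and $5m/6$ times longitudinally (Proposition~\ref{disjoint}), a contradiction. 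This linking-number obstruction---not bookkeeping of innermost disks across the two sheets---is what eliminates the essential double circles, and without it (or a substitute) the argument does not close.
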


The remainder of this section is devoted to proving $A$ is irreducible, which suffices to prove Theorem~\ref{A_irred}.
Recall the block $A=\pa{S^2\br{1,2},T}$ from Figure~\ref{four_blocks}.
The (general position) immersion $f$ yielding $A$ (see Figure~\ref{immersion}) plays a central role in our proof.
We pause to explain $f$ and fix some notation.
We assume the reader has Figure~\ref{immersion} at hand.\\

The domain of $f$ is $\Omega:=S^1\br{1,2}\subset\R^2$.
The compact annulus $\Omega$ contains three equally spaced radial arcs, $\tau_1$, $\tau_2$, and $\tau_3$, as in Figure~\ref{immersion_domain}.
\begin{figure}[h!]
    \centerline{\includegraphics[scale=1.0]{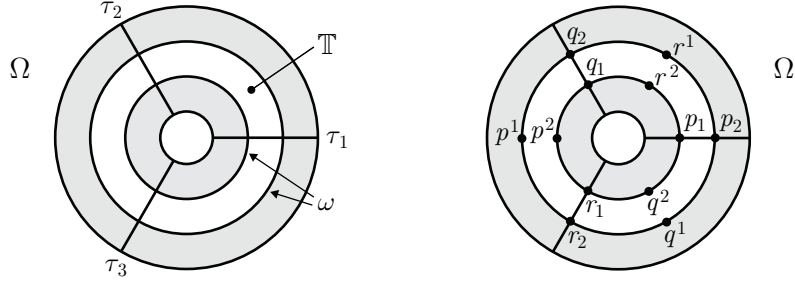}}
    \caption{Two labelings of the domain $\Omega$ of the immersion $f$.}
\label{immersion_domain}
\end{figure}
Let $\tau=\tau_1 \cup \tau_2 \cup \tau_3$.
Note that $f(\tau)=T$, the tangle in $A$.
We let $\T:=S^1\br{\sfrac{4}{3},\sfrac{5}{3}}$, the subannulus of $\Omega$ that is unshaded in Figure~\ref{immersion_domain}.
The boundary of $\T$ is $\omega=\partial \T$, the disjoint union of $S^1\br{\sfrac{4}{3}}$ and $S^1\br{\sfrac{5}{3}}$.\\

Given a subset $X\subset\Omega$, it will be convenient to let $X'$ denote $f(X)$.
(A notable exception is $T=f(\tau)$.)
In particular, $\Omega'=f(\Omega)$, $\T'=f(\T)$, and $\omega'=f(\omega)$.\\

For each $t\in\br{1,2}$, $\left.f\right|S^1\br{t}$ is an embedding, namely the composition of: a rigid rotation, a homothety, and a translation in the $z$-direction.
We will see that:
\begin{enumerate}[label=(\arabic*),leftmargin=*]\setcounter{enumi}{0}
\item The multiple points of $f$ are double points where $f\pa{S^1\br{\sfrac{4}{3}}}=f\pa{S^1\br{\sfrac{5}{3}}}=\omega'$.
\item $\omega'=S^1\br{\sfrac{5}{3}}\times\cpa{\varepsilon}$ where $0<\varepsilon<<1$.
\item $\T'$ is a torus in $S^2\br{1,2}$, smooth except for corners along $\omega'$.
\end{enumerate}
On $\partial \Omega$, $f$ is inclusion $\iota:(x,y)\mapsto(x,y,0)$.
\begin{figure}[h!]
    \centerline{\includegraphics[scale=1.0]{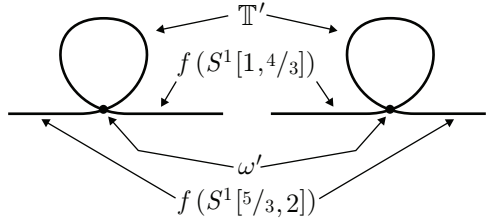}}
    \caption{Intersection of $\Omega'=f\pa{\Omega}$ with plane containing $z$-axis.}
\label{slice}
\end{figure}
Figure~\ref{slice} shows the intersection of $\Omega'$ with any plane in $\R^3$ containing the $z$-axis.
On each of the three subannuli of $\Omega$ in Figure~\ref{immersion_domain}, $f$ is defined as follows.
The annulus $S^1\br{1,\sfrac{4}{3}}$ is stretched radially to $S^1\br{1,\sfrac{5}{3}}$, then the outer boundary component is twisted by $\pi$ radians CCW
while fixing the inner boundary component, then level circles near the outer boundary component are lifted up a bit in the $z$-direction (to yield general position).
The annulus $S^1\br{\sfrac{5}{3},2}$ maps into $S^2\br{1,2}$ by $\iota$, then level circles near the inner boundary component are lifted up a bit in the $z$-direction
(again, to yield general position).
Finally, $f$ is defined on $\T$, interpolating $\left.f\right|S^1\br{1,\sfrac{4}{3}}$ and $\left.f\right|S^1\br{\sfrac{5}{3},2}$, so as to yield a torus $\T'=f\pa{\T}$ as in Figures~\ref{immersion} and~\ref{slice}.
The two components of $\omega$ are identified under $f$ after half a rotation of $S^1\br{\sfrac{4}{3}}$.
This completes our description of $f$.\\

For distinct $i,j\in\cpa{1,2,3}$, let $S_{i,j}$ denote the closed sector in $\Omega$ between $\tau_i$ and $\tau_j$ of angular measure $2\pi/3$.
Note that $\left.f\right|S_{i,j}$ is an embedding.
Fix distinct $i,j,k\in\cpa{1,2,3}$.
Observe that $T_k=f\pa{\tau_k}$ meets $f\pa{S_{i,j}}$ (transversely) at exactly two points.
For example, using the labelings in Figure~\ref{immersion_domain}, $T_1$ meets $f\pa{S_{2,3}}$ at the two points:
\[
	f(p_1)=f(p^1) \quad \tn{and} \quad f(p_2)=f(p^2)
\]
Similarly, $f(q_i)=f(q^i)$ and $f(r_i)=f(r^i)$ for $i=1$ and $2$.
The points $p^i$, $q^i$, and $r^i$, where $i=1$ and $2$, will be referred to as \textbf{special points}.\\

We prove $A$ is irreducible according to Definition~\ref{irred_def_1}.
It suffices to consider a $2$-sphere, $\Sigma$,
embedded in $\Int S^2\br{1,2}$, transverse to $T$, and meeting each component of $T$ at exactly one point.
We improve $\Sigma$ by ambient isotopies of $S^2\br{1,2}$ that fix $T$ setwise at all times.
By an abuse, we refer to each improved $\Sigma$ as $\Sigma$.
We view $\Omega'=f(\Omega)$ as an auxiliary object, unaffected by these isotopies.
Perturb $\Sigma$ so that $\Sigma\cap T$ is disjoint from $\omega'$.
Perturb $\Sigma$ again so that further $\Sigma$ meets $\Omega'$ in general position.
In particular, $\Sigma\cap\Omega'$ is an immersed, closed $1$-manifold in $\Sigma$ in general position.
Define:
\[
	\sigma:=f^{-1}\pa{\Sigma}=f^{-1}\pa{\Sigma\cap\Omega'}\subset\Omega
\]
which is an embedded, closed $1$-manifold in $\Int \Omega$, transverse to $\omega$ and $\tau$.
Each component, $\tau_i$, of $\tau$ meets $\sigma$ at exactly one point (not in $\omega$).
So, there exists one component, $K$, of $\sigma$ that meets each $\tau_i$ at one point (transversely) and $K$ is essential in $\Omega$.

\begin{claim}\label{suff_claim}
It suffices to arrange that $\sigma=K$ and $K\cap\T=\emptyset$.
\end{claim}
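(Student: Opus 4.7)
The plan is to show that, under the hypothesis $\sigma = K$ with $K\cap \T = \emptyset$, we can ambiently isotop $\Sigma$ in $S^2\br{1,2}$ (fixing $T$ setwise at all times) to a level $2$-sphere satisfying the conclusion of Definition~\ref{irred_def_2}; by Proposition~\ref{defs_equiv} this verifies Definition~\ref{irred_def_1} for $A$, completing the proof of Theorem~\ref{A_irred}.

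First I would observe that $\Omega\setminus\T = S^1\br{1,\sfrac{4}{3}} \sqcup S^1\br{\sfrac{5}{3},2}$, so the connected circle $K$ lies in one of these flat subannuli. Without loss of generality, $K \subset X := S^1\br{1,\sfrac{4}{3}}$ (the case $K \subset S^1\br{\sfrac{5}{3},2}$ is handled symmetrically, yielding the trivial block on the outer side of $\Sigma$). On $X$, the map $f$ is an embedding; moreover, since each $\tau_i\cap X$ meets $K$ transversely in exactly one point, $K$ is essential in the annulus $X$, and hence isotopic in $X$, through circles meeting each $\tau_i$ transversely in one point, to the level circle $S^1\br{1+\varepsilon/2}$ for any sufficiently small $\varepsilon>0$.

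Next I would convert this planar isotopy into an ambient isotopy of $S^2\br{1,2}$ in two stages. Stage one conjugates by $f|X$ and extends by the identity outside a small neighborhood of $f(X)$, disjoint from $\T'$ and from $\partial S^2\br{1,2}$; this carries $\Sigma\cap\Omega'$ to the level circle $f\pa{S^1\br{1+\varepsilon/2}}$ while fixing $T$ setwise. Stage two applies Lemma~\ref{ess_sphere} inside the thin slab $f\pa{S^1\br{1,1+\varepsilon}}$, with vector field arranged tangent to $T$ at the three transverse intersection points with $\Sigma$ so that $T$ is preserved setwise; this pushes $\Sigma$ itself (not just $\Sigma\cap\Omega'$) onto the level sphere $S^2\br{1+\varepsilon/2}$.

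Finally, the compact region $R$ between $S^2\br{1}$ and the new $\Sigma=S^2\br{1+\varepsilon/2}$ lies entirely in $f(X)$, where $f$ is a radial stretch (the twist and the small $z$-lift used in defining $f$ on $X$ are supported outside the inner collar $S^1\br{1,1+\varepsilon}$ after a preliminary reparameterization). Hence $\eta|T\cap R$ has no critical points, which is precisely what Definition~\ref{irred_def_2} demands. The main obstacle is stage two: producing the straightening ambient isotopy while keeping $T$ setwise invariant. This is feasible because after stage one all three points of $\Sigma\cap T$ already lie on the target level circle $f\pa{S^1\br{1+\varepsilon/2}}$ and $T$ is nearly radial throughout $f\pa{S^1\br{1,1+\varepsilon}}$, so the flow used in the proof of Lemma~\ref{ess_sphere} can be chosen tangent to $T$ without obstruction.
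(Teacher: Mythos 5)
Your reduction of $K$ to one of the two flat subannuli and your stage one are fine in spirit (though note that to extend the planar isotopy of $K$ to an ambient isotopy of $S^2\br{1,2}$ fixing $T$ setwise you must choose the isotopy so that each point of $K\cap\tau_i$ travels along $\tau_i$, not merely so that $K$ meets each $\tau_i$ once at all times). The genuine gap is stage two. After stage one you control only $\Sigma\cap\Omega'=f(K)$. The complement $\Sigma-f(K)$ consists of two disks which are disjoint from $T$ and from $\Omega'$ but are otherwise completely uncontrolled: they need not lie in, or anywhere near, the thin slab around $f\pa{S^1\br{1,1+\varepsilon}}$, so $\Sigma$ is not close to a level sphere and the region between $S^2$ and $\Sigma$ is not a thin product. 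Moreover, the isotopy of Lemma~\ref{ess_sphere} is built from a product structure obtained via uniqueness of disk embeddings, and there is no reason it can be ``chosen tangent to $T$''; arranging that is exactly what has to be proved. In fact the principle your stage two actually invokes --- that a sphere meeting each tangle component once transversely can be pushed to a level sphere by an isotopy fixing the tangle setwise, provided its three intersection points already lie on a level circle near which the tangle is radial and the sphere is level --- is false: after a preliminary isotopy that slides the three intersection points of the splitting sphere of $AA$ along the tangle and flattens the sphere near them, that sphere satisfies all of these hypotheses, and the conclusion would yield $A\approx\varepsilon_3$, contradicting Corollary~\ref{Anottrivial}.

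The information that must be used, and that your argument never touches, is that the two disks of $\Sigma-f(K)$ miss $\Omega'$, so that $f(\Omega_0)$ (where $\Omega_0$ is the subannulus of $S^1\br{1,\sfrac{4}{3}}$ cut off by $K$, embedded by $f$) together with $S^2$ and $\Sigma$ forms an embedded $2$-complex, containing the relevant part of $T$, inside the region between $S^2$ and $\Sigma$. The paper's proof of the Claim is precisely the bootstrapping argument of paragraph four of the proof of Proposition~\ref{sigma_en} with $f(\Omega_0)$ in the role of $X\cap\Pi$: define a diffeomorphism from that region to $\varepsilon_3$ on the $1$-skeleton, extend over $f(\Omega_0)$ and over $\Sigma$ via the $2$-dimensional Schoenflies theorem, extend over a regular neighborhood of the $2$-complex, and finally extend over the two remaining complementary pieces, which are $3$-balls by the $3$-dimensional Schoenflies theorem, using the fact that every diffeomorphism of $S^2$ extends over $D^3$. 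That last step is what disposes of the uncontrolled disks, and it has no counterpart in your proposal. (Targeting Definition~\ref{irred_def_2} and citing Proposition~\ref{defs_equiv} is legitimate, but that proposition produces the setwise-$T$-preserving isotopy \emph{from} the diffeomorphism to $\varepsilon_3$; it does not let you bypass constructing that diffeomorphism.)
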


\begin{proof}
Similar to the argument in paragraph four of the proof of Proposition~\ref{sigma_en}.
\end{proof}

We give three operations for improving $\Sigma$.
Define the \textbf{complexity} of $\Sigma$ to be:
\[
c(\Sigma):=\card{\omega\cap\sigma}+(\tn{$\#$ of components of $\sigma$})\in\Z^{+}
\]

\begin{figure}[h!]
\centerline{\includegraphics[scale=1.0]{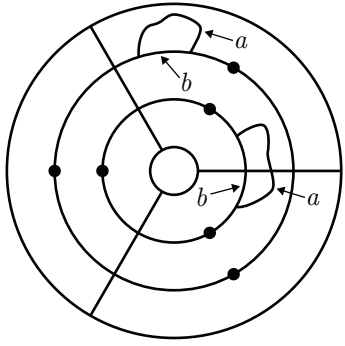}}
\caption{Two pairs of arcs satisfying the hypotheses of Lemma~\ref{taco0}.}
\label{taco_hypo}
\end{figure}

\begin{lemma}\label{taco0}
Suppose that $a\subset\sigma$ and $b\subset\omega$ are arcs, and $a\cup b$ is a simple closed curve bounding a disk $D\subset\Omega$.
Assume that $D$ contains no special points, $D\cap\omega=b$, and $\Int D\cap \sigma=\emptyset$ (see Figure~\ref{taco_hypo}).
Then, the points of $\partial a$ can be eliminated from $\sigma\cap\omega$, and $c(\Sigma)$ decreases by at least $3$.
\end{lemma}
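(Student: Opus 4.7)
The plan is to use the disk $D$ to construct an embedded disk in $S^2\br{1,2}\setminus T$ bounded by $a'\cup b'$, and then to push $\Sigma$ across that disk. First I would verify that $D\cap\tau=\emptyset$: each component $\tau_i$ of $\tau$ is a radial arc with both endpoints on $\partial\Omega$, which is disjoint from the disk $D\subset\Int\Omega$, so any component of $\tau$ entering $D$ would have to exit through $\partial D=a\cup b$. The hypothesis that $b$ contains no special points forces $b\cap\tau=\emptyset$, and since $\sigma\cap\tau_i$ is a single transverse point, $\tau_i$ cannot cross $a$ twice. Since $b$ lies in a single component of $\omega$ (the $2$-to-$1$ locus of $f$), the restriction $f|D$ is injective, so $D':=f(D)$ is an embedded disk in $S^2\br{1,2}$ with $\partial D'=a'\cup b'$; together with $D\cap\tau=\emptyset$ and the absence of special points on $b$, this yields $D'\cap T=\emptyset$.

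Next, I would perform the isotopy. Let $N$ be a small regular neighborhood of $D'$ in $S^2\br{1,2}\setminus T$; because $D'$ is a disk, $N$ is a $3$-ball. Inside $N$, the sphere $\Sigma$ meets $D'$ in the arc $a'$, and a standard disk-pushing ambient isotopy supported in $N$ moves the portion of $\Sigma$ containing $a'$ across $D'$, so that afterward $\Sigma$ is disjoint from $\omega'$ in a neighborhood of $b'$. In particular, the two points $f(\partial a)$ are eliminated from $\Sigma\cap\omega'$. Because $N\cap T=\emptyset$, the tangle $T$ is preserved setwise throughout the isotopy, so the new $\Sigma$ still meets each component of $T$ at exactly one point.

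For the count: since $f$ is $2$-to-$1$ along $\omega$, each point of $\Sigma\cap\omega'$ has two preimages in $\sigma\cap\omega$, one in $S^1\br{\sfrac{4}{3}}$ and one in $S^1\br{\sfrac{5}{3}}$. Removing the two points $f(\partial a)$ from $\Sigma\cap\omega'$ therefore eliminates four points from $\sigma\cap\omega$, namely $\partial a$ together with its $f$-pair on the other component of $\omega$. The push-across simultaneously performs a $4$-valent smoothing of $\Sigma\cap\Omega'$ at each of the two crossings $f(\partial a)$; pulled back via $f$, this corresponds to a specific local surgery on $\sigma$ near the four eliminated crossings. A direct case analysis, tracking how the short arcs of $\sigma$ emanating from the four eliminated crossings are paired by the surgery and noting that the complementary arc $K\setminus a$ together with a small replacement arc in the non-$D$ region of $\Omega$ naturally closes up the component containing $a$, shows the number of components of $\sigma$ changes by at most $+1$. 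Therefore $c(\Sigma)$ decreases by at least $4-1=3$.

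The main obstacle is the component-count estimate in the last step: one must enumerate, in each possible global configuration of $\sigma$ relative to the four eliminated crossings, how the surgery can split or merge components, and check that no configuration creates more than one new circle. The other ingredients --- the disjointness $D\cap\tau=\emptyset$, the construction of the embedded $D'$, and the push-across isotopy itself --- are routine once the four-sheet geometry of $\Omega'$ near $\omega'$ has been unpacked.
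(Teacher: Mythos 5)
Your argument has a genuine gap at the very first step: the claim that $D\cap\tau=\emptyset$ is false in general, and your justification for it rests on a misreading of the term \emph{special points}. The special points are $p^i$, $q^i$, $r^i$ ($i=1,2$), i.e., the $f$-partners of the points $p_i$, $q_i$, $r_i=\tau\cap\omega$ lying on the \emph{opposite} component of $\omega$; they do not lie on $\tau$ at all. So the hypothesis that $D$ contains no special points does not prevent $b$ from containing a point of $\tau\cap\omega$. A component $\tau_i$ can perfectly well enter $D$ through $b$ (at its single crossing with that component of $\omega$) and exit through $a$ (at the single point $\sigma\cap\tau_i$), so that $D\cap\tau$ is a neatly embedded arc in $D$; this configuration satisfies all the hypotheses of the lemma and is one of the two cases depicted in Figure~\ref{taco_hypo}. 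In that case your regular neighborhood $N$ of $D'$ cannot be chosen disjoint from $T$, and the push-across isotopy as you describe it does not exist.

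The missing idea, which is how the paper closes this case, is that when $D'\cap T\neq\emptyset$ the pair $\pa{D,D\cap\tau}$ is a disk--arc pair (every neatly embedded arc in a $2$-disk is standard), hence $\pa{D',D'\cap T}$ is an unknotted disk--arc pair, and the isotopy carrying $f(a)$ past $f(b)$ can therefore be chosen to slide $T$ along itself, fixing $T$ setwise. This is not optional: every improvement of $\Sigma$ in this section must fix $T$ setwise, since the whole argument tracks a sphere meeting each component of $T$ exactly once. The remainder of your proposal --- injectivity of $\left.f\right|D$ because $D$ meets only one component of $\omega$, the count of four eliminated points of $\sigma\cap\omega$ coming from the $2$-to-$1$ identification along $\omega$, and the estimate that the number of components of $\sigma$ increases by at most one --- agrees with the paper's proof, which settles the last point by inspecting Figure~\ref{taco_complexity} rather than by the case analysis you sketch.
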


\begin{proof}
As $D$ intersects only one component of $\omega$, $\left.f\right|D$ is an embedding.
The disk $D'=f(D)$ permits construction of an isotopy of $\Sigma$, with support near $D'$, that carries $f(a)$ past $f(b)$ to a parallel copy of $f(b)$.
If $D'$ intersects $T$, then $\pa{D,D\cap\tau}$ is a disk-arc pair (all of which are unknotted).
So, $\pa{D',D'\cap T}$ is an unknotted disk-arc pair, and the isotopy fixes $T$ setwise.
The reduction in $c(\Sigma)$ follows from Figure~\ref{taco_complexity}.
\begin{figure}[h!]
\centerline{\includegraphics[scale=1.0]{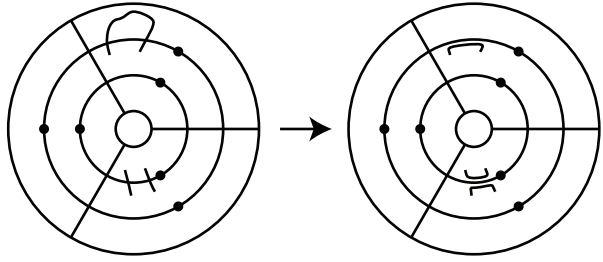}}
\caption{Instance of first operation. The points $\partial a$ of $\sigma\cap S^1\br{\sfrac{5}{3}}$ correspond to two points of $\sigma\cap S^1\br{\sfrac{4}{3}}$.}
\label{taco_complexity}
\end{figure}
Four points of $\sigma\cap\omega$ are eliminated, and, at worst, the number of components of $\sigma$ increases by one.
\end{proof}

\begin{lemma}\label{inessential}
Among the components of $\sigma$ that are inessential in $\Omega$ and disjoint from $\omega$, let $C$ be one that is innermost in $\Omega$.
Then, $C$ can be eliminated from $\sigma$, and $c(\Sigma)$ decreases by at least $1$.
\end{lemma}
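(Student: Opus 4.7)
The plan is to push $\Sigma$ across an embedded disk in $\Omega'$ bounded by $f(C)$, in the same spirit as the first operation. First I would locate $C$: since $\omega$ separates $\Omega$ into the three open subannuli $S^1(1,\sfrac{4}{3})$, $\Int\T$, and $S^1(\sfrac{5}{3},2)$, the curve $C$ (being disjoint from $\omega$) lies in exactly one of them, call it $A$. An essential simple closed curve in $A$ is isotopic to a core of $\Omega$, hence essential in $\Omega$; so the inessentiality of $C$ in $\Omega$ forces $C$ to be inessential in $A$, and therefore $C$ bounds a disk $D \subset A$. In particular, $D$ is disjoint from $\omega$.

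Next I would extract two disjointness properties of $D$. Any component of $\sigma$ inside $\Int D$ is itself inessential in $\Omega$ and disjoint from $\omega$, so the innermost choice of $C$ gives $\Int D \cap \sigma = \emptyset$. Also $C \cap \tau = \emptyset$, because the three points of $\sigma \cap \tau$ all lie on the essential component $K \neq C$ of $\sigma$. Combined with $D \cap \partial\Omega = \emptyset$, this forces $\tau \cap D = \emptyset$: any subarc of a component of $\tau$ lying in $D$ would have endpoints either on $\partial D = C$ (excluded since $C \cap \tau = \emptyset$) or on $\partial\tau \subset \partial\Omega$ (excluded since $D$ avoids $\partial\Omega$).

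Here is the payoff. Because the immersion $f$ identifies only the two boundary components of $\T$ and is injective off $\omega$, its restriction to $A$ is an embedding. Hence $D' := f(D)$ is a smooth embedded disk in $\Omega' \subset S^2\br{1,2}$ with $\partial D' = f(C)$, $\Int D' \cap \Sigma = f(\Int D \cap \sigma) = \emptyset$, and, crucially, $D' \cap T = f(D \cap \tau) = \emptyset$. Taking a small tubular neighborhood $N$ of $D'$ with $N \cap \Omega' = D'$ and $N \cap T = \emptyset$, I would use the product structure of $N$ to push the collar of $f(C)$ in $\Sigma$ across $D'$. The resulting ambient isotopy is supported in $N$, hence fixes $T$ pointwise and leaves $\Sigma \cap \Omega'$ unchanged outside $N$; afterward $\Sigma$ misses $D'$, so the new $\sigma$ is the old $\sigma$ with $C$ deleted and nothing added. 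Since $C$ is disjoint from $\omega$, the term $|\omega \cap \sigma|$ is unchanged while the component count drops by one, so $c(\Sigma)$ decreases by exactly $1$. The main obstacle is really the first step: combining inessentiality in $\Omega$ with disjointness from $\omega$ to confine $D$ to a single open subannulus on which $f$ is embedded; once that is secured, the innermost disk trick applies verbatim because $D'$ is a genuinely embedded disk in $\Omega'$ disjoint from $T$.
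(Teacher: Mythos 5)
Your reduction to an embedded disk $D'=f(D)$ disjoint from $T$ and with $\Int D'\cap\Sigma=\emptyset$ is correct and matches the paper's setup; in fact you justify $D\cap\omega=\emptyset$ and $D\cap\tau=\emptyset$ more explicitly than the paper does. The gap is in the final step: the ambient isotopy you describe, supported in a small tubular neighborhood $N$ of $D'$, does not exist. Since $\Sigma$ meets $\Omega'$ transversally along $C'=\partial D'$, the annulus $\Sigma\cap N$ crosses $D'$, so its two boundary circles lie near $\partial N$ on opposite sides of the disk $D'$, which separates $N$. An isotopy supported in $N$ leaves $\Sigma\setminus\Int N$ unchanged, and any connected surface in $N$ joining two circles on opposite sides of $D'$ must still meet $D'$; the only way to avoid $D'$ is to replace the annulus by two disks, which is a compression, not an isotopy. (The obstruction is already visible in the model where $\Sigma$ is the unit sphere and $D'$ is the equatorial disk in the plane $z=0$: no isotopy supported in a thin slab around that disk can remove the intersection circle.)

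The move that works --- and the one the paper uses --- pushes across a $3$-ball, not across a neighborhood of $D'$. The circle $C'$ bounds two disks $D_1$ and $D_2$ in the sphere $\Sigma$; since $T_1$ meets $\Sigma$ at exactly one point, one may label them so that $T_1\cap D_2=\emptyset$. Because $\Int D'\cap\Sigma=\emptyset$, both $D'\cup D_1$ and $D'\cup D_2$ are embedded spheres, and Lemma~\ref{ess_sphere} (applied with $a=T_1$) shows that $D'\cup D_2$ is inessential in $S^2\br{1,2}$, hence bounds a $3$-ball $\Delta$ with $\Delta\cap T=\emptyset$. The isotopy then carries $D_2$ across $\Delta$ past $D'$; its support is a neighborhood of $\Delta$, which need not be anywhere near $D'$. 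Note that the disjointness $\Delta\cap T=\emptyset$ --- which is what allows the isotopy to fix $T$ --- is not automatic: it depends on choosing $D_2$ rather than $D_1$ and on the essential/inessential dichotomy of Lemma~\ref{ess_sphere}, neither of which appears in your argument.
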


\begin{proof}
Let $D\subset\Omega$ be the $2$-disk with $\partial D=C$.
Note that $D$ is disjoint from $\omega$ and $\tau$.
So, $\left.f\right|D$ is an embedding and $D'=f(D)$ is a $2$-disk disjoint from $T$ and bounding $C'=f(C)$.
The circle $C'$ bounds two $2$-disks, $D_1$ and $D_2$, in $\Sigma$.
The arc $T_1$ meets $\Sigma$ at one point.
So, without loss of generality, $T_1$ meets $D_1$ at one point (transversely, and in $\Int D_1$), and $T_1\cap D_2=\emptyset$.
By hypothesis, $\Int D \cap \sigma=\emptyset$.
So, $D' \cup D_1$ and $D'\cup D_2$ are embedded $2$-spheres.
By Lemma~\ref{ess_sphere}, $D'\cup D_1$ is essential in $S^2\br{1,2}$ and $D'\cup D_2$ is inessential.
So, $D'\cup D_2$ bounds an embedded $3$-disk $\Delta\subset S^2\br{1,2}$, and $\Delta\cap T=\emptyset$.
The $3$-disk $\Delta$ permits construction of an isotopy of $\Sigma$, with support near $\Delta$, that carries $D_2$ past $D'$ to a parallel copy of $D'$.
\end{proof}

\begin{lemma}\label{one_special_point}
Let $C$ be a component of $\sigma$ that bounds a $2$-disk $D\subset\Omega$.
Assume that $z:=D\cap\omega$ is a neatly embedded arc in $D$, $D$ contains exactly one special point $x$, $x\in\Int z$, and $\Int D\cap\sigma=\emptyset$.
Then, $C$ can be eliminated from $\sigma$, and $c(\Sigma)$ decreases by at least $5$.
\end{lemma}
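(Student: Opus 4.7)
The plan is to use the disk $D$ together with the $\omega$-partner arc of $z$ to push $\Sigma$ across $D':=f(D)$ in a way that simultaneously eliminates $C$ from $\sigma$ and removes both pairs of $\omega$-crossings at $\partial z$ and at the $f$-partner of $\partial z$.

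First I would establish the key geometric facts. Because $C$ bounds a disk in the annulus $\Omega$ it is inessential, so $C\neq K$ (as $K$ is essential) and therefore $C\cap\tau=\emptyset$; the standard arc-in-annulus argument (each radial arc $\tau_i$ must cross $\partial D=C$ in order to enter $D$) then yields $D\cap\tau=\emptyset$. The partner $y$ of $x$ lies on the opposite component of $\omega$ from $z$, hence $y\in\omega\setminus D$; since $x$ is the only special point in $D$, $f|_D$ is injective and $D'$ is an embedded disk in $S^2\br{1,2}$ disjoint from $T$. Let $\tilde z\subset\omega$ denote the $\omega$-partner arc of $z$, i.e., the unique arc on the opposite component of $\omega$ with $f(\tilde z)=f(z)$; then $y\in\Int\tilde z$ and $f(\partial\tilde z)=f(\partial z)\subset f(C)\subset\Sigma$, so $\partial\tilde z$ automatically contributes two points to $\sigma\cap\omega$ beyond the two in $\partial z$.

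Next I would set up a clean push. A brief innermost-arc cleanup along $\tilde z$, modeled on Lemma~\ref{taco0}, lets me arrange $\sigma\cap\Int\tilde z=\emptyset$ without increasing $c(\Sigma)$; combined with $\Int D\cap\sigma=\emptyset$, this forces $\Sigma\cap D'=C':=f(C)$. By Lemma~\ref{ess_sphere}, $C'$ cobounds with $D'$ an inessential $2$-sphere enclosing a $3$-disk $\Delta$; as in the proof of Lemma~\ref{inessential}, I choose the disk $E\subset\Sigma$ bounded by $C'$ so that $\Delta\cap T=\emptyset$, and then push $E$ across $\Delta$ past $D'$ by an ambient isotopy supported near $\Delta$ and fixing $T$ setwise. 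The push removes $f(C)=C'$ from $\Sigma$, so $C$ disappears from $\sigma$ together with its two crossings $\partial z$; and since $f(\partial\tilde z)=f(\partial z)$ is thereby also removed from $\Sigma$, the two crossings $\partial\tilde z$ vanish from $\sigma\cap\omega$ simultaneously, with the arcs of $\sigma$ meeting $\omega$ at $\partial\tilde z$ reconnecting along the trace of the isotopy through the half of the $y$-sheet inside $\Delta$. Bookkeeping: one fewer component ($C$), four fewer crossings ($\partial z\cup\partial\tilde z$), and possibly one further lost component if $\partial\tilde z$ originally lay in two distinct components of $\sigma$ that are now merged, so $c(\Sigma)$ drops by at least $1+2+2=5$.

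The hard part is verifying that the push does not introduce new components of $\sigma$ from $\Sigma$'s interaction with the $y$-sheet of $\Omega'$ inside $\Delta$. Because the two sheets of $\Omega'$ meet transversely along $\omega'$ and $D'$ is one of those sheets, I would carry out the push inside a thin tubular neighborhood of $f(\tilde z)$ aligned with the $y$-sheet, so that the only net effect on $\sigma$ near $y$ is the intended reconnection at $\partial\tilde z$, with no spurious circles or crossings appearing.
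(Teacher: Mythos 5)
Your proof founders on the claim that $D'=f(D)$ is disjoint from $T$. By definition of the special points, $x$ is a point of $\omega$ whose image under the immersion $f$ coincides with the image of a point of $\tau$: for instance, if $x=p^1$ then $f(p^1)=f(p_1)\in T_1$. So although you correctly deduce $D\cap\tau=\emptyset$, the embedded disk $D'=f(D)$ meets the tangle $T$ at exactly one point, namely $f(x)$, which lies on some component $T_k$. This is the entire content of the lemma: if $D'$ really were disjoint from $T$, the configuration would already be handled by the arguments of Lemmas~\ref{taco0} and~\ref{inessential}, and no separate statement about special points would be needed. Concretely, since $D'$ meets $T_1$ once and $\Sigma$ meets $T_1$ once, exactly one of the spheres $D'\cup D_1$, $D'\cup D_2$ meets $T_1$ twice; by Lemma~\ref{ess_sphere} that one is inessential, and the $3$-disk $\Delta$ it bounds satisfies $\Delta\cap T=\Delta\cap T_1\neq\emptyset$, a neatly embedded arc. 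Your step ``I choose the disk $E\subset\Sigma$ bounded by $C'$ so that $\Delta\cap T=\emptyset$'' is therefore impossible. The missing ingredient is Lemma~\ref{ball-arc}: because $A$ is Borromean, $(\Delta,\Delta\cap T_1)$ is an \emph{unknotted} ball-arc pair, and it is precisely this unknottedness that licenses an ambient isotopy, supported near $\Delta$, carrying $D_2$ past $D'$ while fixing $T_1$ setwise and $T_2$, $T_3$ pointwise. That is the paper's proof.

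Two secondary problems. First, your ``partner arc'' $\tilde z$ contains in its interior the $f$-partner of the special point $x$, and that partner lies \emph{on} $\tau$ (e.g.\ $p_1\in\tau_1$); so $\tilde z$ crosses a tangle component, and the proposed innermost-arc cleanup along $\tilde z$ is both suspect and out of place in the overall scheme, where each of the three operations must strictly decrease $c(\Sigma)$. Second, the complexity count does not need $\tilde z$ at all: the two points $\partial z=C\cap\omega$ are double points of $f$ whose partners on the other circle of $\omega$ also lie in $\sigma$, so eliminating $C'$ from $\Sigma$ removes four points of $\sigma\cap\omega$ together with the component $C$, giving the decrease of at least $5$ as in Figure~\ref{ball-arc-push}.
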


\begin{proof}
Without loss of generality, $x=p^1$ as in Figure~\ref{ball-arc-push}.
\begin{figure}[h!]
\centerline{\includegraphics[scale=1.0]{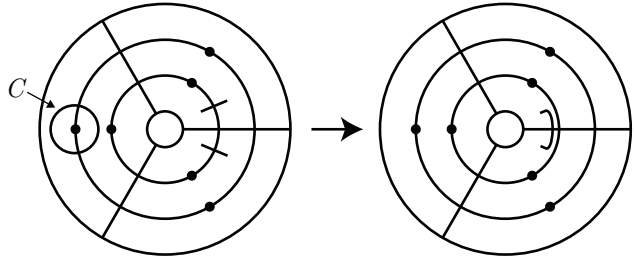}}
\caption{Instance of third operation. The points $C\cap \omega$ correspond to two points of $\sigma\cap S^1\br{\sfrac{4}{3}}$.}
\label{ball-arc-push}
\end{figure}
The embedded $2$-disk $D'=f(D)$ meets $T$ at exactly $f(p^1)=f(p_1)\in T_1$.
The circle $C'=f(C)$ bounds two $2$-disks, $D_1$ and $D_2$, in $\Sigma$.
Without loss of generality, $T_1\cap D_1=\emptyset$ and $T_1$ meets $D_2$ at one point (transversely).
By Lemma~\ref{ess_sphere}, $D'\cup D_2$ bounds an embedded $3$-disk $\Delta\subset \Int S^2\br{1,2}$ and $\Delta\cap T\subset T_1$ is a neatly embedded arc in $\Delta$.
By Lemma~\ref{ball-arc}, $(\Delta,\Delta\cap T)$ is an unknotted ball-arc pair.
The pair $(\Delta,\Delta\cap T)$ permits construction of an isotopy of $\Sigma$, with support near $\Delta$, that carries $D_2$ past $D'$ to a parallel copy of $D'$.
This isotopy fixes $T_1$ setwise and fixes $T_2$ and $T_3$ pointwise.
\end{proof}

Improve $\Sigma$ by applying Lemmas~\ref{taco0}, \ref{inessential}, and \ref{one_special_point} \emph{in any order and as long as possible}.
This is a finite process since the initial complexity of $\Sigma$ is a positive integer and each operation strictly reduces the complexity.
The complexity of the resulting improved $\Sigma$ is $c(\Sigma)\in\Z^+$.
The rest of this section shows that $c(\Sigma)=1$ and $K\cap\T=\emptyset$, which suffices to prove Theorem~\ref{A_irred} by Claim~\ref{suff_claim}.

\begin{lemma}\label{taco}
There do not exist arcs $a\subset\sigma$ and $b\subset\omega$ such that $a\cup b$ is a simple closed curve bounding a disk $D\subset\Omega$
where $D$ is disjoint from the special points.
\end{lemma}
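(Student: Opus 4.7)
The plan is to argue by contradiction, exploiting the standing assumption that Lemmas~\ref{taco0}, \ref{inessential}, and \ref{one_special_point} have been applied to $\Sigma$ as long as possible. Suppose arcs $a\subset\sigma$, $b\subset\omega$, and a disk $D\subset\Omega$ disjoint from the special points with $\partial D = a\cup b$ exist. Because $\sigma\cap\omega$ is finite, only finitely many such triples $(a,b,D)$ arise, so among them one may be chosen with $D$ minimal under inclusion. The goal is to show this minimal $D$ satisfies $D\cap\omega=b$ and $\Int D\cap\sigma=\emptyset$, i.e., the hypotheses of Lemma~\ref{taco0}, which would then allow a further reduction of $c(\Sigma)$, contradicting maximality.

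A key preliminary is a local analysis at a corner $p\in\partial a = a\cap b$. There $\sigma$ and $\omega$ cross transversely, so placing local coordinates with $\sigma=\{y=0\}$, $\omega=\{x=0\}$, and $D$ occupying the sector $\{x\le 0,\,y\le 0\}$, the continuations of $\sigma$ past $a$ and of $\omega$ past $b$ both leave $D$ immediately. Consequently, no arc component of $\omega\cap\Int D$ terminates at a corner of $D$, and likewise for $\sigma$.

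First I claim $\Int D\cap\omega=\emptyset$. Since each component of $\omega$ is essential in $\Omega$ while $D$ is a disk, no component of $\omega\cap\Int D$ is a circle. Any arc component $\alpha$ is a component of $\omega\cap D$ distinct from $b$, hence disjoint from $b$, so $\partial\alpha\subset a$; with the corner analysis, $\partial\alpha\subset\Int a$. The arc $\alpha$ separates $D$ into two subdisks; the one $D_1$ not containing $b$ has boundary $a_1\cup\alpha$ for a subarc $a_1\subset a$, giving a valid triple $(a_1,\alpha,D_1)$ with $D_1\subsetneq D$, contradicting minimality.

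Next, using $\Int D\cap\omega=\emptyset$, I rule out components of $\sigma\cap\Int D$. A circle component $C$ of $\sigma$ in $\Int D$ is inessential in $\Omega$ and disjoint from $\omega$, so Lemma~\ref{inessential} would apply, contradicting maximality. An arc component $\beta$ of $\sigma\cap D$ distinct from $a$ is disjoint from $a$ as distinct components of the 1-manifold $\sigma\cap D$, so $\partial\beta\cap a=\emptyset$; the corner analysis excludes corners, hence $\partial\beta\subset\Int b$. Then $\beta$ cuts off a subdisk $D_2\subsetneq D$ with $\partial D_2=\beta\cup b_2$ for a subarc $b_2\subset b$, giving a valid triple smaller than $D$, again contradicting minimality. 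Therefore $D\cap\omega=b$ and $\Int D\cap\sigma=\emptyset$, so Lemma~\ref{taco0} applies and $c(\Sigma)$ further decreases, the desired contradiction. The main technical subtlety is the corner analysis; once that is established, the innermost-subdisk cutting and the appeal to Lemma~\ref{inessential} proceed routinely.
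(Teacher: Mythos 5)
Your proof is correct and follows essentially the same strategy as the paper's: an innermost/outermost reduction of $D\cap\omega$ and $D\cap\sigma$ that terminates in a configuration to which Lemma~\ref{taco0} applies (with Lemma~\ref{inessential} disposing of circle components of $\sigma$), contradicting the standing assumption that the complexity-reducing operations have been exhausted. Your minimal-counterexample organization is a clean variant of the paper's outermost-arc bookkeeping, and it even treats explicitly the arc components of $\sigma\cap D$ with endpoints on $\Int{b}$, a case the paper's write-up passes over silently.
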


\begin{proof}
We have $D\cap\omega=b\sqcup B$ where $B$ is a finite disjoint union of neatly embedded arcs in $D$.
We claim that $\Int D$ contains no closed component of $\sigma$.
Otherwise, let $\Delta\subset\Int D$ be the $2$-disk bounded by an innermost such component $C$.
If $C\cap\omega=\emptyset$, then Lemma~\ref{inessential} applies to $C$, a contradiction.
Thus, $\Delta\cap\omega$ is a nonempty finite disjoint union of neatly embedded arcs in $\Delta$.
Let $b_0$ be an arc of $\Delta\cap\omega$ that is outermost in $\Delta$ in the sense that $\partial b_0$ subtends an arc $a_0\subset\partial \Delta$
such that:
\[
a_0\cap \omega =\partial a_0 =\partial b_0
\]
Lemma~\ref{taco0} applies to $a_0$ and $b_0$, a contradiction.
The proof of the claim is complete.\\

If $B=\emptyset$, then Lemma~\ref{taco0} applies to $a$ and $b$, a contradiction.
Otherwise, there exists a component $b_0$ of $B$ that is outermost in $D$ in the sense that $\partial b_0$ subtends an arc $a_0\subset\partial D$ such that:
\[
a_0\cap B=\partial a_0 =\partial b_0
\]
Lemma~\ref{taco0} applies to $a_0$ and $b_0$, a contradiction.
\end{proof}

Next, we show that $\sigma$ contains no component inessential in $\Omega$.
Suppose, by way of contradiction, that $\sigma$ contains component(s) inessential in $\Omega$.
Among these components, there must be one, call it $C$, that is innermost in $\Omega$.
By Lemma~\ref{inessential}, $C$ meets $\omega$.
So, $\left|C\cap\omega\right|$ is positive and even.
Let $D\subset\Omega$ be the $2$-disk with $\partial D=C$.
Note that $\sigma\cap \Int D=\emptyset$.
Also, $B:=D\cap\omega$ is a nonempty, finite disjoint union of neatly embedded arcs in $D$.
Recall that the only component of $\sigma$ that meets $\tau$ is $K$, and $K$ is essential in $\Omega$.
So, $D$ is contained in the interior of a sector $S_{i,j}$ and $\left.f\right|D$ is an embedding.
Without loss of generality, assume $D\subset \Int S_{2,3}$.
Let $D'=f(D)$, an embedded $2$-disk with $\partial D'=C'=f(C)$.
The circle $C'$ also bounds two $2$-disks, $D_1$ and $D_2$, in $\Sigma$.
As $\sigma\cap\Int D=\emptyset$, $D'\cup D_1$ and $D'\cup D_2$ are embedded $2$-spheres in $S^2\br{1,2}$.
By Lemma~\ref{ess_sphere} (using the arc $T_1$, say), one of these spheres is essential in $S^2\br{1,2}$ and the other is inessential.
Without loss of generality, assume $D'\cup D_1$ is essential in $S^2\br{1,2}$ and $D'\cup D_2$ is inessential.
Let $\Delta$ be the $3$-disk in $S^2\br{1,2}$ with $\partial\Delta=D'\cup D_2$.

\begin{lemma}\label{no_inessential}
The disk $D$ cannot be disjoint from the special points $p^1$ and $p^2$.
\end{lemma}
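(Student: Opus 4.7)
The plan is to derive a contradiction with Lemma~\ref{taco} via an outermost-arc reduction on $D\cap\omega$. Suppose, to the contrary, that $D$ is disjoint from both $p^1$ and $p^2$. The first step is to check that this makes $D$ disjoint from \emph{every} special point. The $\pi$-twist identification used to define $f$ sends the intersection $\tau_1\cap\omega$ to the angular position $\pi$ on each component of $\omega$; under this identification the two preimages of $T_1\cap f(S_{2,3})$ not lying on $\tau_1$ are $p^1$ and $p^2$, and both sit in $\Int S_{2,3}$. Analogous bookkeeping places $q^1,q^2$ in $\Int S_{1,3}$ (both at angular position $5\pi/3$) and $r^1,r^2$ in $\Int S_{1,2}$ (both at angular position $\pi/3$), so since $D\subset\Int S_{2,3}$ the contradiction hypothesis in fact rules out all six special points.

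Next, I would analyze $B:=D\cap\omega$. It is a compact properly embedded $1$-submanifold of the disk $D$. No component of $B$ can be a closed curve: any such component either is null-homotopic in its component of $\omega$ (impossible for an embedded loop in a circle) or is an entire component of $\omega$ (impossible, since each circle of $\omega$ meets all three sectors, whereas $D\subset\Int S_{2,3}$). Hence $B$ is a nonempty (since $|C\cap\omega|$ is positive) finite disjoint union of arcs properly embedded in $D$ with boundary on $C=\partial D$. A standard outermost-arc choice in $D$ then produces an arc $b_0\subset B$ and a sub-arc $a_0\subset C$ such that $a_0\cup b_0$ is a simple closed curve bounding a sub-disk $D_0\subset D$ with $D_0\cap\omega=b_0$.

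Finally, the triple $(a_0,b_0,D_0)$ fits the hypotheses of Lemma~\ref{taco} precisely: $a_0\subset\sigma$ and $b_0\subset\omega$ are arcs, $a_0\cup b_0$ is a simple closed curve bounding the disk $D_0\subset\Omega$, and $D_0\subset D$ contains no special points by the first step. This contradicts Lemma~\ref{taco}, proving the lemma. The only non-routine ingredient is the angular bookkeeping that identifies the special points in each sector; once that is in place, everything else is a standard outermost-arc manipulation in a disk, and I do not anticipate any serious obstacle beyond correctly reading off which preimages of $T_i\cap\omega'$ land in which sector.
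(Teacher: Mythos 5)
Your proof is correct and follows essentially the same route as the paper: both derive a contradiction with Lemma~\ref{taco} from an arc of $B=D\cap\omega$ together with a subarc of $C$, after noting (implicitly in the paper) that $D\subset\Int S_{2,3}$ forces disjointness from all six special points once $p^1$ and $p^2$ are excluded. The only difference is that your outermost-arc refinement is superfluous, since Lemma~\ref{taco} (unlike Lemma~\ref{taco0}) does not require $D_0\cap\omega=b_0$ or $\Int D_0\cap\sigma=\emptyset$; the paper simply takes any component $b$ of $B$ and any arc $a\subset C$ with $\partial a=\partial b$.
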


\begin{proof}
Otherwise, let $b$ be a component of $B$.
Let $a\subset C$ be an arc with $\partial a=\partial b$.
The arcs $a$ and $b$ contradict Lemma~\ref{taco}.
\end{proof}

\begin{lemma}\label{2special}
The disk $D$ cannot contain both special points $p^1$ and $p^2$.
\end{lemma}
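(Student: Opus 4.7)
The plan is to derive a contradiction with the minimality of $c(\Sigma)$ by showing that if $D$ contains both special points then a further simplifying isotopy of $\Sigma$ exists. Suppose $D\supset\{p^1,p^2\}$. Since $p^1$ and $p^2$ lie on different components of $\omega$ (namely $S^1\br{\sfrac{5}{3}}$ and $S^1\br{\sfrac{4}{3}}$ at angle $\pi$), they belong to distinct arcs $b^1,b^2\in B=D\cap\omega$.

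First I would analyze the combinatorics of $B$ inside $D$. The arcs of $B$ cut $D$ into sub-disks whose dual graph is a tree; each leaf of this tree is an arc $b_0\in B$ that bounds a sub-disk $\Delta_0\subset D$ with $\Delta_0\cap B=b_0$. If any leaf-arc $b_0$ lies outside $\{b^1,b^2\}$, then $\Delta_0$ is a sub-disk bounded by $b_0\subset\omega$ and a sub-arc of $C\subset\sigma$, disjoint from the special points, contradicting Lemma~\ref{taco}. Hence the tree has exactly two leaves, $b^1$ and $b^2$, so it is a path; a further analogous argument on the sub-disks cut off by internal arcs (together with Lemma~\ref{taco0} applied to innermost configurations) reduces us to the case $B=\{b^1,b^2\}$.

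Next, I would analyze the 3-disk $\Delta\subset S^2\br{1,2}$ bounded by the inessential sphere $D'\cup D_2$, exactly as in the paragraph preceding Lemma~\ref{no_inessential}. Applying Lemma~\ref{ess_sphere} to $T_2$ and to $T_3$ (each meeting $\Sigma$ in a single point) forces both points to lie on $D_1$, so $T_2,T_3$ are disjoint from $\Delta$. Since $T_1$ meets $\partial\Delta$ only at $f(p^1),f(p^2)\in D'$ transversely, the intersection $\alpha:=T_1\cap\Delta$ is a single neatly embedded arc whose boundary lies on $D'$; in fact $\alpha$ equals the middle sub-arc of $T_1$ running from $f(p_1)$ to $f(p_2)$ through $\T'$.

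The key step is then to exhibit $(\Delta,\alpha)$ as an \emph{unknotted} ball-arc pair, which will allow an isotopy as in Lemma~\ref{one_special_point} that carries $D_2$ past $D'$ and fixes $T$ setwise, thereby eliminating $p^1,p^2$ (and the enclosed region of $D$) from $\sigma$ and strictly lowering $c(\Sigma)$—contradicting minimality. To establish unknottedness I would exploit the immersion identifications: the arcs $b^1\subset S^1\br{\sfrac{5}{3}}$ and $b^2\subset S^1\br{\sfrac{4}{3}}$ are matched by $f$ with arcs $g(b^1)\subset S^1\br{\sfrac{4}{3}}$ and $g^{-1}(b^2)\subset S^1\br{\sfrac{5}{3}}$ abutting $\tau_1\cap\omega$. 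Take a thin rectangle $R\subset\T$ around the arc $\tau_1\cap\T$ whose short sides are $g(b^1)$ and $g^{-1}(b^2)$; then $f(R)$ is an embedded disk in $S^2\br{1,2}$ containing $\alpha$ as its core, meeting $D'$ exactly along $f(b^1)\cup f(b^2)$, and otherwise lying in $\Delta$. Capping $f(R)$ off with the sub-disk $f(\Delta_m)\subset D'$ bounded between $f(b^1)$ and $f(b^2)$ yields an embedded disk $E\subset\Delta$ with $\partial E=\alpha\cup\beta$ for an arc $\beta\subset D'$, so $\alpha$ is parallel into $D'$, i.e., $(\Delta,\alpha)$ is unknotted.

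The main obstacle is precisely this last construction: one must verify that $f(R)$ is genuinely embedded, lies on the correct side of $D'$ so as to sit inside $\Delta$, and that the disk $E$ obtained after capping is neatly embedded in $\Delta$. This is where the explicit geometry of the immersion $f$ in Figure~\ref{immersion}—in particular, the interplay between the torus $\T'$ near $T_1$ and the sector $f(S_{2,3})$—must be used in an essential way, whereas up to this point only the combinatorics of $\sigma,\omega,\tau$ in $\Omega$ have entered.
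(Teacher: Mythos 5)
Your strategy breaks down at the key step, and the obstruction is visible from a simple count. You propose to show $(\Delta,\alpha)$ is an unknotted ball--arc pair and then, ``as in Lemma~\ref{one_special_point},'' carry $D_2$ past $D'$ by an ambient isotopy fixing $T$ setwise, thereby lowering $c(\Sigma)$. No such isotopy exists, unknotted pair or not. An isotopy fixing $T$ setwise preserves $\card{\Sigma\cap T_1}=1$. But the target sphere $\pa{\Sigma-D_2}\cup D''$, where $D''$ is a parallel copy of $D'$, meets $T_1$ in $1+2=3$ points: $D_2\cap T_1=\emptyset$ while $D'$ meets $T_1$ transversely at the two points $f(p_1),f(p_2)$, so any nearby parallel copy $D''$ also meets $T_1$ twice. (Lemma~\ref{one_special_point} is not a precedent here: there $D'$ and $D_2$ each meet $T_1$ exactly once, the arc $\Delta\cap T$ runs from $D'$ to $D_2$ like a diameter, and the count $1\mapsto1$ is preserved.) This is precisely why the paper's proof stresses that its isotopy $F_t$ does \emph{not} fix $T_1$ setwise, and why it must then exit the complexity framework entirely: after $F_1$ pushes $T_1$ off the sector $S'_{2,3}=f(S_{2,3})$, the Borromean property of $A$ is used to straighten $F_1(T_1)$ and $T_2$, the strip $S'_{2,3}$ is used to wind $T_3$ around $T_2$, and the resulting block is trivial, contradicting Corollary~\ref{Anottrivial}. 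Your proposal contains no substitute for this second half.

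Two secondary problems. First, even granting your aim, the parallelism disk $E$ is not constructed correctly: gluing the capping disk $f(\Delta_m)\subset D'$ to the band $f(R)$ along its two short sides produces an annulus, not a disk, and what boundary-parallelism requires is a disk bounded by $\alpha$ together with an arc of $\partial\Delta$. Second, $f(R)$ need not lie in $\Delta$: at this stage nothing prevents $\sigma$ from meeting $\T$, and indeed $K$ may cross $\tau_1\cap\T$ (the core of $R$), so $f(R)$ may cross $\Sigma$ no matter how thin $R$ is taken. (The fact that $(\Delta,\alpha)$ \emph{is} unknotted does follow painlessly from Lemma~\ref{ball-arc} after forgetting $T_3$; the point is that this fact does not buy you the move you want.) Your opening reduction to $B=\cpa{b^1,b^2}$ is also shakier than you suggest---Lemma~\ref{taco} cannot be applied to internal arcs of the path, since the subdisks they cut off contain a special point---but that reduction is not needed by the paper and would not rescue the argument.
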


\begin{proof}
Suppose otherwise.
Note that $T_1$ meets $D'$ twice (transversely in $\Int D'$), and $T_2$ and $T_3$ are disjoint from $D'$.
By Lemma~\ref{ess_sphere}, $T$ is disjoint from $D_2$.
The disk $\Delta$ permits construction of an an ambient isotopy $F_t$, $0\leq t\leq 1$, of $S^2\br{1,2}$ that carries $D_2$ past $D'$ to a parallel copy of $D'$.
This isotopy has support near $\Delta$, is relative to a neighborhood of $T_2\cup T_3\cup\partial S^2\br{1,2}$, but does \emph{not} fix $T_1$ setwise.
Note that $F_1(T_1)\cap S'_{2,3}=\emptyset$ where $S'_{2,3}$ still denotes $f(S_{2,3})$.
As $F_1(A)\approx A$ is a Borromean block, there is an ambient isotopy $G_t$, $0\leq t\leq1$, of $S^2\br{1,2}$, relative to a neighborhood of 
$\partial S^2\br{1,2}$, that straightens both $F_1(T_1)$ and $F_1(T_2)=T_2$.
The strip $G_1(S'_{2,3})$ permits construction of an ambient isotopy $H_t$, $0\leq t\leq 1$, of $S^2\br{1,2}$ that carries $G_1(F_1(T_3))=G_1(T_3)$ to an
arc close to and winding around $G_1(F_1(T_2))=G_1(T_2)$.
This isotopy has support near $G_1(S'_{2,3})$, is relative to both $G_1(F_1(T_1))$ and $G_1(F_1(T_2))=G_1(T_2)$, but otherwise is not relative to $\partial S^2\br{1,2}$.
The resulting block $B=H_1 G_1 F_1 (A)$ is diffeomorphic to $\varepsilon_3$ by untwisting tangle components $2$ and $3$.
So, $A\approx B \approx \varepsilon_3$, which contradicts Corollary~\ref{Anottrivial}.
\end{proof}

\begin{lemma}\label{one}
The disk $D$ cannot contain exactly one of the special points $p^1$ or $p^2$.
\end{lemma}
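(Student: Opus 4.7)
The plan is to assume without loss of generality that $p^1\in D$ and $p^2\notin D$, and to derive a contradiction by applying one of Lemma~\ref{taco} or Lemma~\ref{one_special_point} to either $D$ or a subdisk of $D$.

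Set $B:=D\cap\omega$. Since $p^1\in D\cap\omega$, $B$ is nonempty, and by general position each of its components is a neatly embedded arc (chord) in $D$. The point $p^1$ lies on a unique chord $b^{\ast}\in B$, and in fact $p^1\in\Int{b^{\ast}}$: the initial perturbation of $\Sigma$ arranged $\Sigma\cap T$ disjoint from $\omega'$, so $f(p^1)\in T_1\cap\omega'$ is not in $\Sigma$, whence $p^1\notin\sigma$; in particular $p^1\notin C=\partial D$, so $p^1$ is not an endpoint of $b^{\ast}$.

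The first step is to show $B$ consists of a single chord. Suppose instead that $B$ has $k\geq 2$ components. The disjoint chords of $B$ cut $D$ into $k+1$ subregions, and the dual graph (pieces as vertices, chords as edges) is a tree, hence has at least two leaves. Each leaf corresponds to an \emph{outermost} subregion bounded by exactly one chord of $B$ together with an arc of $C$. The chord $b^{\ast}$ is associated with at most two outermost subregions, and with two only when $k=1$. Hence for $k\geq 2$ there is an outermost subregion $D_b\subset D$ whose bounding chord $b$ satisfies $b\neq b^{\ast}$, with $\partial D_b=a\cup b$ for some arc $a\subset C\subset\sigma$ and with $\Int{D_b}\cap B=\emptyset$. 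Since $b^{\ast}\not\subset D_b$, the subdisk $D_b$ contains neither $p^1$ (which lies in $\Int{b^{\ast}}$) nor $p^2$ (by hypothesis), so $a$, $b$, and $D_b$ contradict Lemma~\ref{taco}.

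Therefore $B=\{b_0\}$ is a single chord with $p^1\in\Int{b_0}$, and I would apply Lemma~\ref{one_special_point} to $D$ with $z:=b_0$ and $x:=p^1$. Its hypotheses all hold: $z$ is a neatly embedded arc in $D$, $D$ contains exactly one special point, $p^1\in\Int{z}$, and $\Int{D}\cap\sigma=\emptyset$ because any component of $\sigma$ contained in the open disk $\Int{D}$ would be inessential in $\Omega$, violating the innermost choice of $C$. Lemma~\ref{one_special_point} then supplies a complexity-reducing isotopy of $\Sigma$, contradicting the assumption that $\Sigma$ has already been improved as far as possible. The main obstacle I anticipate is the combinatorial step that extracts an outermost subdisk $D_b$ on the inside of a chord $b\neq b^{\ast}$, so that $D_b$ is entirely free of special points and its boundary decomposes in the form demanded by Lemma~\ref{taco}; once this is handled via the dual-tree argument, both concluding contradictions reduce to routine hypothesis checks against the previously established lemmas.
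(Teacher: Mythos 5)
Your proposal is correct and follows essentially the same two-case strategy as the paper: if $D\cap\omega$ is a single chord, apply Lemma~\ref{one_special_point}; otherwise locate a chord--arc pair contradicting Lemma~\ref{taco}. The only difference is that your dual-tree/outermost-subregion step is unnecessary, since Lemma~\ref{taco} does not require the bounded disk to be free of further intersections with $\sigma$ or $\omega$ --- the paper simply takes any component $b$ of $D\cap\omega$ missing $p^1$ and the arc $a\subset C$ on the side avoiding $p^1$.
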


\begin{proof}
Assume, without loss of generality, that $\Int D$ contains $p^1$ but not $p^2$.
Recall that $\sigma\cap \Int D=\emptyset$.
If $B$ is connected, then Lemma~\ref{one_special_point} applies to $D$, a contradiction.
If $B$ is disconnected, then $B$ contains a component, $b$, that does not contain $p^1$.
Let $a\subset C$ be the arc with $\partial a=\partial b$ and such that $p^1$ does not lie inside the simple closed curve $a\cup b$.
The arcs $a$ and $b$ contradict Lemma~\ref{taco}.
\end{proof}

Taking stock, $\sigma$ contains no component inessential in $\Omega$.
So, $\sigma=K$ is a single, essential circle in $\Omega$, and $K$ meets each $\tau_i$ at one point (transversely).
It remains to prove that $K\cap\T=\emptyset$.
A \textbf{segment} will mean an arc $s \subset K$ for which $s \cap (\omega \cup \tau) = \partial s$.
Each inline figure, such as \includegraphics[scale=0.6]{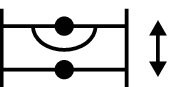}, represents a segment of $K$ in a sector;
vertical lines represent adjacent components of $\tau$, upper and lower horizontal lines represent arcs of $\omega$,
dots represent special points, and arrows indicate reflected cases of the entire figure.
For example, \includegraphics[scale=0.6]{scin_arrow.eps} represents six cases (three choices of sector and a possible vertical reflection), and \includegraphics[scale=0.6]{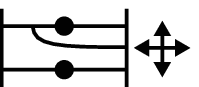} represents twelve cases.

\begin{lemma}\label{scin}
None of the following appear: \includegraphics[scale=0.6]{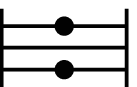}, \includegraphics[scale=0.6]{scin_arrow.eps}, or \includegraphics[scale=0.6]{exitm_arrow.eps}.
\end{lemma}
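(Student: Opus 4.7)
The plan is to rule out each of the three pictured configurations by a direct appeal to Lemma~\ref{taco}. Recall that Lemma~\ref{taco} forbids any arcs $a\subset\sigma$ and $b\subset\omega$ such that $a\cup b$ is a simple closed curve bounding a disk $D\subset\Omega$ disjoint from the six special points $p^{1},p^{2},q^{1},q^{2},r^{1},r^{2}$. Thus for each forbidden segment $s$, it suffices to exhibit a subarc $b\subset\omega$ so that $s\cup b$ cobounds a disk in $\Omega$ missing all of the special points.

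For \includegraphics[scale=0.6]{middle.eps}, the segment lies in a single rectangular region of $\Omega\setminus(\omega\cup\tau)$ with both endpoints on the same component of $\omega$. Together with the short arc of $\omega$ between those endpoints it bounds a disk contained in that rectangle, which by Figure~\ref{immersion_domain} is visibly disjoint from all six special points, contradicting Lemma~\ref{taco}. For \includegraphics[scale=0.6]{scin_arrow.eps} (six variants: three sectors together with a vertical reflection) and \includegraphics[scale=0.6]{exitm_arrow.eps} (twelve variants), the same pattern applies: in each variant the configuration selects a short subarc $b\subset\omega$, the resulting disk lies in a region of $\Omega$ containing no special point, and Lemma~\ref{taco} immediately yields a contradiction.

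The main obstacle is not conceptual but purely organizational: one must track through the $1+6+12=19$ variants and, in each one, verify that the chosen cobounding arc of $\omega$ and the corresponding disk $D\subset\Omega$ avoid all of the $p^{i},q^{i},r^{i}$. This is a finite, concrete check made tractable by the fact that the special points sit on only one component of $\omega$ and only at three prescribed angular positions, so outside a small set of exceptional placements (which are precisely the configurations \emph{not} ruled out by this lemma) the enclosed disk is automatically disjoint from them. No new geometric input beyond Lemma~\ref{taco} is needed.
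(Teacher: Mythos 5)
Your approach cannot work, and the reason is built into the statement of the lemma itself: the three configurations it excludes are precisely the ones to which Lemma~\ref{taco} does \emph{not} apply. The dots in the inline figures are special points sitting on the relevant arcs of $\omega$, so in each case either the segment fails to cobound a disk with a single subarc of $\omega$ at all (the first configuration is a segment running from one component of $\omega$ to the other, i.e.\ all the way across $\T$, with the two special points on either side of it), or every disk it cobounds with an arc of $\omega$ necessarily contains a special point (the second and third configurations). If one could choose the cobounding arc and disk to miss all six special points, the configuration would already have been destroyed by the earlier complexity-reducing operations and Lemma~\ref{taco} would forbid it outright --- there would be nothing left for this lemma to prove. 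Your closing remark that the exceptional placements are ``precisely the configurations \emph{not} ruled out by this lemma'' has the logic inverted.

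The paper's actual argument uses a completely different mechanism: since $\sigma=K$ is a single circle, it separates $\Omega$ into an inside and an outside, and points of $\Omega$ inside (resp.\ outside) $K$ map under $f$ to points inside (resp.\ outside) $\Sigma$. The immersion identifies each special point $p^j\in S_{2,3}$ with a point $p_j\in\tau_1$, i.e.\ $f(p^j)=f(p_j)$. Each forbidden configuration forces $p^1$ and $p^2$ to lie on opposite sides of $K$, hence $f(p^1)$ and $f(p^2)$ on opposite sides of $\Sigma$; but $K$ meets $\tau_1$ in exactly one point, and checking the three possible positions of $K\cap\tau_1$ relative to $p_1$ and $p_2$ shows that $f(p_1)$ and $f(p_2)$ can never realize that same inside/outside pattern. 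Lemma~\ref{taco} (together with the auxiliary claim that a \emph{scout} and a \emph{scin} cannot coexist in one sector) enters only to pin down how $K$ must continue from the endpoints of the offending segment, so that the separation of $p^1$ from $p^2$ is forced. This global double-point argument is the missing idea in your proposal; no amount of case-checking against Lemma~\ref{taco} alone will supply it.
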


\begin{proof}
Observe that points of $\Omega$ inside (outside) $K$ map under $f$ to lie inside (outside) $\Sigma$ respectively.
Suppose there is \includegraphics[scale=0.6]{middle.eps}. Without loss of generality, assume the indicated special points are $p^1$ and $p^2$.
Then, $f(p^1)$ is outside $\Sigma$ and $f(p^2)$ is inside $\Sigma$.
There are three possibilities for the location of $K\cap\tau_1$ (see Figure~\ref{annulus_omo} at right).
The inner option implies $f(p_1)$ and $f(p_2)$ both lie outside $\Sigma$.
The middle option implies $f(p_1)$ lies inside $\Sigma$ and $f(p_2)$ lies outside $\Sigma$.
The outer option implies $f(p_1)$ and $f(p_2)$ both lie inside $\Sigma$.
All three are contradictions since $f(p_1)=f(p^1)$ and $f(p_2)=f(p^2)$.
So, no \includegraphics[scale=0.6]{middle.eps} appears.\\

We claim that:
\begin{enumerate}\setcounter{enumi}{\value{equation}}
\item\label{not_both} There do not exist \includegraphics[scale=0.6]{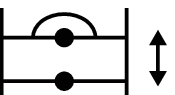} and \includegraphics[scale=0.6]{scin_arrow.eps} in the same sector and with the same reflection.
\setcounter{equation}{\value{enumi}}
\end{enumerate}
To see this, suppose, by way of contradiction, that \includegraphics[scale=0.6]{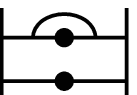} and \includegraphics[scale=0.6]{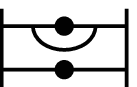} both appear in the same sector,
say $S_{2,3}$.
Let $z_+$ and $z_-$ denote these segments respectively.
Assume, without loss of generality, that the special point just below $z_+$ is $p^1$.
Let $z\subset\omega$ be the unique arc such that $p^1\in\Int z$ and $z\cap\sigma=\partial z$.
Lemma~\ref{taco} implies that $\partial z$ bounds segments $s_+$ parallel to $z_+$ and $s_-$ parallel to $z_-$ ($s_+=z_+$ and $s_-=z_-$ are possible).
Then, $s_+ \cup s_-$ is an inessential component of $\sigma$, a contradiction.
This completes our proof of~\ref{not_both}.\\

Suppose there is \includegraphics[scale=0.6]{scin.eps}.
Assume, without loss of generality, that this segment $s$ of $K$ lies in $S_{2,3}$.
By Lemma~\ref{taco}, the only possible segment of $K$ that meets both $\T$ and $\Omega-\T$,
and is disjoint from $\tau$, is \includegraphics[scale=0.6]{scout_arrow.eps}.
By~\ref{not_both}, the boundary points of $s$ lie in distinct segments, $s_2$ and $s_3$, of $K$, where $s_2$ meets $\tau_2$ at one endpoint and $s_3$ meets $\tau_3$ at one endpoint (see Figure~\ref{annulus_omo}).
\begin{figure}[h!]
    \centerline{\includegraphics[scale=1.0]{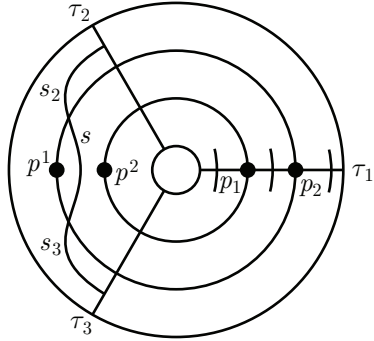}}
    \caption{Hypothetical arrangement of segments of $K$.}
    \label{annulus_omo}
\end{figure}
So, $f(p^1)$ is outside $\Sigma$ and $f(p^2)$ is inside $\Sigma$.
This yields the same contradiction as for \includegraphics[scale=0.6]{middle.eps} above.
Hence, no \includegraphics[scale=0.6]{scin_arrow.eps} appears.\\

Suppose there is \includegraphics[scale=0.6]{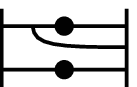}.
Call this segment $s$ and assume, without loss of generality, that $p^1$ is the special point pictured above $s$.
Let $z\subset\omega$ denote the short arc with $\partial z =\cpa{q_2,p^1}$ (see Figure~\ref{annulus_omm}).
We claim that no segment of $K$ contained in $\T$ may meet $z$.
This follows from: (i) Lemma~\ref{taco}, (ii) the nonexistence of \includegraphics[scale=0.6]{scin_arrow.eps},
(iii) since $s$ meets $\tau_2$, and (iv) since $K$ meets $\tau_2$ exactly once.
Therefore, $K\cap S_{2,3}$ appears as in Figure~\ref{annulus_omm}.
\begin{figure}[h!]
    \centerline{\includegraphics[scale=1.0]{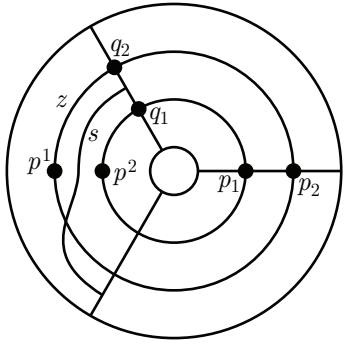}}
    \caption{$K$ cannot exit the annulus $\T$ between $q_2$ and $p^1$.}
    \label{annulus_omm}
\end{figure}
So, $f(p^1)$ is outside $\Sigma$ and $f(p^2)$ is inside $\Sigma$, a familiar contradiction.
Hence, no \includegraphics[scale=0.6]{exitm_arrow.eps} appears.
\end{proof}

Recall that $\T'\subset S^2\br{1,2}$ is an embedded torus, smooth except for corners along $\omega'$.
So, $\Sigma\cap\T'$ is a closed $1$-manifold, smooth except for corners, and embedded in $\Sigma$ and in $\T'$.
We introduce a based longitude $\lambda$ and a based meridian $\mu$ on $\T'$.
Both originate and terminate at $f(p_2)\in\omega'$ (recall Figures~\ref{immersion}, \ref{immersion_domain}, and \ref{slice}).
The longitude $\lambda$ runs once along $\omega'$ in the CW direction about the $z$-axis when viewed from the point $(0,0,1)$.
The meridian $\mu$ is the right loop in Figure~\ref{slice}, oriented CCW, where the plane of intersection is the $xz$-plane.
In particular, a parallel pushoff $\lambda_0$ of $\lambda$ into the inside of $\T'$ has linking number $+1$ with $\mu$ in $\R^3$.
An oriented loop in $\T'$ has \textbf{type} $(m,n)$ provided it is freely homotopic in $\T'$ to $\mu^m \lambda^n$. 

\begin{lemma}\label{essential}
Let $C$ be a component of $\Sigma\cap\T'$, equipped with an orientation and of type $(m,n)$.
Then, $mn=0$.
\end{lemma}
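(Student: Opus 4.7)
The plan is to reduce the lemma to a homology computation via an innermost-disk argument on $\Sigma$. Assume $C$ is essential on $\T'$; otherwise $[C]=0$ in $H_1(\T')$ forces $m=n=0$. The torus $\T'$ is standardly embedded in $S^2\br{1,2}$ and bounds a solid torus $V$ on its interior side, with meridian $\mu$; let $W$ denote the closure of its complement in $S^2\br{1,2}$. Embedding $S^2\br{1,2}\subset S^3$ realizes $W$ as the exterior solid torus of $V$ in $S^3$ with two open $3$-disks removed (the components of $S^3-S^2\br{1,2}$, which are disjoint from $V$), and removing $3$-disks does not affect $H_1$, so $H_1(W)\cong\Z$. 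The meridian--longitude duality between dual solid tori in $S^3$ identifies the generator of $H_1(W)$ with the class of $\mu$ (a longitude of the outside solid torus) and gives $[\lambda]=0$ in $H_1(W)$. Therefore the inclusion-induced maps $H_1(\T')\to H_1(V)$ and $H_1(\T')\to H_1(W)$ send a class of type $(m,n)$ to $n$ and to $m$ respectively; in particular, a simple closed curve on $\T'$ bounding a disk in $V$ has $n=0$, and one bounding a disk in $W$ has $m=0$.

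Next, I simplify $\Sigma\cap\T'$ by innermost-disk surgery on $\Sigma$. Select a component $C^*$ of $\Sigma\cap\T'$ that is innermost on $\Sigma$, so $C^*$ bounds a disk $E\subset\Sigma$ with $E\cap\T'=C^*$. If $C^*$ is nullhomotopic on $\T'$, then $C^*$ also bounds a disk $E''\subset\T'$, and I surger $\Sigma$ by replacing $E$ with a slight pushoff of $E''$ off $\T'$ to the side opposite $E$. The result is a smooth embedded $2$-sphere with strictly smaller $|\Sigma\cap\T'|$, and since $C$ is essential while $C^*$ is not, $C^*\neq C$, so $C$ remains a component of the intersection of the same type. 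Iterating, I reach a configuration in which every innermost-on-$\Sigma$ component of $\Sigma\cap\T'$ is essential on $\T'$.

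Now fix such an essential innermost $C^*$ together with its disk $E\subset\Sigma$. Either $C^*=C$, or $C^*$ and $C$ are disjoint essential simple closed curves on the torus $\T'$ and hence isotopic; in both cases $[C^*]=\pm[C]$ in $H_1(\T')$. Since $\T'$ separates $S^2\br{1,2}$ (it bounds $V$) and $\tn{int}(E)$ is connected and disjoint from $\T'$, the disk $E$ lies entirely in $V$ or entirely in $W$. Applying the homology formula from the first paragraph yields $n=0$ or $m=0$, and therefore $mn=0$.

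I expect the principal subtlety to be correctly identifying $H_1(W)$ and the images of $\mu$ and $\lambda$ there; once the meridian--longitude duality is carefully tracked, the innermost-disk surgery and the concluding homology deduction are routine. A secondary check is to notice that the innermost $C^*$ above always exists: a nonempty collection of disjoint simple closed curves on a $2$-sphere always contains at least one bounding a complementary disk, and if $C$ itself is the only intersection component, then the two hemispheres of $\Sigma-C$ lie on opposite sides of $\T'$, which by the formula forces $m=n=0$ and contradicts the standing assumption that $C$ is essential.
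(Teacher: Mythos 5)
Your route is genuinely different from the paper's, and its skeleton is viable. The paper takes the closure $\Gamma$ in $\Sigma$ of a suitable component of $\Sigma-\T'$, extracts a second boundary curve $C_0$ parallel to $C$, and gets a contradiction from $\tn{lk}(C,C_0)=mn$ versus $\tn{lk}(C,C_0)=0$ for disjoint curves on an embedded sphere. You instead aim to produce a curve of type $\pm(m,n)$ bounding a disk in one of the two complementary solid tori and read off $m=0$ or $n=0$ from $H_1$. Your homology bookkeeping is correct: $[\mu]=0$ in $H_1(V)$, $[\lambda]=0$ in $H_1(W)$, and deleting the two $3$-balls from the outside solid torus of $S^3$ does not change $H_1$. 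The final step (disjoint essential curves on a torus are parallel, and a disk of $\Sigma$ with interior disjoint from $\T'$ lies wholly in $V$ or in $W$) is also fine.

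The gap is in the surgery step. You choose $C^{\ast}$ innermost on $\Sigma$, so $E\subset\Sigma$ has interior disjoint from $\T'$; but the disk $E''\subset\T'$ bounded by $C^{\ast}$ may contain other components of $\Sigma\cap\T'$ in its interior. Since $\Sigma$ crosses $\T'$ transversally along those curves, every small pushoff of $E''$ off $\T'$ (to either side) meets the sheets of $\Sigma$ passing through $\Int{E''}$, so $\pa{\Sigma- E}\cup E''_{\epsilon}$ is not an embedded sphere and the induction on $\card{\Sigma\cap\T'}$ does not close. The standard repair is to run the innermost argument on $\T'$ rather than on $\Sigma$: among the components of $\Sigma\cap\T'$ that are inessential on $\T'$, choose one whose disk $E''\subset\T'$ is innermost, so $\Int{E''}\cap\Sigma=\emptyset$ (an essential component cannot lie inside a disk on $\T'$, so only inessential ones could obstruct minimality); compress $\Sigma$ along $E''$, splitting it into two embedded spheres, and keep the one containing $C$ (note $C\not\subset E''$ because $C$ is essential). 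Each compression strictly decreases $\card{\Sigma\cap\T'}$ while preserving embeddedness, and the remainder of your argument then goes through verbatim. With that one step corrected, your proof is a legitimate alternative that trades the paper's linking-number computation for an extra round of innermost-disk surgeries.
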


\begin{proof}
Assume $C$ is essential in $\T'$ (otherwise, the result is clear).
Note that $\gcd\pa{m,n}=1$.
Focus attention on the submanifolds $\Sigma$ and $\T'$ of $S^2\br{1,2}\subset\R^3$.
We view $\Sigma$ and $\T'$ as submanifolds of $S^3=\R^3\cup\cpa{\infty}$, where $S^3=(S^1\times D^2)\cup (D^2\times S^1)$ and $(S^1\times D^2)\cap (D^2\times S^1)=\T'$.
As $C$ is essential in $\T'$, $C$ is not null-homologous (denoted $C\not\sim 0$) in both $S^1\times D^2$ and $D^2\times S^1$.
Let $X$ denote $S^1\times D^2$ or $D^2\times S^1$ where $C\not\sim 0$ in $X$.
Exactly one component of $\Sigma-\T'$ contains $C$ in its frontier and lies in $X$; let $\Gamma$ denote the closure of this component in $\Sigma$.
Note that $\Gamma$ is a compact $2$-disk with holes, $C\subset\partial \Gamma$, $\Gamma\subset X$, and $\partial \Gamma \subset\partial X=\T'$.
As $C\not\sim 0$ in $X$, there must be another component $C_0$ of $\partial \Gamma$ such that $C_0\not\sim0$ in $X$.
In particular, $C_0$ is essential in $\T'$. 
Choose an orientation of $C_0$.
Then, $C_0$ has type $(m_0,n_0)$ where $\gcd\pa{m_0,n_0}=1$.
The algebraic intersection number of $C$ and $C_0$ in $\T'$ equals $mn_0-nm_0$, which must vanish since $\Sigma\cap\T'$ is embedded in $\T'$.
It follows that $C_0$ has type $\pm(m,n)$.
Switching the orientation of $C_0$ if necessary, $C_0$ has type $(m,n)$.
Thus, $C$ and $C_0$ are parallel in $\T'$.
An exercise (left to the reader) shows that the linking number $\tn{lk}(C,C_0)$ in $\R^3$ equals $mn$.
As $C$ and $C_0$ are disjointly embedded in the sphere $\Sigma\subset\R^3$, $\tn{lk}(C,C_0)=0$.
Hence, $mn=0$ as desired.
\end{proof}

We define an \textbf{extended segment} to be an arc component of $K\cap\T$ of the form:
\[
\includegraphics[scale=0.6]{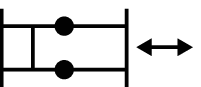}, \quad \includegraphics[scale=0.6]{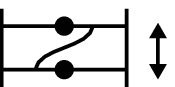}, \quad \tn{or}\quad \includegraphics[scale=0.6]{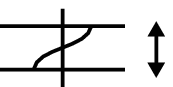} 
\]
In the latter type, the vertical line is a component of $\tau$ and the horizontal lines are arcs of $\omega$ not meeting special points.

\begin{corollary}\label{int_ext_seg}
Any component of $K\cap\T$ is an extended segment.
\end{corollary}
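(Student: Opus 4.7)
The plan is to use Lemma~\ref{scin} as the main combinatorial tool. By transversality, each component $C$ of $K\cap\T$ is either (i) a simple closed curve contained in $\Int \T$, or (ii) an arc properly embedded in $\T$ with $\partial C\subset\omega$. I would first dispatch case (i) and then classify case (ii).

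For case (i), the connectedness of the circle $K$ forces any closed component $C$ of $K\cap\T$ to exhaust $K$, so $K=C\subset\T$. In particular $K\cap\omega=\emptyset$, and the three points of $K\cap\tau$ divide $K$ into three segments, each an arc in $\T$ with both endpoints on $\tau$. Every such segment is of the type \includegraphics[scale=0.6]{middle.eps} forbidden by Lemma~\ref{scin} --- a contradiction ruling out circle components.

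For case (ii), I next bound the number of interior $\tau$-crossings of the arc $C$. If $C$ met $\tau$ at two or more points, two consecutive such crossings would bound a sub-arc of $C$ that is a segment in $\T$ with both endpoints on $\tau$, again a \includegraphics[scale=0.6]{middle.eps} segment and hence excluded. Thus $C$ meets $\tau$ in at most one point. When $C\cap\tau=\emptyset$, $C$ is itself a single segment with both endpoints on $\omega$, and the residual prohibitions of Lemma~\ref{scin} --- notably the exclusion of \includegraphics[scale=0.6]{scin_arrow.eps} --- leave only the shapes \includegraphics[scale=0.6]{bar_arrow.eps} and \includegraphics[scale=0.6]{scurve_arrow.eps}. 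When $C\cap\tau$ is a single interior point, $C$ decomposes at that point into two segments each having one endpoint on $\omega$ and one on $\tau$; the exclusion of \includegraphics[scale=0.6]{exitm_arrow.eps} then forces the combined shape to be \includegraphics[scale=0.6]{scurve_straddle_arrow.eps}.

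The main obstacle I anticipate is the final case analysis: one must systematically match each excluded segment type from Lemma~\ref{scin} against the finite list of combinatorial possibilities for where the endpoints of $C$ lie on the two components of $\omega$, and, when relevant, against the position of the $\tau$-crossing inside a given sector. The content of the corollary lies precisely in verifying that, after these matches are carried out, only the three pictured extended segments survive.
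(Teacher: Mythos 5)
Your overall skeleton matches the paper's: closed components of $K\cap\T$ are ruled out because $K\subset\T$ would force three \includegraphics[scale=0.6]{middle.eps} segments, an arc component meets $\tau$ at most once for the same reason, and the remaining cases are sorted by where the endpoints land on $\omega$. The gap is in the final sorting: you claim that the exclusions of Lemma~\ref{scin} alone ``leave only'' the three extended segments, but Lemma~\ref{scin} forbids exactly three configurations (\includegraphics[scale=0.6]{middle.eps}, \includegraphics[scale=0.6]{scin_arrow.eps}, \includegraphics[scale=0.6]{exitm_arrow.eps}), and these do not cover an arc component of $K\cap\T$ that returns to the \emph{same} component of $\omega$ while cobounding, with an arc $b\subset\omega$, a disk containing no special point. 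Such an arc is not a \includegraphics[scale=0.6]{scin_arrow.eps} (that configuration encloses a special point, which is what drives the inside/outside contradiction in Lemma~\ref{scin}), it is not a \includegraphics[scale=0.6]{middle.eps}, and its two sub-segments need not be of type \includegraphics[scale=0.6]{exitm_arrow.eps} when it happens to cross a component of $\tau$. The paper kills precisely these leftover configurations in one line --- ``By Lemma~\ref{taco}, no \includegraphics[scale=0.6]{straddle_arrow.eps} appears'' --- and Lemma~\ref{taco} is itself a nontrivial global statement (proved by an innermost-disk argument after the three reduction operations have been exhausted), not a consequence of Lemma~\ref{scin}.

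So you need to add an appeal to Lemma~\ref{taco} at the point where you enumerate endpoint locations: an arc with both endpoints on one component of $\omega$ either encloses a special point (excluded as \includegraphics[scale=0.6]{scin_arrow.eps} by Lemma~\ref{scin}) or does not (excluded by Lemma~\ref{taco}); only then are both endpoints forced onto distinct components of $\omega$, yielding \includegraphics[scale=0.6]{bar_arrow.eps} or \includegraphics[scale=0.6]{scurve_arrow.eps} in the $\tau$-free case and \includegraphics[scale=0.6]{scurve_straddle_arrow.eps} in the one-crossing case. With that repair your argument coincides with the paper's.
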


\begin{proof}
Let $s$ be a component of $K\cap\T$.
By Lemma~\ref{scin}, no \includegraphics[scale=0.6]{middle.eps} appears.
Therefore, $s\neq K$.
Hence, $s$ is an arc neatly embedded in $\T$.
Note that $\partial s\subset\omega$, and $\partial s$ is disjoint from $\tau$ and from special points.
By Lemma~\ref{taco}, no \includegraphics[scale=0.6]{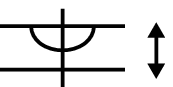} appears.
The result now follows from Lemma~\ref{scin} by considering the possible locations of points in $\partial s$.
\end{proof}

\begin{proposition}\label{disjoint}
$K\cap\T=\emptyset$.
\end{proposition}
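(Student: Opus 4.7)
\emph{Plan.} Assume for contradiction that $K\cap\T\neq\emptyset$. By Corollary~\ref{int_ext_seg} every component of $K\cap\T$ is an extended segment, and $\Sigma\cap\T'=f(K\cap\T)$ is a closed $1$-manifold in the torus $\T'$ whose components are closed curves formed by concatenating the arcs $f(s)$ (for each extended segment $s$), glued in pairs at their $f$-identified endpoints on $\omega'$. The strategy is to exhibit some component $C$ of $\Sigma\cap\T'$ that is essential in $\T'$ with type $(m,n)$ satisfying $mn\neq0$, contradicting Lemma~\ref{essential}.

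Identify the universal cover $\widetilde{\T'}=\R^2$ and parametrize $\T$ by $(u,v)$, with $v=0$ on $S^1\br{\sfrac{4}{3}}$ and $v=1$ on $S^1\br{\sfrac{5}{3}}$. The $\pi$-twist identification becomes $(u,0)\sim(u+\sfrac{1}{2},1)$, so the deck lattice of $\R^2\to\T'$ is $L=\Z\lambda\oplus\Z\mu$ with $\lambda=(1,0)$ and $\mu=(\sfrac{1}{2},1)$; a vector $(u,v)\in L$ then has type $(m,n)=(v,\,u-v/2)$. A bar (type-$(1)$ extended segment) lifts to an arc with meridional displacement $\pm1$, whereas each s-curve (type-$(2)$ or type-$(3)$) lifts with meridional displacement $0$. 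Consequently, the meridional coordinate $m$ of any component of $\Sigma\cap\T'$ equals the signed count of its bars.

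The key computation is that two bars at angular separation $\pi$ are forced by the half-twist identification to pair with each other, forming a bigon component of $\Sigma\cap\T'$ whose lift in $\R^2$ has displacement $(0,2)\in L$; this is type $(m,n)=(2,-1)$, with $mn=-2\neq0$, yielding the desired contradiction. The main obstacle is to handle configurations in which bars are interleaved with s-curves into longer components whose net type could \emph{a priori} be purely meridional $(m,0)$ or purely longitudinal $(0,n)$. Here the earlier restrictions are used: Lemmas~\ref{scin} and~\ref{taco} constrain the local patterns of $K\cap\T$ tightly enough (and the bound $|K\cap\tau|=3$ limits the number of type-$(3)$ segments) that the bar endpoints on $\omega$ cannot be chained with s-curve endpoints in a way that exactly kills the longitudinal contribution, forcing every essential component containing a bar to satisfy $mn\neq0$. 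Finally, if no extended segment is a bar, then $m=0$ for every component; a telescoping calculation in $\R^2$ shows that every such component has vanishing total displacement and is null-homotopic in $\T'$, whence $f^{-1}$ of a bounding disk gives a closed component of $\sigma$ in $\Int\T$ distinct from $K$, contradicting $\sigma=K$. Either way we reach a contradiction, proving $K\cap\T=\emptyset$.
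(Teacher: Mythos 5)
Your overall strategy coincides with the paper's: exhibit a component of $\Sigma\cap\T'$ of type $(m,n)$ with $mn\neq0$ and contradict Lemma~\ref{essential}. But the execution rests on a false premise. You assert that only bars contribute to the meridional displacement and that s-curves lift with meridional displacement $0$. That is not so: \emph{every} extended segment --- bar, s-curve, or straddling s-curve --- is by definition an arc of $K\cap\T$ running from one component of $\omega$ to the other, and $f$ glues those two circles together to form the torus $\T'$. Hence each of the $m$ segments making up a component $C$, oriented coherently to point out from $S^1\br{\sfrac{4}{3}}$, contributes exactly $+1$ to the meridional coordinate; $m$ is the total number of segments in $C$, not the signed count of its bars. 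Your bar-pairing computation and your final case (``if no extended segment is a bar, then $m=0$'') therefore do not apply. (That final case has a second problem: an inessential component of $\Sigma\cap\T'$ is a concatenation of images of arcs of $K$ glued along $\omega'$, so its preimage under $f$ is a union of arcs of $K$, not a new closed component of $\sigma$.)

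Moreover, the step you flag as ``the main obstacle'' --- ruling out components whose longitudinal contribution cancels to $0$ --- is exactly where the content of the proposition lies, and you only assert that Lemmas~\ref{scin} and~\ref{taco} handle it. The paper closes this gap with a quantitative estimate rather than a combinatorial chase: because the two components of $\omega$ are identified after half a rotation, and because each extended segment stays within (at most) one sector's worth of angular drift ($2\pi/3$, i.e.\ $\sfrac{1}{3}$ of a revolution), every segment winds $\theta_k\in(\sfrac{1}{6},\sfrac{5}{6})$ revolutions in the $\lambda$ direction. Summing, $C$ has type $(m,n)$ with $m\geq1$ and $m/6<n<5m/6$, so $n\geq1$ and $mn\neq0$. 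Without some uniform bound of this kind your argument does not go through.
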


\begin{proof}
Suppose, by way of contradiction, that $K$ meets $\T$.
Then, there is a circle $C\subset\Sigma\cap\T'$.
By Corollary~\ref{int_ext_seg}, there exists a finite, disjoint collection $s_1,s_2,\ldots,s_m$, $m\geq 1$, of extended segments
such that:
\[
	C=s'_1\cup s'_2 \cup \cdots \cup s'_m
\]
Orient all segments $s_k$, $1\leq k\leq m$, to point out from $S^2\br{\sfrac{4}{3}}$.
Note that this yields a coherent orientation of $C$.
A moment of reflection on the immersion $f$ (cf. Figure~\ref{immersion}) reveals that each $s'_k$ winds around $\T'$ by:
(i) $+1$ revolutions in the $\mu$ direction, and (ii) $\theta_k$ revolutions in the $\lambda$ direction where $1/6<\theta_k<5/6$.
Hence, $C$ has type $(m,n)$ where $m\geq1$ and $m/6<n<5m/6$, which contradicts Lemma~\ref{essential}.
\end{proof}

This completes our proof that $A$ is irreducible and our proof of Theorem~\ref{A_irred}. \qed

\section{Improving Spheres in Concatenations of Borromean Blocks}\label{improve_spheres}

This section proves that certain spheres in concatenations of Borromean blocks may be ambiently isotoped into a single block summand, while fixing the tangle setwise.

\begin{proposition}\label{borr_prime}
Let $B_i$, $1\leq i \leq k$, be Borromean blocks where $k\geq 2$.
Consider the concatenation:
\[
B:=B_1 B_2 \cdots B_k = \pa{S^2\br{1,k+1},\tau}
\]
Let $\Sigma$ be a $2$-sphere embedded in the interior of $S^2\br{1,k+1}$, transverse to $\tau$,
and meeting each component $\tau_i$ of $\tau$ at exactly one point $x_i$ for $i=1$, $2$, and $3$.
Then, there is a neighborhood $U$ of $\partial S^2\br{1,k+1}$ in $S^2\br{1,k+1}$ and an ambient isotopy $H_t$, $0\leq t \leq 1$, of $S^2\br{1,k+1}$ such that:
\begin{enumerate}\setcounter{enumi}{\value{equation}}
\item $H_0=\tn{Id}$.
\item $\left.H_t\right|U=\tn{Id}$ for all $0\leq t\leq 1$.
\item $H_t$ fixes $\tau$ setwise for all $0\leq t \leq 1$.
\item $H_1\pa{\Sigma}\subset\Int{S^2\br{m,m+1}}$ for some $m\in\cpa{1,2,\ldots,k}$.
\setcounter{equation}{\value{enumi}}
\end{enumerate}
\end{proposition}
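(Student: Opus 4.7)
The plan is induction on the complexity $N(\Sigma) := |\Sigma \cap \bigcup_{j=2}^{k} \Pi_j|$, where $\Pi_j := S^2\br{j}$ are the interior separating level spheres of the concatenation $B = B_1 \cdots B_k$. If $N(\Sigma) = 0$, then $\Sigma$ is contained in the open summand $\tn{int}\, S^2\br{m,m+1}$ for some $m$, and we are done. Otherwise we reduce $N(\Sigma)$ by an ambient isotopy of $B$ fixing $\tau$ setwise and a neighborhood of $\partial B$. First perturb $\Sigma$ into general position so that it meets each $\Pi_j$ transversely and $\Sigma \cap \tau$ is disjoint from every $\Pi_j \cap \tau$; then each $\Sigma \cap \Pi_j$ is a finite disjoint union of circles in $\Pi_j \setminus \tau$.

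The main tool is a $\bmod\,2$ parity constraint: since $H_1(B,\partial B;\Z/2) \cong \Z/2$ and each $\tau_i$ represents its generator, the parity $\#_2(\tau_i,S)$ is independent of $i$ for every embedded $2$-sphere $S \subset B$. Applied to a sphere $S = D \cup \Delta$, where $D \subset \Pi_{j_0}$ and $\Delta \subset \Sigma$ are disks cobounded by a circle $C \subset \Sigma \cap \Pi_{j_0}$, this forces $D \cap \tau$ and $\Delta \cap \tau$, viewed as subsets of the three tangle punctures on the respective sphere, to be either equal or complementary. Select $C$ innermost on $\Pi_{j_0}$, bounding a disk $D$ with $\tn{int}\, D \cap \Sigma = \emptyset$ and, after swapping with the complementary disk if necessary, $|D \cap \tau| \in \cpa{0,1}$. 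If $|D \cap \tau| = 0$, the parity constraint supplies a disk $\Delta \subset \Sigma$ bounded by $C$ with $\Delta \cap \tau = \emptyset$; then $D \cup \Delta$ is inessential by Lemma~\ref{ess_sphere} and bounds a ball $B' \subset B$ which is necessarily disjoint from $\tau$ (since each $\tau_i$ connects the two boundary spheres of $B$, no $\tau_i$ can be trapped in $B'$), and a push of $\Delta$ across $B'$ past $D$ strictly reduces $N(\Sigma)$. If $|D \cap \tau| = 1$, say on $\tau_i$, parity provides a disk $\Delta$ with matching content $\Delta \cap \tau = \cpa{x_i}$; the sphere $D \cup \Delta$ is then inessential and bounds a ball $B' \subset B$ disjoint from $\tau_j, \tau_k$ containing an arc $\alpha$ of $\tau_i$ from $p_i$ to $x_i$. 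By the Borromean hypothesis, $(B, \tau_i \cup \tau_l) \approx \varepsilon_2$ rel $\partial B$ for any $l \neq i$, so Lemma~\ref{ball-arc} applied in this trivial $2$-sub-block shows that $(B', \alpha)$ is an unknotted ball-arc pair.

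I expect the main obstacle to be constructing the reduction isotopy in the $|D \cap \tau| = 1$ case while preserving $\tau_i$ setwise. A naive push of $\Delta$ onto $D$ through $B'$ would require swapping the two endpoints of $\alpha$ on $\partial B'$, which is impossible for any isotopy lying in the identity component of $\tn{Diff}(B', \alpha)$ (by continuity, endpoints of $\alpha$ cannot move off themselves while $\alpha$ is setwise preserved). The resolution is to enlarge the support of the isotopy beyond $B'$: the arc $\tau_i$ extends past $p_i$ into the summand across $\Pi_{j_0}$, and using the standard ball-arc model furnished by Lemma~\ref{ball-arc} together with a tubular neighborhood of an extended sub-arc of $\tau_i$, one produces an ambient isotopy that slides $\tau_i$ along itself (fixing $\tau_i$ setwise and $\tau_j, \tau_k$ pointwise) while carrying $\Delta$ across $\Pi_{j_0}$ and past $D$. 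Each reduction strictly decreases $N(\Sigma) \in \Z_{\geq 0}$, so iteration terminates; at termination $\Sigma$ lies in a single open summand, completing the proof.
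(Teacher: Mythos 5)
Your overall strategy is the one the paper uses: induct on $\card{\Sigma\cap\bigcup_{j=2}^{k}S^2\br{j}}$, split into the zero-puncture and one-puncture cases, detect which disks pair up via the mod $2$ intersection parity (this is exactly Lemma~\ref{ess_sphere} and Lemma~\ref{disk_ess_sphere}), invoke the Borromean hypothesis to reduce to $\varepsilon_2$ and get an unknotted (half-)ball--arc pair (the paper packages this as Lemma~\ref{ball-arc_diffeo}), and resolve the setwise-preservation problem by sliding $\tau_i$ along itself beyond the ball. The one structural difference is where you take the innermost disk: you take it on the level sphere $\Pi_{j_0}$, whereas the paper takes the disk $D_1$ innermost on $\Sigma$, inside the half of $\Sigma$ containing at most one of the $x_i$ (so $\card{D_1\cap\tau}\le1$ and $\Int D_1\cap Z=\emptyset$ come for free, and the ball automatically lies in a single block summand).

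That dual choice is where your write-up has a genuine gap: ``after swapping with the complementary disk if necessary, $\card{D\cap\tau}\in\cpa{0,1}$'' does not work, because the complementary disk of an innermost disk is in general not innermost --- it contains all the other circles of $\Sigma\cap\Pi_{j_0}$ --- and the rest of your argument really needs $\Int D\cap\Sigma=\emptyset$ (both for $D\cup\Delta$ to be embedded and for your conclusion that $\Int B'$ misses $\Sigma$, which is what makes the push well-behaved). The fix is easy but is a different argument from the one you gave: the circles of $\Sigma\cap\Pi_{j_0}$ cut $\Pi_{j_0}$ into regions whose dual graph is a tree with at least two leaves, so there are at least two disjoint innermost disks; since $\Pi_{j_0}$ carries only three punctures, at least one innermost disk already satisfies $\card{D\cap\tau}\le1$. (Alternatively, adopt the paper's selection on $\Sigma$.) A second, smaller point you should make explicit: because your $\Delta$ need not be innermost on $\Sigma$, $\Int\Delta$ may meet other level spheres, and the push can in principle create one new circle parallel to $C$ when $B'$ lies locally on the ``wrong'' side of $\Pi_{j_0}$ along $C$; one checks that this bad configuration forces $\Int D'\subset\Int B'$, hence $B'\cap\tau\neq\emptyset$, a contradiction, unless $\Int\Delta$ already carries circles of $Z$ that the push removes --- so the count does strictly decrease either way, but this deserves a sentence. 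With those two repairs your proof is sound and is essentially the paper's.
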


\begin{remark}
For the definition of \emph{Borromean block}, see Section~\ref{borr_blocks}.
\end{remark}

\begin{proof}
Without loss of generality, no point $x_i$ lies in a sphere $S^2\br{j}$ where $j\in\cpa{2,3,\ldots,k}$, and $\Sigma$ is transverse to these spheres.
So, $Z:=\Sigma\cap \pa{\cup_{j=2}^{k} S^2\br{j}}$ is a closed $1$-manifold.
It suffices to improve $\Sigma$, by appropriate ambient isotopies of $S^2\br{1,k+1}$, so that $Z$ becomes empty.
We will employ the following two operations.\\

\noindent \textbf{Operation 1.} Suppose $C$ is a component of $Z$ bounding a disk $D_1\subset\Sigma$ such that:
(i) $D_1$ is disjoint from $Z-C$ and (ii) $D_1$ is disjoint from $\tau$.
The circle $C$ lies in $S^2\br{j}$ for some $j\in\cpa{2,3,\ldots,k}$ and bounds two $2$-disks, $D_2$ and $D'_2$, in $S^2\br{j}$.
By Lemma~\ref{disk_ess_sphere} (with $a=\textnormal{any }\tau_i$), we have $0=\#_2(a,D_1)=\#_2(a,D_2)$ and $D_1\cup D_2$ is inessential in $S^2\br{1,k+1}$.
So, $D_2\cap\tau=\emptyset$ (since each $\tau_i$ meets $D_2$ at most once) and there is a $3$-disk $D$ in $S^2\br{1,k+1}$ bounded by $D_1\cup D_2$.
Hence, $\tau$ is disjoint from $D$ and $D$ permits construction of an ambient isotopy, with support near $D$,
that carries $D_1$ past $D_2$ to a parallel copy of $D_2$.
Thus, $C$ (at least) has been eliminated from $Z$.\\

\noindent \textbf{Operation 2.} Suppose $C$ is a component of $Z$ bounding a disk $D_1\subset\Sigma$ such that:
(i) $D_1$ is disjoint from $Z-C$ and (ii) $D_1$ intersects $\tau$ at one point.
For notational convenience, we assume $D_1\cap\tau=\cpa{x_1}$.
Note that $x_1\in\Int{D_1}$.
The circle $C$ lies in $S^2\br{j}$ for some $j\in\cpa{2,3,\ldots,k}$.
Let $D_2\subset S^2\br{j}$ be the $2$-disk whose boundary is $C$ and whose interior meets $\tau_1$ (necessarily at one point, call it $q$).
Now, $D_1$ lies in $S^2\br{j-1,j}$ or in $S^2\br{j,j+1}$.
Without loss of generality, assume the latter.
Consider the block:
\[
X:=\pa{S^2\br{j,j+1},r}
\]
where $r$ has components:
\[
 r_i:=\tau_i \cap S^2\br{j,j+1} \tn{ for $i=1$, $2$, and $3$.}
\]
Let $X'$ be the block obtained from $X$ by forgetting $r_3$.
Then, $X'\approx\varepsilon_2$ since $X\approx B_j$ is a Borromean block.
Hence, Lemma~\ref{ball-arc_diffeo} implies that $D_1\cup D_2$ bounds the $3$-disk $D\subset S^2\br{j,j+1}$,
that $D\cap r_2=\emptyset$, and there exists a diffeomorphism $g:\pa{D,D\cap (r_1\cup r_2)}\to \pa{D^3_+,\cpa{\pa{0,0}}\times D^1_+}$.
The first two of these consequences imply that $D\cap \tau_2=\emptyset$.
Forgetting $r_2$ instead of $r_3$, we get $D\cap\tau_3=\emptyset$.
Therefore, we have a diffeomorphism:
\[
g:\pa{D,D\cap \tau}\to \pa{D^3_+,\cpa{\pa{0,0}}\times D^1_+}
\]
This diffeomorphism permits construction of an ambient isotopy of $S^2\br{1,k+1}$ that:
(i) has support near $D$, (ii) fixes $\tau_2$ and $\tau_3$ pointwise, (iii) fixes $\tau_1$ setwise,
and (iv) carries $D_1$ past $D_2$ to a parallel copy of $D_2$.
Thus, $C$ (at least) has been eliminated from $Z$.\\

Observe that if $Z\neq\emptyset$, then Operation 1 or 2 is applicable.
Indeed, let $C_0$ be a component of $Z$.
One component, $W$, of $\Sigma-C_0$ contains at most one of the points $x_1$, $x_2$, or $x_3$.
If $W$ contains components of $Z$, then Operation 1 or 2 applies to any innermost component of $Z$ in $W$.
If $W$ contains no component of $Z$, then Operation 1 or 2 applies to $C_0$ itself.
So, by finitely many applications of Operations 1 and 2, we get $Z=\emptyset$ and the proof is complete.
\end{proof}

\begin{corollary}\label{fin_cor}
Let $B_i$, $1\leq i \leq m$, and $C_j$, $1\leq j\leq n$, be irreducible Borromean blocks.
Consider the concatenations:
\begin{align*}
	B:=&B_1 B_2 \cdots B_m =\pa{S^2\br{1,m+1},\tau}\\
	C:=&C_1 C_2 \cdots C_n =\pa{S^2\br{1,n+1},\sigma}
\end{align*}
Suppose there is a diffeomorphism $f:B\to C$ such that $f(S^2)=S^2$.
Then, $m=n$ and there is an isotopy of $f$, sending $\tau$ to $\sigma$ setwise at all times, to a diffeomorphism $g:B\to C$ such that $g(B_i)=C_i$ for each $1\leq i\leq m$.
\end{corollary}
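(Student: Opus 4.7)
The plan is to induct on $m$, at each step pushing $f(S^2\br{2})$ to coincide exactly with $S^2\br{2}$ in $C$ (thereby identifying $g(B_1) = C_1$) and then applying the inductive hypothesis to the outer concatenation $B_2 \cdots B_m \to C_2 \cdots C_n$ (after radial reparameterization to return to the standard form of Corollary~\ref{fin_cor}). For the base case $m = 1$, we have $B_1 \approx C_1 \cdots C_n$; if $n \geq 2$, irreducibility of $B_1$ applied to the split $(C_1)(C_2 \cdots C_n)$ forces one factor to be $\approx \varepsilon_3$, which is impossible since $C_1$ is Borromean and, by Corollary~\ref{cor1}, triviality of $C_2 \cdots C_n$ would propagate to every $C_j$. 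So $n = 1$ and $g := f$ works.

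For the inductive step, let $\Sigma := f(S^2\br{2})$. This is a $2$-sphere in $\Int{S^2\br{1,n+1}}$, transverse to $\sigma$, meeting each of its three components at exactly one point. Proposition~\ref{borr_prime} supplies an ambient isotopy of $C$ (preserving $\sigma$ setwise, pointwise the identity near $\partial S^2\br{1,n+1}$) shoving $\Sigma$ into $\Int{S^2\br{k,k+1}}$ for some $k \in \cpa{1, \ldots, n}$; post-composing $f$ with this isotopy, we may assume $\Sigma$ lies inside the block $C_k$. Viewing $\Sigma$ as a sphere in $C_k$ meeting each tangle component of $C_k$ at one point, Proposition~\ref{defs_equiv} and the irreducibility of $C_k$ (Definition~\ref{irred_def_2}) yield a further isotopy (rel $\partial C_k$, preserving $\sigma$) that makes $\Sigma$ a level sphere $S^2\br{\ell}$ and renders one of the two resulting sub-blocks $C_k^{\text{in}}, C_k^{\text{out}}$ of $C_k$ diffeomorphic to $\varepsilon_3$ via Corollary~\ref{trivial_block_cor}.

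The key algebraic step is to read off $B_1 \approx C_1$. The inner region $S^2\br{1,\ell}$ is diffeomorphic to $B_1$ via $f$ and equals, as a concatenation, $C_1 \cdots C_{k-1} C_k^{\text{in}}$; absorbing the trivial sub-block via Lemmas~\ref{diff_impl_diff} and~\ref{e_nidentity} shows that $B_1$ is diffeomorphic to $C_1 \cdots C_{k-1}$ if $C_k^{\text{in}}$ is trivial, or to $C_1 \cdots C_k$ if $C_k^{\text{out}}$ is trivial. Irreducibility of $B_1$, together with Corollary~\ref{cor1} and the Borromean hypothesis on the $C_j$, then rule out all values of $k$ except $k = 2$ in the first subcase and $k = 1$ in the second; both yield $B_1 \approx C_1$. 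The symmetric analysis on the outer region gives $B_2 \cdots B_m \approx C_2 \cdots C_n$.

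Finally, I would push $S^2\br{\ell}$ to $S^2\br{2}$ exactly via a radial dilation isotopy supported in a neighborhood of the trivial sub-block of $C_k$; after first straightening the tangle there to be radial via Lemma~\ref{en_unique}, this dilation preserves $\sigma$ setwise. Then $f(S^2\br{2}) = S^2\br{2}$, so $f$ restricts to a diffeomorphism $(S^2\br{2,m+1}, \tau \cap S^2\br{2,m+1}) \to (S^2\br{2,n+1}, \sigma \cap S^2\br{2,n+1})$ between smaller concatenations, to which the inductive hypothesis applies, yielding $m - 1 = n - 1$ (hence $m = n$) and, after further isotopy, $g(B_i) = C_i$ for $i \geq 2$. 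The main obstacle will be the algebraic bookkeeping in the identification $B_1 \approx C_1$: distinguishing the two subcases according to which piece of $C_k$ is trivial, and correctly absorbing trivial factors into neighboring non-trivial blocks via Lemma~\ref{diff_impl_diff} in order to upgrade abstract diffeomorphisms into diffeomorphisms of concatenations.
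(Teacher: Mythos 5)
Your proposal is correct and follows essentially the same route as the paper: use Proposition~\ref{borr_prime} to push $f(S^2\br{2})$ into a single block $C_k$, invoke irreducibility (via Proposition~\ref{defs_equiv}) to level it near a boundary sphere of $C_k$, rule out all cases except those placing it at $S^2\br{2}$ by irreducibility of $B_1$ and the Borromean hypothesis, and iterate/induct outward. The only cosmetic difference is that the radialization of the trivial sub-block should be credited to Corollary~\ref{trivial_block_cor} rather than Lemma~\ref{en_unique}, and the paper phrases the recursion as an iteration rather than a formal induction.
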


\begin{proof}
Assume $m\leq n$ (otherwise, consider $f^{-1}$).
If $m=1$, then $n>1$ contradicts irreducibility of $B_1$ (recall that each $C_j\not\approx\varepsilon_3$ since each $C_j$ is a Borromean block).
So, $n=1$ and we are done.
Next, let $m>1$.
All isotopies of $f$ send $\tau$ to $\sigma$ setwise at all times.
All isotopies of $C$ are ambient and fix $\sigma$ setwise at all times.
By an abuse of notation, the corresponding improved $f$ will still be denoted by $f$.
By Proposition~\ref{borr_prime}, we can isotop $f$ such that $f\pa{S^2[2]}\subset\Int C_j$ for some $j$.
As $C_j$ is irreducible, we can isotop $f$ so that $f\pa{S^2[2]}$ is a level $2$-sphere in $C_j$ very close to one boundary sphere of $C_j$, namely: (i) $S^2[j]$, or (ii) $S^2[j+1]$.\newline
\noindent Case 1. $j=1$ and (i) occurs. Then, $B_1\approx\varepsilon_3$, a contradiction since $B_1$ is a Borromean block.\newline
\noindent Case 2. $j=1$ and (ii) occurs, or $j=2$ and (i) occurs. Then, we may further isotop $f$ so that $f\pa{S^2[2]}=S^2[2]$.\newline
\noindent Case 3. $j=2$ and (ii) occurs, or $j>2$. These cases contradict irreducibility of $B_1$.\newline
\noindent In any case, we have arranged that $f\pa{S^2[2]}=S^2[2]$. Repeat this process with $f\pa{S^2[3]}$ and so forth, and it must terminate with $m=n$.
\end{proof}

\begin{corollary}\label{workhorse}
Let $B_i$ and $C_i$, $i\in\Z^+$, be sequences of irreducible Borromean blocks.
Consider the two Borromean rays:
\begin{align*}
	\pa{\R^3,\tau}&:=D^3 B_1 B_2 B_3\cdots\\
	\pa{\R^3,\sigma}&:=D^3 C_1 C_2 C_3 \cdots
\end{align*}
Suppose there is a diffeomorphism $f:\pa{\R^3,\tau}\to\pa{\R^3,\sigma}$.
Then, there exist $n\in\Z$ and $N\in\Z^+$, and there is an isotopy of $f$, sending $\tau$ to $\sigma$ setwise at all times,
to a diffeomorphism $g:\pa{\R^3,\tau}\to\pa{\R^3,\sigma}$ such that $g(B_i)=C_{i+n}$ for each $i\geq N$.
\end{corollary}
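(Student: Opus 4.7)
The plan is to align successive level spheres outward under $f$, combining compactness with Proposition~\ref{borr_prime} and the irreducibility of the $B_i$ and $C_j$. All isotopies of $f$ will send $\tau$ to $\sigma$ setwise throughout.

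\textbf{Base alignment.} Since $f$ is proper, $f^{-1}(D^3)$ is compact, so there exists $k \geq 2$ with $f(S^2[k])$ disjoint from $D^3$ in the target; by further compactness, $f(S^2[k])$ lies in $S^2[1, M]$ for some $M \geq k$, and it meets each component of $\sigma$ at exactly one point. Applying Proposition~\ref{borr_prime} to the finite concatenation $C_1 C_2 \cdots C_{M-1}$ (extending the ambient isotopy by the identity outside $S^2[1, M]$) produces an isotopy of $f$ that carries $f(S^2[k])$ into the interior of some block $C_{j_0}$. Irreducibility of $C_{j_0}$ (Theorem~\ref{A_irred}), via Definition~\ref{irred_def_2}, then supplies a further isotopy supported in $C_{j_0}$ making $f(S^2[k])$ a level sphere so close to either $S^2[j_0]$ or $S^2[j_0+1]$ that $\eta|\sigma$ has no critical points between them. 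A radial rescaling in a thin collar where $\sigma$ has been radialized via Corollary~\ref{trnb} then slides $f(S^2[k])$ onto a level sphere $S^2[k+n]$ for some $n \in \Z$, as in the corresponding boundary-alignment move in the proof of Corollary~\ref{fin_cor}.

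\textbf{Inductive step.} Suppose we have arranged $f(S^2[k+i]) = S^2[k+n+i]$ for $0 \leq i \leq m$ by isotopies supported in $S^2[1, L_m]$. Apply the same three-step procedure to $f(S^2[k+m+1])$, now working in $S^2[k+n+m, \infty)$. The isotopy from Proposition~\ref{borr_prime} is supported away from the boundary of its ambient thickened sphere, and the isotopy from Definition~\ref{irred_def_2} is supported away from both boundary spheres of the relevant block; hence every new isotopy fixes $S^2[1, k+n+m]$ pointwise and preserves all prior alignments. The outcome places $f(S^2[k+m+1])$ on some level sphere $S^2[L]$ with $L \geq k+n+m+1$ (strict by injectivity of $f$), so $f$ restricted to $B_{k+m}$ is a diffeomorphism onto $C_{k+n+m} C_{k+n+m+1} \cdots C_{L-1}$. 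Irreducibility of $B_{k+m}$, together with the fact that no $C_\ell$ is diffeomorphic to $\varepsilon_3$ (iterating Definition~\ref{irred_def_1} via Corollary~\ref{cor1}), forces this concatenation to have a single factor, i.e., $L = k+n+m+1$. Thus $f(S^2[k+m+1]) = S^2[k+n+m+1]$ and $f(B_{k+m}) = C_{k+n+m}$, completing the induction.

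\textbf{Passage to the limit.} The isotopy used at the $i$-th stage has compact support contained in $S^2[k+n+i-1, L_i]$, so the supports exhaust $\R^3$ and only finitely many act on any given compact set. Their composition with $f$ is therefore a well-defined smooth proper diffeomorphism $g$ satisfying $g(B_i) = C_{i+n}$ for all $i \geq N := k$. The main technical obstacle I anticipate is executing the radial-rescaling alignment step cleanly---sliding each $f(S^2[k+i])$ from a position near a boundary sphere exactly onto it---which requires careful use of Corollary~\ref{trnb} to radialize $\sigma$ on both sides of the target level sphere; once that is handled, the irreducibility-of-$B_{k+m}$ argument matching indices follows readily from Definition~\ref{irred_def_1} and Corollary~\ref{cor1}.
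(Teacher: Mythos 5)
Your proposal is correct and follows essentially the same route as the paper: choose $N$ by compactness so that $f(S^2[N])$ clears the inner region, push that sphere into a single block $C_j$ via Proposition~\ref{borr_prime}, use irreducibility to make it a level sphere (defining the shift $n$), and iterate outward, with irreducibility of the $B_i$ and non-triviality of the $C_j$ forcing each $B_i$ to map onto exactly one $C_{i+n}$. The paper compresses the induction into "use Proposition~\ref{borr_prime} and irreducibility repeatedly," deferring the case analysis to the proof of Corollary~\ref{fin_cor}; your explicit treatment of the inductive step and of the infinite composition of compactly supported isotopies matches what that argument requires.
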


\begin{proof}
All isotopies send $\tau$ to $\sigma$ setwise at all times, and are relative to $D^3$.
By compactness, there exists an integer $N\geq 2$ such that $f(S^2\br{N})$ is disjoint from $D^3 C_1$.
By Proposition~\ref{borr_prime}, we may isotop $f$ so that $f\pa{S^2[N]}\subset \Int C_i$ for some $i\geq 2$.
As $C_i$ is irreducible, we may further isotop $f$ so that $f\pa{S^2[N]}=S^2[M]$ for some integer $M\geq 2$.
Define $n:=M-N\in\Z$.
Use Proposition~\ref{borr_prime} and irreducibility repeatedly to get $f\pa{S^2[i]}=S^2[i+n]$ for each integer $i\geq N$.
\end{proof}

\section{Borromean Rays and Hyperplanes}\label{brbh}

\subsection{Borromean Rays}\label{borr_rays}

This section proves Theorem~\ref{borr_rays_thm}, the first of our main results.
If $\sigma$ is a multiray given by a concatenation of blocks:
\[
\pa{\R^3,\sigma}=D^3 C_1 C_2 C_3\cdots
\]
then the \textbf{mirror} of $\sigma$, denoted $\overline{\sigma}$, is defined by:
\[
\pa{\R^3,\overline{\sigma}}:=D^3 \overline{C_1} \: \overline{C_2} \: \overline{C_3}\cdots
\]

\begin{theorem}\label{borr_rays_thm}
Let $B_i$ and $C_i$, $i\in\Z^{+}$, be sequences of blocks in $\B$.
Consider the two Borromean rays:
\begin{align*}
	\pa{\R^3,\tau}&:=D^3 B_1 B_2 B_3\cdots\\
	\pa{\R^3,\sigma}&:=D^3 C_1 C_2 C_3 \cdots
\end{align*}
There exists a diffeomorphism $f:\pa{\R^3,\tau}\to\pa{\R^3,\sigma}$ preserving orientation of $\R^3$ if and only if
there exists $n\in\Z$ such that one of the following holds for all sufficiently large $i\in\Z^{+}$:
\begin{enumerate}[itemsep=3pt]\setcounter{enumi}{\value{equation}}
\item\label{BeqC} $B_i=C_{i+n}$ (i.e., $B_i$ and $C_i$ have identical tails).
\item\label{BeqCbarstar} $B_i=\overline{C_{i+n}}^{\ast}$ (i.e., $B_i$ and $\overline{C_i}^{\ast}$ have identical tails).
\setcounter{equation}{\value{enumi}}
\end{enumerate}
There exists a diffeomorphism $f:\pa{\R^3,\tau}\to\pa{\R^3,\sigma}$ reversing orientation of $\R^3$ if and only if
there exists $n\in\Z$ such that one of the following holds for all sufficiently large $i\in\Z^{+}$:
\begin{enumerate}[itemsep=3pt]\setcounter{enumi}{\value{equation}}
\item\label{BeqCbar} $B_i=\overline{C_{i+n}}$ (i.e., $B_i$ and $\overline{C_{i}}$ have identical tails).
\item\label{BeqCstar} $B_i={C_{i+n}}^{\ast}$ (i.e., $B_i$ and ${C_{i}}^{\ast}$ have identical tails).
\setcounter{equation}{\value{enumi}}
\end{enumerate}
\end{theorem}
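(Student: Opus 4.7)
The plan is to combine the tail-rigidity result Corollary~\ref{workhorse} with the enumeration of block diffeomorphism types in Table~\ref{hom_block}. Since every block in $\B$ is both Borromean (Corollary~\ref{four_blocks_borromean}) and irreducible (Theorem~\ref{A_irred}), Corollary~\ref{workhorse} applies to any diffeomorphism between the two concatenations.

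For the ``only if'' direction, given a diffeomorphism $f:(\R^3,\tau)\to(\R^3,\sigma)$, Corollary~\ref{workhorse} produces an isotopy of $f$ (sending $\tau$ to $\sigma$ setwise at all times) to a diffeomorphism $g$ and integers $n\in\Z$ and $N\in\Z^{+}$ such that $g\pa{S^2\br{i}}=S^2\br{i+n}$, and hence $g\pa{B_i}=C_{i+n}$, for every $i\geq N$. For each such $i$ the restriction $g_i:=g\vert_{B_i}\colon B_i\to C_{i+n}$ is a morphism in $\tn{Hom}(\B)$ whose boundary character is $+1$ by construction, whose orientation character $\varepsilon\in\cpa{\pm1}$ coincides with the global orientation character of $f$, and whose tangle permutation $\pi_i\in\tn{Sym}(3)$ is in fact independent of $i$. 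The last point holds because the CCW-from-$x$-axis labelling convention of Section~\ref{s:diffeo_blocks} is common to every block in $\B$, and the near-boundary radialization built into concatenation (Corollary~\ref{trnb}) places the tangle endpoints of consecutive blocks on the same radial rays; thus $\tau$ and $\sigma$ acquire globally consistent numberings $\tau_1,\tau_2,\tau_3$ and $\sigma_1,\sigma_2,\sigma_3$, and each $\pi_i$ records the single permutation $\pi$ that $g$ induces on $\cpa{\tau_1,\tau_2,\tau_3}$. Now inspect the boundary-$+1$ rows of Table~\ref{hom_block}: when $\varepsilon=+1$ the nonempty entries are exactly the pairs $(B,B)$ with permutation set $\A3$ and the pairs $(B,\overline{B}^{\ast})$ (namely $(A,\Abs),(\Ab,\As),(\As,\Ab),(\Abs,A)$) with permutation set $C$; when $\varepsilon=-1$ they are the pairs $(B,\overline{B})$ with $\A3$ and $(B,B^{\ast})$ with $C$. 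As $\pi$ is constant in $i$, one of~\eqref{BeqC}--\eqref{BeqCstar} holds for all sufficiently large $i$, matching the four cases of the theorem.

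For the ``if'' direction, it suffices to realize cases~\eqref{BeqC} and~\eqref{BeqCbarstar}; cases~\eqref{BeqCbar} and~\eqref{BeqCstar} then follow by pre-composition with the orientation-reversing reflection $\beta$ of $\R^3$ across the $xy$-plane, which literally realizes the bar operation blockwise (bar being defined as this reflection) and reduces them respectively to~\eqref{BeqC} and~\eqref{BeqCbarstar}. In case~\eqref{BeqC} the two concatenations have identical tails from some index onward, so Remarks~\ref{concat_remarks} item~\ref{tail_det_type} yields an orientation-preserving ambient isotopy of $\R^3$ carrying $\tau$ onto $\sigma$. In case~\eqref{BeqCbarstar} let $\rho$ be rotation of $\R^3$ by $\pi$ about the $x$-axis. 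Table~\ref{obvious_homeos} together with the involutive nature of $\rho$ shows $\rho$ is orientation-preserving and sends each block $C\in\B$ to $\overline{C}^{\ast}$ (up to ambient isotopy relative to boundary spheres, which glues block by block into an orientation-preserving ambient isotopy of $\R^3$). Thus $\rho$ carries $(\R^3,\sigma)$ onto $D^3\overline{C_1}^{\ast}\overline{C_2}^{\ast}\cdots$, whose tail matches that of $(\R^3,\tau)$ by hypothesis; a final application of Remarks~\ref{concat_remarks} item~\ref{tail_det_type} completes the case.

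The main obstacle is establishing the constancy of $\pi$ in the ``only if'' direction, since without it Table~\ref{hom_block} yields potentially different relations at different $i$ and no single one of~\eqref{BeqC}--\eqref{BeqCstar} applies. Constancy amounts to checking that the block-level labellings on either side of each gluing sphere $S^2\br{i}$ agree---a component labelled $k$ in $B_i$ enters $B_{i+1}$ as the component labelled $k$---which holds because every block in $\B$ places its tangle endpoints on the same three radial rays and that radial structure is preserved through gluing by Corollary~\ref{trnb}. A secondary technical check, in the ``if'' direction, is that $\rho$ and $\beta$ actually implement $\overline{\cdot}^{\ast}$ and $\overline{\cdot}$ on an entire concatenation; this is immediate since each preserves the radial foliation $\cpa{S^2\br{t}}$ of $\R^3$ and any blockwise adjustments required can be taken rel boundary.
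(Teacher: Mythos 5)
Your proposal is correct and follows essentially the same route as the paper: Corollary~\ref{workhorse} plus Theorem~\ref{A_irred} to align blocks up to a shift, then Table~\ref{hom_block} (with the observation that the induced tangle permutation is a single $\pi$ for all large $i$, which the paper asserts and you justify in slightly more detail) for the forward implications, and the $x$-axis rotation, the $xy$-plane reflection, and Remarks~\ref{concat_remarks} item~\ref{tail_det_type} for the converses. The only cosmetic difference is that the paper handles the orientation-reversing cases by composing with $(x,y,z)\mapsto(x,y,-z)$ and reducing to the orientation-preserving case, whereas you read the $\varepsilon=-1$ entries of Table~\ref{hom_block} directly; both are valid.
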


\begin{proof}
If~\ref{BeqC} holds, then $f$ exists by Remarks~\ref{concat_remarks} item~\ref{tail_det_type}.
Assume~\ref{BeqCbarstar} holds.
Then:
\begin{align*}
D^3 B_1 B_2 B_3\cdots &\approx D^3 B_N B_{N+1} B_{N+2}\cdots\\
	&= D^3 \overline{C_{M}}^{\ast} \, \overline{C_{M+1}}^{\ast} \, \overline{C_{M+2}}^{\ast} \cdots\\
	&\approx D^3  \overline{C_{1}}^{\ast} \, \overline{C_{2}}^{\ast} \, \overline{C_{3}}^{\ast} \cdots\\
	&\approx D^3 C_1 C_2 C_3 \cdots
\end{align*}
The last diffeomorphism is $(x,y,z)\mapsto(x,-y,-z)$ (= rotation of $\R^3$ about $x$-axis), followed by a simple ambient isotopy in each block relative to boundary $2$-spheres (cf. Section~\ref{s:diffeo_blocks}).
The other two diffeomorphisms come from Remarks~\ref{concat_remarks} item~\ref{tail_det_type}.
The composition is the required $f$.\\

Assume~\ref{BeqCbar} holds. The first orientation preserving case above yields a diffeomorphism $g:\pa{\R^3,\tau}\to\pa{\R^3,\overline{\sigma}}$ that preserves orientation of $\R^3$.
Composing $g$ with $(x,y,z)\mapsto(x,y,-z)$ yields the required $f$.
Assume~\ref{BeqCstar} holds. Then:
\[
	B_i =\overline{\pa{\overline{C_{i+n}}}}^{\ast}
\]
for sufficiently large $i$. The second orientation preserving case yields a diffeomorphism $g:\pa{\R^3,\tau}\to\pa{\R^3,\overline{\sigma}}$ that preserves orientation of $\R^3$. Again, compose $g$ with $(x,y,z)\mapsto(x,y,-z)$ to obtain the required $f$.\\

For the forward implications, note that blocks in $\B$ are irreducible by Theorem~\ref{A_irred}.
By Corollary~\ref{workhorse}, there exist $n\in\Z$, $N\in\Z^+$, and a diffeomorphism
$g:\pa{\R^3,\tau}\to\pa{\R^3,\sigma}$, isotopic to $f$, such that $g(B_i)=C_{i+n}$ for each $i\geq N$.
Assume $f$ preserves orientation of $\R^3$.
Then, $g$ preserves orientation of $\R^3$.
So, each of the diffeomorphisms:
\[
\left. g\right|:B_i \to C_{i+n}, \quad i\geq N,
\]
preserves orientation, preserves boundary $2$-spheres setwise, and has the same tangle permutation $\pi\in\tn{Sym}(3)$ (see Section~\ref{s:diffeo_blocks}).
If $\pi\in\tn{A}_3$, then~\ref{BeqC} holds by Table~\ref{hom_block} in Section~\ref{s:diffeo_blocks}.
If $\pi\in(1,2)\tn{A}_3=C$, then~\ref{BeqCbarstar} holds by Table~\ref{hom_block}.\\

Finally, assume the given $f$ reverses orientation of $\R^3$.
The composition of $f$ with $(x,y,z)\mapsto(x,y,-z)$ is a diffeomorphism $\pa{\R^3,\tau}\to\pa{\R^3,\overline{\sigma}}$ preserving orientation of $\R^3$.
Now, apply the orientation preserving case.
\end{proof}

Let $\mathcal{S}$ denote the set of all sequences $B_i$, $i\in\Z^{+}$, of blocks in $\B$.
Declare two sequences to be equivalent, written $B_i\sim C_i$, if and only if their corresponding Borromean rays are equivalent by some diffeomorphism of $\R^3$ (not necessarily orientation preserving).

\begin{corollary}\label{uncount_borr_rays}
The set of equivalence classes $\mathcal{S}/\sim$ is uncountable.
\end{corollary}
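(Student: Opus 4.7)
The plan is a straightforward cardinality argument using Theorem~\ref{borr_rays_thm} to control the size of each equivalence class. First I would note that $\mathcal{S}$, the set of all functions $\Z^{+}\to\B$, has cardinality $|\B|^{\aleph_0}=4^{\aleph_0}=2^{\aleph_0}$, which is uncountable.

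Next, I would show that each equivalence class $[C_i]\in\mathcal{S}/\sim$ is at most countable. By Theorem~\ref{borr_rays_thm}, a sequence $B_i$ satisfies $B_i\sim C_i$ if and only if there exist an integer $n\in\Z$, one of the four operations $\phi\in\cpa{\tn{Id},\overline{(\cdot)}^{\ast},\overline{(\cdot)},(\cdot)^{\ast}}$ (corresponding to cases~\ref{BeqC}--\ref{BeqCstar}), and a threshold $N\in\Z^{+}$ such that $B_i=\phi(C_{i+n})$ for all $i\geq N$. For any fixed triple $(n,\phi,N)$, the sequence $B_i$ is determined by its initial finite segment $B_1,\ldots,B_{N-1}$, so there are exactly $|\B|^{N-1}=4^{N-1}$ sequences of that form. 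Since the set of triples $(n,\phi,N)\in\Z\times\cpa{\tn{Id},\overline{(\cdot)}^{\ast},\overline{(\cdot)},(\cdot)^{\ast}}\times\Z^{+}$ is countable and each associated family of sequences is finite, the equivalence class $[C_i]$ is a countable union of finite sets, hence countable.

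Finally, the standard set-theoretic fact that an uncountable set partitioned into countable classes must yield uncountably many classes gives
\[
\card{\mathcal{S}/\sim}\geq \frac{\card{\mathcal{S}}}{\aleph_0}=\frac{2^{\aleph_0}}{\aleph_0}=2^{\aleph_0},
\]
so $\mathcal{S}/\sim$ is uncountable as required.

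There is no substantive obstacle: Theorem~\ref{borr_rays_thm} has already done all the geometric work by reducing equivalence of Borromean rays to an entirely combinatorial tail-equivalence on sequences. The only point to be careful about is that the four operations $\tn{Id}$, $\overline{(\cdot)}^{\ast}$, $\overline{(\cdot)}$, $(\cdot)^{\ast}$ form a finite set and that the shift parameter $n$ ranges over a countable set, both of which are immediate from the statement of the theorem.
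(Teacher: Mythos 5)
Your proof is correct and is exactly the argument the paper gives (the paper compresses it to the single observation that, by Theorem~\ref{borr_rays_thm}, each equivalence class is countable). The only cosmetic point is that the "division" $2^{\aleph_0}/\aleph_0$ is not standard cardinal arithmetic; the clean phrasing is that a countable set of countable classes would make $\mathcal{S}$ countable, a contradiction.
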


\begin{proof}
By Theorem~\ref{borr_rays_thm}, the equivalence class of any given sequence is countable.
\end{proof}

A multiray $\tau\subset \R^3$ is \textbf{achiral} provided there exists a diffeomorphism $f:\pa{\R^3,\tau}\to\pa{\R^3,\overline{\tau}}$ preserving orientation of $\R^3$.
Otherwise, $\tau$ is \textbf{chiral}.
Equivalently, $\tau$ is \textbf{achiral} provided there exists a diffeomorphism $f:\pa{\R^3,\tau}\to\pa{\R^3,\tau}$ reversing orientation of $\R^3$.

\begin{corollary}\label{chiral_cor}
Let $\pa{\R^3,\tau}=D^3 B_1 B_2 B_3\cdots$ for a sequence $B_i$, $i\in\Z^{+}$, of blocks in $\B$.
Then, $\tau$ is achiral if and only if (i) there exists a block $C=C_1 C_2 \cdots C_k$ where $k\in\Z^{+}$ and each $C_i\in\B$, and (ii) a tail of the sequence $B_i$ equals one of the following:
\begin{equation}\label{Cper}
 C\overline{C} C \overline{C} C\overline{C} \cdots \quad \tn{or} \quad C C^{\ast} C C^{\ast} C C^{\ast} \cdots
\end{equation}
In particular, if $\tau$ is achiral, then $B_i$ is eventually periodic.
So, at most countably many achiral $\tau$ arise this way.
\end{corollary}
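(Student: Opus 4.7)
The plan is to apply Theorem~\ref{borr_rays_thm} to the pair $(\R^3,\overline{\tau})$, which corresponds to the sequence $\overline{B_i}$ since $\B$ is closed under the bar operation. By definition, $\tau$ is achiral precisely when there exists an orientation-preserving diffeomorphism $f:\pa{\R^3,\tau}\to\pa{\R^3,\overline{\tau}}$, so the corollary becomes a translation exercise between the criteria of Theorem~\ref{borr_rays_thm} and the two tail patterns in~\eqref{Cper}.

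For the forward direction, I would invoke the orientation-preserving half of Theorem~\ref{borr_rays_thm} with target sequence $C_j := \overline{B_j}$. This produces an integer $n$ such that, for all sufficiently large $i$, either (a) $B_i = \overline{B_{i+n}}$, or (b) $B_i = \overline{\overline{B_{i+n}}}^{\ast} = B_{i+n}^{\ast}$. In case (a), I first rule out $n=0$: the four blocks in $\B$ are pairwise distinct (in particular $A\neq\overline{A}$ and $A^{\ast}\neq\overline{A}^{\ast}$), so no $B\in\B$ satisfies $B=\overline{B}$. After replacing $f$ by $f^{-1}$ if necessary, I may take $n>0$; then with $N$ large enough and $C := B_N B_{N+1}\cdots B_{N+n-1}$, iterating the relation $B_i = \overline{B_{i+n}}$ forces the tail of $B_i$ to read $C\overline{C}C\overline{C}\cdots$. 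Case (b) is analogous: $n=0$ is excluded because no block in $\B$ equals its star, and the tail becomes $CC^{\ast}CC^{\ast}\cdots$.

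For the converse, I reverse this reasoning. If a tail of $B_i$ equals $C\overline{C}C\overline{C}\cdots$ where $C$ has length $k$, then $B_i = \overline{B_{i+k}}$ for all large $i$, which is exactly condition~\ref{BeqC} of Theorem~\ref{borr_rays_thm} applied to the target sequence $\overline{B_j}$ with $n:=k$, and this delivers the required orientation-preserving diffeomorphism. The pattern $CC^{\ast}CC^{\ast}\cdots$ is handled identically via condition~\ref{BeqCbarstar}. Finally, the set of finite sequences $(C_1,\dots,C_k)$ from $\B$ is countable, and each such $C$ together with a finite initial segment determines a unique $(B_i)$ with one of the two specified eventually periodic tails, so at most countably many achiral $\tau$ arise from concatenations of blocks in $\B$.

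The main subtlety I anticipate is the bookkeeping for the two cases of Theorem~\ref{borr_rays_thm} and the careful exclusion of $n=0$ in each; once those are pinned down, the corollary is essentially a direct translation between the two theorems' conditions and the tail patterns.
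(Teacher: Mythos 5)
Your proof is correct and follows essentially the same route as the paper: both reduce achirality via Theorem~\ref{borr_rays_thm} to the two tail conditions $B_i=\overline{B_{i+n}}$ or $B_i=B_{i+n}^{\ast}$ (you via an orientation-preserving diffeomorphism to $\overline{\tau}$, the paper via the equivalent orientation-reversing self-diffeomorphism of $\tau$), then exclude $n=0$ because no block in $\B$ equals its bar or its star, normalize to $n>0$, and iterate the relation to extract the periodic word $C=B_N\cdots B_{N+n-1}$.
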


\begin{proof}
By Theorem~\ref{borr_rays}, $\tau$ is achiral if and only if (i) $B_i$ and $\overline{B_i}$ have identical tails,
or (ii) $B_i$ and $B_i^{\ast}$ have identical tails.
So, if a tail of $B_i$ has the form~\eqref{Cper}, then $\tau$ is achiral.
Conversely, suppose $B_i$ and $\overline{B_i}$ have identical tails (the other case is similar).
Then, there exists $n\in\Z$ and $N\in\Z^{+}$ such that
\begin{equation}\label{shift}
B_i=\overline{B_{i+n}} \tn{ for all } i\geq N
\end{equation}
As $B_N\neq\overline{B_N}$, we get $n\neq0$.
Without loss of generality, assume $n>0$ (otherwise, apply bar to~\eqref{shift}).
Note that:
\[
B_{N+n}=\overline{\overline{B_{N+n}}}=\overline{B_{N}}
\]
where the second equality used~\eqref{shift}.
Repeating this argument, we get that if $m\geq0$ and $0\leq j<n$, then:
\[
B_{N+mn+j}=
\begin{dcases*}
B_{N+j}	&	if $m$ is even\\
\overline{B_{N+j}}	&	if $m$ is odd
\end{dcases*}
\]
Therefore, $C=B_N B_{N+1} \cdots B_{N+n-1}$.
\end{proof}

\begin{example}\label{achiral_br}
For each $m\in\Z^{+}$, define:
\[
C_m :=A^{m}=\underbrace{A A \cdots A}_{m \tn{ times}}
\]
and define:
\[
\pa{\R^3,\tau_m}:=D^3 C_{m} \overline{C_m} C_{m} \overline{C_m} C_{m} \overline{C_m} \cdots
\]
By Corollary~\ref{chiral_cor}, $\tau_m$ is achiral.
By Theorem~\ref{borr_rays_thm}, $m$ is a diffeomorphism invariant of $\tau_m$.
So, $\tau_m$, $m\in\Z^{+}$, is a countably infinite family of achiral Borromean rays, pairwise distinct up to diffeomorphism.
\end{example}

\subsection{Borromean Hyperplanes}\label{borr_hyp}

A \textbf{hyperplane} is a smooth, proper embedding of $\R^{n-1}$ in $\R^{n}$.
A \textbf{multiple hyperplane} $H\subset \R^n$ is a smooth, proper embedding of a disjoint union of (at most countably many) copies of $\R^{n-1}$.
The basic invariant of $H$ is an associated tree $T(H)$.
The vertices of $T(H)$ are the components of $\R^n - H$.
Two vertices are adjacent provided their closures in $\R^n$ share a component of $H$.
Figure~\ref{hyp_graphs} depicts some multiple hyperplanes and their trees.
\begin{figure}[h!]
    \centerline{\includegraphics[scale=1.0]{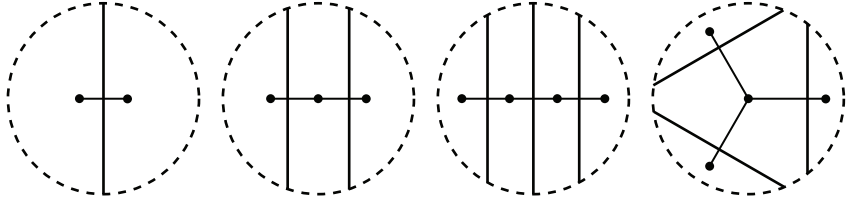}}
    \caption{Multiple hyperplanes and their associated trees.}
    \label{hyp_graphs}
\end{figure}

Multiple hyperplanes in $\R^n$, $n\neq3$, are classified by their associated trees~\cite[\S9]{cks}.
More precisely, if $H$ and $H'$ are multiple hyperplanes in $\R^n$, $n>3$, and the trees $T(H)$ and $T(H')$ are isomorphic,
then there is a diffeomorphism $f:\pa{\R^n,H}\to\pa{\R^n,H'}$ that preserves orientation of $\R^n$.
For $n=2$, these trees are naturally planar (i.e., the edges incident with a given vertex are cyclically ordered).
The result then holds provided $T(H)$ and $T(H')$ are isomorphic as planar trees.\\

Let $\H^3$ denote Klein's model of hyperbolic $3$-space.
Namely, $\H^3$ is the open unit $3$-disk in $\R^3$, and a \textbf{hyperbolic hyperplane} is the nonempty intersection of $\H^3$ with an affine plane in $\R^3$.
A \textbf{hyperbolic multiple hyperplane} is a properly embedded submanifold of $\H^3$, each component of which is a hyperbolic hyperplane.
The multiple hyperplanes in Figure~\ref{hyp_graphs} are hyperbolic.
A multiple hyperplane $H\subset \R^3$ is \textbf{unknotted} provided there exists a diffeomorphism $f:\pa{\R^3,H}\to\pa{\H^3,L}$ where $L$ is a hyperbolic multiple hyperplane.

\begin{lemma}
Let $H$ and $H'$ be unknotted multiple hyperplanes in $\R^3$.
There exists a diffeomorphism $f:\pa{\R^3,H}\to\pa{\R^3,H'}$ preserving orientation of $\R^3$ if and only if $T(H)$ and $T(H')$ are isomorphic trees.
\end{lemma}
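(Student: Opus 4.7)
For the forward implication, any diffeomorphism $f:\pa{\R^3,H}\to\pa{\R^3,H'}$ sends components of $\R^3-H$ bijectively to components of $\R^3-H'$ and components of $H$ bijectively to components of $H'$, preserving the ``shared boundary component'' adjacency that defines the trees. Thus $f$ induces an isomorphism $T(H)\to T(H')$.

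For the reverse implication, I would first use the unknottedness hypothesis to reduce to hyperbolic models: fix diffeomorphisms $\pa{\R^3,H}\approx\pa{\H^3,L}$ and $\pa{\R^3,H'}\approx\pa{\H^3,L'}$ with $L,L'$ hyperbolic multiple hyperplanes. A tree isomorphism $T(H)\cong T(H')$ transports to an isomorphism $\phi:T(L)\to T(L')$, and it suffices to construct an orientation-preserving diffeomorphism $g:\pa{\H^3,L}\to\pa{\H^3,L'}$ realizing $\phi$. Any orientation discrepancy introduced by the two unknotting diffeomorphisms can be corrected at the end by pre- or post-composing with a reflection of $\H^3$ arranged to preserve $L'$ setwise (for example, reflection across an ambient hyperplane disjoint from $L'$, after a small ambient isotopy if needed).

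The construction of $g$ proceeds by induction over the tree. Enumerate the vertices of $T(L)$ as $v_1,v_2,\ldots$ so that each $v_n$ with $n\geq 2$ is adjacent to some $v_i$ with $i<n$; this is possible for any countable connected tree. Let $R_n$ and $R'_n$ denote the closed regions of $\H^3$ corresponding to $v_n$ and $\phi(v_n)$, each a convex hyperbolic polytope bounded by $\deg v_n$ pairwise disjoint hyperplanes, and identify the boundary faces of $R_n$ with those of $R'_n$ via the edge bijection induced by $\phi$. First I would build a diffeomorphism of pairs $g_1:R_1\to R'_1$ realizing this face bijection. For each $n\geq 2$, let $\Lambda_n$ be the hyperplane face shared by $R_n$ and the earlier region $R_i$ on which $g_{n-1}$ is already defined, and extend $g_{n-1}$ across $\Lambda_n$ to $g_n$ on $R_1\cup\cdots\cup R_n$ whose restriction to $R_n$ is a face-respecting diffeomorphism onto $R'_n$ agreeing with $g_{n-1}$ on $\Lambda_n$. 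Because $T(L)$ has no cycles, no additional compatibility constraint arises beyond agreement along this single face. Define $g:=\bigcup_n g_n$; properness follows from $\phi$ preserving vertex degrees, so any compact set in $\H^3$ meets only finitely many $R_n$ and its image meets the corresponding finitely many $R'_n$.

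The main obstacle is the following uniqueness statement needed at every inductive step: for each $k\in\cpa{0,1,2,\ldots,\infty}$, any two closed convex polyhedral regions in $\H^3$ bounded by $k$ pairwise disjoint hyperplanes are diffeomorphic as pairs via a diffeomorphism realizing any prescribed bijection on boundary faces. The interior of such a region is diffeomorphic to $\R^3$ by convexity and each face is a copy of $\R^2$, and the fact that distinct faces are disjoint means no higher incidence data can obstruct. I would prove this by constructing a standard model, for instance by partitioning $\R^3$ into $k$ asymptotic wedges and straightening each wedge boundary to a plane, and then showing every such polytope admits a face-respecting diffeomorphism to it; the relative version (extending a face-matching across one additional face) then powers the inductive step, with standard collaring and corner-smoothing handling smoothness along each $\Lambda_n$.
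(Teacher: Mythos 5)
Your overall architecture --- reduce to hyperbolic models, enumerate the vertices of the countable tree so each new vertex is adjacent to an earlier one, and build the diffeomorphism region by region, gluing along the shared boundary hyperplane at each step --- is exactly the scheme of the proof the paper invokes; the paper itself gives no argument beyond the citation ``the classification proof for multiple hyperplanes in $\R^n$, $n>3$, applies to unknotted multiple hyperplanes in $\R^3$'' \cite[\S 9]{cks}, so you are reconstructing the cited argument rather than replacing it. Two points need attention. First, your mechanism for controlling orientation is wrong: a reflection of $\H^3$ across a hyperbolic hyperplane $P$ disjoint from $L'$ does \emph{not} preserve $L'$ setwise (it carries $L'$ to its mirror image about $P$, and no ``small ambient isotopy'' repairs this --- asserting that some large isotopy does is precisely the achirality of unknotted multiple hyperplanes, which in the paper is deduced \emph{from} this lemma, so your fix is circular). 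The correct repair is cheap: after composing one of the two unknotting diffeomorphisms with a fixed reflection of $\H^3$ if necessary (the image of a hyperbolic multiple hyperplane under a hyperbolic reflection is again one, with the same tree), you may assume both are orientation preserving, and then you choose each piece $g_n:R_n\to R'_n$ to be orientation preserving; this is possible because the standard model region can be taken symmetric about a hyperbolic plane meeting every face orthogonally, so it admits an orientation-reversing self-diffeomorphism fixing each face setwise, whence both orientations of a face-respecting diffeomorphism $R_n\to R'_n$ are realizable.

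Second, essentially all of the content is concentrated in the ``main obstacle'' you name but do not prove: that any two convex regions of $\H^3$ bounded by $k$ pairwise disjoint hyperbolic hyperplanes are diffeomorphic by a face-respecting diffeomorphism realizing a prescribed face bijection, \emph{in the relative form} where the map is prescribed on one face, and \emph{including $k=\infty$}, where the bounding hyperplanes may accumulate at infinity in complicated ways and a finite convexity argument no longer closes the induction by itself (one needs an exhaustion argument there). This is exactly what \cite[\S 9]{cks} supplies, so your sketch is compatible with the paper but is not yet self-contained at its crucial step. Minor point: properness of the glued map $g$ follows from properness of $L$ and $L'$ as embedded submanifolds (a compact set meets only finitely many complementary regions), not from $\phi$ preserving vertex degrees.
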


\begin{proof}
The classification proof for multiple hyperplanes in $\R^n$, $n> 3$, applies to \emph{unknotted} multiple hyperplanes in $\R^3$~\cite[\S9]{cks}.
\end{proof}

Given a tree $T$ that is at most countable and is not necessarily locally finite, it is not difficult to construct a hyperbolic multiple hyperplane
$L\subset\H^3$ such that $T(L)$ is isomorphic to $T$.
Hence, unknotted multiple hyperplanes in $\R^3$ are classified, up to diffeomorphism, by isomorphism classes of such trees.
Up to isomorphism, there is a unique tree with $n$ vertices for $n\leq 3$.
For $n=4$, there are two: a linear tree and the $3$-prong (see Figure~\ref{hyp_graphs}).\\

Let $\R^3_+ :=\cpa{(x,y,z) \mid z\geq 0}$ denote closed upper half space.
Throughout this section, $\zeta\subset\R^3$ denotes the ray $\cpa{(0,0)}\times[1,\infty)$ in the positive $z$-axis.\\

If $r\subset \R^3$ is a multiray with $1\leq n \leq \infty$ components, then $\nu r$ denotes a smooth, closed regular neighborhood of $r$ in $\R^3$~\cite[\S3]{cks} (see also Hirsch~\cite{hirsch_62}).
If $r_i$ is a component of $r$, then $\nu r_i$ denotes the component of $\nu r$ containing $r_i$.
Basic properties of $\nu r$ include:
\begin{enumerate}\setcounter{enumi}{\value{equation}}
\item\label{mnr1} For each $r_i$, there is a diffeomorphism of pairs $\varphi_i:\pa{\nu r_i,r_i}\to\pa{\R^3_+,\zeta}$.
\item\label{mnr2} $\nu r$ is unique up to ambient isotopy of $\R^3$ relative to $r$.
\item\label{mnr3} The boundary of $\nu r$, denoted $\partial \nu r$, is an $n$ component multiple hyperplane.
\item\label{mnr4} The tree $T(\partial \nu r)$ is an $n$-prong. 
\setcounter{equation}{\value{enumi}}
\end{enumerate}

We wish to show that each multiray $r\subset\R^3$ is essentially determined by the multiple hyperplane $H=\partial \nu r$.

\begin{lemma}\label{ztoz}
Let $\alpha:\R^3_+\to\R^3_+$ be a diffeomorphism.
Then, $\alpha$ is isotopic, relative to a neighborhood of $\partial \R^3_+$ in $\R^3_+$, to a diffeomorphism of pairs $\beta:\pa{\R^3_+,\zeta}\to \pa{\R^3_+,\zeta}$.
\end{lemma}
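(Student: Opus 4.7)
The plan is to construct an ambient isotopy $F_t$, $0\leq t\leq 1$, of $\R^3_+$ with $F_0=\tn{Id}$, $F_1(\alpha(\zeta))=\zeta$, and $F_t$ equal to the identity on some neighborhood $W$ of $\partial\R^3_+$ for all $t$. Given such $F_t$, the composite $H_t:=F_t\circ\alpha$ is an isotopy from $\alpha$ to $\beta:=F_1\circ\alpha$; the latter is a diffeomorphism of the pair $(\R^3_+,\zeta)$, and $H_t$ agrees with $\alpha$ throughout on the neighborhood $U:=\alpha^{-1}(W)$ of $\partial\R^3_+$. Everything thus reduces to producing $F_t$.

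Set $\tilde{\zeta}:=\alpha(\zeta)$, a properly embedded ray in $\R^3_+$ with interior endpoint $p=\alpha(0,0,1)$ and disjoint from $\partial\R^3_+$. First I would choose a neighborhood $W$ of $\partial\R^3_+$ disjoint from $\zeta$ together with a regular neighborhood $\nu\tilde{\zeta}\subset\R^3_+\setminus W$; the sliding construction of Remarks~\ref{concat_remarks} item~\ref{tail_det_type}, applied inside $\nu\tilde{\zeta}$, then produces an ambient isotopy of $\R^3_+$ supported in $\nu\tilde{\zeta}$ that carries $\tilde{\zeta}$ entirely into $\R^3_+\setminus W$. Next, using isotopy extension along a smooth path in $\tn{Int}\,(\R^3_+\setminus W)$, I would drag the endpoint of the new $\tilde{\zeta}$ onto $(0,0,1)$, the endpoint of $\zeta$. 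Finally, a small perturbation of $\tilde{\zeta}$ inside $\R^3_+\setminus W$ makes $\eta\rvert\tilde{\zeta}$ Morse; the critical points pair up and can be eliminated by strips running along $\tilde{\zeta}$ inside $\R^3_+\setminus W$, after which Lemma~\ref{multiray_straightening} (applied in $\R^3$, with support cut off away from $W$) ambient isotopes $\tilde{\zeta}$ onto the radial ray containing $\zeta$, and one further slide of the endpoint collapses it to $\zeta$. The composition of all of these ambient isotopies, each supported in $\R^3_+\setminus W$, is the required $F_t$.

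The main obstacle is the first step, namely ensuring that $\nu\tilde{\zeta}$ may be chosen disjoint from $W$. Although $\tilde{\zeta}$ is disjoint from $\partial\R^3_+$, its distance to $\partial\R^3_+$ need not be bounded below: a generic $\alpha$ can send $\zeta$ to a ray approaching $\partial\R^3_+$ arbitrarily closely at infinity. The remedy is to choose $W$ and $\nu\tilde{\zeta}$ simultaneously using the distance-to-boundary function along $\tilde{\zeta}$, so that $W$ narrows toward infinity fast enough to avoid a prescribed tubular neighborhood of $\tilde{\zeta}$; this is a standard application of regular neighborhood theory for proper embeddings in smooth manifolds with boundary (cf.~\cite[\S3]{cks}). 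Once this technicality is handled the remaining straightening and isotopy-extension steps are routine manipulations already developed in Section~\ref{definitions}.
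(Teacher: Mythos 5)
Your reduction to constructing an ambient isotopy $F_t$ of $\R^3_+$, supported away from a neighborhood of $\partial\R^3_+$ and carrying $\alpha(\zeta)$ to $\zeta$, matches the paper's first step, and your attention to the fact that $\alpha(\zeta)$ may approach $\partial\R^3_+$ at infinity (so that the neighborhood $W$ must narrow going out) is a legitimate point. But the heart of your argument has a genuine gap: in the third step you assert that, after making $\eta|\tilde{\zeta}$ Morse, ``the critical points pair up and can be eliminated by strips running along $\tilde{\zeta}$.'' There is no justification for this, and it cannot be true for an arbitrary properly embedded ray: if critical points of $\eta|r$ could always be cancelled in pairs, then by Lemma~\ref{multiray_straightening} every ray in $\R^3$ would be unknotted, contradicting the existence of knotted rays (Fox--Artin). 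Cancelling a pair of critical points requires an embedded disk guiding the move, and the obstruction to finding such disks is exactly where ray knotting lives. Notice that your argument nowhere uses the hypothesis that $\tilde{\zeta}$ is the image of $\zeta$ under a global diffeomorphism of the half-space --- it would apply verbatim to a Fox--Artin ray placed in $\Int\R^3_+$ --- so it proves too much and must be wrong.

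The paper's proof exploits the boundary structure that your argument discards. It takes the ray $\tau=[1,\infty)\times\cpa{(0,0)}$ lying \emph{in} $\partial\R^3_+$ and the flat strip $S$ (a quarter-plane sector) joining $\tau$ to $\zeta$. Since $\alpha$ preserves $\partial\R^3_+$, the image $\alpha(\tau)$ is a properly embedded ray in the plane $\partial\R^3_+$, and the strip $\alpha(S)$ lets one isotop $\alpha(\zeta)$ so close to $\alpha(\tau)$ that orthogonal projection to $\R^2\times\cpa{0}$ embeds it. One then flattens $\alpha(\zeta)$ into a horizontal plane $\R^2\times\cpa{1}$ and invokes the fact that rays do not knot in $\R^2$ to straighten it there. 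In other words, the unknotting of $\alpha(\zeta)$ is inherited from the two-dimensional unknotting of $\alpha(\tau)\subset\partial\R^3_+$ via the strip; some such input is indispensable, and your proposal has nothing playing that role.
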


\begin{proof}
Let $\tau\subset\R^3$ be the ray $[1,\infty)\times\cpa{(0,0)}$ in the positive $x$-axis.
Let $S$ denote the points of norm $\geq 1$ in the first quadrant of the $xz$-plane.
We identify $S$ with $[0,1]\times[1,\infty)$ so that $\tau = \cpa{0}\times[1,\infty)$ and $\zeta=\cpa{1}\times[1,\infty)$.
All isotopies of $\R^3_+$ will be ambient and relative to a neighborhood of $\partial \R^3_+$ in $\R^3_+$.
Isotoped subsets will be called by their original names.
It suffices to construct an isotopy of $\R^3_+$ that carries $\alpha(\zeta)$ to $\zeta$.
Let $\pi:\R^3_+ \to \R^2\times\cpa{0}$ be orthogonal projection.
The submanifold $\alpha(S)$ permits construction of an isotopy of $\R^3_+$ that carries $\alpha(\zeta)$ close enough to $\alpha(\tau)$ so that $\left.\pi\right|\alpha(\zeta)$ is an embedding.
This is possible since $\alpha(\tau)$ is properly embedded.
Next, by integrating a suitable vector field of the form $v(x,y,z)=(0,0,\ast)$, we get an isotopy carrying $\alpha(\zeta)$ into $\R^2\times\cpa{1}$.
Rays do not knot in $\R^2$~\cite[Thm.~9.13]{cks}.
So, there is an ambient isotopy $F$ of $\R^2\times\cpa{1}$ that carries $\alpha(\zeta)$ to a straight ray.
Use a small tube about $\R^2\times\cpa{1}$ in $\Int \R^3_+$ and a suitable bump function to extend $F$ to an isotopy of $\R^3_+$.
The rest is elementary.
\end{proof}

\begin{remarks}
\noindent\begin{enumerate}[label=(\arabic*),leftmargin=*]\setcounter{enumi}{0}
\item Lemma~\ref{ztoz} is very different from `uniqueness of regular neighborhoods'.
Let $K$ be a smooth subcomplex of a smooth manifold $M$.
It is not true, in general, that each orientation preserving diffeomorphism $h:\nu K\to \nu K$ is isotopic to a diffeomorphism $\pa{\nu K,K}\to\pa{\nu K,K}$.
For a simple counterexample, let $K$ be a bouquet of three circles embedded in $M=\R^2$ in such a way that no circle of $K$ is inside another.
Then, $\nu K\subset M$ is a smooth, compact $2$-disk with three holes.
Let $C\subset \Int \nu K$ be a simple closed curve such that two boundary components of $\nu K$ are inside $C$.
Let $h:\nu K \to \nu K$ be a Dehn twist about $C$.
Then, $h$ is not isotopic to a diffeomorphism $\pa{\nu K,K}\to\pa{\nu K,K}$.
Otherwise, $h$ would be isotopic to the identity, which is false~\cite[pp.~239--247]{farbmargalit}.
\item Counterexamples exist even when $K$ is a smooth submanifold. We are indebted to Bob Gompf for these examples.
Let $X$ be a simply-connected, closed, symplectic $4$-manifold with positive signature, denoted $\sigma(X)>0$, and $b_2^+>1$.
(Many such manifolds are known---even K\"{a}hler examples have been around for several decades.)
As $X$ is symplectic, $X$ is smooth and oriented, and $b_2^+$ is odd.
Let $K$ be $X$ blown up $\sigma(X)$ times.
Then, $\sigma(K)=0$ and $K$ is homeomorphic to $Z:=\sharp_m (\C P^2 \sharp\overline{\C P}^2)$ for some $m>1$ (and very large in practice).
Since $K$ is symplectic, $K$ has nonvanishing Seiberg-Witten invariants, denoted $SW(K)\neq 0$.
However, $\overline{K}$ (= $K$ with reversed orientation) splits off a $\C P^2$ summand (from the blowup of $X$) and has $b_2^+>1$.
Therefore, $SW(\overline{K})=0$ and $K$ admits no orientation reversing self diffeomorphism.
Fix any $n\geq 2$.
Then, $M:=K\times S^n$ is a smooth, closed, oriented manifold containing $K=K\times\cpa{p}$, and $\nu K \subset M$ is identified with $K\times D^n$.
Now, there is a smooth $h$-cobordism $W$ between $K$ and $Z$.
So, $W\times S^{n-1}$ is smoothly a product, and, working relative to boundary, $W\times D^n$ is smoothly a product.
Hence, there is a diffeomorphism $K\times D^n \to Z \times D^n$.
Let $\alpha$ be an orientation reversing self diffeomorphism of $Z$ (e.g., $\alpha$ permutes the summands and is otherwise the identity). Let $\beta$ be an orientation reversing self diffeomorphism of $D^n$.
Then, $k:=\alpha\times\beta$ is an orientation preserving self diffeomorphism of $Z \times D^n$,
and $k_{\ast}$ is multiplication by $-1$ on $H_4\pa{Z \times D^n  ;\Z}$.
Let $h$ be the corresponding orientation preserving self diffeomorphism of $\nu K$.
On $H_4\pa{\nu K  ; \Z}$, $h_{\ast}$ is multiplication by $-1$.
So, $h$ is not isotopic to a diffeomorphism $\pa{\nu K,K}\to\pa{\nu K,K}$.
Otherwise, we get a forbidden orientation reversing self diffeomorphism of $K$.
\item An alternative approach to proving Lemma~\ref{ztoz} uses the following lemma together with some collaring arguments.
\end{enumerate}
\end{remarks}

\begin{lemma}
Let $\gamma:\R^n_+\to\R^n_+$ be an orientation preserving diffeomorphism.
Then, $\gamma$ is isotopic to the identity.
\end{lemma}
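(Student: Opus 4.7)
The plan is to reduce the problem to a linear one via a classical rescaling trick (the Alexander isotopy), then connect the resulting linear map to the identity through a path of linear automorphisms of $\R^n_+$.

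First I would arrange that $\gamma$ fixes the origin. Since $\gamma$ is a diffeomorphism of $\R^n_+$, it sends $\partial \R^n_+$ to itself, so $\gamma(0) \in \partial\R^n_+ = \R^{n-1}\times\{0\}$. The translation isotopy $\tau_s(x) := x - s\,\gamma(0)$, $s \in [0,1]$, is a smooth ambient isotopy of $\R^n_+$ through diffeomorphisms. Replacing $\gamma$ by $\tau_1 \circ \gamma$ (and recording the isotopy $\tau_s \circ \gamma$), I may assume $\gamma(0)=0$ and that $\gamma$ still preserves orientation.

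Next I would apply the Alexander-type rescaling. Define
\[
F(x,t) :=
\begin{cases}
t^{-1}\,\gamma(tx), & t \in (0,1], \\
D\gamma(0)\cdot x,     & t = 0.
\end{cases}
\]
Because $\R^n_+$ is closed under multiplication by $t \in [0,1]$ and $\gamma(\R^n_+) = \R^n_+$, each slice $F(\cdot,t)$ is a diffeomorphism of $\R^n_+$ (for $t>0$ it is the composition $x \mapsto tx \mapsto \gamma(tx) \mapsto \gamma(tx)/t$ of diffeomorphisms, and for $t=0$ it is the linear automorphism $D\gamma(0)$). A Taylor expansion $\gamma(y) = D\gamma(0)\cdot y + O(|y|^2)$ with smooth remainder gives $F(x,t) = D\gamma(0)\cdot x + t\,\phi(t,x)$ for a smooth function $\phi$, so $F$ extends smoothly across $t = 0$. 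Thus $F$ is a smooth isotopy from $\gamma$ to the linear map $L := D\gamma(0)$.

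Finally I would isotope $L$ to the identity through linear automorphisms preserving $\R^n_+$. Because $L$ fixes $\partial\R^n_+$ setwise and preserves orientation, its matrix has block form
\[
L = \begin{pmatrix} A & b \\ 0 & c \end{pmatrix}, \qquad A \in GL(n-1,\R),\ b \in \R^{n-1},\ c > 0,
\]
with $\det L = c\det A > 0$ forcing $\det A > 0$. Contract $b$ linearly to $0$, $c$ linearly to $1$, and (using path-connectedness of $GL^{+}(n-1,\R)$) deform $A$ to the identity; at every moment the resulting matrix remains upper block-triangular with positive $(n,n)$-entry and $\det A > 0$, hence a linear automorphism of $\R^n_+$. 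Concatenating the three isotopies yields the desired isotopy of $\gamma$ to $\mathrm{Id}$.

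The only genuinely technical point is the smooth extension of $F$ across $t=0$; this is standard but must be verified using the mean-value form of Taylor's theorem so that the remainder is smooth in both $x$ and $t$ (not merely continuous). Everything else is formal, and the argument works identically when $n=3$, which is what Lemma~\ref{ztoz} above requires.
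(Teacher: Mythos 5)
Your proof is correct and is essentially the paper's argument: the paper simply observes that since $\gamma$ preserves $\R^{n-1}\times\cpa{0}$, the standard proof for diffeomorphisms of $\R^n$ (Milnor, \emph{Topology from the differentiable viewpoint}, p.~34) applies, and that standard proof is exactly your translation-plus-Alexander-rescaling followed by a linear deformation. You have merely written out the details the paper delegates to the citation, including the correct verification that each stage (translation along the boundary, the rescaled family, and the block-triangular linear path) preserves the half-space.
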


\begin{proof}
As $\gamma\pa{\R^{n-1}\times\cpa{0}}=\R^{n-1}\times\cpa{0}$,
the well-known proof for a diffeomorphism $\R^n\to\R^n$ applies (see Milnor~\cite[p.~34]{milnor97}).
\end{proof}

The following lemma is useful.

\begin{lemma}\label{unknot_equiv}
Let $H\subset \R^n$ be a hyperplane. Let $X$ and $Y$ denote the closures in $\R^n$ of the components of $\R^n-H$.
The following are equivalent:
\begin{enumerate}\setcounter{enumi}{\value{equation}}
\item\label{eisotopy} There is an ambient isotopy $F$ of $\R^n$ such that $F_1(H)=\R^{n-1}\times\cpa{0}$.
\item\label{ediffeo} There is a diffeomorphism $f:\pa{\R^n,H}\to\pa{\R^n,\R^{n-1}\times\cpa{0}}$.
\item\label{ehalves} There are diffeomorphisms $\varphi:X\to\R^n_+$ and $\psi:Y\to\R^n_+$.
\setcounter{equation}{\value{enumi}}
\end{enumerate}
\end{lemma}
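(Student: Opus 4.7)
The plan is to prove the cyclic chain \ref{eisotopy}$\Rightarrow$\ref{ediffeo}$\Rightarrow$\ref{ehalves}$\Rightarrow$\ref{eisotopy}. The first implication is immediate: take $f:=F_1$. For \ref{ediffeo}$\Rightarrow$\ref{ehalves}, a diffeomorphism of pairs $f:\pa{\R^n,H}\to\pa{\R^n,\R^{n-1}\times\cpa{0}}$ carries the two closed components $X$ and $Y$ of $\R^n$ cut by $H$ onto the closed upper and lower halfspaces cut by $\R^{n-1}\times\cpa{0}$; composing with the reflection $(x,t)\mapsto(x,-t)$ on the half mapped to the lower halfspace produces $\varphi:X\to\R^n_+$ and $\psi:Y\to\R^n_+$.

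The substantive direction is \ref{ehalves}$\Rightarrow$\ref{eisotopy}, which I would handle in two stages: first paste $\varphi$ and $\psi$ into a single diffeomorphism of pairs $f:\pa{\R^n,H}\to\pa{\R^n,\R^{n-1}\times\cpa{0}}$, then produce an ambient isotopy from it. For the pasting, let $\rho:\R^n_+\to\R^n_-$ be the reflection $(x,t)\mapsto(x,-t)$ and form $\psi_0:=\rho\circ\psi:Y\to\R^n_-$. The boundary restrictions $\varphi|_H$ and $\psi_0|_H$ are both diffeomorphisms $H\to\R^{n-1}\times\cpa{0}$, but they generically disagree. Setting $g:=\varphi|_H\circ(\psi_0|_H)^{-1}$, I would extend $g$ cylindrically to $G:\R^n_-\to\R^n_-$ by $(x,t)\mapsto(g(x),t)$ and replace $\psi_0$ by $G\circ\psi_0$, so that now the two halves agree on $H$. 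The resulting pasted map $f:\R^n\to\R^n$ is a continuous bijection, smooth on each closed half; I would smooth it across $H$ using the standard collar-smoothing technique (cf.~\cite[p.~182]{hirsch}).

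The main obstacle is precisely this pasting: a naive union of $\varphi$ and $\psi$ fails because the two parameterizations of $H$ typically differ by a nontrivial self-diffeomorphism of $\R^{n-1}$, which must be absorbed by a cylindrical extension on one side before smoothing across $H$ becomes possible. Having obtained $f$, the conversion to an ambient isotopy is straightforward. Let $r:\R^n\to\R^n$ be the reflection $(x,t)\mapsto(x,-t)$, which fixes $\R^{n-1}\times\cpa{0}$ pointwise and reverses orientation of $\R^n$. Set $h:=f$ if $f$ preserves orientation, and $h:=r\circ f$ otherwise. In either case $h$ is an orientation-preserving diffeomorphism of $\R^n$ with $h(H)=\R^{n-1}\times\cpa{0}$, hence smoothly isotopic to the identity (cf.~\cite[p.~34]{milnor97}) via an ambient isotopy $F$ with $F_0=\tn{Id}$ and $F_1=h$; then $F_1(H)=\R^{n-1}\times\cpa{0}$, establishing \ref{eisotopy}.
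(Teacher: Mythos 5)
Your proposal is correct and follows essentially the same route as the paper: the nontrivial content is exactly the pasting of $\varphi$ and $\psi$ after absorbing the boundary discrepancy by a cylindrical (product) extension on one side, followed by collar-smoothing across $H$, and then Milnor's result that an orientation preserving diffeomorphism of $\R^n$ is isotopic to the identity (composing with a reflection in the orientation reversing case). The only cosmetic difference is that the paper splits the argument as \ref{ehalves}$\Rightarrow$\ref{ediffeo} and \ref{ediffeo}$\Rightarrow$\ref{eisotopy} and normalizes $\varphi$, $\psi$ to be orientation preserving up front, whereas you chain the two steps together and defer the orientation bookkeeping to the final Milnor step; both are fine.
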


\begin{proof}
Only two implications require proof.\\

\ref{ediffeo} $\Rightarrow$ \ref{eisotopy}: We may assume $f$ preserves orientation of $\R^n$ (otherwise, compose $f$ with reflection through $\R^{n-1}\times\cpa{0}$).
By Milnor~\cite[p.~34]{milnor97}, $f$ is isotopic to the identity.\\

\ref{ehalves} $\Rightarrow$ \ref{ediffeo}: Let $\R^n_-$ denote closed lower half space.
Replacing $\varphi$ and $\psi$ with their compositions with appropriate reflections, we can and do assume $\varphi:X\to\R^n_+$ and $\psi:Y\to\R^n_-$ are orientation preserving diffeomorphisms.
Let $\mu=\left.\psi\circ\varphi^{-1}\right|\R^{n-1}\times\cpa{0}$, an orientation preserving automorphism of $\R^{n-1}\times\cpa{0}$.
Let $\eta$ be the orientation preserving automorphism of $\R^{n-1}$ given by the following composition where $\tn{pr}$ is the obvious projection:
\[
	\R^{n-1} \hookrightarrow \R^{n-1}\times\cpa{0} \stackrel{\mu}{\rightarrow} \R^{n-1}\times\cpa{0} \stackrel{\tn{pr}}{\rightarrow} \R^{n-1}
\]
Let $\tn{Id}$ be the identity map on $[0,\infty)$.
Then, $\eta\times\tn{Id}$ is an orientation preserving automorphism of $\R^n_+$.
Replacing $\varphi$ with $(\eta\times\tn{Id})\circ\varphi$, we can and do further assume $\left.\varphi\right|H=\left.\psi\right|H$.
Define $h:\R^n \to \R^n$ by $h(p)=\varphi(p)$ if $p\in X$ and $h(p)=\psi(p)$ if $p\in Y$.
Then, $h$ is an orientation preserving autohomeomorphism of $\R^n$. By construction, $h$ is smooth on $X$ and $h$ is smooth on $Y$.
Using collaring uniqueness~\cite[Thm.~8.1.9]{hirsch}, we may adjust $h$ (by isotoping $\varphi$ and $\psi$ near $H$ and relative to $H$) to obtain the desired diffeomorphism $f$.
\end{proof}

\begin{corollary}\label{rtor}
Let $r$ and $r'$ be multirays in $\R^3$.
Let $h:\nu r \to \nu r'$ be a diffeomorphism.
Let $H=\partial \nu r$ and $H'=\partial \nu r'$.
Then, $h$ is isotopic, relative to a neighborhood of $H$ in $\nu r$, to a diffeomorphism of pairs $g:\pa{\nu r,r}\to \pa{\nu r',r'}$.
\end{corollary}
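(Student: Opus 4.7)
The plan is to reduce to Lemma~\ref{ztoz} componentwise. Write $r=\bigsqcup_i r_i$ and $r'=\bigsqcup_j r'_j$; these induce decompositions $\nu r=\bigsqcup_i \nu r_i$ and $\nu r'=\bigsqcup_j \nu r'_j$. Because $h$ is a diffeomorphism it permutes these components, so there is a bijection $\pi$ on index sets with $h(\nu r_i)=\nu r'_{\pi(i)}$ for every $i$. By property~(\ref{mnr1}), I would fix diffeomorphisms of pairs $\varphi_i:\pa{\nu r_i,r_i}\to\pa{\R^3_+,\zeta}$ and $\psi_j:\pa{\nu r'_j,r'_j}\to\pa{\R^3_+,\zeta}$ for all $i,j$.

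For each $i$, set
\[
\alpha_i := \psi_{\pi(i)}\circ \rest{h}_{\nu r_i} \circ \varphi_i^{-1}:\R^3_+\to\R^3_+,
\]
a self-diffeomorphism of $\R^3_+$. Lemma~\ref{ztoz} supplies an isotopy $\alpha_i^t$, $0\le t\le 1$, from $\alpha_i^0=\alpha_i$ to a diffeomorphism of pairs $\alpha_i^1:\pa{\R^3_+,\zeta}\to\pa{\R^3_+,\zeta}$, fixed on some neighborhood $U_i$ of $\partial\R^3_+$ in $\R^3_+$. Transferring back by
\[
h_i^t := \psi_{\pi(i)}^{-1}\circ\alpha_i^t\circ\varphi_i:\nu r_i\to\nu r'_{\pi(i)}
\]
produces an isotopy of $\rest{h}_{\nu r_i}$, fixed on the neighborhood $\varphi_i^{-1}(U_i)$ of $\partial\nu r_i\subset H$ in $\nu r_i$, whose time-$1$ map is a diffeomorphism of pairs $\pa{\nu r_i,r_i}\to\pa{\nu r'_{\pi(i)},r'_{\pi(i)}}$. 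Since the sets $\nu r_i$ are pairwise disjoint, I would assemble these isotopies into a single map $h^t:\nu r\to\nu r'$ with $h^0=h$ and $g:=h^1$ the desired diffeomorphism of pairs, fixed on the neighborhood $\bigsqcup_i\varphi_i^{-1}(U_i)$ of $H$ in $\nu r$.

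The main obstacle is checking that the assembled isotopy is a genuine smooth proper isotopy and not just an isotopy on each summand separately. Smoothness in $(x,t)$ is automatic because the $\nu r_i$ are open and pairwise disjoint in $\nu r$, and each $h_i^t$ is smooth. Properness is the subtler point: a compact $K\subset\nu r'$ meets only finitely many components $\nu r'_j$, and on each such component $h^t$ is conjugate via the diffeomorphisms $\varphi_i$ and $\psi_j$ to the smooth family $\alpha_i^t$ on $\R^3_+\times[0,1]$, whose graph is closed and projects properly over bounded time intervals; hence $\cpa{(x,t):h^t(x)\in K}$ is compact. This yields the required $g$ and completes the plan.
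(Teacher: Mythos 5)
Your proposal is correct and follows essentially the same route as the paper: restrict to each component $\nu r_i$, conjugate by the diffeomorphisms $\varphi_i$ and $\psi_{\pi(i)}$ from property~\ref{mnr1} to get a self-diffeomorphism of $\R^3_+$, apply Lemma~\ref{ztoz}, and transfer the resulting isotopy back. The paper simply says ``it suffices to consider $\left.h\right|\nu r_i$''; your extra care in assembling the componentwise isotopies and checking smoothness and properness of the assembled family is a welcome elaboration, not a deviation.
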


\begin{proof}
It suffices to consider $\left.h\right|\nu r_i$ where $r_i$ is a component of $r$.
Reindex the components of $r'$ so that $h(\nu r_i)=\nu r'_i$.
Let $\varphi_i:\pa{\nu r_i,r_i}\to\pa{\R^3_+,\zeta}$ and $\psi_i:\pa{\nu r'_i,r'_i}\to\pa{\R^3_+,\zeta}$ be diffeomorphisms.
Lemma~\ref{ztoz} yields an isotopy \hbox{$F:\R^3_+ \times[0,1]\to\R^3_+$}, relative to a neighborhood of $\partial \R^3_+$,
such that $F_0=\psi_i h \varphi^{-1}_i$ and $F_1 (\zeta)=\zeta$.
Then, $\psi^{-1}_i \circ F \circ (\varphi_i\times\tn{Id})$ is the desired isotopy of $\left.h\right|\nu r_i$.
\end{proof}

\begin{corollary}\label{krikh}
If $r\subset \R^3$ is a knotted multiray, then $H:=\partial \nu r$ is a knotted multiple hyperplane. 
\end{corollary}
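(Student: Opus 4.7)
I would prove the contrapositive: if $H = \partial\nu r$ is unknotted, then $r$ is unknotted. Let $n\in\{1,2,3,\ldots\}\cup\{\infty\}$ denote the number of components of $r$, and pick any $n$-component radial multiray $r'\subset\R^3$. Choosing each component of $\nu r'$ to be a round solid half-cone makes $H':=\partial\nu r'$ a disjoint union of standard half-planes, which is manifestly unknotted (a diffeomorphism $\R^3\to\H^3$ straightens the half-planes to hyperbolic hyperplanes). By properties~\ref{mnr3}--\ref{mnr4}, the trees $T(H)$ and $T(H')$ are both $n$-prongs, hence isomorphic.

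Applying the tree-classification of unknotted multiple hyperplanes in $\R^3$ (the lemma preceding this corollary), I obtain a diffeomorphism $f:(\R^3,H)\to(\R^3,H')$. This $f$ sends components of $\R^3-H$ bijectively to components of $\R^3-H'$, respecting the tree structure. Each closure $\nu r_i$ is a leaf of $T(H)$ (its boundary in $\R^3$ is the single component $\partial\nu r_i$ of $H$), while $\R^3-\Int\nu r$ is the central vertex (for $n\geq 2$, or the other leaf for $n=1$); the same holds for $H'$. After post-composing $f$ with a side-swapping diffeomorphism of $(\R^3,H')$ in the $n=1$ case if necessary, I may arrange that $f$ restricts to a diffeomorphism $h:\nu r\to\nu r'$ with $h(\nu r_i)=\nu r'_{\sigma(i)}$ for some bijection $\sigma$.

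Corollary~\ref{rtor} then produces an isotopy, relative to a neighborhood $U$ of $H$ in $\nu r$, from $h$ to a diffeomorphism of pairs $g:(\nu r,r)\to(\nu r',r')$. Because this isotopy is stationary on $U$, I extend it by the identity on $\R^3-\Int\nu r$ to a smooth proper ambient isotopy of $\R^3$; applying its final time to $f$ yields a diffeomorphism $\tilde f:(\R^3,r)\to(\R^3,r')$. Since $r'$ is radial, $r$ is unknotted, contradicting the hypothesis. The only delicate point I anticipate is smoothness and properness of this extension-by-identity: smoothness is guaranteed precisely by the ``relative to a neighborhood'' clause in Corollary~\ref{rtor} (which gives a full open collar of $H$ in $\nu r$, not merely $H$ itself), and properness follows because $\nu r\subset\R^3$ is a properly embedded smooth submanifold, so only finitely many of its components meet any given compact set.
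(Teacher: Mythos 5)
Your proof is correct, and it shares the paper's overall architecture: argue the contrapositive and reduce everything to Corollary~\ref{rtor}. The difference lies in how the comparison diffeomorphism of regular neighborhoods is produced. The paper takes the diffeomorphism $h:\pa{\R^3,H}\to\pa{\H^3,L}$ supplied directly by the definition of unknottedness, observes that each leaf region $h(\nu r_i)$ is a half-space cut off by a hyperbolic hyperplane and is therefore itself a smooth closed regular neighborhood of a radial ray $\tau_i\subset\H^3$, and then runs the Corollary~\ref{rtor} argument against that core; no second model in $\R^3$ and no appeal to the classification lemma are needed. You instead build a standard model $H'=\partial\nu r'$ over a radial multiray $r'$, check that both trees are $n$-prongs, and invoke the tree-classification lemma to obtain $f:\pa{\R^3,H}\to\pa{\R^3,H'}$. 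Your route lets you apply Corollary~\ref{rtor} literally as stated (both multirays live in $\R^3$), but it incurs two obligations the paper's route avoids: you must show that $H'$ is unknotted, and you must match up tree vertices (including the $n=1$ side-swap, which you handle correctly). The first of these is where ``manifestly'' is a little quick: a literal round solid half-cone is not a smooth closed regular neighborhood (its boundary has a corner, and each boundary component is a plane, not a half-plane). The clean fix is to transport half-spaces cut off by hyperbolic hyperplanes back to $\R^3$ under a radial diffeomorphism $\R^3\to\H^3$ and cite uniqueness of regular neighborhoods, property~\ref{mnr2}. Your remaining points --- extension by the identity outside $\nu r$, smoothness via the collar clause of Corollary~\ref{rtor}, properness when $n=\infty$ --- are exactly the details the paper's phrase ``as in Corollary~\ref{rtor}'' elides, and you handle them correctly.
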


\begin{proof}
Suppose $H$ is unknotted.
Then, there is a diffeomorphism $h:\pa{\R^3,H}\to\pa{\H^3,L}$ where $L$ is a hyperbolic multiple hyperplane.
As $T(H)$ is an $n$-prong, so is $T(L)$.
Without loss of generality, the origin of $\H^3$ does not lie in $h(\nu r)$.
For each component $r_i$ of $r$, let $C_i=h(\nu r_i)$ and let $L_i=\partial C_i$.
Let $p_i$ be the point of $L_i$ closest to the origin in $\H^3$ (for the euclidean metric).
Let $\sigma_i\subset C_i$ be the radial ray in $\H^3$ with initial point $p_i$.
Let $\tau_i\subset\Int C_i$ be the radial ray in $\sigma_i$ that is half as long as $\sigma_i$ (for the euclidean metric).
Notice that $C_i$ is a smooth, closed regular neighborhood of $\tau_i$ in $\H^3$.
Let $\tau\subset\H^3$ be the radial multiray with components $\tau_i$.
Notice that $h(\nu r)$ is a smooth, closed regular neighborhood of $\tau$ in $\H^3$. 
As in Corollary~\ref{rtor}, we may isotop $h$ to a diffeomorphism $\pa{\R^3,r}\to\pa{\H^3,\tau}$.
But, this implies $r$ is unknotted, a contradiction.
\end{proof}

\begin{proposition}\label{rays_planes}
Let $\tau$ and $\tau'$ be multirays in $\R^3$, each containing $n$ components where $1\leq n\leq\infty$.
Let $H=\partial \nu \tau$ and let $H'=\partial \nu \tau'$.
If $f:\pa{\R^3,\tau}\to\pa{\R^3,\tau'}$ is a diffeomorphism, then $f$ is isotopic relative to $\tau$ to a diffeomorphism $g:\pa{\R^3,H}\to\pa{\R^3,H'}$.
Conversely, suppose $g:\pa{\R^3,H}\to\pa{\R^3,H'}$ is a diffeomorphism.
If any of the following conditions are met, then $g$ is isotopic relative to $G:=\R^3 -\Int \nu \tau$ to a diffeomorphism $f:\pa{\R^3,\tau}\to\pa{\R^3,\tau'}$.
\begin{enumerate}\setcounter{enumi}{\value{equation}}
\item\label{cond1} $g(\nu \tau)=\nu \tau'$.
\item\label{cond2} $n\geq 2$.
\item\label{cond3} $n=1$, and $\tau$ or $\tau'$ is knotted.
\setcounter{equation}{\value{enumi}}
\end{enumerate}
\end{proposition}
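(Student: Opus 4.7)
The plan is to handle the forward direction by uniqueness of regular neighborhoods (\ref{mnr2}), and then to reduce conditions~\ref{cond2} and~\ref{cond3} of the converse to condition~\ref{cond1}, where Corollary~\ref{rtor} finishes the job.

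For the forward direction, since $f\pa{\tau}=\tau'$, both $f\pa{\nu\tau}$ and $\nu\tau'$ are smooth closed regular neighborhoods of $\tau'$. Property~\ref{mnr2} supplies an ambient isotopy $F_t$ of $\R^3$ with $F_0=\tn{Id}$, $\left.F_t\right|\tau'=\tn{Id}$, and $F_1\pa{f\pa{\nu\tau}}=\nu\tau'$. Setting $g:=F_1\circ f$ gives a diffeomorphism of pairs $\pa{\R^3,H}\to\pa{\R^3,H'}$, and $F_t\circ f$ is an isotopy from $f$ to $g$ rel $\tau$, since $F_t$ fixes $f\pa{\tau}=\tau'$ at all times.

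For the converse under condition~\ref{cond1}, the restriction $\left.g\right|\nu\tau:\nu\tau\to\nu\tau'$ is, by Corollary~\ref{rtor}, isotopic rel a neighborhood of $H$ in $\nu\tau$ to a diffeomorphism of pairs $f_0:\pa{\nu\tau,\tau}\to\pa{\nu\tau',\tau'}$; call this isotopy $G_t$. Define $f$ on $\R^3$ by $\left.f\right|\nu\tau:=f_0$ and $\left.f\right|G:=\left.g\right|G$, which is well-defined and smooth since $\left.G_t\right|H=\left.g\right|H$ for all $t$. Extending $G_t$ by $\left.g\right|G$ on $G$ yields an ambient isotopy of $\R^3$ from $g$ to $f$ that is rel $G$.

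For condition~\ref{cond2}, it suffices to show $g\pa{\nu\tau}=\nu\tau'$. The space $\R^3-H$ has $n+1$ components: each $\Int \nu\tau_i$ (whose frontier in $\R^3$ is the single component $H_i$ of $H$) and $\Int G$ (whose frontier is all of $H$). When $n\geq 2$, $\Int G$ is the unique component of $\R^3-H$ whose frontier meets more than one component of $H$, so $g\pa{\Int G}=\Int G'$ and hence $g\pa{\nu\tau}=\nu\tau'$. For condition~\ref{cond3}, we again aim to show $g\pa{\nu\tau}=\nu\tau'$. Without loss of generality, $\tau$ is knotted (else use $g^{-1}$). By Corollary~\ref{krikh}, $H$ is knotted, so Lemma~\ref{unknot_equiv} implies that the two sides of $H$ are not both diffeomorphic to $\R^3_+$. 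Since $\nu\tau\approx\R^3_+$ by property~\ref{mnr1}, $G\not\approx\R^3_+$; then $g\pa{G}\not\approx\R^3_+$, while $\nu\tau'\approx\R^3_+$, forcing $g\pa{G}=G'$. The only real subtlety is verifying that these topological distinctions force $g$ to respect the decomposition $\R^3=\nu\tau\cup G$; once that holds, the isotopies glue along $H$ by construction, and the remainder is routine bookkeeping.
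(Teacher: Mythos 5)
Your proposal is correct and follows essentially the same route as the paper: the forward direction via ambient uniqueness of regular neighborhoods, the reduction of conditions (2) and (3) to condition (1) by showing $g(\nu\tau)=\nu\tau'$ (your component-counting argument for $n\geq 2$ is just an unpacking of the paper's observation that $T(H)$ and $T(H')$ are $n$-prongs, and your knottedness argument for $n=1$ matches the paper's use of Corollary~\ref{krikh} and Lemma~\ref{unknot_equiv}), and condition (1) handled by Corollary~\ref{rtor}.
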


\begin{remark}
The case $n=1$ where $\tau$ and $\tau'$ are unknotted is exceptional for trivial reasons.
For example, let $\tau$ and $\tau'$ both equal $\zeta$.
Let $\nu \tau$ and $\nu \tau'$ both equal $\R^3_+$, so $H=H'=\R^2\times\cpa{0}$.
Then, $g(x,y,z)=(-x,y,-z)$ is an orientation preserving diffeomorphism of $\R^3$ sending $H$ to $H'$.
However, $g$ is not isotopic relative to $H$ to a diffeomorphism $\pa{\R^3,\tau}\to\pa{\R^3,\tau'}$. 
\end{remark}

\begin{proof}[Proof of Proposition~\ref{rays_planes}]
The forward implication is immediate by ambient uniqueness of closed regular neighborhoods~\cite[\S3]{cks}.
Next, let $g:\pa{\R^3,H}\to\pa{\R^3,H'}$ be a diffeomorphism.
First, assume condition~\ref{cond1}.
Then, the result is immediate by Corollary~\ref{rtor}.
Second, assume condition~\ref{cond2}.
As $T(H)$ and $T(H')$ are both $n$-prongs, condition~\ref{cond1} is satisfied and the result follows.
Third, assume condition~\ref{cond3}.
Without loss of generality, assume $\tau$ is knotted (otherwise, consider $g^{-1}$).
Let $X=\nu \tau$ and let $Y=\R^3-\Int \nu \tau$.
By Corollary~\ref{krikh}, $H$ is knotted.
By Lemma~\ref{unknot_equiv}, $Y\not\approx\R^3_+$.
As $\nu \tau' \approx \R^3_+$, we see that condition~\ref{cond1} is satisfied and the result follows.
\end{proof}

Proposition~\ref{rays_planes} permits us to translate results on knotted multirays in $\R^3$ to results on knotted multiple hyperplanes in $\R^3$.
By McPherson~\cite{mcpherson}, there exist uncountably many knot types of a ray: so there exist uncountably many knot types of a hyperplane.
By Fox and Harrold~\cite{foxharrold}, there exist uncountably many knot types of two component multirays with unknotted components
(see Fox and Artin~\cite[p.~988]{foxartin} for a nice example): so there exist uncountably many knot types of two component multiple hyperplanes with unknotted components.\\

Proposition~\ref{rays_planes} is proved via ambient isotopies, so it also yields results on chirality.
Let $H\subset\R^3$ be a multiple hyperplane.
We say $H$ is \textbf{achiral} provided there is a diffeomorphism $\pa{\R^3,H}\to\pa{\R^3,H}$ that reverses orientation of $\R^3$.
Otherwise, $H$ is \textbf{chiral}.
For example, it is an exercise to show that each unknotted multiple hyperplane $H\subset \R^3$ is achiral.\\

A multiple hyperplane $H$ in $\R^3$ forms \textbf{Borromean hyperplanes} provided $H$ is knotted, but any two components of $H$ form an unknotted multiple hyperplane.
Proposition~\ref{rays_planes} implies that if $\tau\subset\R^3$ forms Borromean rays, then $\partial \nu r$ forms Borromean hyperplanes.
Thus, we obtain our second main result.

\begin{theorem}
There exist uncountably many Borromean hyperplanes, pairwise distinct up to diffeomorphism of $\R^3$.
There exists a countably infinite family of achiral Borromean hyperplanes, pairwise distinct up to diffeomorphism of $\R^3$.
\end{theorem}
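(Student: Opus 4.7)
The plan is to reduce the hyperplane statements to the already-established results about Borromean rays, using Proposition~\ref{rays_planes} as the translation mechanism. Given a three-component multiray $\tau \subset \R^3$ forming Borromean rays, set $H := \partial \nu \tau$. The author already noted (as a consequence of Corollary~\ref{krikh} and Proposition~\ref{rays_planes}) that $H$ forms Borromean hyperplanes, so it remains to control equivalence and chirality under the passage $\tau \mapsto H$.

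For the uncountability statement, I would start from the uncountable collection of Borromean rays $\pa{\R^3,\tau} = D^3 B_1 B_2 B_3 \cdots$ with $B_i \in \B$ provided by Corollary~\ref{uncount_borr_rays}: the set $\mathcal{S}/\sim$ is uncountable, so fix a system of representatives $\cpa{\tau_\alpha}_{\alpha \in I}$ with $I$ uncountable and pairwise inequivalent. Set $H_\alpha := \partial \nu \tau_\alpha$. Each $H_\alpha$ forms Borromean hyperplanes. Suppose now, for $\alpha \neq \beta$, there is a diffeomorphism $g:\pa{\R^3,H_\alpha}\to\pa{\R^3,H_\beta}$. Since each $\tau_\alpha$ has $n=3 \geq 2$ components, condition~\ref{cond2} of Proposition~\ref{rays_planes} applies, and $g$ is isotopic (rel the closed exterior) to a diffeomorphism $f:\pa{\R^3,\tau_\alpha}\to\pa{\R^3,\tau_\beta}$. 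This contradicts the choice of representatives, so the $H_\alpha$ are pairwise distinct up to diffeomorphism of $\R^3$, giving uncountably many inequivalent Borromean hyperplanes.

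For the achiral family, take $\tau_m$, $m \in \Z^{+}$, from Example~\ref{achiral_br}, and let $H_m := \partial \nu \tau_m$. By the argument of the previous paragraph, $\cpa{H_m}_{m \in \Z^{+}}$ are pairwise distinct up to diffeomorphism of $\R^3$, and each $H_m$ forms Borromean hyperplanes. To verify $H_m$ is achiral, pick an orientation-reversing diffeomorphism $f_m:\pa{\R^3,\tau_m}\to\pa{\R^3,\tau_m}$ guaranteed by achirality of $\tau_m$. By the forward direction of Proposition~\ref{rays_planes}, $f_m$ is isotopic rel $\tau_m$ to a diffeomorphism $g_m:\pa{\R^3,H_m}\to\pa{\R^3,H_m}$. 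Since isotopy preserves the orientation behavior of a diffeomorphism of $\R^3$, $g_m$ also reverses orientation, so $H_m$ is achiral, completing the proof.

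Nothing here is really a serious obstacle: Proposition~\ref{rays_planes} has already done the hard work of matching the two equivalence relations (multiray vs.\ hyperplane complement) once $n \geq 2$, and all the ray-theoretic content (uncountability, achiral family, knottedness preservation under $\tau \mapsto \partial \nu \tau$) is in place. The only mild subtlety is noting explicitly that $n = 3$ activates case~\ref{cond2} of Proposition~\ref{rays_planes}, so we do not fall into the exceptional single-component regime, and that isotopies do not alter the orientation character of a self-diffeomorphism of $\R^3$.
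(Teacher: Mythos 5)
Your proof is correct and follows exactly the paper's route: the paper derives the theorem as "immediate" from Proposition~\ref{rays_planes}, Corollary~\ref{uncount_borr_rays}, and Example~\ref{achiral_br}, which is precisely the reduction you spell out. Your elaborations (invoking condition~\ref{cond2} with $n=3$, and noting that isotopy preserves the orientation character of a self-diffeomorphism) are the right details and contain no gaps.
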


\begin{proof}
Immediate by Proposition~\ref{rays_planes}, Corollary~\ref{uncount_borr_rays}, and Example~\ref{achiral_br}.
\end{proof}

\end{document}